\numberwithin{equation}{section}
\newtheorem{corollary}[subsection]{Corollary}
\newtheorem{lemma}[subsection]{Lemma}
\newtheorem{proposition}[subsection]{Proposition}
\newtheorem{theorem}[subsection]{Theorem}
\newtheorem{question}[subsection]{Question}
\theoremstyle{definition}
\newtheorem{definition}[subsection]{Definition}
\newtheorem{example}[subsection]{Example}
\newtheorem{remark}[subsection]{Remark}
\newcommand{\PP}{\mathbb P}
\newcommand{\KK}{\mathbb K}
\newcommand{\ZZ}{\mathbb Z}
\newcommand{\QQ}{\mathbb Q}
\newcommand{\LLL}{\mathscr{L}}
\newcommand{\HHH}{\mathscr{H}}
\newcommand{\GGG}{\mathscr{G}}
\DeclareMathOperator{\Aut}{Aut}
\DeclareMathOperator{\AAut}{\mathbf{Aut}}
\DeclareMathOperator{\Bir}{Bir}
\DeclareMathOperator{\Ine}{Ine}
\DeclareMathOperator{\Spec}{Spec}
\DeclareMathOperator{\Hom}{Hom}
\DeclareMathOperator{\Gal}{Gal}
\DeclareMathOperator{\Pic}{Pic}
\DeclareMathOperator{\red}{red}
\DeclareMathOperator{\GL}{GL}
\DeclareMathOperator{\PGL}{PGL}
\DeclareMathOperator{\PSL}{PSL}
\DeclareMathOperator{\SL}{SL}
\DeclareMathOperator{\NS}{NS}
\DeclareMathOperator{\Hilb}{Hilb}
\DeclareMathOperator{\Tot}{Tot}
\DeclareMathOperator{\one}{\mathbf{1}}
\def \ge {\geqslant}
\def \le {\leqslant}
\title{Automorphisms of surfaces over fields of positive characteristic}
\author{Yifei Chen and Constantin Shramov}
\address{\emph{Yifei Chen}
\newline
\textnormal{
Hua Loo-Keng Key Laboratory of Mathematics, Academy of Mathematics and Systems Science,
Chinese Academy of Sciences, No. 55 Zhonguancun East Road, Haidian District, Beijing, 100190, P.R.China.}
\newline
\textnormal{\texttt{yifeichen@amss.ac.cn}}
}
\address{\emph{Constantin Shramov}
\newline
\textnormal{Steklov Mathematical Institute of RAS,
8 Gubkina street, Moscow 119991, Russia.
}
\newline
\textnormal{
National Research University Higher School of Economics, Laboratory of Algebraic Geometry, NRU HSE, 6 Usacheva str., Moscow, 117312, Russia.
}
\newline
\textnormal{\texttt{costya.shramov@gmail.com}}}
\begin{document}
\maketitle

\begin{abstract}
We study automorphism and birational automorphism groups
of varieties over fields of positive
characteristic from the point of view of Jordan and $p$-Jordan property.
In particular, we show that the Cremona group of rank $2$
over a field of characteristic $p>0$ is $p$-Jordan,
and the birational automorphism group of an arbitrary geometrically irreducible algebraic surface is nilpotently $p$-Jordan of class at most~$2$.
Also, we show that the automorphism group of
a smooth geometrically irreducible projective variety of non-negative Kodaira
dimension is Jordan in the usual sense.
\end{abstract}

\tableofcontents

\section{Introduction}

Birational automorphism groups of algebraic varieties sometimes have
extremely complicated structure. Also, their finite subgroups
may be hard to classify explicitly. To obtain an approach to
the description of finite subgroups, the following definition
was introduced by V.\,Popov.

\begin{definition}[{see \cite[Definition~2.1]{Popov}}]
\label{definition:Jordan}
A group~$\Gamma$ is called \emph{Jordan}
(alternatively, one says that~$\Gamma$ has \emph{Jordan property}),
if there exists a constant~\mbox{$J=J(\Gamma)$},
depending only on~$\Gamma$,
such that any finite subgroup of~$\Gamma$
contains a normal abelian subgroup of index at most~$J$.
\end{definition}

A classical theorem of C.\,Jordan
asserts that the general linear group, and thus also every linear algebraic group, over a field of characteristic zero
is Jordan;
see~\mbox{\cite[\S40]{Jordan}}, or~\cite{Bieberbach}, or~\cite{Frobenius},
or~\mbox{\cite[Theorem~36.13]{Curtis-Reiner-1962}}, or~\cite[Theorem~A]{Robinson},
or~\cite[\S4]{Tao}, or~\cite{Breuillard}, or~\cite[\S2]{BreulliardGreen}, or~\mbox{\cite[\S3]{MiR-Jordan}}, or~\mbox{\cite[Theorem~9.9]{Serre-FiniteGroups}}.
J.-P.\,Serre proved in~\mbox{\cite[Theorem~5.3]{Serre-2009}}
that the same property holds for the Cremona group of rank~$2$,
that is, for the group~\mbox{$\Bir(\PP^2)$}
of birational automorphisms of the projective plane,
over fields of characteristic zero. V.\,Popov
classified in~\mbox{\cite[Theorem~2.32]{Popov}} all
surfaces over fields of characteristic zero
whose birational automorphism
group is Jordan. Sh.\,Meng and D.-Q.\,Zhang proved
in~\mbox{\cite[Theorem~1.6]{MZ}}
that Jordan property holds for automorphism groups of projective varieties over fields of characteristic zero.

However, all this fails miserably over fields of positive characteristic.
Indeed, let $p$ be a prime number, let $\mathbf{F}_{p^k}$ denote the field of $p^k$ elements, and
let~$\bar{\mathbf{F}}_p$ be the algebraic closure of the field~$\mathbf{F}_p$. Then the group $\PGL_2(\bar{\mathbf{F}}_p)$ contains the
groups~\mbox{$\mathrm{PSL}_2(\mathbf{F}_{p^k})$}
for all positive integers $k$; the latter finite groups are simple apart from a (small) finite number of exceptions, see \cite[\S3.3.1]{Wilson}.
This means that the group~\mbox{$\PGL_2(\bar{\mathbf{F}}_p)$}
not only fails to be Jordan, but moreover, its finite subgroups cannot contain \emph{any} proper subgroups
of bounded index. In particular, no straightforward generalizations of
Jordan property, like nilpotent Jordan property (see~\cite{Guld}) or solvable
Jordan property (see \cite[\S8]{ProkhorovShramov-Bir}), can hold in this case.

The above observation naturally leads to the following definitions.

\begin{definition}[{\cite[Definition~1.2]{Hu}}]
\label{D:g-Jordan-1}
Let $p$ be a prime number. A group~$\Gamma$ is called \emph{generalized} $p$-\emph{Jordan}, if there is a constant $J(\Gamma)$, depending only on $\Gamma$, such that every
finite subgroup $G$ of $\Gamma$ whose order is not divisible by $p$ contains a normal abelian subgroup
of index at most $J(\Gamma)$.
\end{definition}

\begin{definition}[{\cite[Definition~1.6]{Hu}, see also \cite{BrauerFeit}}]
\label{D:g-Jordan-2}
Let $p$ be a prime number. A group $\Gamma$ is called
$p$-\emph{Jordan}, if there exist constants
$J(\Gamma)$ and~\mbox{$e(\Gamma)$}, depending only on $\Gamma$, such that every finite subgroup
$G$ of $\Gamma$ contains a normal abelian subgroup of order coprime
to $p$ and index at most~\mbox{$J(\Gamma)\cdot |G_{p}|^{e(\Gamma)}$},
where~$G_{p}$ is a $p$-Sylow subgroup of $G$.
\end{definition}

Note that every Jordan group is $p$-Jordan
(see Corollary~\ref{corollary:Jordan-vs-p-Jordan}),
and every $p$-Jordan group is generalized $p$-Jordan,
but the converse statements do not hold.
It was proved in~\mbox{\cite[Theorem~5.3]{Serre-2009}} that the Cremona group
of rank~$2$ over a field of characteristic $p>0$ is generalized $p$-Jordan.
As for the $p$-Jordan property,
the following analog of the theorem of C.\,Jordan
is known.

\begin{theorem}[{\cite{BrauerFeit}, \cite[Theorem~0.4]{LarsenPink}}]
\label{theorem:BrauerFeit}
Let $n$ be a positive integer. Then there
exists a constant $J(n)$ such that for every prime $p$ and every
field $\Bbbk$ of characteristic $p$, every finite subgroup
$G$ of~\mbox{$\GL_n(\Bbbk)$}
contains a normal abelian subgroup of order coprime
to $p$ and index at most~\mbox{$J(n)\cdot |G_{p}|^{3}$},
where $G_{p}$ is a $p$-Sylow subgroup of $G$.
In particular, for every field $\Bbbk$ of characteristic $p>0$
the group~\mbox{$\GL_n(\Bbbk)$} is $p$-Jordan.
\end{theorem}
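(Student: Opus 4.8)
The plan is to deduce the statement from the structure theorem for finite linear groups of Larsen and Pink \cite{LarsenPink}, the quantitative heart being an elementary order estimate for finite simple groups of Lie type. First I would reduce to the case of an algebraically closed ground field by extending scalars along an embedding $\Bbbk \hookrightarrow \bar{\Bbbk}$; this changes neither $G$ nor $|G_p|$, costs nothing, and lets me treat the prime-to-$p$ part of $G$ as a simultaneously diagonalizable subgroup of $\GL_n$. Applying the Larsen--Pink structure theorem then produces a constant $J'(n)$ depending only on $n$ and a chain of subgroups $\Gamma_3 \le \Gamma_2 \le \Gamma_1 \le G$, all normal in $G$, such that $[G:\Gamma_1] \le J'(n)$, the quotient $\Gamma_1/\Gamma_2$ is a product of finite simple groups of Lie type in characteristic $p$, the quotient $\Gamma_2/\Gamma_3$ is abelian of order prime to $p$, and $\Gamma_3$ is a $p$-group.

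The key quantitative input is that a finite simple group $L$ of Lie type in characteristic $p$ satisfies $|L| \le c\,|L_p|^3$ for a universal constant $c$, where $L_p$ is a $p$-Sylow subgroup of $L$. Indeed, realizing $L$ over $\mathbf{F}_q$ with $q = p^f$, one has $|L_p| = q^N$ with $N$ the number of positive roots, while $|L|$ is at most comparable to $q^{2N+r}$ with $r$ the rank; since $r \le N$ for every root system, $|L| \le c\,q^{3N} = c\,|L_p|^3$. For $\PSL_2$ this is already visible from $|\PSL_2(q)| = q(q^2-1)/\gcd(2,q-1) \le q^3$, and the twisted and exceptional types are covered by the same inspection of the order formulas. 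Taking the product over the simple factors of $\Gamma_1/\Gamma_2$ and using multiplicativity of $p$-parts gives $|\Gamma_1/\Gamma_2| \le c\,|(\Gamma_1/\Gamma_2)_p|^3$, with no dependence on the number of factors.

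Write $a = |(\Gamma_1/\Gamma_2)_p|$ and $b = |\Gamma_3|$. Since $\Gamma_2/\Gamma_3$ has order prime to $p$ and $\Gamma_3$ is a $p$-group, the filtration yields $|G_p| = [G:\Gamma_1]_p \cdot a \cdot b \ge ab$. By the Schur--Zassenhaus theorem $\Gamma_3$ admits a complement $A$ in $\Gamma_2$, which is abelian of order prime to $p$ with $[\Gamma_2:A] = b$. Combining the estimates, $[G:A] \le J'(n)\cdot c\,a^3 \cdot b = c\,J'(n)\,a^2(ab) \le c\,J'(n)\,|G_p|^3$, because $a \le ab \le |G_p|$. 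This exhibits an abelian subgroup of order prime to $p$ whose index is $J(n)\,|G_p|^3$ with $J(n) = c\,J'(n)$, and — once its normality is secured, see below — this is exactly the bound of Definition~\ref{D:g-Jordan-2} with exponent $e(\Gamma) = 3$, whence $\GL_n(\Bbbk)$ is $p$-Jordan.

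The main obstacle I anticipate is \emph{normality}. The Schur--Zassenhaus complement $A$ is determined only up to $\Gamma_3$-conjugacy, so it is a priori normal merely in $\Gamma_2$, and replacing it by its normal core in $G$ would blow the index up beyond any fixed power of $|G_p|$, destroying the estimate. This is precisely where the finer analysis of \cite{LarsenPink} enters: rather than an arbitrary complement one extracts a canonical abelian subgroup, using the algebraic-group structure of the Zariski closure of $G$ over $\bar{\Bbbk}$ to pin down a diagonalizable subgroup that is intrinsic to $G$ and therefore $G$-normalized, and a careful bookkeeping of the $p$-parts along the resulting filtration keeps its index within $J(n)\,|G_p|^3$. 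The order estimate $|L|\le c\,|L_p|^3$ together with the inequality $ab \le |G_p|$ is exactly what pins the exponent at $3$ and not a larger power.
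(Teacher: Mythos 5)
Your attempt cannot be measured against an internal argument of the paper: Theorem~\ref{theorem:BrauerFeit} is not proved there at all, it is imported verbatim from the literature (\cite{BrauerFeit} and~\cite[Theorem~0.4]{LarsenPink}), so your sketch has to stand on its own. Its skeleton is indeed the standard route by which the result is actually established: deduce it from the Larsen--Pink structure theorem (their Theorem~0.2, giving normal subgroups $\Gamma_3\le\Gamma_2\le\Gamma_1\le G$ with the properties you list), feed in the order estimate for finite simple groups of Lie type, and track $p$-parts along the filtration. Your bookkeeping $|G_p|\ge ab$ and $[G:A]\le c\,J'(n)\,a^{2}(ab)\le c\,J'(n)|G_p|^{3}$ is correct, and you rightly observe that $\PSL_2(q)$ pins the exponent at~$3$. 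One small flaw there: the claim that the product over simple factors costs ``no dependence on the number of factors'' requires the estimate with constant $c=1$, i.e.\ the sharp inequality $|L|<|L_p|^{3}$ (which does hold for every finite simple group of Lie type); with a generic $c>1$ you only control the number of factors by $k\le\log_2 a$, and $c^{k}$ then degrades the exponent.

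The genuine gap is exactly where you flag it: normality. The Schur--Zassenhaus complement $A$ is normal only in $\Gamma_2$, and taking its normal core in $G$ ruins the bound, as you say. But your proposed resolution --- deferring to ``the finer analysis of \cite{LarsenPink}'' --- is circular as a proof of this statement, since the statement \emph{is} \cite[Theorem~0.4]{LarsenPink}; if one may invoke that analysis, one may simply quote the theorem, which is what the paper does. The gap is, however, fixable by an elementary argument you could supply. Instead of an arbitrary complement, take $C=C_{\Gamma_2}(\Gamma_3)$: it is normal in $G$ (centralizer of a normal subgroup inside a normal subgroup). Since $C/(C\cap\Gamma_3)$ embeds in the abelian group $\Gamma_2/\Gamma_3$ and $C\cap\Gamma_3=Z(\Gamma_3)$ is central in $C$, one gets $[C,C]\subset Z(C)$, so $C$ is nilpotent; hence $C=Z(\Gamma_3)\times C'$, where $C'$ is the Hall $p'$-subgroup of $C$. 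Then $C'$ is characteristic in $C$, hence normal in $G$, and abelian because $[C',C']\subset Z(\Gamma_3)\cap C'=1$. For its index, bound $[\Gamma_2:C]$ as follows: the image $I$ of $\Gamma_2$ in $\Aut(\Gamma_3)$ contains $\mathrm{Inn}(\Gamma_3)\cong\Gamma_3/Z(\Gamma_3)$ as a normal $p$-Sylow subgroup with abelian $p'$-quotient; by Burnside's theorem a $p'$-subgroup of $\Aut(\Gamma_3)$ embeds into $\GL_m(\mathbf{F}_p)$ with $p^{m}\le b$, and an abelian $p'$-subgroup of $\GL_m(\mathbf{F}_p)$ has order less than $p^{m}$ (Schur's lemma over finite fields), so $[\Gamma_2:C]<\bigl(b/|Z(\Gamma_3)|\bigr)\cdot b$. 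Altogether $[G:C']\le J'(n)\,a^{3}\cdot b^{2}\le J'(n)\,(ab)^{3}\le J'(n)\,|G_p|^{3}$, which completes the proof with a subgroup that is genuinely normal in~$G$.
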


It immediately follows from Theorem~\ref{theorem:BrauerFeit}
that (the group of $\Bbbk$-points of) every linear algebraic group over a field
$\Bbbk$ of characteristic $p>0$
is $p$-Jordan.

It appears that the $p$-Jordan property is useful to study automorphism groups
and birational automorphism groups of algebraic varieties (by which we mean geometrically
reduced separated schemes of finite type)
over fields of positive characteristic.
The following analog of \cite[Theorem~1.6]{MZ} was proved by F.\,Hu.

\begin{theorem}[{\cite[Theorem~1.10]{Hu}}]
\label{theorem:Hu}
Let $\Bbbk$ be a field of characteristic $p>0$, and
let $X$ be a projective variety over $\Bbbk$.
Then the automorphism group $\Aut(X)$ is $p$-Jordan.
\end{theorem}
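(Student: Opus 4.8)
The plan is to cut $\Aut(X)$ into a part that acts on a lattice of numerical classes, controlled by Minkowski's theorem, and a kernel that is essentially an algebraic group, controlled by Chevalley's structure theorem together with the $p$-Jordan property of linear algebraic groups furnished by Theorem~\ref{theorem:BrauerFeit}. First I would reduce to an algebraically closed base field: base change along $\Bbbk\hookrightarrow\bar\Bbbk$ gives an injective homomorphism $\Aut(X)\hookrightarrow\Aut(X_{\bar\Bbbk})$ preserving the orders of finite subgroups, and since the $p$-Jordan property passes to subgroups with unchanged constants, it is enough to treat $X_{\bar\Bbbk}$. Working over $\bar\Bbbk$, I would replace the automorphism group scheme by its reduction, which over a perfect field is a smooth algebraic group whose points contain every finite subgroup in question; this is the first place where positive characteristic must be handled with care, since the automorphism scheme may fail to be reduced.

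Since $X$ is projective, the group $\Num(X)$ is free of finite rank $n$ and carries a nondegenerate intersection pairing preserved by automorphisms, while $\Aut(X)$ also acts on the finite set of irreducible components of $X$. This yields a homomorphism $\rho\colon\Aut(X)\to\mathrm O(\Num(X))\times\Sigma$, where $\Sigma$ is a finite symmetric group and $\mathrm O(\Num(X))\subset\GL_n(\ZZ)$. By Minkowski's theorem every finite subgroup of $\GL_n(\ZZ)$ has order bounded by a constant $M(n)$, so every finite subgroup of the image of $\rho$ has order bounded in terms of $X$ alone.

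The substance lies in the kernel $K=\ker\rho$. A connected algebraic group admits no nontrivial homomorphism to the discrete groups $\mathrm O(\Num(X))$ and $\Sigma$, so $\Aut^0(X)\subseteq K$; on the other hand $K$ is contained in the group $\Aut_\tau(X)$ of numerically trivial automorphisms, and I would use the finiteness of $\Aut_\tau(X)/\Aut^0(X)$ to see that $K/\Aut^0(X)$ is finite. By Chevalley's theorem the identity component sits in an extension $1\to L\to\Aut^0(X)\to B\to 1$ with $L$ connected linear and $B$ an abelian variety, and the linear-algebraic-group consequence of Theorem~\ref{theorem:BrauerFeit} shows that $L$ is $p$-Jordan. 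For a finite subgroup $F\le\Aut^0(X)$, commutativity of $B$ forces $[F,F]$ into $L$, so $F\cap L$ is a large normal subgroup to which the $p$-Jordan property of $L$ applies; combined with the fact that finite subgroups of the abelian variety $B$ are abelian, this should yield in $F$ a normal abelian subgroup of order coprime to $p$ and index of the required form, and then the same for the finite extension $K$.

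Finally I would assemble the pieces along $1\to K\to\Aut(X)\to\rho(\Aut(X))\to 1$ using the formal stability of the $p$-Jordan class under extensions whose quotients have finite subgroups of bounded order, adjusting the constants $J$ and $e$ accordingly. I expect the main obstacle to be the connected-algebraic-group step: in characteristic $p$ the $p$-Sylow subgroups of $F\cap L$ can be arbitrarily large, so one must retain the factor $|F_p|^{e}$ throughout, and, more delicately, one must produce a genuinely abelian normal subgroup rather than merely a metabelian one out of the extension of the $p$-Jordan group $F\cap L$ by the abelian quotient $F/(F\cap L)$. This requires pushing the abelian part of $F\cap L$ into a maximal torus and exploiting rigidity of tori in connected groups, all while keeping track of the $p'$-condition; the finiteness of $\Aut_\tau(X)/\Aut^0(X)$ in positive characteristic is a further nontrivial ingredient.
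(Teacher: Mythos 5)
First, a point about the comparison itself: the paper does not prove Theorem~\ref{theorem:Hu} --- it quotes it from Hu's paper and uses it as a black box. What the paper does establish is one half of your architecture: Lemma~\ref{lemma:MZ} (finite subgroups of $\Aut(X)/\Aut^0(X)$ are bounded, proved via the action on $\NS(X)$, Minkowski's theorem in the form of Corollary~\ref{corollary:Minkowski}, and the finiteness of $\Aut_{[L]}(X)/\Aut^0(X)$ from Corollary~\ref{corollary:Aut-X-L}), together with the assembly tool Lemma~\ref{lemma:group-theory}. Your corresponding steps --- reduction to $\bar{\Bbbk}$, the action on $\Num(X)$ and on the set of irreducible components, finiteness of the numerically trivial part modulo $\Aut^0(X)$, and the final extension argument --- are correct and essentially identical to that part of the paper.

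The genuine gap is the core step: the claim that finite subgroups of the connected algebraic group $\Aut^0(X)$ are $p$-Jordan-controlled. You reduce by Chevalley to $1\to L\to\Aut^0(X)\to B\to 1$ and then assert that, for a finite subgroup $F$, the extension $1\to F\cap L\to F\to \bar{F}\to 1$ with $\bar{F}\subset B(\Bbbk)$ abelian and $F\cap L$ handled by Theorem~\ref{theorem:BrauerFeit} ``should yield'' a normal abelian subgroup of $p'$-order and the required index. As a uniform group-theoretic implication this is false: a finite group with a normal subgroup whose quotient is abelian and which itself is abelian (hence as $p$-Jordan as one could wish) need not contain any abelian subgroup of bounded index. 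The counterexamples are exactly the theta groups $1\to\mu_\ell\to\mathscr{H}_\ell\to(\ZZ/\ell\ZZ)^2\to 1$ with $\ell$ coprime to $p$, whose abelian subgroups all have index at least $\ell$ while $|(\mathscr{H}_\ell)_p|=1$; the paper itself uses precisely these groups in Lemma~\ref{lemma:Zarhin} to show that $\Bir(A\times\PP^1)$ is not even generalized $p$-Jordan. So the entire difficulty of the theorem is concentrated in ruling such configurations out of a fixed algebraic group, and your proposed fix --- pushing the abelian part of $F\cap L$ into a maximal torus and invoking ``rigidity of tori'' --- does not do this: the center $\mu_\ell$ of a theta group already lies in the torus $\mathbb{G}_{\mathrm{m}}$, so nothing about tori alone excludes the Heisenberg commutator pairing. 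What is actually required is the structure theory that constitutes Hu's Theorem~1.7 (and Meng--Zhang's work in characteristic zero): the decomposition with central anti-affine part, the structure of anti-affine groups in characteristic $p$, and rigidity of morphisms from complete or rational varieties to affine groups, which force the relevant commutator pairings to be trivial or small. None of this appears in your sketch, so in effect you are assuming the cited result rather than proving it. The paper's own Lemmas~\ref{lemma:Heisenberg-estimate-1} and~\ref{lemma:Heisenberg-estimate-2} make the missing hypotheses explicit: the extension step goes through only when the kernel has order bounded by $B\cdot|F_p|^e$ and bounded generation, conditions which $F\cap L$ (for instance a large cyclic $p'$-subgroup of a torus inside $L$) violates. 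Note also that the one related statement the paper does prove itself, Proposition~\ref{proposition:Hu-improved}, sidesteps this issue entirely, because there $\Aut^0(X)$ is an abelian variety and the kernel of the extension is abelian with no Heisenberg obstruction.
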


\medskip
The purpose of this paper
is to initiate a systematic study of $p$-Jordan and generalized $p$-Jordan
properties for groups of birational automorphisms of varieties
over fields of positive characteristic,
and to generalize to this setting
the relevant results for surfaces over fields of characteristic zero
(as usual, by a surface we mean a variety of dimension~$2$).
Our first goal is to prove an
analog of~\mbox{\cite[Theorem~5.3]{Serre-2009}}.

\begin{theorem}\label{theorem:Serre}
There exists a constant $J$
such that for every prime $p$ and every field $\Bbbk$ of characteristic $p$,
every finite subgroup
$G$ of the birational automorphism group~\mbox{$\Bir(\PP^2)$},
contains a normal abelian subgroup of order coprime
to $p$ and index at most~\mbox{$J\cdot |G_{p}|^{3}$},
where~$G_{p}$ is a $p$-Sylow subgroup of $G$.
In particular, for every field $\Bbbk$ of characteristic~\mbox{$p>0$}
the group $\Bir(\PP^2)$ is $p$-Jordan.
\end{theorem}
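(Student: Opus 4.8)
The plan is to follow the geometric strategy of Serre's proof of~\cite[Theorem~5.3]{Serre-2009}, replacing each use of Jordan property of linear algebraic groups in characteristic zero by the $p$-Jordan property of Theorem~\ref{theorem:BrauerFeit}, and verifying throughout that the resulting constants depend neither on $p$ nor on $\Bbbk$. First I would regularize the action: given a finite subgroup $G\subset\Bir(\PP^2)$, $G$-equivariant resolution of surface singularities (available in arbitrary characteristic) replaces the birational $G$-action by a biregular one on a smooth projective $\Bbbk$-rational surface $X$, so that $G\subset\Aut(X)$. Running a $G$-equivariant minimal model program over $\Bbbk$ then reduces $X$ to a $G$-minimal surface, which is of one of two types: a del Pezzo surface with $\Pic(X)^G=\ZZ$, or a conic bundle $\pi\colon X\to B$ with $\Pic(X)^G=\ZZ^2$. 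Since $X$ is $\Bbbk$-rational, the base $B$ carries a $\Bbbk$-point and hence is $\PP^1_\Bbbk$, and the generic fibre of $\pi$ is $\PP^1$ over the function field of $B$.

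In the del Pezzo case the degree $d=K_X^2$ satisfies $1\le d\le 9$, so $-mK_X$ is very ample for some $m\le 3$, and the $G$-action on the corresponding complete linear system embeds $X$, and with it $\Aut(X)$, into $\PGL_N(\Bbbk)$ with $N=\dim_\Bbbk H^0(X,-mK_X)$ bounded by an absolute constant. Corollary~\ref{corollary:PGL}---equivalently, Theorem~\ref{theorem:BrauerFeit} for $\GL_N$---then provides a normal abelian subgroup of $G$ of order coprime to $p$ and index at most $J_0\cdot|G_p|^3$, where $J_0$ depends only on the bound for $N$ and is therefore absolute. This settles the del Pezzo case with the sharp exponent $3$.

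In the conic bundle case the fibration gives a short exact sequence $1\to G_F\to G\to G_B\to 1$, where $G_B\subset\Aut(\PP^1_\Bbbk)=\PGL_2(\Bbbk)$ is the image on the base and $G_F$ acts faithfully on the generic fibre, so that $G_F\subset\Aut(\PP^1_K)=\PGL_2(K)$ with $K$ the function field of $B$. Theorem~\ref{theorem:BrauerFeit} applied with $n=2$, over $\Bbbk$ and over $K$ (both of characteristic $p$), shows that each of $G_B$ and $G_F$ contains a normal abelian subgroup of order coprime to $p$ whose index is at most $J_1$ times the cube of the order of its $p$-Sylow subgroup, for an absolute constant $J_1$. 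The remaining, and central, task is to assemble these data into a single normal abelian subgroup of $G$ of order coprime to $p$ and index at most $J_2\cdot|G_p|^3$. The mechanism I would use is diagonalization: a tame abelian subgroup of $\PGL_2$ lies in a maximal torus, and the fibrewise diagonal automorphisms and the base diagonal automorphisms, being simultaneously diagonalizable, commute; when $\pi$ is a Hirzebruch surface these tori are the two factors of a maximal torus $\mathbb{G}_m^2\subset\Aut^0(X)$, and finite subgroups of it are honestly abelian, whereas the (possibly high-dimensional) unipotent radical only contributes to the normal $p$-subgroup $G\cap R_u(\Aut^0(X))\subseteq G_p$ and hence does not enter the final constant; when $\pi$ has degenerate fibres the fibrewise group $G_F$ is correspondingly constrained, and the same diagonalization, carried out inside $\PGL_2(K)$, yields the abelian subgroup. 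Setting $J=\max(J_0,J_2)$ completes the proof.

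The main obstacle is this final assembly in the conic bundle case. A general extension of two groups each having a bounded-index abelian subgroup is only metabelian, and finite metabelian groups (e.g.\ Heisenberg-type $p$-groups) need not contain any abelian subgroup of bounded index; the purely group-theoretic combination therefore yields at best the nilpotent-of-class-$2$ statement that holds for arbitrary surfaces. Extracting a genuinely \emph{abelian} normal subgroup with the \emph{sharp} exponent $3$ requires the geometric input above---the commuting diagonal tori, and the fact that the non-abelian prime-to-$p$ part of $\PGL_2$ is bounded---together with careful bookkeeping that routes every unipotent contribution into $|G_p|$ without raising the exponent beyond $3$. A secondary but genuine difficulty is foundational: one must run equivariant resolution, the $G$-equivariant minimal model program, and the classification of $G$-minimal rational surfaces over a field $\Bbbk$ that may be imperfect and non-closed, and verify that no constant appearing anywhere depends on $p$ or on $\Bbbk$.
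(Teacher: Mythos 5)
Your overall strategy---regularize the action, run the $G$-equivariant MMP, split into the del Pezzo and conic bundle cases, and settle the del Pezzo case by embedding via a bounded multiple of $-K_X$ and applying Brauer--Feit to a $\PGL_N$ of absolutely bounded size---is exactly the paper's, and that part of your proposal is sound. The genuine gap is in the conic bundle assembly, which you correctly identify as the crux but whose proposed mechanism does not work. First, an element of $G$ lying over a nontrivial automorphism of the base does not act $K$-linearly on the generic fibre (here $K=\Bbbk(\PP^1)$): it acts only semilinearly, twisting by the induced automorphism of $K$. So there is no common ambient $\PGL_2(K)$ in which a ``fibrewise torus'' and a ``base torus'' can be simultaneously diagonalized; outside the Hirzebruch case (and a $G$-minimal conic bundle will typically have degenerate fibres and no positive-dimensional connected automorphism group) the commuting-tori picture has no content. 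Second, even granting a cyclic fibrewise subgroup $R$ of order $n$ coprime to $p$ normalized by a lift $\alpha$ of a base element, it is false in general that $\alpha$ commutes with $R$: $\alpha$ may swap the two fixed points of $R$ on the generic fibre and conjugate a generator $g$ to $g^{r}$ with $r\not\equiv 1$. What is true---and this is the paper's key Lemma~\ref{lemma:Serre}, taken from Serre---is that $\alpha^{2}$ commutes with $R$; its proof requires the faithfulness of the tangent-space representation at the two fixed points (Theorem~\ref{theorem:fixed-point}) and, crucially, the fact that the primitive $n$-th root of unity lies in the base field $\Bbbk$ and is therefore fixed by the automorphism of $K$ induced by $\alpha$. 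This squaring step, absent from your proposal, is precisely what makes an abelian (rather than merely class-$2$ nilpotent) subgroup accessible.

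There are two further points where your plan, as written, loses control of the constants. Brauer--Feit applied to $G_F\subset\PGL_2(K)$ produces a subgroup that is normal in $G_F$ only; since $[G:G_F]=|G_B|$ is unbounded, intersecting its $G$-conjugates via Lemma~\ref{lemma:conjugation} destroys any index bound. The paper instead uses the explicit classification of finite subgroups of $\PGL_2$ in characteristic $p$ (Theorem~\ref{theorem:ADE}) to produce, in Lemma~\ref{lemma:P1}, a \emph{characteristic} cyclic subgroup $R\subset F$ of order coprime to $p$ and index at most $60|F_p|^{3}$; being characteristic in the normal subgroup $F$, it is automatically normal in all of $G$. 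Similarly, on the base the paper needs the normal subgroup of $\bar{G}\subset\PGL_2(\Bbbk)$ to be \emph{cyclic} (again Lemma~\ref{lemma:P1}), not just abelian as Brauer--Feit would give: a single lift $\alpha$ then suffices, and $\langle R,\alpha^{2}\rangle$ is abelian, whereas with a two-generated abelian base piece the squares $\alpha_1^2,\alpha_2^2$ commute with $R$ but not necessarily with each other---which is exactly why the analogous argument over an elliptic base (Lemma~\ref{lemma:nilpotent-ruled}) only yields nilpotency of class $2$. The final step in the paper intersects the $G$-conjugates of $\langle R,\alpha^{2}\rangle$; because $R$ is normal in $G$ and the image of the intersection in the base group is still of bounded index, the result is a normal abelian subgroup of order coprime to $p$ and index at most $7200\,|G_p|^{3}$ (Lemma~\ref{lemma:conic-bundle-Jordan}). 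Your proposal needs to be repaired at these three points---semilinearity, the squaring lemma, and characteristic cyclic (not merely normal abelian) subgroups from the $\PGL_2$ classification---before the assembly can go through.
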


Furthermore, we obtain an analog of \cite[Theorem~2.32]{Popov}
(see also~\mbox{\cite[Theorem~1.7]{ProkhorovShramov-CCS}}).

\begin{theorem}\label{theorem:Popov}
Let $\Bbbk$ be an algebraically closed field of characteristic~\mbox{$p>0$}, and let~$S$ be an irreducible algebraic
surface over $\Bbbk$. The following assertions hold.

\begin{itemize}
\item[(i)] If $S$ is birational to a product $E\times\PP^1$ for some elliptic
curve $E$, then the group
$\Bir(S)$ is not generalized $p$-Jordan.

\item[(ii)] If the Kodaira dimension of $S$ is negative but $S$ is not birational
to a product $E\times\PP^1$ for any elliptic curve $E$, then the group
$\Bir(S)$ is $p$-Jordan but not Jordan.

\item[(iii)] If the Kodaira dimension of $S$ is non-negative, then the group
$\Bir(S)$ is Jordan.
\end{itemize}
\end{theorem}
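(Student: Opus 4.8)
The plan is to prove Theorem~\ref{theorem:Popov} by splitting into the three cases according to Kodaira dimension and, within the negative case, according to whether $S$ is birational to a product with an elliptic curve. The key observation is that all three assertions are birational in nature, so I may freely replace $S$ by any birational model; the strategy is therefore to run minimal model theory for surfaces over the algebraically closed field $\Bbbk$ and reduce to a convenient representative of each birational class, then analyze the (birational) automorphism group of that representative.

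\medskip

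For part~(iii), the case of non-negative Kodaira dimension, I would first recall that a surface of non-negative Kodaira dimension admits a \emph{unique} minimal model $S_{\min}$, and that birational maps between minimal surfaces of non-negative Kodaira dimension are isomorphisms. Consequently $\Bir(S)\cong\Aut(S_{\min})$, and since $S_{\min}$ is projective, Theorem~\ref{theorem:Hu} already tells us that $\Aut(S_{\min})$ is $p$-Jordan. To upgrade this to genuine Jordan property (no dependence on $|G_p|$), I would use the fact that on a surface of non-negative Kodaira dimension the canonical class is effective and nef on the minimal model, which forces every automorphism to preserve additional structure (the pluricanonical maps, or the Albanese/Iitaka fibration) and sharply constrains the $p$-part of finite subgroups. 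The plan here is to bound the order of the $p$-Sylow subgroup $G_p$ acting faithfully on such a surface by a universal constant, so that the factor $|G_p|^{e}$ in the $p$-Jordan estimate becomes bounded and the property collapses to ordinary Jordan property; this is the step where I expect to lean most heavily on the geometry (e.g.\ action on $\NS(S_{\min})$, on global differential forms, and on the Albanese torus) and it is likely the main obstacle, since controlling wild $p$-torsion in characteristic $p$ is exactly what fails in the negative Kodaira dimension case.

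\medskip

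For part~(ii), negative Kodaira dimension but not birational to $E\times\PP^1$, I would pass to a minimal rational or ruled model. Such a surface is birational to $\PP^2$ or to a $\PP^1$-bundle over a curve $B$ of genus $g\ge 1$ with $B$ not elliptic (or to $\PP^2$ itself in the rational case). In the rational case $\Bir(S)\cong\Bir(\PP^2)$ is $p$-Jordan by Theorem~\ref{theorem:Serre}, while it fails to be Jordan because of the groups $\PSL_2(\mathbf{F}_{p^k})\subset\PGL_2(\Bbbk)$ exhibited in the introduction. In the non-rational ruled case, the base curve $B$ of genus $\ge 2$ has finite automorphism group, and any birational automorphism preserves the ruling, so there is an exact sequence relating $\Bir(S)$ to a subgroup of $\Aut(B)$ and to $\PGL_2$ of the generic fibre; I would bound the finite subgroups using $p$-Jordan property of $\PGL_2(\Bbbk)$ (via Theorem~\ref{theorem:BrauerFeit}, cf.\ Corollary~\ref{corollary:PGL}) together with finiteness of $\Aut(B)$, and I would exhibit the same $\PSL_2$ family to show failure of ordinary Jordan property.

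\medskip

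For part~(i), $S$ birational to $E\times\PP^1$, I would show $\Bir(S)$ is \emph{not} generalized $p$-Jordan by constructing, for arbitrarily large $n$ coprime to $p$, finite subgroups all of whose abelian normal subgroups have unbounded index. The natural source is the group $E[n]\rtimes\langle\text{translations and }\PGL_2\text{-twists}\rangle$ acting on $E\times\PP^1$: combining the $n$-torsion translations on $E$ (of order $n^2$, coprime to $p$ for suitable $n$) with an action on the $\PP^1$-factor that does not commute with them produces Heisenberg-type or extraspecial non-abelian subgroups of order growing with $n$ yet with no large abelian normal subgroup. The key point, parallel to the characteristic-zero argument in~\cite[Theorem~2.32]{Popov}, is that these examples have order prime to $p$, so they obstruct even the weakest generalized $p$-Jordan property; verifying the commutator relations and the index bounds is routine once the right subgroups are written down.
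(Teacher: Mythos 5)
Your parts (i) and (ii) follow the paper's own route: part (ii) is the same reduction (the rational case via Theorem~\ref{theorem:Serre}; the genus $\ge 2$ base via the exact sequence of Lemma~\ref{lemma:ruled-Bir}, finiteness of $\Aut(C)$, and Corollary~\ref{corollary:PGL}; failure of Jordan because $\Bir(S)$ contains $\PGL_2(\Bbbk)$), and the order-coprime-to-$p$ Heisenberg-type subgroups you invoke for part (i) are exactly what the paper constructs in Lemma~\ref{lemma:Zarhin} via Mumford's theta groups $\GGG(\LLL)$ for $\LLL=\LLL_0^{\otimes\ell}$ ample on $E$. One caution on your phrasing in (i): there is no ``action on the $\PP^1$-factor that does not commute with the translations'' inside the product group $\Aut(E)\times\PGL_2(\Bbbk)$, since anything acting only on the second factor commutes with all translations. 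The non-commuting lifts of the $\ell$-torsion translations are necessarily fiberwise maps twisted over $E$, i.e.\ elements of $\PGL_2(\Bbbk(E))$; producing them \emph{is} the theta-group construction on the total space $\Tot(\LLL)$, so this step is not routine bookkeeping but is precisely where Lemma~\ref{lemma:Zarhin} does its work.

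The genuine gap is in part (iii). Your plan is to start from the $p$-Jordan property of $\Aut(S_{\min})$ (Theorem~\ref{theorem:Hu}) and upgrade it to Jordan by bounding the order of a $p$-Sylow subgroup of any finite group acting faithfully on $S_{\min}$ by a universal constant. No such bound exists: if $S_{\min}$ is an ordinary abelian surface, translations by $p^n$-torsion points give faithful actions of $(\ZZ/p^n\ZZ)^2$ for every $n$, and the same unbounded $p$-torsion occurs for hyperelliptic (and quasi-hyperelliptic) surfaces, whose automorphism groups contain the translation group $C(\Bbbk)$ of an elliptic curve with bounded index. So $|G_p|$ is unbounded already on a fixed minimal surface of Kodaira dimension $0$, and the factor $|G_p|^{e}$ never ``collapses''. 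The correct mechanism, which is what the paper proves in Proposition~\ref{proposition:Hu-improved}, is not that wild $p$-torsion is absent in non-negative Kodaira dimension but that it is \emph{abelian}: since $\kappa\ge 0$, the group scheme $\AAut_{S_{\min}}$ contains no non-trivial connected linear algebraic subgroups (Proposition~\ref{proposition:Aut-nonruled}), hence its reduced neutral component is an abelian variety (Theorem~\ref{theorem:alg-group-abelian}), so $\Aut^0(S_{\min})$ is an abelian normal subgroup absorbing all the unbounded $p$-torsion; meanwhile $\Aut(S_{\min})/\Aut^0(S_{\min})$ has bounded finite subgroups by the Meng--Zhang argument (Lemma~\ref{lemma:MZ}), and the Jordan property follows from the extension Lemma~\ref{lemma:group-theory}. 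Replacing your bounding step by this structural argument repairs the proof; as written, the step fails at exactly the surfaces (abelian, hyperelliptic) that part (iii) must cover.
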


In the proof of Theorem~\ref{theorem:Popov} we use the following
assertion which makes Theorem~\ref{theorem:Hu} more precise in one important particular case.

\begin{proposition}\label{proposition:Hu-improved}
Let $\Bbbk$ be an arbitrary field, and
let $X$ be a smooth geometrically irreducible
projective variety of non-negative Kodaira dimension over~$\Bbbk$.
Then the group $\Aut(X)$ is Jordan.
\end{proposition}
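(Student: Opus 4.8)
The plan is to combine the action of $\Aut(X)$ on the N\'eron--Severi lattice with the special structure of the identity component that non-negative Kodaira dimension provides. Since $X$ is geometrically irreducible, its Kodaira dimension is unchanged by the base change to an algebraic closure $\bar\Bbbk$, and $\Aut(X)$ embeds into $\Aut(X_{\bar\Bbbk})$; as any subgroup of a Jordan group is again Jordan with the same constant, I may assume from the outset that $\Bbbk$ is algebraically closed.

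Recall that $\Aut(X)$ is the group of $\Bbbk$-points of a group scheme $\AAut(X)$ locally of finite type, whose identity component $\Aut^0(X)$ is a connected algebraic group that is normal in $\Aut(X)$. The crucial geometric input is that, because $\kappa(X)\ge 0$, the variety $X$ is not uniruled, and hence $\Aut^0(X)$ admits no nontrivial connected linear subgroup; by the structure theorem of Nishi and Matsumura this forces $\Aut^0(X)$ to be an abelian variety. In particular $\Aut^0(X)(\Bbbk)$ is a commutative group, so every one of its subgroups is abelian --- this is exactly the point where non-negative Kodaira dimension upgrades the $p$-Jordan property of Theorem~\ref{theorem:Hu} to honest Jordan property, since the linear part of the identity component, which in positive characteristic may contain unboundedly large finite subgroups, has been eliminated.

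Next I would analyse the action of $\Aut(X)$ on $\Num(X)=\NS(X)/(\mathrm{torsion})\cong\ZZ^{\rho}$, writing $\Aut^\tau(X)$ for its kernel; as a connected group acts trivially on the discrete lattice $\Num(X)$, we have $\Aut^0(X)\subseteq\Aut^\tau(X)$. Every element of $\Aut^\tau(X)$ fixes the numerical class of a chosen ample divisor $H$, and the stabilizer of an ample class is known to have only finitely many connected components, so $\Aut^\tau(X)/\Aut^0(X)$ is finite, of some order $N$ depending only on $X$. At the same time $\Aut(X)/\Aut^\tau(X)$ embeds into $\GL_{\rho}(\ZZ)$, whose finite subgroups all have order bounded by a Minkowski constant $M(\rho)$.

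It then remains to assemble these facts. Given a finite subgroup $G\subseteq\Aut(X)$, put $A:=G\cap\Aut^0(X)$; since $\Aut^0(X)$ is normal in $\Aut(X)$ and $\Aut^0(X)(\Bbbk)$ is abelian, $A$ is a normal abelian subgroup of $G$. The image of $G$ in $\GL_{\rho}(\ZZ)$ has order at most $M(\rho)$, while $(G\cap\Aut^\tau(X))/A$ injects into $\Aut^\tau(X)/\Aut^0(X)$ and so has order at most $N$; hence $[G:A]\le M(\rho)\cdot N$, a bound independent of $G$. This yields the Jordan property with constant $M(\rho)\cdot N$. I expect the main obstacle to be the geometric input asserting that $\Aut^0(X)$ is an abelian variety over a field of positive characteristic: one must rule out a nontrivial linear part, which rests on the implication that a uniruled variety has Kodaira dimension $-\infty$ together with the Chevalley-type decomposition of connected algebraic groups, and this is where the characteristic-$p$ subtleties are concentrated.
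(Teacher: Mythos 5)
Your architecture is the same as the paper's: reduce to an algebraically closed base field, show that $\Aut^0(X)$ is (the group of points of) an abelian variety because a nontrivial connected linear subgroup is impossible, bound the finite subgroups of the component group via the action on the N\'eron--Severi lattice together with the finiteness of the component group of the stabilizer of an ample class (this is exactly the content of Lemma~\ref{lemma:MZ}), and assemble. Your assembly step is in fact slightly cleaner than the paper's: since $\Aut^0(X)$ is abelian and normal in $\Aut(X)$, the subgroup $G\cap\Aut^0(X)$ is automatically normal in $G$, so you do not need the conjugation argument hidden in Lemmas~\ref{lemma:conjugation} and~\ref{lemma:group-theory}.

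There is, however, one genuine flaw, and it sits precisely at the point you flagged as the crux. You justify the absence of a nontrivial connected linear subgroup of $\Aut^0(X)$ by the chain ``linear subgroup $\Rightarrow$ $X$ uniruled $\Rightarrow$ $\kappa(X)=-\infty$''. The second implication is \emph{false} in positive characteristic: by Shioda's classical examples, suitable Fermat surfaces are unirational (hence uniruled) surfaces of general type in every characteristic $p>0$, so $\kappa(X)\ge 0$ does not rule out uniruledness. The argument the paper uses (Proposition~\ref{proposition:Aut-nonruled}) produces a conclusion strictly stronger than uniruledness: a nontrivial connected linear algebraic subgroup of the automorphism group scheme contains a copy of $\mathbb{G}_{\mathrm{a}}$ or $\mathbb{G}_{\mathrm{m}}$ (Lemma~\ref{lemma:Gm-or-Ga}), and by Rosenlicht-type results on quotients (combined with the triviality of $\mathbb{G}_{\mathrm{a}}$- and $\mathbb{G}_{\mathrm{m}}$-torsors over a field) such an action exhibits $X$ as birational to a \emph{product} $Y\times\PP^1$. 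Being birational to $Y\times\PP^1$ does imply $\kappa(X)=-\infty$ in every characteristic (Lemma~\ref{lemma:ruled-kappa}, due to Koll\'ar). So in your key geometric step you must replace ``uniruled'' by ``ruled, i.e. birational to a product $Y\times\PP^1$''; with that repair (and the minor remark that over the algebraic closure one applies the structure theorem to the reduced group $\AAut^0_{X,\red}$, as in Corollary~\ref{corollary:Aut-0-abelian}, since $\AAut^0_X$ may be non-reduced), your proof coincides with the paper's.
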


Our next result is an analog of \cite[Proposition~1.6]{ProkhorovShramov-2019}.
It can be regarded as (a little bit more precise version of) a particular
subcase of Theorem~\ref{theorem:Popov}(iii).

\begin{proposition}
\label{proposition:Jordan-kappa-0}
There exists a constant $J$ such that
for every field $\Bbbk$ and
every geometrically irreducible algebraic
surface $S$  of Kodaira dimension~$0$ over~$\Bbbk$,
every finite subgroup of~\mbox{$\Bir(S)$}
contains a normal abelian subgroup of
index at most~$J$.
\end{proposition}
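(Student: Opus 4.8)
The plan is to pass to the minimal model, where $\Bir$ becomes $\Aut$, and then to split a finite subgroup into the part acting faithfully on the Néron–Severi lattice (of uniformly bounded rank) and the part acting trivially on it (of uniformly bounded order). First I would reduce to an algebraically closed field and a minimal surface. Since a surface of Kodaira dimension $0$ is not uniruled, it has a unique minimal model $\overline{S}$; uniqueness makes $\overline{S}$ invariant under $\Gal(\overline{\Bbbk}/\Bbbk)$, hence defined over $\Bbbk$, and a birational map between minimal surfaces of non-negative Kodaira dimension is biregular, so $\Bir(S)\cong\Aut(\overline{S})$. A finite subgroup of $\Aut_{\Bbbk}(\overline{S})$ is a finite subgroup of $\Aut_{\overline{\Bbbk}}\bigl(\overline{S}\times_{\Bbbk}\overline{\Bbbk}\bigr)$, and the base change is again geometrically irreducible, minimal, of Kodaira dimension $0$; thus I may assume $\Bbbk=\overline{\Bbbk}$.

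Next I set up the two numerical inputs. Write $\Aut^{0}=\Aut^{0}(\overline{S})$ for the identity component and $K=\ker\bigl(\Aut(\overline{S})\to\GL(\Num(\overline{S}))\bigr)$ for the kernel of the action on the torsion-free numerical lattice $\Num(\overline{S})$, which preserves the intersection form. By the Bombieri–Mumford classification $\overline{S}$ is abelian, bielliptic, quasi-bielliptic, K3 or Enriques, and in every case I record two facts: (a) $\Aut^{0}$ is a connected commutative group, so all of its finite subgroups are abelian; and (b) $\rk\Num(\overline{S})\le b_{2}(\overline{S})\le 22$. Since a connected group acts trivially on the discrete lattice $\Num(\overline{S})$, we have $\Aut^{0}\subseteq K$.

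Granting that $|K/\Aut^{0}|\le C$ for a universal constant $C$, the proposition follows quickly. For a finite subgroup $F\subseteq\Aut(\overline{S})$, the subgroup $F\cap\Aut^{0}$ is abelian and, being the intersection of $F$ with a normal subgroup, normal in $F$. The quotient $F/(F\cap\Aut^{0})$ embeds into $\Aut(\overline{S})/\Aut^{0}$, whose image in $\GL(\Num(\overline{S}))\hookrightarrow\GL_{22}(\ZZ)$ (after padding a basis) has order at most Minkowski's universal bound $M(22)$ on finite subgroups of $\GL_{22}(\ZZ)$, while the kernel of this map is $K/\Aut^{0}$, of order at most $C$. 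Hence $[F:F\cap\Aut^{0}]\le C\cdot M(22)=:J$, a constant independent of $\overline{S}$ and of $\Bbbk$.

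The main obstacle is thus the uniform bound $|K/\Aut^{0}|\le C$. For abelian and (quasi-)bielliptic surfaces an element of $K$ fixes the class of an ample divisor, hence preserves a polarization, so it lies in the automorphism group of a polarized abelian variety (respectively of the Albanese elliptic curve together with the finite data defining $\overline{S}$); by Serre's rigidity for level structures this group injects into $\GL_{4}(\mathbb{F}_{\ell})$ for a fixed $\ell\in\{3,5\}$ with $\ell\neq p$, which gives a uniform bound. The genuinely delicate case is K3 and Enriques, where $\Aut^{0}$ is trivial and $K$ is the full group of automorphisms acting trivially on $\Num(\overline{S})$. Over $\mathbb{C}$ such an automorphism acts on the transcendental lattice through a cyclic group whose order $n$ satisfies $\varphi(n)\le 21$, and is therefore bounded; in characteristic $p$ I would run this argument on $H^{2}_{\mathrm{et}}(\overline{S},\ZZ_{\ell})$, and the real difficulty is to rule out, uniformly, automorphisms of large $p$-power order acting trivially on $\Num$, for which one needs the finiteness and boundedness results for automorphism groups of K3 and Enriques surfaces in positive characteristic. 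Finally, I note that the same constant can alternatively be extracted from the proof of Proposition~\ref{proposition:Hu-improved}, whose Jordan constant depends only on $b_{2}(\overline{S})$, $\dim\Aut^{0}(\overline{S})$ and $|K/\Aut^{0}|$, all of which are uniformly bounded in Kodaira dimension~$0$.
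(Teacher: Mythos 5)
Your reduction (to an algebraically closed field and a minimal model, where $\Bir(S)=\Aut(S)$ by Lemma~\ref{lemma:Bir-vs-Aut}, followed by the Bombieri--Mumford classification of Theorem~\ref{theorem:KE}(ii)) and your group-theoretic skeleton (intersect a finite subgroup with the commutative group $\Aut^0$, and control the quotient by its action on a lattice of rank at most $22$) match the paper's strategy. However, the entire content of the proposition is concentrated in what you yourself call ``the main obstacle'', namely the uniform bound $|K/\Aut^0|\le C$ on automorphisms acting trivially on $\Num(\overline{S})$, and your proposal does not establish it. For $K3$ and Enriques surfaces you explicitly leave it open, deferring to unnamed ``finiteness and boundedness results \ldots in positive characteristic''; this is precisely where the hard input sits, not a routine step (note also that automorphism groups of $K3$ surfaces are in general infinite, so what is needed is a bound on the kernel of a cohomological representation, not ``finiteness of automorphism groups''). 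For quasi-bielliptic surfaces (characteristic $2$ and $3$) your sketch via ``Serre's rigidity for level structures'' does not apply: the Albanese fibration has cuspidal rational fibers, the automorphism group scheme can be non-reduced, and no elementary polarization argument bounds the numerically trivial automorphisms there uniformly. Only the abelian-surface case of your sketch is essentially complete (it is the paper's Corollary~\ref{corollary:abelian-variety-Jordan}).

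The paper closes exactly these gaps with specific cited theorems. For $K3$ and Enriques surfaces (Lemma~\ref{lemma:K3}) it does not use $\Num(\overline{S})$ at all: it uses the representation on $H^2_{\acute{e}t}(S,\ZZ_\ell)$ modulo torsion, with $\ell\neq p$, whose kernel is trivial for $K3$ surfaces by a theorem of Keum (with Ogus's result in the supersingular case) and has order at most $4$ for Enriques surfaces by Dolgachev--Martin, while finite subgroups of the image are uniformly bounded by the $\ell$-adic Minkowski Lemma~\ref{lemma:Minkowski-l-adic}; this bounds the order of the whole finite group uniformly, sidestepping any need to control the kernel of the action on $\Num$. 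For hyperelliptic and quasi-hyperelliptic surfaces (Lemma~\ref{lemma:hyperelliptic-groups}) it invokes Martin's classification of automorphism group schemes of such surfaces to bound uniformly the index of the normal abelian subgroup $C(\Bbbk)$ of Albanese translations. Without inputs of this strength your argument cannot close. Your final remark is also unjustified: the Jordan constant produced by the proof of Proposition~\ref{proposition:Hu-improved} comes from Lemma~\ref{lemma:MZ}, hence from the order of the group $\Aut_{[L]}(X)/\Aut^0(X)$ of Corollary~\ref{corollary:Aut-X-L}, whose finiteness is proved by a Hilbert-scheme argument that yields no bound uniform over all surfaces of Kodaira dimension~$0$.
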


Using the terminology of \cite[Definition~1.6]{ProkhorovShramov-Bir},
one can reformulate Proposition~\ref{proposition:Jordan-kappa-0} by saying that
the set of automorphism groups of all geometrically irreducible algebraic
surfaces of Kodaira dimension~$0$ over all fields is uniformly Jordan.

Given a variety $X$ and a point $P\in X$, we denote by $\Aut(X;P)$ the stabilizer of $P$
in the group $\Aut(X)$.
The following result is an analog
of~\mbox{\cite[Proposition~1.3]{ProkhorovShramov-2019}}.

\begin{proposition}
\label{proposition:stabilizer-kappa-0}
There exists a constant $B$ such that for every field $\Bbbk$, every smooth geometrically irreducible projective surface
$S$ of Kodaira dimension~$0$ over $\Bbbk$, every
$\Bbbk$-point $P\in S$, and every finite subgroup~\mbox{$G\subset \Aut(S;P)$}
the order of the group~$G$ is at most~$B$.
\end{proposition}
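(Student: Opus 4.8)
The plan is to reduce to a minimal surface and then control a finite $G\subset\Aut(S;P)$ through its two natural linear actions—on the tangent space at the fixed point and on $\ell$-adic cohomology—using that for a minimal surface of Kodaira dimension $0$ all numerical invariants are universally bounded. First I would base change to an algebraic closure $\bar\Bbbk$, which preserves finite subgroups and the fixed point. Next, since the Kodaira dimension is $0$, the surface $S$ has a unique minimal model $S_0$, the contraction $\pi\colon S\to S_0$ is $\Aut(S)$-equivariant, and an automorphism of $S$ inducing the identity on $S_0$ is the identity on the dense open set $S\setminus\mathrm{Exc}(\pi)$, hence everywhere; thus $\Aut(S)\hookrightarrow\Aut(S_0)$. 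As $\pi(P)$ is fixed by the image, it suffices to bound finite subgroups of $\Aut(S_0;\pi(P))$. We may therefore assume $S$ is minimal, so that $b_1\le 4$ and $b_2\le 22$; fixing a prime $\ell\ne p$, the total dimension $N=\sum_i\dim_{\QQ_\ell}H^i_{\mathrm{et}}(S,\QQ_\ell)$ is at most $28$, independently of $S$, $\Bbbk$ and $p$.

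Let $K=\ker\big(G\to\GL(H^{*}_{\mathrm{et}}(S,\QQ_\ell))\big)$. The quotient $G/K$ is a finite subgroup of $\GL_N(\QQ_\ell)$, so by the classical theorem of Jordan it contains a normal abelian subgroup $A_0$ of index at most a constant $c(N)$. Every $g\in G/K$ has finite order, hence acts on $H^{*}_{\mathrm{et}}$ with eigenvalues that are roots of unity; as its characteristic polynomial then lies in $\ZZ[t]$ and has degree $N$, each primitive $d$-th root of unity occurring satisfies $\varphi(d)\le N$, so the order of $g$ is bounded by a constant $D(N)$. Being abelian, $A_0$ is diagonalisable over $\bar\QQ_\ell$, so it embeds into $(\bar\QQ_\ell^{\times})^{N}$ with every coordinate character of order at most $D(N)$; therefore $|A_0|\le D(N)^{N}$ and $|G/K|\le c(N)\,D(N)^{N}$, a universal bound. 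One could equally start from the Jordan property of $\Aut(S)$ supplied by Proposition~\ref{proposition:Hu-improved} and bound the resulting abelian subgroup by the same estimate.

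It remains to bound $|K|$, the automorphisms in $G$ acting trivially on $H^{*}_{\mathrm{et}}$. Here I would use the differential at $P$, $\rho\colon K\to\GL(T_PS)\cong\GL_2(\bar\Bbbk)$. If $g\in K$ has order prime to $p$ and trivial differential, then averaging the formal action of $\langle g\rangle$ at $P$ produces $g$-invariant formal coordinates in which $g$ is linear; triviality of the differential then forces $g$ to be the identity on the formal neighbourhood of $P$, hence on $S$. Thus $\ker\rho$ is a $p$-group. On the other hand $K$ contains the reduced identity component $\Aut^{0}(S)_{\mathrm{red}}$, which for Kodaira dimension $0$ is an abelian variety of dimension at most $2$—all of $S$ for an abelian surface, an elliptic curve for a bielliptic surface, and trivial for K3 and Enriques surfaces—acting by translations. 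Since such translations have no fixed points outside a finite stabiliser, $K\cap\Aut^{0}(S)_{\mathrm{red}}$ lies in the stabiliser of $P$ and is uniformly bounded, while the prime-to-$p$ part of $K/\Aut^{0}(S)_{\mathrm{red}}$ is bounded over the finitely many types of such surfaces.

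The hard part will be the genuinely positive-characteristic part of $K$, invisible to both representations above. Two things can go wrong in small $p$. First, a wild automorphism of $p$-power order may fix $P$ with trivial differential and simultaneously act trivially on $\ell$-adic cohomology, so it lies in $\ker\rho\cap K$ and escapes both linearisations; bounding such a $p$-group requires estimating, via the structure of the (quasi-)elliptic fibration or the crystalline cohomology of $S$, the order to which its formal action at $P$ can remain nontrivial. Second, for quasi-bielliptic surfaces in characteristics $2$ and $3$ the component $\Aut^{0}(S)_{\mathrm{red}}$ can be additive and can genuinely fix a point—for instance the cusp of a fibre of the quasi-elliptic fibration—so the fixed-point condition no longer forces $K\cap\Aut^{0}(S)_{\mathrm{red}}$ to be small, and this case must be handled by an explicit analysis of the fibration. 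Once these $p$-groups are bounded by a universal constant, combining with the bound on $|G/K|$ yields $|G|=|K|\cdot|G/K|\le B$.
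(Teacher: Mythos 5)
Your reduction to a minimal surface over $\bar{\Bbbk}$ and your bound on $|G/K|$, where $K$ is the kernel of the action on $H^{*}_{\acute{e}t}(S,\QQ_\ell)$, are sound and agree in substance with the paper (which bounds such images more directly by noting that a finite subgroup of $\GL_N(\QQ_\ell)$ stabilizes a lattice and then applying the Minkowski-type Lemma~\ref{lemma:Minkowski-l-adic}). But the proposal has a genuine gap, which you yourself flag: you never bound the wild part of $K$, i.e.\ the subgroup of automorphisms of $p$-power order that fix $P$, have trivial differential at $P$ (and so escape Theorem~\ref{theorem:fixed-point}), and act trivially on $\ell$-adic cohomology. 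Saying that this ``requires estimating, via the structure of the (quasi-)elliptic fibration or the crystalline cohomology of $S$, the order to which its formal action at $P$ can remain nontrivial'' is not an argument; it is precisely the hard step, and without it there is no bound on $|K|$ and hence no constant $B$. The paper closes exactly this gap by running the Kodaira--Enriques classification (Theorem~\ref{theorem:KE}(ii)) first and then invoking case-specific results: for $K3$ surfaces the cohomology representation is faithful by Keum's theorem \cite{Keum} (cf.\ \cite{Ogus}); for Enriques surfaces its kernel has order at most $4$ by Dolgachev--Martin \cite{DM}; for abelian surfaces the whole stabilizer $\Aut(S;P)$ injects into $\GL_4(\ZZ_\ell)$ via the Tate module (Theorem~\ref{theorem:abelian-variety}, Corollary~\ref{corollary:torus-stabilizer}); and for hyperelliptic and quasi-hyperelliptic surfaces Martin's classification \cite{Martin} shows that the translation subgroup $C(\Bbbk)$ of the Albanese elliptic curve has uniformly bounded index in $\Aut(S)$, while equivariance of the Albanese map forces $G\cap C(\Bbbk)$ to be trivial for any $G$ fixing a $\Bbbk$-point (Lemma~\ref{lemma:hyperelliptic-groups}(ii)). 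These inputs are serious theorems; none of them follows from the linearisation arguments in your proposal, and they are exactly what kills the wild $p$-groups.

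A secondary point: your worry that for quasi-bielliptic surfaces $\Aut^0(S)_{\mathrm{red}}$ ``can be additive and can genuinely fix a point'' is inconsistent with Corollary~\ref{corollary:Aut-0-abelian}: since $S$ is smooth projective with $\kappa(S)=0$, the reduced identity component is an abelian variety (the Albanese elliptic curve acting by translations), and by Corollary~\ref{corollary:abelian-variety-no-fixed-points} it has no fixed points at all. Only the non-reduced infinitesimal part of $\AAut^0_S$ can be of additive type, and it contributes no $\Bbbk$-points to a finite subgroup $G\subset\Aut(S;P)$. So that case is not an obstruction of the kind you describe---but it still requires Martin's classification to bound the index of the translations uniformly, which your proposal does not supply (note that the bound of Theorem~\ref{theorem:high-dimension-stabilizer} depends on $X$ and therefore cannot be used for the uniform statement here).
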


We point out that the assertion of Proposition~\ref{proposition:stabilizer-kappa-0}
fails in general for stabilizers of closed points on varieties over algebraically non-closed fields
which are not $\Bbbk$-points,
see Example~\ref{example:large-stabilizer}.
However, it holds if one replaces the stabilizers by the inertia groups,
see Corollary~\ref{corollary:inertia}. Also, there is the following (partial) generalization of Proposition~\ref{proposition:stabilizer-kappa-0}
which holds for varieties of arbitrary dimension over arbitrary fields.
It is an analog of~\mbox{\cite[Theorem~1.5]{ProkhorovShramov-2019}}.

\begin{theorem}\label{theorem:high-dimension-stabilizer}
Let $\Bbbk$ be a field, and let $X$
be a smooth geometrically irreducible projective variety
of non-negative Kodaira dimension over~$\Bbbk$.
Then there exists a constant~\mbox{$B=B(X)$}
such that for every closed point $P\in X$ and
every finite subgroup~\mbox{$G\subset\Aut(X;P)$}
the order of the group $G$ is at most~$B$.
\end{theorem}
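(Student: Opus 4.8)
The plan is to reduce, via the Jordan property already available for $\Aut(X)$, to the case of a finite \emph{abelian} group fixing $P$, and then to bound such a group by combining faithfulness of the tangent representation on its prime-to-$p$ part with the global constraints coming from $\kappa(X)\ge 0$. By Proposition~\ref{proposition:Hu-improved} there is a constant $J=J(X)$ such that every finite subgroup $G\subset\Aut(X)$ contains a normal abelian subgroup $A$ with $[G:A]\le J$. Since $A\subset G\subset\Aut(X;P)$, it suffices to bound $|A|\le B'(X)$ for finite abelian subgroups $A$ fixing $P$; then $|G|\le J\cdot B'(X)=:B$. Writing $p=\operatorname{char}\Bbbk$, I decompose $A=A_{p'}\times A_p$ into its prime-to-$p$ and $p$-parts (with $A_p$ trivial when $p=0$) and bound the two factors separately. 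Here one may work over the residue field $\kappa(P)$, passing to $\overline{\kappa(P)}$ when diagonalizing.

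First I would treat $A_{p'}$. An automorphism of order prime to $p$ fixing $P$ and acting trivially on $\mathfrak m_P/\mathfrak m_P^2$ can be linearized over the completed local ring, as its order is invertible; it therefore acts trivially on $\widehat{\mathcal O}_{X,P}$ and hence on all of $X$. Thus $A_{p'}$ embeds into $\GL(T_PX)=\GL_n(\kappa(P))$, and being abelian of order prime to $p$ it is diagonalizable over $\overline{\kappa(P)}$ with weight characters $\chi_1,\dots,\chi_n$ of trivial common kernel. To bound the orders of these characters I would use the $A$-equivariant jet evaluations $H^0(X,\omega_X^{\otimes m})\to (\omega_X^{\otimes m})_P\otimes\operatorname{Sym}^{\le k}\Omega^1_{X,P}$ for an $m$ with $H^0(X,\omega_X^{\otimes m})\neq 0$: these realize the weights $\theta\cdot\prod_i\chi_i^{-a_i}$, where $\theta$ is the weight of $A$ on the fibre $(\omega_X^{\otimes m})_P$, as characters occurring on the fixed finite-dimensional space $H^0(X,\omega_X^{\otimes m})$, thereby bounding the orders of the weights in the directions separated by the pluricanonical map.

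The directions \emph{not} separated by the pluricanonical map, together with the whole of $A_p$, I would control through the structure of the automorphism group scheme. Since $\kappa(X)\ge 0$, the variety $X$ carries no regular vector field generating a $\mathbb G_a$- or $\mathbb G_m$-action, so the reduced identity component $T:=\Aut^0(X_{\overline\Bbbk})_{\mathrm{red}}$ is an abelian variety. A connected proper group fixing a point acts trivially on the tangent space there, hence trivially on $X$, so $T$ acts with finite stabilizers; by semicontinuity their orders are bounded by a constant $c_1(X)$ independent of $P$, giving $|A\cap T|\le c_1(X)$. It then remains to bound the image of $A$ in the component group $\Aut(X_{\overline\Bbbk})/T$, which acts faithfully on the Néron–Severi lattice together with the graded pluricanonical ring; combining the rigidity of this discrete action with Theorem~\ref{theorem:BrauerFeit} should bound the finite subgroups of it that fix $P$.

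The main obstacle is the positive-characteristic behaviour. In characteristic zero the prime-to-$p$ analysis already makes the tangent representation faithful, and boundedness of abelian-variety stabilizers finishes the argument. When $p>0$ the delicate points are that $A_p$ may act trivially, or only unipotently, on $T_PX$, so that it is invisible to the tangent representation and must be detected on higher jets, where Theorem~\ref{theorem:BrauerFeit} yields only a bound of the shape $|G_p|^{e}$ rather than an absolute one; and that every constant produced must depend on $X$ alone, not on $\Bbbk$ or on $P$. I expect the decisive step to be bounding $A_p$ after restriction to the fibre of the Iitaka fibration through $P$, a variety of Kodaira dimension $0$, where uniform bounds of the type established in Proposition~\ref{proposition:stabilizer-kappa-0} and Proposition~\ref{proposition:Jordan-kappa-0} are the natural input; making such bounds work in arbitrary dimension and in arbitrary characteristic is the crux of the proof.
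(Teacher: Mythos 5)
Your middle paragraph is, in essence, the paper's entire proof, and it applies directly to the whole finite group $G$ rather than to the pieces you carve out of it: since $\kappa(X)\ge 0$, the reduced identity component of $\AAut_X$ is an abelian variety (Corollary~\ref{corollary:Aut-0-abelian}); a positive-dimensional abelian subvariety of $\AAut_X$ cannot fix a closed point, by the rigidity theorem (Corollary~\ref{corollary:abelian-variety-no-fixed-points}), so every stabilizer $\AAut^0_{X;Q}$ is a finite group scheme; the incidence scheme $Z=\{(\sigma,Q)\mid \sigma(Q)=Q\}\subset\AAut^0_{X,\red}\times X$ with its projection to $X$ then bounds $|\Aut^0(X;Q)|$ by a constant $C(X)$ independent of $Q$ (Lemma~\ref{lemma:number-of-points}; this is your ``semicontinuity''); and the image of $G$ in $\Aut(X)/\Aut^0(X)$ is bounded by Lemma~\ref{lemma:MZ}. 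That is all: $|G|\le C(X)\cdot B(X)$. The Jordan reduction via Proposition~\ref{proposition:Hu-improved}, the splitting $A=A_{p'}\times A_p$, the tangent-weight analysis, and the Iitaka-fibration step are all unnecessary, and it is precisely in those extra steps that the proposal breaks down.

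Concretely: (i) the jet argument for $A_{p'}$ bounds nothing, because a character occurring in a finite-dimensional representation of an abelian group can have arbitrary order (a cyclic group of any order acts faithfully on a line), so exhibiting the weights $\theta\cdot\prod_i\chi_i^{-a_i}$ inside $H^0(X,\omega_X^{\otimes m})$ puts no bound on the orders of the $\chi_i$; moreover, at a closed point that is not a $\Bbbk$-point the stabilizer can act nontrivially on $\kappa(P)$ (see Example~\ref{example:large-stabilizer}, where a cyclic group of order $m$ prime to $p$ does exactly this), so the tangent action is only semilinear over $\kappa(P)$, there are no weight characters, and Theorem~\ref{theorem:fixed-point} is not available in the form you use it, as it is proved for $\Bbbk$-points. (ii) The claimed faithfulness of the component group on $\NS(X_{\bar\Bbbk})$ together with the pluricanonical ring is false: for $X$ an abelian variety, the inversion $[-1]$ is a nontrivial element of $\Aut(X)/\Aut^0(X)$ acting trivially on $\NS(X)$ and on the pluricanonical ring (here $\omega_X\cong\mathcal{O}_X$). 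The correct statement is Lemma~\ref{lemma:MZ}, whose proof bounds the kernel of the $\NS$-representation not by faithfulness but by finiteness of the ample-class stabilizer $\Aut_{[L]}(X)/\Aut^0(X)$ (Lemma~\ref{lemma:Aut-X-L}, Corollary~\ref{corollary:Aut-X-L}); no appeal to Theorem~\ref{theorem:BrauerFeit} or to the fixed point $P$ is needed there. (iii) The treatment of $A_p$ is not a proof: you defer it to restriction to an Iitaka fibre and to Propositions~\ref{proposition:Jordan-kappa-0} and~\ref{proposition:stabilizer-kappa-0}, which are results about surfaces and about $\Bbbk$-points, and you yourself label this step the unresolved crux. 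So as written the proposal does not establish the theorem, even though all the ingredients needed for the short argument above are already present in your text.
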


Since some of the birational automorphism groups are not Jordan, one can weaken
it by considering nilpotent subgroups instead of abelian ones in
Definition~\ref{definition:Jordan}.
Recall that a group $G$ is said to be \emph{nilpotent of class
at most~$c$} if its upper central series has length at most $c$,
see \cite[\S1D]{Isaacs} for details; in particular, nilpotent groups of class
at most $1$ are exactly abelian groups, and the only nilpotent group
of class at most~$0$ is the trivial group.
This leads to the notion of nilpotently Jordan groups.

\begin{definition}[{see \cite[Definition~1]{Guld}}]
\label{definition:nilp-Jordan}
A group~$\Gamma$ is \emph{nilpotently Jordan of class at most $c$}
if there exists a constant~\mbox{$J(\Gamma)$},
depending only on~$\Gamma$,
such that any finite subgroup of~$\Gamma$
contains a normal subgroup $N$
of index at most~\mbox{$J(\Gamma)$}, where~$N$
is nilpotent of class at most~$c$.
\end{definition}

We introduce the analogs of this definition
suitable for automorphism groups of varieties over fields of positive
characteristic.

\begin{definition}
\label{D:g-nilp-Jordan-1}
Let $p$ be a prime number.
A group $\Gamma$ is \emph{generalized nilpotently $p$-Jordan of
class at most
$c$},
if there is a constant $J(\Gamma)$, depending only on $\Gamma$,
such that every
finite subgroup of $\Gamma$
whose order is not divisible by~$p$ contains a normal subgroup $N$
of index at most $J(\Gamma)$, where~$N$ is nilpotent of class at most $c$.
\end{definition}

\begin{definition}
\label{D:g-nilp-Jordan-2}
Let $p$ be a prime number. A group $\Gamma$ is \emph{nilpotently
$p$-Jordan of class at most $c$}, if there exist constants $J(\Gamma)$ and $e(\Gamma)$, depending only on $\Gamma$, such that every finite subgroup
$G$ of $\Gamma$ contains a normal subgroup $N$ of
order coprime to $p$ and index at most
$J(\Gamma)\cdot |G_{p}|^{e(\Gamma)}$, where
$N$ is nilpotent of class at most $c$,
and $G_{p}$
denotes a $p$-Sylow subgroup of $G$.
\end{definition}

Similarly to the situation with the usual Jordan property,
every nilpotently Jordan group of class at most $c$ is nilpotently
$p$-Jordan of class at most $c$
(see Corollary~\ref{corollary:Jordan-vs-p-Jordan}),
and every nilpotently $p$-Jordan group
of class at most $c$
is generalized nilpotently $p$-Jordan of class at most $c$, while
the converse statements do not hold.
Note also that a group is generalized nilpotently $p$-Jordan of class at most
$1$ (respectively, nilpotently
$p$-Jordan group of class at most $1$) if and only if
it is generalized $p$-Jordan (respectively, $p$-Jordan).

According to \cite[Theorem~2]{Guld},
the birational automorphism group of any (geometrically irreducible)
variety over
a field of characteristic zero is nilpotently Jordan of class at most~$2$.
We prove an analog of this assertion for surfaces over fields of
positive characteristic.

\begin{theorem}\label{theorem:nilpotent}
Let $\Bbbk$ be a field of characteristic $p>0$.
Let $S$ be a geometrically irreducible algebraic
surface over $\Bbbk$.
Then the group $\Bir(S)$ is nilpotently
$p$-Jordan of class at most~$2$.
\end{theorem}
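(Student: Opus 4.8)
The plan is to argue by the dichotomy according to the Kodaira dimension of $S$, using the $G$-equivariant minimal model program for surfaces, which is available over an arbitrary field. First I would dispose of the case of non-negative Kodaira dimension. Here $\Bir(S)$ is isomorphic to the automorphism group of the unique smooth minimal model $S_{\min}$, since birational self-maps of a surface of non-negative Kodaira dimension extend to biregular automorphisms of its (unique) minimal model, and $S_{\min}$ is a smooth geometrically irreducible projective surface defined over $\Bbbk$. Thus Proposition~\ref{proposition:Hu-improved} shows that $\Bir(S)$ is Jordan, hence \emph{a fortiori} nilpotently $p$-Jordan of class at most $2$ (in fact of class at most $1$). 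So from now on I may assume that $S$ has negative Kodaira dimension.

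Next I would pass from $\Bir(S)$ to biregular actions. Given a finite subgroup $G\subset\Bir(S)$, a regularization argument provides a smooth projective surface on which $G$ acts faithfully by automorphisms; running the $G$-equivariant minimal model program then yields a $G$-Mori fibre space, which in dimension two is either a $G$-del Pezzo surface $X$ with $\rho^G(X)=1$, or a $G$-conic bundle $f\colon X\to C$ over a smooth projective curve $C$. If $X$ is a del Pezzo surface, then $\Aut(X)$ embeds into a linear algebraic group acting on an anticanonical model of bounded dimension (the degree of $X$ lying between $1$ and $9$), so Theorem~\ref{theorem:BrauerFeit} shows that $G$ contains a normal abelian subgroup of order coprime to $p$ and index at most $J\cdot|G_p|^{3}$. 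If moreover $S$ is geometrically rational (in particular whenever the base curve $C$ is a form of $\PP^1$), then $\Bir(S)$ is $p$-Jordan: for $S=\PP^2$ this is exactly Theorem~\ref{theorem:Serre}, and the general geometrically rational case follows from the same equivariant-MMP reduction together with the linear bounds of Theorem~\ref{theorem:BrauerFeit}. In all of these cases class $1$ already suffices.

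The substance of the theorem lies in the conic bundle $f\colon X\to C$ with $g(C)\ge 1$. The fibration $f$ is $G$-equivariant, giving an exact sequence
$$1\longrightarrow G_f\longrightarrow G\stackrel{\phi}{\longrightarrow} G_C\longrightarrow 1,$$
where $G_C\subset\Aut(C)$ is the image of $G$ on the base and $G_f$ acts fibrewise. The generic fibre of $f$ is a conic over the function field $K=\Bbbk(C)$, that is, a $K$-form of $\PP^1$, so $G_f$ embeds into a $K$-form of $\PGL_2$; by Theorem~\ref{theorem:BrauerFeit} this group is $p$-Jordan with uniform constants, and its finite subgroups of order coprime to $p$ are cyclic, dihedral, or one of $A_4,S_4,A_5$. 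For the base, if $g(C)\ge 2$ then $\Aut(C)$ is a fixed finite group and $G_C$ has bounded order, whereas if $g(C)=1$ then $\Aut(C)$ is an extension of a finite group of bounded order by the abelian group of translations $C(\Bbbk)$; in either case, after passing to a normal subgroup of bounded index, I may assume that $G_C$ is a finite abelian group of translations.

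The main obstacle, and the source of the class $2$ bound, is to control this extension of the abelian group $G_C$ by the controlled part of $G_f$ over an elliptic base. The crucial point is that the lifts to $X$ of a finite group of translations of $C$ form a central extension of theta-group (Heisenberg) type: the commutator of two such lifts is a fibrewise automorphism lying in the centre, so the commutator pairing $G_C\times G_C\to G_f$ is alternating and central. Consequently, after discarding the $p$-part of $G$ and a normal subgroup of bounded index coming from the del Pezzo/genus bounds and from the $p$-Jordan constants of the $K$-form of $\PGL_2$, the resulting normal subgroup $N\trianglelefteq G$ has order coprime to $p$, index at most $J\cdot|G_p|^{e}$, and satisfies $[[N,N],N]=1$, i.e.\ $N$ is nilpotent of class at most $2$. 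I expect the delicate points to be verifying that the conjugation action of the translations on the relevant abelian fibrewise subgroup is trivial (so that the extension is genuinely nilpotent of class $2$ rather than merely metabelian), and assembling the various constants into ones depending only on $\Bir(S)$. That class $2$ is unavoidable, and cannot be improved to class $1$, is witnessed by the Heisenberg subgroups of $\Bir(E\times\PP^1)$ underlying Theorem~\ref{theorem:Popov}(i).
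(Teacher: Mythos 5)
Your reduction is sound as far as it goes: the dichotomy by Kodaira dimension, the disposal of $\kappa(S)\ge 0$ via Proposition~\ref{proposition:Hu-improved}, of the geometrically rational case via Theorem~\ref{theorem:Serre}, and of ruled surfaces over a base of genus $\ge 2$ via the extension $1\to G_f\to G\to G_C\to 1$ all match the paper (which routes the first three cases through Theorem~\ref{theorem:Popov} and isolates the elliptic-base case in Lemma~\ref{lemma:nilpotent-ruled}). The gap is exactly at the point you yourself flag as ``delicate'': your claimed mechanism for class~$2$ nilpotency --- that ``the commutator of two lifts is a fibrewise automorphism lying in the centre, so the commutator pairing $G_C\times G_C\to G_f$ is alternating and central'' --- is asserted, not proved, and it is false for arbitrary lifts. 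A commutator of two lifts of translations does land in $G_f$, but $G_f$ is a (generally non-abelian) subgroup of $\PGL_2\big(\Bbbk(E)\big)$ and nothing forces such a commutator to centralize $G$, or even $G_f$; indeed, without centrality the ``pairing'' is not even well defined, since changing lifts changes the commutator by conjugation. The theta-group picture from Lemma~\ref{lemma:Zarhin} explains why class~$2$ is necessary, but it does not show that \emph{every} finite subgroup has, after bounded-index and prime-to-$p$ reduction, this central-extension shape; a priori the extension could be merely metabelian, which is weaker than nilpotent of class~$2$.

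The paper closes precisely this gap by an arithmetic argument on the generic fiber rather than by any Heisenberg structure: Lemma~\ref{lemma:Serre} shows that if $\alpha$ normalizes a cyclic subgroup $R\subset G_f$ of order $n\ge 3$ coprime to $p$, then $\alpha^2$ commutes with $R$ (the point being that the relevant root of unity lies in $\Bbbk$, hence is fixed by the automorphism of $\Bbbk(E)$ induced by $\alpha$). Taking $R$ to be the characteristic cyclic subgroup of $F=G_f\cap G$ furnished by Lemma~\ref{lemma:P1}, lifting two generators $\bar\alpha_1,\bar\alpha_2$ of the translation part of $G_C$, and applying Lemma~\ref{lemma:Serre} \emph{twice} (once to make $\alpha_i^2$ commute with $R$, once more to make the cyclic group $A_F$ --- cyclic by Remark~\ref{remark:PGL2-centralizer} --- commute with $\alpha_i^4$), one obtains a subgroup generated by $R$, $\alpha_1^4$, $\alpha_2^4$ that is genuinely a central extension of an abelian group, hence nilpotent of class $\le 2$; normality in $G$ is then recovered by intersecting conjugates, using that $R$ is characteristic in $F$. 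Note also that this argument needs $n\ge 3$; when the characteristic cyclic subgroup of $F$ has order $\le 2$ (e.g.\ $F\cong\mathfrak{A}_5$ or $F$ of type (5) in Theorem~\ref{theorem:ADE} with small prime-to-$p$ part), the paper falls back on the center estimates of Lemmas~\ref{lemma:Heisenberg-estimate-1} and~\ref{lemma:Heisenberg-estimate-2}, a case your proposal does not address at all. Without an argument of this kind replacing the unproven centrality claim, the proof is incomplete.
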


\medskip
The plan of our paper is as follows.
In Section~\ref{section:preliminaries}
we collect some elementary assertions about groups and lattices used in the rest of the paper.
In Section~\ref{section:Aut}
we recall the basic concepts and facts concerning
automorphism groups and schemes of projective varieties.
In Section~\ref{section:MZ} we discuss the group of connected components
of the automorphism group scheme of a projective variety following~\cite{MZ}.
In Section~\ref{section:surfaces}
we recall the basics of the Minimal Model Program in dimension~$2$,
including some theorems from its equivariant version.
In Section~\ref{section:abelian-varieties}
we collect auxiliary facts about automorphism groups of abelian varieties.
In Section~\ref{section:varieties-kappa-non-negative}
we make some observations on
automorphism groups of varieties of non-negative
Kodaira dimension and
prove Proposition~\ref{proposition:Hu-improved}.
In Section~\ref{section:curves}
we discuss automorphism groups of smooth projective curves.
In Section~\ref{section:Cremona}
we prove Theorem~\ref{theorem:Serre}.
In Section~\ref{section:Popov}
we prove Theorem~\ref{theorem:Popov}.
In Section~\ref{section:kappa-0}
we study automorphism groups of surfaces of zero Kodaira dimension and
prove Propositions~\ref{proposition:Jordan-kappa-0} and~\ref{proposition:stabilizer-kappa-0}.
In Section~\ref{section:high-dim}
we prove Theorem~\ref{theorem:high-dimension-stabilizer}.
In Section~\ref{section:nilpotent}
we prove Theorem~\ref{theorem:nilpotent}.
In Section~\ref{section:discussion}
we discuss some open questions concerning
(birational) automorphism groups of varieties over fields of positive
characteristic.

In some cases, the proofs of our main results go along the same lines as the proofs of the corresponding results
in characteristic~$0$. This applies to Propositions~\ref{proposition:Jordan-kappa-0} and~\ref{proposition:stabilizer-kappa-0},
Theorem~\ref{theorem:high-dimension-stabilizer} and, to a certain extent, to
Theorem~\ref{theorem:Popov}.
The proof of Theorem~\ref{theorem:Serre}
mostly follows the proof of~\mbox{\cite[Theorem~5.3]{Serre-2009}} but contains additional arguments
needed to treat finite subgroups of~\mbox{$\Bir(\PP^2)$}
whose orders are divisible by the characteristic of the field.
Proposition~\ref{proposition:Hu-improved} and its proof look new in the context of positive characteristic.
Theorem~\ref{theorem:nilpotent} (as well as the accompanying Definitions~\ref{D:g-nilp-Jordan-1} and~\ref{D:g-nilp-Jordan-2})
and its proof are also entirely new; we point out that the proof does not follow the ideas of~\cite{Guld}, but is rather inspired by the approach of J.-P.\,Serre to~\mbox{\cite[Theorem~5.3]{Serre-2009}}.
Many statements collected in the preliminary sections of our paper are well known to experts and are widely
used at least over fields of characteristic zero, but are not readily available in the literature in the positive characteristic setup.
Since one of the goals of our paper is to provide a survey of the methods of studying finite
groups of birational automorphisms in arbitrary characteristic, in
such cases we take the opportunity to spell out the details of the proofs.
This allows either to emphasize
that the proofs do not depend on the characteristic of the base field,
see e.g. Lemmas~\ref{lemma:regularization} and~\ref{lemma:MZ} and
Corollary~\ref{corollary:general-type};
or to be able to mention the (minor) differences in the proofs appearing in the case of positive characteristic, see e.g.
Theorem~\ref{theorem:fixed-point}.

\medskip
Throughout the paper, we use the following standard notation.
Given a field~$\Bbbk$, we denote by $\bar{\Bbbk}$ its algebraic closure.
If $\Bbbk\subset\KK$ is a field extension, and~$X$ is a scheme defined over~$\Bbbk$,
we denote by $X_{\KK}$ the extension of scalars of~$X$ to~$\KK$.
By a $\Bbbk$-point of a scheme $X$ over $\Bbbk$ we mean its closed point of degree~$1$;
the set of all $\Bbbk$-points of $X$ is denoted by~$X(\Bbbk)$.

\medskip
We are grateful to Yi Gu, Yuri Prokhorov, Dmitry Timashev,
Vadim Vologodsky, and Jinsong Xu for useful discussions.
Yifei Chen was partially supported by
NSFC grants (No.~11688101, 11771426).
The work of Constantin Shramov was performed at the Steklov
International Mathematical Center and supported by the Ministry of
Science and Higher Education of the Russian Federation (agreement no.~\mbox{075-15-2022-265}).
Constantin Shramov was also supported by
the HSE University Basic Research Program,
Russian Academic Excellence Project~\mbox{``5-100''},
and by the Foundation for the
Advancement of Theoretical Physics and Mathematics ``BASIS''.

\section{Preliminaries}
\label{section:preliminaries}

In this section we collect auxiliary facts about groups and lattices.

\medskip
\textbf{Group theory.}
Recall that a subgroup of a group $G$ is called \emph{characteristic}
if it is preserved by all automorphisms of~$G$.

\begin{example}\label{example:normal-Sylow-subgroup}
Let $G$ be a finite group that has a normal $p$-Sylow subgroup~$G_p$.
Then $G_p$ is characteristic in~$G$. Indeed, it consists of all
elements of $G$ whose order is a power of~$p$.
\end{example}

\begin{example}\label{example:characteristic-product}
Let $G\cong G'\rtimes G''$ be a finite group. Suppose that
the orders of~$G'$ and $G''$ are coprime. Then $G'$
is a characteristic subgroup of $G$. Indeed, it consists of all
elements of $G$ whose order divides~$|G'|$.
\end{example}

\begin{example}\label{example:characteristic-by-generators}
Let $H$ be an abelian group generated by its elements~\mbox{$h_1,\ldots,h_k$},
and let $r$ be a positive integer. Then the subgroup
$H'$ generated by~\mbox{$h_1^r,\ldots,h_k^r$} coincides with the subgroups that consists of
$r$-th powers of elements of $H$. Therefore, $H'$ is a characteristic subgroup
of index at most~$r^k$ in~$H$.
\end{example}

\begin{remark}\label{remark:normal-vs-characteristic}
Let $G$ be a group, let $H$ be its normal subgroup, and let $F$ be a normal
subgroup of $H$. Then $F$ is not necessarily normal in~$G$.
However, if $F$ is characteristic in $H$, then it is normal in~$G$.
\end{remark}

It appears that the Jordan property implies the
$p$-Jordan property for every~$p$. To see this, we make an auxiliary observation.

\begin{lemma}\label{lemma:Jordan-vs-p-Jordan}
Let $p$ be a prime number, let $J$ be a positive integer, and let $G$ be a finite group.
Let $\tilde{A}$ be a normal subgroup of index at most $J$ in $G$. Suppose that $\tilde{A}$ is
abelian (respectively, nilpotently Jordan of class at most $c$).
Then~$G$ contains a normal subgroup $A$
such that the minimal number of elements that generate $A$ does not exceed that of
$\tilde{A}$, the order of $A$ is coprime to $p$, the index of $A$ in $G$ is at most
$J\cdot |G_p|$, where $G_p$ is a $p$-Sylow subgroup of $G$,
and~$A$ is abelian (respectively, nilpotently Jordan of class at most $c$).
\end{lemma}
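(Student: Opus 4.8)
The plan is to produce $A$ by discarding the $p$-part of $\tilde A$. Since $\tilde A$ is a subgroup of the finite group $G$, it is itself finite, and being abelian (respectively, nilpotent of class at most $c$) it is the direct product of its Sylow subgroups. Thus I would write $\tilde A=\tilde A_p\times\tilde A_{p'}$, where $\tilde A_p$ is the unique $p$-Sylow subgroup of $\tilde A$ (consisting of all elements whose order is a power of $p$) and $\tilde A_{p'}$ is the complementary Hall subgroup consisting of all elements of $\tilde A$ whose order is coprime to $p$. By Example~\ref{example:normal-Sylow-subgroup} (respectively, by Example~\ref{example:characteristic-product}), both direct factors are characteristic in $\tilde A$. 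I then set $A:=\tilde A_{p'}$.

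Next I would check that $A$ has all the required features. As $A$ is characteristic in $\tilde A$, and $\tilde A$ is normal in $G$, Remark~\ref{remark:normal-vs-characteristic} gives that $A$ is normal in $G$. By construction the order of $A$ is coprime to $p$, and as a subgroup of $\tilde A$ it is abelian (respectively, nilpotent of class at most $c$), since both of these properties are inherited by subgroups. For the generator count I would use that $A\cong\tilde A/\tilde A_p$ is a quotient of $\tilde A$: the image of any generating set of $\tilde A$ generates $A$, so the minimal number of generators of $A$ does not exceed that of $\tilde A$.

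It remains to bound the index. I would compute
\[
[G:A]=[G:\tilde A]\cdot[\tilde A:A]=[G:\tilde A]\cdot|\tilde A_p|\le J\cdot|\tilde A_p|,
\]
using the hypothesis $[G:\tilde A]\le J$. Since $\tilde A_p$ is a $p$-subgroup of $G$, it is contained in some $p$-Sylow subgroup $G_p$ of $G$, whence $|\tilde A_p|\le|G_p|$. This yields $[G:A]\le J\cdot|G_p|$, as desired.

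The argument is essentially formal, so I do not expect a genuine obstacle. The only points requiring care are that the splitting $\tilde A=\tilde A_p\times\tilde A_{p'}$ into \emph{characteristic} factors genuinely uses nilpotence of $\tilde A$ (which holds in both the abelian and the class-$\le c$ cases), that normality of $A$ in the whole group $G$ is obtained through characteristicity rather than mere normality in $\tilde A$, and that the passage from $\tilde A$ to $A$ costs exactly the factor $|\tilde A_p|\le|G_p|$ in the index while preserving both the relevant structural property and the bound on the number of generators.
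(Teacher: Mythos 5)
Your proof is correct and follows essentially the same route as the paper: decompose the finite nilpotent group $\tilde{A}$ as the direct product of its $p$-Sylow subgroup and the complementary Hall subgroup $A$, note that $A$ is characteristic in $\tilde{A}$ (hence normal in $G$), that generators project, and that the index picks up exactly the factor $|\tilde{A}_p|\le |G_p|$. No gaps; this matches the paper's argument step for step.
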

\begin{proof}
Let $\tilde{A}_p$ be the $p$-Sylow subgroup of $\tilde{A}$.
Then
$$
\tilde{A}\cong \tilde{A}_p\times A,
$$
where the group $A$
is isomorphic to the product of the $q$-Sylow subgroups
of~$\tilde{A}$ for all prime numbers $q$ different from~$p$,
see \cite[Theorem~1.26]{Isaacs}.
In particular, the group~$A$ is abelian (respectively,
nilpotent of class at most~$c$), and its order is coprime to~$p$.
Also, if $\tilde{A}$ can be generated by the elements $\tilde{a}_1,\dots, \tilde{a}_r$,
then the projections of these elements to $A$ generate $A$.
Note that~$A$ is a characteristic subgroup of~$\tilde{A}$
by Example~\ref{example:characteristic-product}.
Thus, $A$ is normal in~$G$. Finally, one can see that the index
of~$A$ in $G$ is at most
$$
J\cdot |\tilde{A}_p|\le J\cdot |G_p|.
$$
\end{proof}

\begin{corollary}\label{corollary:Jordan-vs-p-Jordan}
Let $p$ be a prime number, and let $\Gamma$ be a group. Suppose that~$\Gamma$ is
Jordan (respectively, nilpotently Jordan of class at most $c$).
Then $\Gamma$ is $p$-Jordan (respectively, nilpotently $p$-Jordan of class at most $c$).
\end{corollary}
\begin{proof}
Let $G$ be a finite subgroup of $\Gamma$. Then $G$ contains a normal
subgroup~$\tilde{A}$ such that its index is
bounded by some constant $J=J(\Gamma)$
independent of~$G$, and $\tilde{A}$ is abelian (respectively,
nilpotent of class at most~$c$). Therefore, the assertion follows from
Lemma~\ref{lemma:Jordan-vs-p-Jordan}.
\end{proof}

The following fact is standard.

\begin{lemma}\label{lemma:conjugation}
Let $G$ be a finite group, and let $G'$ be its normal subgroup.
Let~\mbox{$A'\subset G'$} be a subgroup that is normal in $G'$.
Denote by $B$ the index of~$G'$ in~$G$, and by~$J$ the index of
$A'$ in $G'$. Then $A'$ contains a subgroup $A$
such that~$A$ is normal in~$G$, and the index of $A$ in $G$ is at most~$BJ^B$.
\end{lemma}
\begin{proof}
The group~$G$ acts on $G'$ by conjugation,
and the conjugation by the elements of~$G'$ preserve~$A'$.
Let $A'_1=A',\ldots,A'_r$ be the orbit of $A'$ under this action.
Then~\mbox{$r\le |G/G'|\le B$}, so that the index of the intersection
$$
A=\bigcap\limits_{i=1}^r A'_i
$$
in $G'$ is at most $J^r\le J^B$. Thus, $A$ is a normal
subgroup of index at most~\mbox{$BJ^B$} in $G$.
\end{proof}

Lemma~\ref{lemma:conjugation} can be applied to find normal
subgroups of a given group with the properties that are inherited
by subgroups, like the properties of being abelian or nilpotent.
Moreover, if under the assumptions of Lemma~\ref{lemma:conjugation}
the group $A'$ is abelian, then one can find a normal abelian
subgroup~$A$ in $G$ such that the index of $A$ in $G$ is at most~$BJ^2$,
see~\mbox{\cite[Theorem~1.41]{Isaacs}}. However, as is pointed
out in~\cite[Remark~3.1]{Hu}, in the latter
case we cannot guarantee that $A$ is contained in $A'$, and thus
have no control on divisibility properties of the order of $A$
(which is essential for the notions of a $p$-Jordan group
or a nilpotently $p$-Jordan group).

One says that a group $\Gamma$ has  \emph{bounded finite subgroups}
if there exists a constant~\mbox{$B=B(\Gamma)$} such that every finite subgroup of $\Gamma$ has order at most $B$.
The next lemma allows one to check Jordan-type properties for
certain extensions of groups.

\begin{lemma}[{cf. \cite[Lemma~2.11]{Popov}}]
\label{lemma:group-theory}
Let $p$ be a prime number, and let
$$
1\to \Gamma'\to\Gamma\to\Gamma''
$$
be an exact sequence of groups.
Suppose that $\Gamma''$ has bounded finite subgroups.
Suppose also that the group $\Gamma'$ is  Jordan
(respectively, $p$-Jordan, generalized $p$-Jordan,
nilpotently Jordan of class at most $c$,
nilpotently $p$-Jordan of class at most $c$,
generalized nilpotently $p$-Jordan of class at most $c$).
Then the group~$\Gamma$ is  Jordan (respectively, $p$-Jordan,
generalized $p$-Jordan,
nilpotently Jordan of class at most $c$,
nilpotently $p$-Jordan of class at most $c$,
generalized nilpotently $p$-Jordan of class at most $c$).
\end{lemma}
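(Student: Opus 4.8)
The plan is to fix a finite subgroup $G\subset\Gamma$ and extract a normal subgroup of the required type by intersecting $G$ with $\Gamma'$, handling the resulting pieces separately. Write $\pi\colon\Gamma\to\Gamma''$ for the projection (with kernel $\Gamma'$), and set $G'=G\cap\Gamma'=\ker(\pi|_G)$. Then $G'$ is normal in $G$, the quotient $G/G'$ embeds into $\Gamma''$, and since $\Gamma''$ has bounded finite subgroups we get
$$
[G:G']\le B,
$$
where $B=B(\Gamma'')$ is the bound from the hypothesis, independent of $G$.

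Next I would apply the relevant Jordan-type hypothesis on $\Gamma'$ to the finite subgroup $G'\subset\Gamma'$. In the plain Jordan (respectively nilpotently Jordan of class at most $c$) case this produces a normal subgroup $A'\subset G'$, abelian (respectively nilpotent of class at most $c$), with $[G':A']\le J'$ for a constant $J'=J(\Gamma')$ independent of $G'$. I then invoke Lemma~\ref{lemma:conjugation} with the inclusions $A'\subset G'\triangleleft G$: it yields a subgroup $A\subset A'$ that is normal in $G$, inherits the abelian (respectively nilpotent of class at most $c$) property since it is a subgroup of $A'$, and satisfies $[G:A]\le B\,J'^{\,B}$. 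As this bound depends only on $B$ and $J'$, hence only on $\Gamma$, the group $\Gamma$ is Jordan (respectively nilpotently Jordan of class at most $c$). For the generalized $p$-Jordan and generalized nilpotently $p$-Jordan cases the argument is identical, because the assumption that $|G|$ is coprime to $p$ passes to the subgroup $G'$, so the hypothesis on $\Gamma'$ applies verbatim and no $p$-Sylow bookkeeping is needed.

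The only genuinely new feature arises in the $p$-Jordan and nilpotently $p$-Jordan cases, where the subgroup furnished by the hypothesis on $\Gamma'$ must have order coprime to $p$ and index controlled by a power of $|G'_p|$. Here the hypothesis gives a normal subgroup $A'\subset G'$ of order coprime to $p$, nilpotent of class at most $c$ (abelian when $c=1$), with
$$
[G':A']\le J(\Gamma')\cdot|G'_p|^{e(\Gamma')}.
$$
Applying Lemma~\ref{lemma:conjugation} as above produces $A\subset A'$ normal in $G$; since $A\subset A'$, its order is automatically coprime to $p$ and it remains nilpotent of class at most $c$, and the index bound becomes $[G:A]\le B\big(J(\Gamma')\cdot|G'_p|^{e(\Gamma')}\big)^{B}$. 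The main point to check carefully is that this can be rewritten in the form $J(\Gamma)\cdot|G_p|^{e(\Gamma)}$ demanded by Definition~\ref{D:g-Jordan-2}. For this I would use that $G'_p$ embeds in $G_p$, so $|G'_p|\le|G_p|$; then one may take $e(\Gamma)=B\cdot e(\Gamma')$ and $J(\Gamma)=B\cdot J(\Gamma')^{B}$, both depending only on $\Gamma$. The expected obstacle is purely bookkeeping: ensuring that intersecting conjugates in Lemma~\ref{lemma:conjugation} does not reintroduce $p$-torsion (it cannot, since $A\subset A'$ has order coprime to $p$) and that the exponentiation of the index bound by $B$ still yields a bound of the prescribed shape in $|G_p|$.
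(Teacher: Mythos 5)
Your proposal is correct and follows essentially the same route as the paper's proof: intersect $G$ with $\Gamma'$ to get $G'$ of index at most $B$, apply the hypothesis on $\Gamma'$ to $G'$, and then use Lemma~\ref{lemma:conjugation} to pass to a subgroup $A\subset A'$ normal in $G$, noting that containment in $A'$ preserves coprimality to $p$ and the abelian/nilpotent property; your index bound $BJ^B\cdot|G_p|^{Be}$ matches the paper's exactly.
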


\begin{proof}
By assumption, we know that there exists a constant $B$ such that every finite
subgroup of $\Gamma''$ has order at most $B$.
Let $G$ be a finite subgroup of $\Gamma$, and let $G'=G\cap \Gamma'$.
Then the index of $G'$ in $G$ is at most $B$.

Suppose that $\Gamma'$ is Jordan, or that
$\Gamma'$ is generalized $p$-Jordan
and the order of~$G$ is coprime to $p$. Then $G'$ contains a normal abelian
subgroup
of index at most $J$ for some constant $J=J(\Gamma')$ that does not
depend on $G'$. Therefore, $G$ contains
a normal abelian subgroup of index at most $BJ^B$ by Lemma~\ref{lemma:conjugation}.

Similarly, suppose that $\Gamma'$ is nilpotently Jordan of class at most $c$,
or that~$\Gamma'$ is generalized nilpotently $p$-Jordan of class at most $c$, and
the order of~$G$ is coprime to $p$. Then $G'$ contains a normal
nilpotent subgroup of class at most $c$ that has
index at most $J$ for some constant $J=J(\Gamma')$. Therefore, $G$ contains
a normal nilpotent subgroup of class at most $c$ that has
index at most $BJ^B$ by Lemma~\ref{lemma:conjugation}.

Now suppose that $\Gamma'$ is $p$-Jordan (respectively,
nilpotently $p$-Jordan of class at most $c$).
Let $G_p$ and $G'_p$ be $p$-Sylow subgroups of~$G$ and $G'$.
The group~$G'$ contains a normal
subgroup~$A'$ of order coprime to $p$ and
index at most~\mbox{$J\cdot |G'_p|^e$} for some constants~\mbox{$J=J(\Gamma')$}
and $e=e(\Gamma')$ such that $A'$ is abelian
(respectively, nilpotent of class at most $c$).
Applying Lemma~\ref{lemma:conjugation}, we find a normal
subgroup~$A$ in $G$ such that~$A$ is abelian (respectively,
nilpotent of class at most $c$), its order is coprime to $p$, and its index in $G$
is at most
$$
B\cdot \big(J\cdot |G'_p|^e\big)^B=BJ^B\cdot |G'_p|^{Be}\le
BJ^B\cdot |G_p|^{Be}.
$$
\end{proof}

The next results are partial analogs of \cite[Lemma~2.8]{ProkhorovShramov-Bir} for $p$-Jordan groups.

\begin{lemma}\label{lemma:Heisenberg-estimate-1}
Let $p$ be a prime number, and let
$$
1\to F\to H\to \bar{H}\to 1
$$
be an exact sequence of finite groups such that $\bar{H}$ is abelian and generated by~$r$ elements.
Suppose that $|F|\le B\cdot |F_p|^e$ for some positive constants~$B$ and~$e$, where $F_p$ is a $p$-Sylow subgroup of $F$, and
suppose that
$F$ is generated by~$s$ elements.
Then $H$ contains a characteristic abelian subgroup of order coprime to $p$ and
index at most~\mbox{$B^{r+s}\cdot |H_p|^{e(r+s)+1}$}, where $H_p$ is a $p$-Sylow subgroup of~$H$.
\end{lemma}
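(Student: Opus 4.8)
The plan is to reduce to an essentially central situation by centralizing $F$, and then to read off both index bounds from a commutator pairing. I set $C=C_H(F)$, the centralizer of $F$ in $H$. Since $F$ is generated by $s$ elements, every automorphism of $F$ is determined by the images of these generators, so $|\Aut(F)|\le|F|^s$; as the conjugation action $H\to\Aut(F)$ has kernel $C$, this gives $[H:C]\le|F|^s$. Because $F\le H$ we have $|F_p|\le|H_p|$, hence $|F|\le B\,|H_p|^e$, and therefore $[H:C]\le B^s|H_p|^{es}$.

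The next step is to analyze the internal structure of $C$. One has $C\cap F=Z(F)$ and $C/Z(F)\cong CF/F\le\bar H$, so $C/Z(F)$ is abelian and generated by at most $r$ elements; in particular $[C,C]\subseteq Z(F)$. Moreover $Z(F)$ is centralized by $C$, so $Z(F)\subseteq Z(C)$ and $C$ is nilpotent of class at most $2$. The commutator then defines a homomorphism $C\to\Hom\bigl(C/Z(F),Z(F)\bigr)$, $x\mapsto[x,\,\cdot\,]$, whose kernel is exactly $Z(C)$. Since $C/Z(F)$ is generated by $r$ elements, a homomorphism to $Z(F)$ is determined by the images of these generators, so $|\Hom(C/Z(F),Z(F))|\le|Z(F)|^r\le|F|^r$, whence $[C:Z(C)]\le|F|^r\le B^r|H_p|^{er}$. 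Combining the two estimates, $Z(C)$ is an abelian subgroup with $[H:Z(C)]\le|F|^{r+s}\le B^{r+s}|H_p|^{e(r+s)}$.

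Finally I would pass to the part of order coprime to $p$. Writing the abelian group $Z(C)$ as $Z(C)_p\times Z(C)_{p'}$ and setting $A=Z(C)_{p'}$, the subgroup $A$ is abelian of order coprime to $p$, it is characteristic in $Z(C)$ by Example~\ref{example:characteristic-product}, and $[Z(C):A]=|Z(C)_p|\le|H_p|$ since $Z(C)_p$ is a $p$-subgroup of $H$. Multiplying the indices gives $[H:A]\le B^{r+s}|H_p|^{e(r+s)+1}$, which is the required bound. Up to this point everything is routine bookkeeping once the centralizer and the pairing are in place.

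The step I expect to be the main obstacle is the word \emph{characteristic}. By construction $A$ is characteristic in $Z(C)$, and $Z(C)$ is characteristic in $C=C_H(F)$, so $A$ is characteristic in $C$; but $C$ is only \emph{normal} in $H$, and Remark~\ref{remark:normal-vs-characteristic} then guarantees only that $A$ is normal in $H$. The subgroup $A$ becomes characteristic in $H$ precisely when $C$ is characteristic in $H$, that is, when $F$ is characteristic in $H$. Thus the delicate point, rather than the index estimate above, is to arrange characteristicity of $F$: this is automatic whenever $F$ is the kernel of a canonical homomorphism out of $H$ (and hence preserved by $\Aut(H)$), which is the form in which the lemma is meant to be applied, and one reduces to this situation. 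Making this reduction precise — and checking that the tight index bound leaves no room to lose it — is where the care is needed.
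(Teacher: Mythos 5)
Your index bookkeeping is correct and even yields the same numerical bound as the paper, but there is a genuine gap exactly where you suspected: the subgroup you construct is not characteristic in $H$, and this cannot be repaired in the way you suggest. Your $A$ is the $p'$-part of $Z(C_H(F))$, which depends on $F$; an abstract automorphism of $H$ need not preserve $F$, hence need not preserve $C_H(F)$ or $A$. A concrete counterexample: let $H=Q_8$ be the quaternion group, $F=\langle i\rangle\cong\ZZ/4\ZZ$, $\bar{H}\cong\ZZ/2\ZZ$, and $p$ odd. Then $C_H(F)=F$ and your $A=\langle i\rangle$, which is normal but \emph{not} characteristic in $Q_8$ (an automorphism sends $i\mapsto j$). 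Your proposed reduction --- ``arrange that $F$ is characteristic in $H$'' --- is not available: the lemma quantifies over all exact sequences, the map $H\to\bar H$ is given data rather than something $\Aut(H)$ must respect, and in the $Q_8$ example no such reduction exists. Moreover, the characteristic property is not a cosmetic strengthening: in the two places the lemma is applied (the corollary following it and Lemma~\ref{lemma:nilpotent-ruled}), $H$ is a normal subgroup of a finite group $G$ with $[G:H]$ growing with $|G_p|$, and characteristicity of $A$ in $H$ is precisely what promotes $A$ to a normal subgroup of $G$. With only ``$A$ normal in $H$'' one would have to intersect the $[G:H]$ conjugates of $A$ (Lemma~\ref{lemma:conjugation}), and since $[G:H]$ is unbounded this destroys the bound of the form $J\cdot|G_p|^{e}$, i.e.\ the $p$-Jordan conclusion.

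The paper avoids the issue by working with the center of $H$ itself, which is automatically characteristic. Since $\bar{H}$ is abelian, the commutator subgroup $K$ of $H$ lies in $F$; for any $x\in H$ the centralizer $Z(x)$ has index at most $|K(x)|\le|K|\le|F|$, where $K(x)$ is the set of commutators $hxh^{-1}x^{-1}$; and $H$ is generated by $r+s$ elements (lifts of generators of $\bar H$ together with generators of $F$), so $[H:Z(H)]\le|F|^{r+s}\le B^{r+s}|F_p|^{e(r+s)}$. Passing to the $p'$-part of $Z(H)$ then gives a characteristic abelian subgroup with the stated index bound. Note that $Z(H)\subseteq Z(C_H(F))$, so the paper's subgroup is smaller than yours but canonical; if you want to salvage your argument, the honest fix is either to switch to $Z(H)$ as above, or to restate the lemma so that the conclusion is invariance under any group of automorphisms of $H$ preserving $F$ (which does suffice for both applications, since there $F$ is itself normal in $G$) --- but that is a different, weaker statement than the one you were asked to prove.
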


\begin{proof}
First, let us bound the index of the center $Z$ of the group $H$.
Let $K\subset H$ be the commutator subgroup. Since $\bar{H}$ is abelian, we see that
$K$ is contained in $F$.
For every element $x\in H$, denote by $Z(x)$ the centralizer of $x$ in $H$, and by $K(x)$ the set of elements of the form
$hxh^{-1}x^{-1}$ for various $h\in H$. Then the index of $Z(x)$ does not exceed $|K(x)|\le |K|$.

By assumption, one can choose $r+s$ elements $x_1,\ldots,x_{r+s}$ generating $H$.
Thus $Z=Z(x_1)\cap\ldots\cap Z(x_{r+s})$. Hence
\begin{multline*}
[H:Z]\le [H:Z(x_1)]\cdot\ldots\cdot [H:Z(x_{r+s})]\le |K(x_1)|\cdot\ldots\cdot |K(x_{r+s})|\\ \le |K|^{r+s} \le |F|^{r+s}\le B^{r+s}\cdot |F_p|^{e(r+s)}.
\end{multline*}

Now let $Z'$ be the maximal subgroup in $Z$ whose order is coprime to~$p$. Then~$Z'$
is a characteristic abelian subgroup of $H$,
and the index of $Z'$ in $Z$ equals the order of the $p$-Sylow subgroup $Z_p$ of $Z$.
Therefore, we have
\begin{multline*}
[H:Z']=[H:Z]\cdot [Z:Z']\le \left(B^{r+s}\cdot |F_p|^{e(r+s)}\right)\cdot |Z_p|\\ \le B^{r+s}\cdot |F_p|^{e(r+s)}\cdot |H_p|\le B^{r+s}\cdot |H_p|^{e(r+s)+1}.
\end{multline*}
\end{proof}

\begin{corollary}
Let $p$ be a prime number, and let
$$
1\to \Gamma'\to\Gamma\to\Gamma''
$$
be an exact sequence of groups.
Suppose that $\Gamma''$ is $p$-Jordan,
and there exist positive constants $r$, $B$, $e$, and $s$
such that
\begin{itemize}
\item every finite abelian subgroup of $\Gamma''$ whose order is coprime to $p$ is generated by at most~$r$ elements;

\item for every finite subgroup $F$ of $\Gamma'$ one has $|F|\le B\cdot |F_p|^e$, where $F_p$ is a $p$-Sylow subgroup of $F$, and
$F$ is generated by at most $s$ elements.
\end{itemize}
Then the group $\Gamma$ is $p$-Jordan.
\end{corollary}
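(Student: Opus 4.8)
The plan is to take an arbitrary finite subgroup $G\subset\Gamma$ and to produce inside it a normal abelian subgroup of order coprime to $p$ whose index is bounded by the required expression. First I would set $G'=G\cap\Gamma'$ and let $\bar{G}$ denote the image of $G$ under the map $\Gamma\to\Gamma''$, so that $G'$ is a finite subgroup of $\Gamma'$, the group $\bar{G}$ is a finite subgroup of $\Gamma''$, and there is an exact sequence
$$
1\to G'\to G\to\bar{G}\to 1.
$$

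Since $\Gamma''$ is $p$-Jordan, the group $\bar{G}$ contains a normal abelian subgroup $\bar{A}$ of order coprime to $p$ and of index at most $J''\cdot|\bar{G}_p|^{e''}$, where $J''$ and $e''$ are the constants witnessing the $p$-Jordan property of $\Gamma''$, and $\bar{G}_p$ is a $p$-Sylow subgroup of $\bar{G}$. By the first hypothesis, $\bar{A}$ is generated by at most $r$ elements. I would then let $H$ be the preimage of $\bar{A}$ in $G$; since $\bar{A}$ is normal in $\bar{G}$, the subgroup $H$ is normal in $G$, and we obtain an exact sequence
$$
1\to G'\to H\to\bar{A}\to 1,
$$
whose kernel $G'$ satisfies $|G'|\le B\cdot|G'_p|^e$ and is generated by at most $s$ elements, by the second hypothesis.

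The crucial step is to apply Lemma~\ref{lemma:Heisenberg-estimate-1} to this last exact sequence, with $F=G'$ and $\bar{H}=\bar{A}$: it produces a characteristic abelian subgroup $A\subset H$ of order coprime to $p$ and of index at most $B^{r+s}\cdot|H_p|^{e(r+s)+1}$, where $H_p$ is a $p$-Sylow subgroup of $H$. Because $A$ is characteristic in $H$ and $H$ is normal in $G$, Remark~\ref{remark:normal-vs-characteristic} shows that $A$ is normal in $G$. It then remains to estimate the index. Using $[G:A]=[G:H]\cdot[H:A]$, the bound $[G:H]=[\bar{G}:\bar{A}]\le J''\cdot|\bar{G}_p|^{e''}$, the bound on $[H:A]$ just obtained, and the elementary observations that both $|\bar{G}_p|$ and $|H_p|$ divide $|G_p|$ (the former because $\bar{G}$ is a quotient of $G$, the latter because $H$ is a subgroup of $G$), one gets
$$
[G:A]\le J''\cdot B^{r+s}\cdot|G_p|^{e''+e(r+s)+1}.
$$
Setting $J(\Gamma)=J''\cdot B^{r+s}$ and $e(\Gamma)=e''+e(r+s)+1$, which depend only on $\Gamma$, establishes the $p$-Jordan property.

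There is no serious obstacle here: the argument is a routine assembly of the two hypotheses through the extension $1\to G'\to G\to\bar{G}\to1$. The one point that genuinely requires attention is normality of the output subgroup $A$ in $G$; this is precisely why it matters that Lemma~\ref{lemma:Heisenberg-estimate-1} yields a \emph{characteristic} subgroup of $H$ rather than merely a normal one, so that normality of $H$ in $G$ propagates. The remaining care is purely bookkeeping: checking that coprimality to $p$ is preserved at each stage and that the exponents of $|G_p|$ are collected correctly.
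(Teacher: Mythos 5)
Your proof is correct and follows essentially the same route as the paper's own argument: pass to the extension $1\to G'\to G\to\bar{G}\to 1$, pull back a normal abelian subgroup of $\bar{G}$ given by the $p$-Jordan property of $\Gamma''$, and apply Lemma~\ref{lemma:Heisenberg-estimate-1} to its preimage $H$, using that the resulting subgroup is \emph{characteristic} in $H$ to conclude normality in $G$. The index bookkeeping and the resulting bound $J''\cdot B^{r+s}\cdot|G_p|^{e''+e(r+s)+1}$ match the paper's estimate exactly.
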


\begin{proof}
Let $G$ be a finite subgroup of $\Gamma$. Then $G$ fits into an exact sequence
$$
1\to F\to G\to \bar{G}\to 1,
$$
where $F\subset \Gamma'$ and $\bar{G}\subset\Gamma''$. By assumption, there exist positive
constants~$\bar{B}$ and~$\bar{e}$ that do not depend on $\bar{G}$ such that
$\bar{G}$ contains a normal abelian subgroup~$\bar{H}$ whose order is coprime to $p$
and whose index is bounded by $\bar{B}\cdot |\bar{G}_p|^{\bar{e}}$, where~$\bar{G}_p$ is a $p$-Sylow
subgroup of $\bar{G}$. Moreover, $\bar{H}$ can be generated by $r$ elements.
Let~$H$ be the preimage of $\bar{H}$ in $G$. According to Lemma~\ref{lemma:Heisenberg-estimate-1}, there is
a characteristic abelian subgroup $A$ in $H$ whose order is coprime to $p$ and whose
index is at most $B^{r+s}\cdot |H_p|^{e(r+s)+1}$, where $H_p$ is a $p$-Sylow subgroup of~$H$.
Therefore, $A$ is a normal abelian subgroup of $G$ of index at most
$$
\left(\bar{B}\cdot |\bar{G}_p|^{\bar{e}}\right)\cdot \left(B^{r+s}\cdot |H_p|^{e(r+s)+1}\right)\le \bar{B}B^{r+s}\cdot |G_p|^{\bar{e}+e(r+s)+1},
$$
where $G_p$ is a $p$-Sylow subgroup of~$G$.
\end{proof}

We will use the following general fact.

\begin{lemma}\label{lemma:Darafsheh}
Let $p$ be a prime number, and let $m$ be a non-negative integer.
Let $F$ be a group containing a normal
subgroup $F'\cong (\ZZ/p\ZZ)^m$, and let $g$ be an element of $F$.
Then for some positive integer $t\le p^m-1$ the element~$g^t$ commutes with $F'$.
\end{lemma}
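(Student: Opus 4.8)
The plan is to translate the statement into a bound on the order of an element of $\GL_m(\mathbf F_p)$. Since $F'$ is normal in $F$, conjugation by $g$ induces an automorphism $\phi$ of $F'$. Identifying $F'\cong(\ZZ/p\ZZ)^m$ with the $m$-dimensional $\mathbf F_p$-vector space, we get $\Aut(F')\cong\GL_m(\mathbf F_p)$; let $M$ be the matrix corresponding to $\phi$. For a positive integer $t$, the element $g^t$ commutes with $F'$ if and only if $\phi^t=\mathrm{id}$, that is, if and only if $M^t$ is the identity matrix. Hence it suffices to take $t$ equal to the order of $M$ in $\GL_m(\mathbf F_p)$ and to check that this order does not exceed $p^m-1$. (We may assume $m\ge 1$, the case $m=0$ being trivial.)

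To bound the order of $M$ I would pass to the commutative $\mathbf F_p$-subalgebra $A=\mathbf F_p[M]$ of the matrix algebra $\mathrm{Mat}_m(\mathbf F_p)$ generated by $M$. By the Cayley--Hamilton theorem the minimal polynomial of $M$ has degree at most $m$, so $A\cong\mathbf F_p[x]/(f)$ with $\deg f\le m$; in particular $\dim_{\mathbf F_p}A\le m$ and $|A|\le p^m$. As $M$ is invertible, $M^{-1}$ is a polynomial in $M$ and so lies in $A$, which shows that $M$ belongs to the group of units $A^\times$. Since $A$ is a nonzero finite ring, $0\notin A^\times$, and therefore $|A^\times|\le|A|-1\le p^m-1$. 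The order of $M$ divides $|A^\times|$, so it is at most $p^m-1$, and taking $t$ to be this order finishes the argument.

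The only real content is the last inequality, i.e. the fact that every element of $\GL_m(\mathbf F_p)$ has order at most $p^m-1$, and this is the step I would expect to be the main obstacle. One could instead argue through the rational canonical form, writing $M$ via its multiplicative Jordan decomposition $M=su$ into commuting semisimple and unipotent parts and estimating the order of $s$ (governed by the numbers $p^{d_i}-1$ attached to the irreducible factors of its minimal polynomial) and the order of $u$ (a power of $p$) separately; but this is fiddly and requires combining the two estimates, whereas the subalgebra argument disposes of both parts at once and is where I expect the cleanest proof to lie.
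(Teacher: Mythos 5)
Your proof is correct, and its first half --- replacing $g$ by the automorphism of $F'\cong(\ZZ/p\ZZ)^m$ induced by conjugation, so that the statement reduces to a bound on the order of an element of $\GL_m(\mathbf{F}_p)$ --- is exactly the reduction the paper makes (the paper phrases it as a homomorphism $\chi\colon L\to\Aut(F')\cong\GL_m(\mathbf{F}_p)$ from the cyclic group $L$ generated by $g$). The difference lies in how that bound is handled: the paper simply cites the literature (a corollary from a paper of Darafsheh on orders of elements in general linear groups over $\mathbf{F}_p$) for the fact that every element of $\GL_m(\mathbf{F}_p)$ has order at most $p^m-1$, whereas you prove it. Your subalgebra argument is a complete and clean proof of that fact: Cayley--Hamilton gives $\dim_{\mathbf{F}_p}\mathbf{F}_p[M]\le m$; invertibility of $M$ places $M$ in the unit group of this finite commutative ring (the minimal polynomial has nonzero constant term, so $M^{-1}$ is a polynomial in $M$); and Lagrange's theorem in that unit group, which has at most $p^m-1$ elements because it omits $0$, finishes the job --- note that this handles semisimple and unipotent parts simultaneously, as you say. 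So your write-up is self-contained where the paper's relies on a reference, at the cost of one paragraph; the Jordan-decomposition route you sketch as an alternative is indeed fiddlier and is not what the paper does either.
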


\begin{proof}
Let $L\subset F$ be the subgroup generated by $g$.
The action of $L$ on $F'$ defines a
homomorphism
$$
\chi\colon L\to \Aut(F')\cong\GL_m(\mathbf{F}_p).
$$
It is known that the order of any element in
$\GL_m(\mathbf{F}_p)$ does not exceed~\mbox{$p^m-1$}, see
e.g.~\cite[Corollary~2]{Darafsheh}.
Therefore, $g^t$ is contained in the kernel of $\chi$ for some $t\le p^m-1$, and the required
assertion follows.
\end{proof}

Lemma~\ref{lemma:Darafsheh} allows to obtain a version of Lemma~\ref{lemma:Heisenberg-estimate-1} that is applicable for a certain class of groups
without a bound on the number of generators.

\begin{lemma}\label{lemma:Heisenberg-estimate-2}
Let $p$ be a prime number, and let
$$
1\to F\to H\to \bar{H}\to 1
$$
be an exact sequence of finite groups such that $\bar{H}$ is abelian and is generated by~$r$ elements.
Suppose that the $p$-Sylow subgroup $F_p$ of $F$ is normal
in~$F$, and~\mbox{$F_p\cong(\ZZ/p\ZZ)^m$} for some non-negative integer $m$. Furthermore, suppose
that~\mbox{$|F|\le B\cdot |F_p|^e$} for some positive constants $B$ and $e$, and suppose that
$F$ is generated by $F_p$ and $s$ additional elements.
Then~$H$ contains a characteristic abelian subgroup of order coprime to $p$ and
index at most~\mbox{$B^{r+s+1}\cdot |H_p|^{e(r+s+1)+r+1}$}, where $H_p$ is a $p$-Sylow subgroup of~$H$.
\end{lemma}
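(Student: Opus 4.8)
The plan is to follow the strategy used in the proof of Lemma~\ref{lemma:Heisenberg-estimate-1}: bound the index of the center $Z$ of $H$, and then pass to the maximal subgroup $Z'$ of $Z$ of order coprime to $p$, which is automatically characteristic and abelian. The point where that argument breaks down is that $F$, and hence $H$, can no longer be generated by a bounded number of elements, since $F_p\cong(\ZZ/p\ZZ)^m$ may require up to $m$ generators with $m$ unbounded; thus the naive estimate $[H:Z]\le |K|^{\#\mathrm{generators}}$, where $K=[H,H]$, becomes useless. The whole difficulty is concentrated in controlling the contribution of $F_p$, and this is exactly where Lemma~\ref{lemma:Darafsheh} will be used.

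First I would observe that $F_p$ is characteristic in $F$ by Example~\ref{example:normal-Sylow-subgroup}, and since $F$ is normal in $H$, Remark~\ref{remark:normal-vs-characteristic} gives that $F_p$ is normal in $H$. Hence conjugation defines a homomorphism $\rho\colon H\to\Aut(F_p)\cong\GL_m(\mathbf{F}_p)$ whose kernel is the centralizer $C=C_H(F_p)$. Rather than intersecting the centralizers of $m$ generators of $F_p$, I would bound $[H:C]=|\rho(H)|$ directly. The image $\rho(F)$ factors through $F/F_p$ since $F_p$ is abelian, so $|\rho(F)|\le |F|/|F_p|\le B\cdot|F_p|^{e-1}$. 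The quotient $\rho(H)/\rho(F)$ is isomorphic to a quotient of the abelian group $\bar H$, hence abelian and generated by the $\rho$-images of $r$ lifts of generators of $\bar H$; by Lemma~\ref{lemma:Darafsheh} every element of $\rho(H)$ has order at most $p^m-1<|F_p|$, so this abelian quotient has order at most $|F_p|^{r}$. Therefore $[H:C]\le B\cdot|F_p|^{e+r-1}$.

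Next I would choose $f_1,\dots,f_s\in F$ generating $F$ together with $F_p$, and $g_1,\dots,g_r\in H$ lifting generators of $\bar H$, so that $H$ is generated by $F_p$ and these $r+s$ elements. As in Lemma~\ref{lemma:Heisenberg-estimate-1}, since $\bar H$ is abelian we have $K=[H,H]\subseteq F$, whence $[H:C_H(x)]\le |K|\le |F|\le B\cdot|F_p|^{e}$ for each of the $r+s$ elements $x\in\{f_i,g_j\}$. Writing the center as $Z=C\cap\bigcap_i C_H(f_i)\cap\bigcap_j C_H(g_j)$ and multiplying indices gives $[H:Z]\le B\cdot|F_p|^{e+r-1}\cdot\bigl(B\cdot|F_p|^{e}\bigr)^{r+s}=B^{r+s+1}\cdot|F_p|^{e(r+s+1)+r-1}$. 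Finally, taking $Z'\subseteq Z$ to be the maximal subgroup of order coprime to $p$, which is characteristic abelian with $[Z:Z']=|Z_p|\le |H_p|$, and using $|F_p|\le |H_p|$, I obtain a bound of the shape $B^{r+s+1}\cdot|H_p|^{e(r+s+1)+r}$, which is in particular at most the claimed $B^{r+s+1}\cdot|H_p|^{e(r+s+1)+r+1}$.

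The main obstacle is the estimate in the second paragraph, namely bounding $[H:C_H(F_p)]$ with no control on the number of generators of $F_p$. The key insight is that Lemma~\ref{lemma:Darafsheh} converts the statement ``$F_p$ has many generators'' into the statement ``every element of $\rho(H)$ has order below $|F_p|$''; combined with the abelianness of $\bar H$, this caps the relevant quotient of $\rho(H)$ at $|F_p|^{r}$ and allows the remainder of the argument to proceed exactly as in Lemma~\ref{lemma:Heisenberg-estimate-1}.
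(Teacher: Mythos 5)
Your proof is correct and reaches the required bound (in fact a slightly sharper one). It shares the paper's skeleton --- bound the index of the center $Z$ of $H$, then pass to the maximal subgroup $Z'\subseteq Z$ of order coprime to $p$, which is characteristic and abelian --- and it rests on the same two ingredients, Lemma~\ref{lemma:Darafsheh} and the $r$-generation of the abelian quotient $\bar{H}$. But the crucial step, handling the centralizer of $F_p$, is implemented differently. The paper never bounds $[H:C_H(F_p)]$: it first forms $R$, the common centralizer of the $r+s$ listed generators (so $[H:R]\le |F|^{r+s}$), then picks $z_1,\ldots,z_r\in R$ lifting generators of the image of $R$ in $\bar{H}$ and applies Lemma~\ref{lemma:Darafsheh} to each, obtaining powers $z_i^{t_i}$ with $t_i\le p^m-1$ that also centralize $F_p$; the subgroup $R'$ generated by these powers lies in $Z$, and the estimate is assembled as $[H:Z]\le [H:R]\cdot [R:R']$ with $[R:R']\le t_1\cdots t_r\cdot |F|$. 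You instead treat $F_p$ as a single block: writing $Z$ exactly as the intersection of $C=C_H(F_p)$ with the centralizers of the $r+s$ generators, you bound $[H:C]=|\rho(H)|$ through the conjugation representation $\rho\colon H\to\Aut(F_p)\cong\GL_m(\mathbf{F}_p)$, using that $\rho(F)$ factors through $F/F_p$ (since $F_p$ is abelian) and that $\rho(H)/\rho(F)$ is an abelian quotient of $\bar{H}$ whose $r$ generators each have order at most $p^m-1$ by Lemma~\ref{lemma:Darafsheh}. Your route is more direct, and the factorization of $\rho(F)$ through $F/F_p$ saves one factor of $|F_p|$, yielding the exponent $e(r+s+1)+r$ in place of the paper's $e(r+s+1)+r+1$; the paper's route, in exchange, stays entirely inside the subgroup lattice of $H$ and never discusses images of representations. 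One pedantic point: your final replacement of $|F_p|$ by $|H_p|$ requires the exponent $e(r+s+1)+r-1$ to be non-negative, which holds whenever $r\ge 1$; in the degenerate case $r=0$ the claimed bound still follows, since then $|F_p|^{e(s+1)-1}\le 1\le |H_p|^{e(s+1)}$.
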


\begin{proof}
Let us use the notation of the proof of Lemma~\ref{lemma:Heisenberg-estimate-1}.
We are going to estimate the index of the center $Z$ of $H$, and its  maximal subgroup~$Z'$ of order coprime to~$p$.

Let $x_1,\ldots, x_r$ be the elements of $H$ such that their images in $\bar{H}$ generate~$\bar{H}$,
and let $y_1,\ldots,y_s$ be the elements of $F$ such that $F_p$ and $y_1,\ldots,y_s$ generate $F$.
Set
$$
R=Z(x_1)\cap\ldots\cap Z(x_r)\cap Z(y_1)\cap\ldots\cap Z(y_s).
$$
Then
\begin{multline*}
[H:R]\le |K(x_1)|\cdot\ldots\cdot |K(x_r)|\cdot |K(y_1)|\cdot\ldots\cdot |K(y_s)| \\
\le |K|^{r+s} \le |F|^{r+s}\le B^{r+s}\cdot |F_p|^{e(r+s)}.
\end{multline*}

Let $\bar{R}$ be the image of $R$ in $\bar{H}$. Being a subgroup of an abelian group
generated by $r$ elements, $\bar{R}$ can be generated by $r$ elements as well.
Let $z_1,\ldots,z_r$ be the elements of $R$ such that their images in $\bar{H}$ generate
$\bar{R}$.  Then $z_1,\ldots,z_r$ normalize the group $F_p$ by Example~\ref{example:normal-Sylow-subgroup}.
According to Lemma~\ref{lemma:Darafsheh}, there exist positive integers $t_i\le p^m-1$,
$1\le i\le r$, such that the elements~$z_i^{t_i}$ commute with the subgroup $F_p$.
Therefore, the group $R'$ generated by~$z_i^{t_i}$,~\mbox{$1\le i\le r$}, is contained in the center $Z$ of $H$.
On the other hand, the image~$\bar{R}'$ of~$R'$ in $\bar{H}$ is a subgroup of index at most
$t_1\cdot\ldots\cdot t_r$ in $\bar{R}$, which implies that
$$
[R:R']\le t_1\cdot\ldots\cdot t_r\cdot |F|< p^{mr}\cdot B\cdot |F_p|^e=B\cdot |F_p|^{e+r}.
$$
Finally, as in the proof of Lemma~\ref{lemma:Heisenberg-estimate-1},
we have
\begin{multline*}
[H:Z']=[H:Z]\cdot [Z:Z']\le [H:R']\cdot [Z:Z']= [H:R]\cdot [R:R']\cdot [Z:Z']\\ \le
\left(B^{r+s}\cdot |F_p|^{e(r+s)}\right)\cdot \left(B\cdot |F_p|^{e+r}\right)\cdot |H_p|\le
B^{r+s+1}\cdot |H_p|^{e(r+s+1)+r+1}.
\end{multline*}
\end{proof}

\medskip
\textbf{Automorphisms of lattices.}
Given a prime number $\ell$, we denote by~$\ZZ_\ell$ the ring of
$\ell$-adic integers. The following assertion
is well known, and
is proved similarly to the classical theorem of H.\,Minkowski,
see \cite[\S1]{Minkowski}, or~\mbox{\cite[Lemma~1]{Serre-2007}}, or~\mbox{\cite[Theorem~9.9]{Serre-FiniteGroups}}.

\begin{lemma}
\label{lemma:Minkowski-l-adic}
Let $n$ be a positive integer, let $\ell$ be a prime, and let
$G$ be a finite subgroup of $\GL_n(\ZZ_\ell)$.
Then $G$ is isomorphic to a subgroup of $\GL_n(\ZZ/\ell\ZZ)$
if~\mbox{$\ell\neq 2$}, and to a subgroup of $\GL_n(\ZZ/4\ZZ)$
if $\ell=2$. In particular,
the group~\mbox{$\GL_n(\ZZ_\ell)$} has bounded finite subgroups.
\end{lemma}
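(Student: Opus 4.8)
The plan is to realize $G$ inside a finite linear group over a finite ring via reduction, thereby reducing the entire statement to the torsion-freeness of a congruence subgroup. Let $\pi$ denote the reduction homomorphism $\GL_n(\ZZ_\ell)\to\GL_n(\ZZ/\ell\ZZ)$ when $\ell$ is odd, and $\GL_n(\ZZ_\ell)\to\GL_n(\ZZ/4\ZZ)$ when $\ell=2$. Its kernel is the principal congruence subgroup consisting of matrices congruent to the identity modulo $\ell$ (respectively, modulo $4$). Since $G$ is finite, it suffices to prove that $\pi$ is injective on $G$, or equivalently that $\ker\pi$ contains no nontrivial element of finite order. Granting this, $G$ embeds into $\GL_n(\ZZ/\ell\ZZ)$ or into $\GL_n(\ZZ/4\ZZ)$, which are finite groups depending only on $n$ and $\ell$; their orders then bound the order of every finite subgroup of $\GL_n(\ZZ_\ell)$, which yields the final assertion on bounded finite subgroups.

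To prove the torsion-freeness I would argue by contradiction, using that any nontrivial torsion element of a group has a power of prime order lying in the same subgroup. Thus I may assume there is $A\in\ker\pi$ with $A^q=I$ for some prime $q$ and $A\neq I$. Let $k$ be the largest integer with $A\equiv I\pmod{\ell^k}$; since $A-I\neq 0$ and $\ZZ_\ell$ is a domain, $k$ is finite, and by definition of $\ker\pi$ one has $k\ge 1$ for $\ell$ odd and $k\ge 2$ for $\ell=2$. Write $A=I+\ell^k B$ with $B\in M_n(\ZZ_\ell)$ having at least one entry of $\ell$-adic valuation $0$. The identity $0=A^q-I=\sum_{j=1}^q\binom{q}{j}\ell^{jk}B^j$ is then analyzed by comparing the $\ell$-adic valuations of the entries of the individual summands.

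The core computation splits according to whether $q$ equals $\ell$. If $q\neq\ell$, then $q$ is a unit in $\ZZ_\ell$, so the leading term $q\ell^k B$ is divisible by $\ell^k$ but not by $\ell^{k+1}$, whereas every summand with $j\ge 2$ is divisible by $\ell^{2k}$; as $2k>k$, vanishing of the sum forces $B\equiv 0\pmod\ell$, a contradiction. This disposes of all primes $q$ when $\ell$ is odd and of all odd $q$ when $\ell=2$, already for $k\ge 1$. If $q=\ell$ is odd, the leading term $\ell^{k+1}B$ has valuation exactly $k+1$, while the divisibility of $\binom{\ell}{j}$ by $\ell$ for $1\le j\le\ell-1$, together with $jk\ge 2k$ for $j\ge 2$, shows that every later summand is divisible by a strictly higher power of $\ell$; this again contradicts $A^\ell=I$ unless $B\equiv 0$. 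Finally, if $q=\ell=2$ one has $A^2-I=2^{k+1}B+2^{2k}B^2$, and since $2k\ge k+2>k+1$ precisely when $k\ge 2$, the same valuation comparison yields the contradiction.

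The main obstacle is the prime $\ell=2$. There the naive reduction modulo $2$ does \emph{not} have torsion-free kernel: the scalar $-I$ is congruent to $I$ modulo $2$ but not modulo $4$, and it has order $2$. This is exactly why one reduces modulo $4$ in the even case, and it is the single place where the quadratic term $2^{2k}B^2$ must be controlled: for $k=1$ the two terms $2^{k+1}B$ and $2^{2k}B^2$ carry the same power of $2$ and the argument breaks down, whereas for $k\ge 2$ the quadratic term sits strictly deeper and the contradiction goes through. All remaining cases are uniform in the characteristic and follow the classical Minkowski reduction argument.
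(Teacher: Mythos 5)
Your proof is correct, but it proves the key point --- torsion-freeness of the congruence kernel $\ker\pi$ --- by a genuinely different method than the paper. The paper introduces the $\ell$-adic logarithm and exponential: for $M=\one+\ell M'$ of finite order $r$ it shows the series $\log M$ converges, that $\exp$ converges on matrices divisible by $\ell$ (via the estimate $v_\ell(k!)\le k/(\ell-1)$), and then concludes from $r\log M=\log M^r=0$ and $\exp\log M=M$ that $M=\one$; the case $\ell=2$ is handled the same way after passing to reduction modulo $4$. You instead run the classical Minkowski valuation argument: reduce to a torsion element of prime order $q$, write $A=I+\ell^kB$ with $B$ having a unit entry, expand $A^q-I$ binomially, and compare $\ell$-adic valuations of the summands, splitting into $q\ne\ell$, $q=\ell$ odd, and $q=\ell=2$. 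Your route is more elementary --- no power series or convergence estimates --- and it isolates exactly why reduction modulo $2$ fails (the element $-I$, i.e.\ the case $k=1$ where $2^{k+1}B$ and $2^{2k}B^2$ have equal valuation) and why modulo $4$ suffices. The paper's analytic route is slicker in that it treats all finite orders $r$ at once with no case analysis on $q$, and the same log/exp machinery is reusable for finer structural statements about congruence subgroups. Both arguments establish the same reduction (injectivity of $\pi$ on any finite subgroup), so your derivation of the boundedness conclusion is identical to the paper's.
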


\begin{proof}
First assume that $\ell\neq 2$.
Let
$$
\rho\colon \GL_n(\ZZ_\ell)\to \GL_n(\ZZ/\ell\ZZ)
$$
be the natural homomorphism. We claim that
its kernel does not contain non-trivial elements of finite order.
Indeed, denote by $\one$ the identity matrix in~\mbox{$\GL_n(\ZZ_\ell)$},
and suppose that 
$$
M=\one +\ell M'
$$
is an element of $\GL_n(\ZZ_\ell)$ such that $M^r=\one$ for some positive
integer $r$. Set
\begin{equation}\label{equation:log}
\log M=\ell M'-\frac{(\ell M')^2}{2}+\ldots+(-1)^{k-1}\frac{(\ell M')^k}{k}
+\ldots
\end{equation}
It is easy to see that the series on the right hand side
of~\eqref{equation:log} converges in~\mbox{$\GL_n(\ZZ_\ell)$}, and so
$\log M$ is a well defined element of $\GL_n(\ZZ_\ell)$; also, we see
that $\log M$ is divisible by $\ell$ in $\GL_n(\ZZ_\ell)$.
Now let $L$ be an arbitrary element divisible by $\ell$ in $\GL_n(\ZZ_\ell)$.
Set
\begin{equation}\label{equation:exp}
\exp L=\one + L+\ldots+\frac{L^k}{k!}+\ldots
\end{equation}
Note that the $\ell$-adic valuation of $L^k$ is at least $k$, while
the $\ell$-adic valuation of~$k!$ equals
$$
\left\lfloor\frac{k}{\ell}\right\rfloor+\left\lfloor\frac{k}{\ell^2}\right\rfloor+\ldots\le
k\left(\frac{1}{\ell}+\frac{1}{\ell^2}+\ldots\right)=\frac{k}{\ell-1}\le
\frac{k}{2}.
$$
Thus the series on the right hand side
of~\eqref{equation:exp} converges in~\mbox{$\GL_n(\ZZ_\ell)$},
and so~\mbox{$\exp L$} is a well defined element of $\GL_n(\ZZ_\ell)$.
We conclude that
$$
\exp \log M=M
$$
and
$$
r\log M=\log M^r=0.
$$
Hence $\log M=0$ and $M=\one$.

Now assume that $\ell=2$. Arguing as above, one shows that the kernel
of the natural homomorphism
$$
\rho\colon \GL_n(\ZZ_2)\to \GL_n(\ZZ/4\ZZ)
$$
does not contain non-trivial elements of finite order.
\end{proof}

Lemma~\ref{lemma:Minkowski-l-adic} allows to deduce more traditional versions of
Minkowski's theorem.

\begin{corollary}\label{corollary:Minkowski}
Let $\Lambda$ be a finitely generated abelian group. Then the group~\mbox{$\Aut(\Lambda)$}
has bounded finite subgroups.
\end{corollary}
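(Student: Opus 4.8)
The plan is to reduce the statement to Lemma~\ref{lemma:Minkowski-l-adic} by separating the free and torsion parts of $\Lambda$. I would start by writing $\Lambda\cong\ZZ^n\oplus T$, where $T$ is the torsion subgroup of $\Lambda$. The key structural observation is that $T$ is finite and \emph{characteristic}, since it consists precisely of the elements of $\Lambda$ of finite order. Consequently every element of $\Aut(\Lambda)$ preserves $T$ and therefore induces an automorphism of the quotient $\Lambda/T\cong\ZZ^n$, yielding a homomorphism
$$
\rho\colon \Aut(\Lambda)\to\Aut(\Lambda/T)\cong\GL_n(\ZZ).
$$

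First I would check that the kernel $K$ of $\rho$ is finite. An element $g\in K$ preserves $T$, so its restriction $g|_T$ is an element of the finite group $\Aut(T)$; moreover, since $g$ acts trivially on $\Lambda/T$, for a fixed basis $e_1,\ldots,e_n$ of the free part one has $g(e_i)=e_i+t_i$ with $t_i\in T$. Thus $g$ is completely determined by the data $\big(g|_T,t_1,\ldots,t_n\big)$, and hence
$$
|K|\le |\Aut(T)|\cdot|T|^n<\infty.
$$
Next I would invoke Lemma~\ref{lemma:Minkowski-l-adic}: fixing any prime $\ell$, the inclusion $\ZZ\subset\ZZ_\ell$ induces an embedding $\GL_n(\ZZ)\hookrightarrow\GL_n(\ZZ_\ell)$, and the latter group has bounded finite subgroups; hence so does $\GL_n(\ZZ)$, say every finite subgroup of it has order at most some constant $B'$ depending only on $n$.

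Finally I would combine the two bounds. Given a finite subgroup $G\subset\Aut(\Lambda)$, the intersection $G\cap K$ has order at most $|K|$, while $\rho(G)\cong G/(G\cap K)$ is a finite subgroup of $\GL_n(\ZZ)$ and so has order at most $B'$. Therefore
$$
|G|=|G\cap K|\cdot|\rho(G)|\le |K|\cdot B',
$$
a bound depending only on $\Lambda$, as required. I do not anticipate any serious obstacle here; the only point that requires care is that one cannot simply tensor $\Lambda$ with $\ZZ_\ell$ and apply Lemma~\ref{lemma:Minkowski-l-adic} directly, since doing so would discard (all or part of) the torsion of $\Lambda$ and thus lose control over the finite subgroups that act nontrivially only on $T$. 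Passing instead to the characteristic torsion subgroup together with its free quotient circumvents this difficulty, at the modest cost of keeping track of the finite kernel $K$.
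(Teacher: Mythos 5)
Your proof is correct and follows essentially the same route as the paper: split off the (characteristic) torsion subgroup $T$, map $\Aut(\Lambda)$ onto $\GL_n(\ZZ)$, bound the finite subgroups of $\GL_n(\ZZ)$ via its embedding into $\GL_n(\ZZ_\ell)$ and Lemma~\ref{lemma:Minkowski-l-adic}, and absorb the finite kernel. In fact you are slightly more careful than the paper, whose exact sequence displays the kernel of $\Aut(\Lambda)\to\GL_n(\ZZ)$ as $\Aut(\Lambda_{\mathrm{tors}})$, whereas it is the larger (but still finite) group of order at most $|\Aut(T)|\cdot|T|^n$ that you describe; this discrepancy does not affect the conclusion in either argument.
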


\begin{proof}
There is an exact sequence of groups
$$
1\to\big(\Lambda_{\mathrm{tors}}\big)^{\times n}\times \Aut(\Lambda_{\mathrm{tors}})\to \Aut(\Lambda)\to\GL_n(\ZZ)\to 1,
$$
where $\Lambda_{\mathrm{tors}}$ is the torsion subgroup of $\Lambda$, and
$n$ is the rank of the free abelian group~\mbox{$\Lambda/\Lambda_{\mathrm{tors}}$}.
The group $\GL_n(\ZZ)$ is a subgroup
of $\GL_n(\ZZ_\ell)$ for any prime~$\ell$. Thus, it has bounded finite subgroups by Lemma~\ref{lemma:Minkowski-l-adic}.
On the other hand, the group $\Aut(\Lambda_{\mathrm{tors}})$ is finite, and the required assertion follows.
\end{proof}

\begin{corollary}[{cf. \cite[Theorem~F]{Feit}}]
For every positive integer~$n$, the group~\mbox{$\GL_n(\QQ)$} has bounded finite subgroups.
\end{corollary}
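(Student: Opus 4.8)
The plan is to reduce the rational case to the integral case already handled by Corollary~\ref{corollary:Minkowski}, via the standard device of producing a lattice invariant under a given finite subgroup. Concretely, let $G$ be an arbitrary finite subgroup of $\GL_n(\QQ)$, acting in the natural way on the vector space $\QQ^n$. I would start from the standard lattice $\ZZ^n\subset\QQ^n$ and average it over the group by setting
$$
\Lambda=\sum_{g\in G}g\cdot\ZZ^n .
$$
First I would check that $\Lambda$ is a full-rank lattice in $\QQ^n$: it is a finite sum of translates of the finitely generated group $\ZZ^n$, hence finitely generated; it is torsion-free as a subgroup of the torsion-free group $\QQ^n$; and it contains $\ZZ^n$, so it spans $\QQ^n$ over $\QQ$ and thus has rank exactly $n$. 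Consequently $\Lambda$ is a free abelian group of rank $n$, that is, $\Lambda\cong\ZZ^n$.

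Next I would observe that $\Lambda$ is $G$-invariant by construction, since for any $h\in G$ one has $h\cdot\Lambda=\sum_{g\in G}(hg)\cdot\ZZ^n=\Lambda$. The action of $G$ on $\QQ^n$ therefore restricts to an action on $\Lambda$, giving a homomorphism $G\to\Aut(\Lambda)$. This homomorphism is injective: the action of an element of $G$ on $\Lambda$ determines its action on $\Lambda\otimes_{\ZZ}\QQ=\QQ^n$, and an element of $\GL_n(\QQ)$ acting trivially on $\QQ^n$ is the identity. Hence $G$ embeds into $\Aut(\Lambda)\cong\GL_n(\ZZ)$.

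Finally I would invoke Corollary~\ref{corollary:Minkowski}, applied to the finitely generated abelian group $\Lambda\cong\ZZ^n$: it asserts that $\Aut(\Lambda)$ has bounded finite subgroups, so there is a constant $B=B(n)$, independent of $G$, with $|G|\le B$. Since $G$ was an arbitrary finite subgroup of $\GL_n(\QQ)$, this proves that $\GL_n(\QQ)$ has bounded finite subgroups. I do not expect any genuine obstacle here; the only substantive step is the averaging construction of the invariant lattice, and it is entirely formal, with all the arithmetic content already packaged into Lemma~\ref{lemma:Minkowski-l-adic} and Corollary~\ref{corollary:Minkowski}.
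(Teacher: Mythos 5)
Your proof is correct and takes essentially the same approach as the paper: the paper's own (very terse) argument also passes to a $G$-invariant full-rank sublattice of $\QQ^n$, embeds $G$ into $\GL_n(\ZZ)$, and invokes Corollary~\ref{corollary:Minkowski}. You have merely spelled out the standard averaging construction $\Lambda=\sum_{g\in G}g\cdot\ZZ^n$ that the paper leaves implicit, and all of your verifications (finite generation, rank, $G$-invariance, faithfulness) are accurate.
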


\begin{proof}
If $G$ is a finite subgroup of $\GL_n(\QQ)$, then it
acts by automorphisms of some sublattice $\ZZ^n\subset\QQ^n$. This means that~$G$ is isomorphic to
a subgroup of~\mbox{$\GL_n(\ZZ)$}, and thus $\GL_n(\QQ)$ has bounded finite by Corollary~\ref{corollary:Minkowski}.
\end{proof}

\medskip
\textbf{Projective general linear groups.}
We conclude this section by an easy consequence
of Theorem~\ref{theorem:BrauerFeit}.

\begin{corollary}\label{corollary:PGL}
Let $n$ be a positive integer. Then there
exists a constant~\mbox{$J_{\PGL}(n)$} such that every finite subgroup
$G$ of~\mbox{$\PGL_n(\Bbbk)$}, where $\Bbbk$
is an arbitrary field of characteristic $p>0$,
contains a normal abelian subgroup of order coprime
to $p$ and index at most~\mbox{$J_{\PGL}(n)\cdot |G_{p}|^{3}$},
where $G_{p}$ is a $p$-Sylow subgroup of $G$.
\end{corollary}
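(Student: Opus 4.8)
The plan is to deduce the statement from the case of $\GL_n$ treated in Theorem~\ref{theorem:BrauerFeit} by lifting finite subgroups of $\PGL_n$ to finite subgroups of $\GL_n$ over the algebraic closure. Let $G\subset\PGL_n(\Bbbk)$ be a finite subgroup, and regard it as a subgroup of $\PGL_n(\bar{\Bbbk})$. I would choose finitely many generators $g_1,\dots,g_k$ of $G$ and lift each $g_i$ to a matrix $M_i\in\GL_n(\bar{\Bbbk})$. Since $\bar{\Bbbk}$ is algebraically closed, every element of $\bar{\Bbbk}^*$ has an $n$-th root, so after multiplying each $M_i$ by a suitable scalar I may assume $\det M_i=1$, that is, $M_i\in\SL_n(\bar{\Bbbk})$. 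Let $H$ be the subgroup of $\SL_n(\bar{\Bbbk})$ generated by $M_1,\dots,M_k$. The projection $\GL_n(\bar{\Bbbk})\to\PGL_n(\bar{\Bbbk})$ restricts to a surjection $\varphi\colon H\to G$ whose kernel is $H\cap Z$, where $Z=\{\lambda\one : \lambda^n=1\}$ is the finite group of scalar matrices lying in $\SL_n$. Thus $H$ is an extension of the finite group $G$ by the finite group $H\cap Z$, and is therefore itself finite.

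Next I would apply Theorem~\ref{theorem:BrauerFeit} to the finite subgroup $H\subset\GL_n(\bar{\Bbbk})$: it provides a normal abelian subgroup $A$ of $H$ of order coprime to $p$ with index $[H:A]\le J(n)\cdot|H_p|^3$, where $H_p$ is a $p$-Sylow subgroup of $H$ and $J(n)$ is the Brauer--Feit constant. I then push $A$ forward along $\varphi$: its image $\varphi(A)$ is abelian, being a quotient of $A$; it is normal in $G=\varphi(H)$; and its order divides $|A|$, hence is coprime to $p$. A direct computation with orders, using $|\varphi(A)|=|A|/|A\cap\ker\varphi|$ and $|G|=|H|/|\ker\varphi|$, gives $[G:\varphi(A)]\le[H:A]$, so $\varphi(A)$ will be the desired normal abelian subgroup of $G$.

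The key observation that makes the cubic bound survive the descent is that the kernel $H\cap Z$ has order coprime to $p$: in characteristic $p$ the number of $n$-th roots of unity in $\bar{\Bbbk}$ equals the prime-to-$p$ part of $n$. Consequently the $p$-Sylow subgroup $H_p$ meets $\ker\varphi$ trivially and injects into $G$ under $\varphi$, so that $|H_p|=|G_p|$. Combining this with the estimates above yields $[G:\varphi(A)]\le[H:A]\le J(n)\cdot|H_p|^3=J(n)\cdot|G_p|^3$, and I may therefore take $J_{\PGL}(n)=J(n)$. I expect this coprimality of the central kernel to the characteristic to be the single point requiring genuine care; once it is in place, the argument is simply the standard lifting of projective representations combined with elementary bookkeeping on indices and Sylow orders.
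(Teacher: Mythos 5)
Your proof is correct, but it follows a genuinely different route from the paper. The paper's argument is a one-liner: the adjoint (conjugation) representation embeds $\PGL_n(\Bbbk)$ into $\GL_{n^2}(\Bbbk)$, and Theorem~\ref{theorem:BrauerFeit} is then applied directly to $G$ inside $\GL_{n^2}(\Bbbk)$, yielding the constant $J_{\PGL}(n)=J(n^2)$; no passage to the algebraic closure, no lifting, and no bookkeeping with central extensions is needed. Your argument instead lifts $G$ to a finite subgroup $H\subset\SL_n(\bar{\Bbbk})$ generated by determinant-one representatives of generators of $G$, applies Brauer--Feit in dimension $n$, and pushes the resulting normal abelian subgroup $A$ forward along $\varphi\colon H\to G$. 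Every step checks out: $\ker(\varphi|_H)=H\cap\mu_n(\bar{\Bbbk})\cdot\one$ is finite, so $H$ is finite; $\varphi(A)$ is normal and abelian in $G$ with order dividing $|A|$, hence coprime to $p$; the index estimate $[G:\varphi(A)]=[H:A]\cdot|A\cap\ker\varphi|/|\ker\varphi|\le[H:A]$ is correct; and your key observation that $\mu_n(\bar{\Bbbk})$ has order equal to the prime-to-$p$ part of $n$ (since $x^n-1=(x^{n'}-1)^{p^a}$ for $n=p^a n'$) is exactly what guarantees $|H_p|=|G_p|$, so the cubic bound descends intact. What each approach buys: yours yields the sharper constant $J_{\PGL}(n)=J(n)$ rather than $J(n^2)$, at the cost of the lifting argument and the coprimality analysis of the central kernel; the paper's adjoint-representation trick sacrifices the constant for an argument that works uniformly over $\Bbbk$ itself in a single sentence. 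Since the corollary only asserts existence of some constant, both are equally valid proofs of the statement.
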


\begin{proof}
The adjoint representation embeds the group
$\PGL_n(\Bbbk)$ into~\mbox{$\GL_{n^2}(\Bbbk)$},
so Theorem~\ref{theorem:BrauerFeit} applies.
\end{proof}

\section{Automorphism groups}
\label{section:Aut}

In this section we recall the basic concepts and facts about
varieties and their automorphisms.

\medskip
\textbf{General settings.}
The following terminology is standard (although not \emph{universally}
accepted); see e.g. \cite[\S\,AG.12, \S\,I.1]{Borel}, and cf. \cite[Definition~2.1.5]{Brion-long}.

\begin{definition}
An algebraic group over a field $\Bbbk$ is a geometrically reduced group scheme of finite type over $\Bbbk$.
\end{definition}

In other words, an algebraic group is a variety with a structure of a group scheme.
Note that any algebraic group is smooth, see for instance~\mbox{\cite[Proposition~2.1.12]{Brion-long}}.

If $X$ is an arbitrary algebraic variety over a field $\Bbbk$, then its automorphisms form a group, which
we denote by $\Aut(X)$. If $K\supset\Bbbk$ is a field extension, and~$X_K$ is the extension of scalars of $X$ to $K$,
then every automorphism of $X$ defines an automorphism of $X_K$; in other words, one
has
$$
\Aut(X)\subset\Aut(X_K).
$$
If $X$ is irreducible,
then one can consider its birational automorphism group~\mbox{$\Bir(X)$};
one has a natural embedding $\Aut(X)\subset\Bir(X)$.
If $X_K$ is still irreducible,
then $\Bir(X)\subset\Bir(X_K)$.
In this paper, we will not be interested in any additional structures on the group
$\Bir(X)$; the reader can find a discussion of these
in some particular cases over fields of characteristic zero
in \cite{Hanamura1} and~\cite{Hanamura2} (see also \cite{BlancFurter}).
However, we will need some structure related to the automorphism group.

Let $X$ be a projective variety over a field $\Bbbk$.
Then the automorphism functor of
$X$ is represented by the automorphism group scheme $\AAut_X$
which is locally of finite type, see e.g. \cite[Theorem~7.1.1]{Brion-long}.
The automorphism
group~\mbox{$\Aut(X)$} is just the group of $\Bbbk$-points
of $\AAut_X$, that is,
$$
\Aut(X)=\AAut_X(\Bbbk).
$$
Let $\AAut^0_X$ be the neutral component of $\AAut_X$; then $\AAut^0_X$ is a connected group scheme of finite type
over $\Bbbk$, but it may be non-reduced,
even if $\Bbbk$ is algebraically closed and $X$ is smooth,
see e.g. \cite[Example~7.1.5]{Brion-long}.
Let $\AAut^0_{X,\red}$ be the maximal reduced subscheme of $\AAut^0_X$.
If the field $\Bbbk$ is perfect,
then $\AAut^0_{X,\red}$ is a group scheme, see \cite[\S2.5]{Brion-long}
(note that over a non-perfect field
the maximal reduced subscheme of a group scheme is not necessarily
a group scheme itself, see  \cite[Example~2.5.3]{Brion-long}).
Furthermore, in this case the fact that  $\AAut^0_{X,\red}$
is reduced implies that it is geometrically reduced,
and thus $\AAut^0_{X,\red}$ is an algebraic group;
in particular, this means that~\mbox{$\AAut^0_{X,\red}$} is smooth.
We set
$$
\Aut^0(X)=\AAut^0_{X,\red}(\Bbbk)=\AAut^0_X(\Bbbk).
$$
Note that $\Aut^0(X)$ always has a group structure,
regardless of whether the field~$\Bbbk$ is perfect or not.
Anyway, in this paper we will need to deal with the group~\mbox{$\Aut^0(X)$} and
the group scheme $\AAut^0_X$ only in the case when the base field
is algebraically closed.

The reader is referred to the surveys \cite[\S7.1]{Brion-long} and \cite[\S2.1]{Brion-short} for more details
on automorphism groups and automorphism group schemes.

\medskip
\textbf{Resolution of singularities and regularization of birational maps.}
Resolution of singularities is available in arbitrary dimension over fields
of characteristic $0$, and in small dimensions over fields of positive
characteristic. We will need it in the classical case of surfaces.

\begin{theorem}[{see \cite[Theorem]{Lipman-Desingularization} and \cite[Corollary~27.3]{Lipman-Rational}}]
\label{theorem:resolution-of-singularities}
Let $S$ be a geometrically irreducible algebraic surface
over a field $\Bbbk$.
Then there exists a minimal resolution of singularities of $S$.
More precisely, there exists a regular
surface $\tilde{S}$ over $\Bbbk$ with a proper birational
morphism~\mbox{$\pi\colon \tilde{S}\to S$} such that any proper birational morphism from
a regular projective surface to~$S$ factors through~$\pi$. In particular, if the field $\Bbbk$ is perfect, then
the surface $\tilde{S}$ is smooth.
\end{theorem}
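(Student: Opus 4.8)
The statement is in essence a combination of two theorems of Lipman, and the plan is to assemble it from an existence result for desingularizations together with the intersection theory of exceptional curves on regular surfaces. \textbf{Existence.} Since $S$ is a variety over a field, it is an excellent two-dimensional scheme, so Lipman's desingularization theorem \cite{Lipman-Desingularization} supplies at least one resolution: a proper birational morphism $f\colon S'\to S$ with $S'$ regular. This is the starting point.

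\textbf{Construction of a minimal resolution.} Starting from $f\colon S'\to S$, I would examine the exceptional locus over the finitely many singular points of $S$ and repeatedly contract exceptional curves of the first kind. Over an arbitrary field this uses Castelnuovo's contractibility criterion in the form valid for regular projective surfaces: a curve defined over $\Bbbk$ that becomes a disjoint union of $(-1)$-curves after base change to $\bar{\Bbbk}$ contracts to a regular point by a morphism of $\Bbbk$-schemes. Each such contraction strictly decreases the rank of the group generated by the exceptional curves over $S$, so the process terminates and yields a proper birational morphism $\pi\colon\tilde{S}\to S$ from a regular surface whose exceptional locus contains no curve of the first kind. This $\tilde{S}$ is the candidate minimal resolution.

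\textbf{Universal property.} Let $g\colon S''\to S$ be any proper birational morphism from a regular projective surface; I need the induced birational map $h\colon S''\dashrightarrow\tilde{S}$ to be a morphism, for then $g=\pi\circ h$ factors through $\pi$. Passing to $\bar{\Bbbk}$, where the full structure theory of birational maps of smooth surfaces is available, one resolves $h$ through a common blow-up $W$ and writes it as a composite of blow-ups and blow-downs. The key input is the negativity of the intersection form on the exceptional locus of $\pi$: if $h$ failed to be a morphism at some point, then $\tilde{S}$ would have to carry an exceptional curve of the first kind lying over $S$, contradicting the minimality secured in the previous step. Hence $h$ is a morphism, which both proves the factorization property and pins down $\tilde{S}$ uniquely as the minimal resolution. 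Finally, when $\Bbbk$ is perfect, a regular scheme of finite type over $\Bbbk$ is geometrically regular, hence smooth, so $\tilde{S}$ is smooth in that case.

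\textbf{Main obstacle.} The delicate point is the universal property over a non-perfect or merely non-closed field: the description of birational maps via blow-ups and blow-downs is transparent over $\bar{\Bbbk}$, so one must carry out the contractions and the factorization compatibly with the Galois action to ensure everything is defined over $\Bbbk$ rather than only over $\bar{\Bbbk}$. This descent, and the precise contraction theory behind it, are exactly what is packaged in \cite{Lipman-Rational}; in practice I would quote \cite{Lipman-Desingularization} for existence and \cite{Lipman-Rational} for the minimal resolution and its universal property, rather than redo the descent by hand.
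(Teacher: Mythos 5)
The paper does not actually prove this theorem: it is stated as a known result, with exactly the two citations your proposal ends with, namely \cite{Lipman-Desingularization} for the existence of a desingularization of an excellent two-dimensional scheme and \cite[Corollary~27.3]{Lipman-Rational} for the existence of the minimal resolution together with its factorization property. Since your argument ultimately defers to those same sources — and rightly so, because your intermediate sketch via base change to $\bar{\Bbbk}$ would founder on the fact that regularity is not preserved under inseparable field extension (over a non-perfect field $\tilde{S}_{\bar{\Bbbk}}$ need not be regular, so the structure theory of birational maps of smooth surfaces does not apply there), which is precisely the obstacle you flag — your proposal takes essentially the same approach as the paper.
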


\begin{remark}\label{remark:projective}
If in the notation of Theorem~\ref{theorem:resolution-of-singularities} the surface~$S$ is projective, then the surface~$\tilde{S}$
is projective as well. Indeed,~$\tilde{S}$ is complete and regular. Hence~$\tilde{S}_{\bar{\Bbbk}}$ is projective according
to~\mbox{\cite[Corollary~IV.2.4]{Kleiman}},
which implies that~$\tilde{S}$ is projective, see e.g.~\mbox{\cite[Corollaire~9.1.5]{EGA4-3}}.
\end{remark}

In particular, Theorem~\ref{theorem:resolution-of-singularities} and Remark~\ref{remark:projective} tell us that
every geometrically irreducible algebraic surface has a regular projective birational model.

\begin{corollary}\label{corollary:smooth-projective-model}
Let $S$ be a geometrically irreducible algebraic surface
over a field $\Bbbk$. Then there exists a regular
projective surface $\tilde{S}$ over $\Bbbk$ birational to~$S$.
In particular, if the field $\Bbbk$ is perfect, then
the surface $\tilde{S}$ is smooth.
\end{corollary}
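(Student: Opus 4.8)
The plan is to reduce the statement to the projective case, which is already settled by Theorem~\ref{theorem:resolution-of-singularities} together with Remark~\ref{remark:projective}. Note that applying the theorem directly to $S$ produces a regular surface $\tilde{S}$ carrying a proper birational morphism to $S$; but since $S$ is only assumed to be an arbitrary geometrically irreducible algebraic surface, and not a projective one, the morphism $\tilde{S}\to S$ need not be projective, and so $\tilde{S}$ itself need not be projective. The first and essential task is therefore to replace $S$ by a birational \emph{projective} model, to which the resolution theorem and Remark~\ref{remark:projective} can then be applied in tandem.

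To construct such a model I would choose a dense affine open subset $U=\Spec A\subset S$, where $A$ is a finitely generated $\Bbbk$-algebra which is an integral domain, since $S$ is integral (being geometrically irreducible and reduced). A choice of generators of $A$ gives a closed immersion $U\hookrightarrow\mathbb{A}^N\subset\PP^N$, and I let $S'$ be the reduced closure of the image of $U$ in $\PP^N$. Then $S'$ is an integral projective surface containing $U$ as a dense open subset, so it is birational to $S$ and shares with it the function field $\Bbbk(S)$. It remains to verify that $S'$ is again a geometrically irreducible algebraic surface. Its dimension is $2$ because $U$ has dimension $2$. For the remaining two properties I would use that, for an integral variety, both geometric irreducibility and geometric reducedness depend only on the function field extension over $\Bbbk$: geometric irreducibility amounts to $\Bbbk$ being separably algebraically closed in $\Bbbk(S)$, and geometric reducedness (for a reduced scheme) amounts to $\Bbbk(S)/\Bbbk$ being a separable extension. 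Since $S$ enjoys both properties and $S'$ is reduced with $\Bbbk(S')=\Bbbk(S)$, the surface $S'$ inherits geometric irreducibility and geometric reducedness from $S$.

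With $S'$ at hand, I would apply Theorem~\ref{theorem:resolution-of-singularities} to obtain a regular surface $\tilde{S}$ together with a proper birational morphism $\pi\colon\tilde{S}\to S'$. Because $S'$ is projective, Remark~\ref{remark:projective} applies verbatim and shows that $\tilde{S}$ is projective. As $\tilde{S}$ is birational to $S'$ and hence to $S$, this produces the desired regular projective model, and the final clause of Theorem~\ref{theorem:resolution-of-singularities} guarantees that $\tilde{S}$ is in fact smooth when $\Bbbk$ is perfect. The one genuinely delicate point, and the step I expect to require the most care, is the verification that the projective closure $S'$ stays geometrically reduced over a non-perfect field: over a perfect field reducedness and geometric reducedness coincide, so the issue evaporates, but in general one must appeal to the separability of $\Bbbk(S)/\Bbbk$, which is precisely the hypothesis that $S$ is geometrically reduced.
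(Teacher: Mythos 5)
Your proof is correct and follows essentially the same route as the paper's: replace $S$ by a dense affine open subset, pass to its projective closure, and then apply Theorem~\ref{theorem:resolution-of-singularities} together with Remark~\ref{remark:projective}. The only difference is that you spell out the verification (left implicit in the paper) that the projective closure remains geometrically irreducible and geometrically reduced, which you do correctly via the function-field characterizations of these properties.
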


\begin{proof}
Replace $S$ by its affine open subset, then replace the latter by a projective completion,
and take a resolution of singularities.
\end{proof}

The factorization property provided by Theorem~\ref{theorem:resolution-of-singularities}
implies that the minimal resolution of singularities behaves well with respect to the automorphism group.

\begin{corollary}\label{corollary:Aut-lifts-to-resolution}
Let $S$ be a geometrically irreducible algebraic surface, and let~\mbox{$\pi\colon \tilde{S}\to S$} be the minimal resolution of singularities.
Then there is an action of the group $\Aut(S)$ on $\tilde{S}$ such that the morphism $\pi$ is $\Aut(S)$-equivariant.
\end{corollary}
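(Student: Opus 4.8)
The plan is to exploit the universal property of the minimal resolution provided by Theorem~\ref{theorem:resolution-of-singularities}. The key point is that $\AAut$ acts on $S$ by biregular automorphisms, and the minimal resolution is canonically determined by $S$, so any automorphism of $S$ should lift uniquely to an automorphism of $\tilde{S}$ compatible with $\pi$.

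First I would fix an automorphism $\sigma\in\Aut(S)$ and form the composite $\sigma\circ\pi\colon\tilde{S}\to S$. Since $\pi$ is a proper birational morphism from a regular surface, and $\sigma$ is an isomorphism, the composite $\sigma\circ\pi$ is again a proper birational morphism from the regular projective surface $\tilde{S}$ to $S$. By the factorization property in Theorem~\ref{theorem:resolution-of-singularities}, this composite factors through $\pi$; that is, there exists a proper birational morphism $\tilde{\sigma}\colon\tilde{S}\to\tilde{S}$ with $\pi\circ\tilde{\sigma}=\sigma\circ\pi$. I would next argue that $\tilde{\sigma}$ is in fact an isomorphism: applying the same construction to $\sigma^{-1}$ yields a morphism $\widetilde{\sigma^{-1}}$ with $\pi\circ\widetilde{\sigma^{-1}}=\sigma^{-1}\circ\pi$, and then $\pi\circ(\tilde{\sigma}\circ\widetilde{\sigma^{-1}})=\sigma\circ\sigma^{-1}\circ\pi=\pi$. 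Since $\pi$ is birational and $\tilde{S}$ is separated and reduced, a morphism $\tilde{S}\to\tilde{S}$ over $\pi$ that restricts to the identity on the (dense) open locus where $\pi$ is an isomorphism must be the identity; hence $\tilde{\sigma}\circ\widetilde{\sigma^{-1}}=\mathrm{id}$, and symmetrically for the other composite, so $\tilde{\sigma}\in\Aut(\tilde{S})$.

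It remains to check that $\sigma\mapsto\tilde{\sigma}$ is a group homomorphism $\Aut(S)\to\Aut(\tilde{S})$, which makes the lifted maps into an honest action with $\pi$ equivariant by construction. The uniqueness observation used above — that a morphism over $\pi$ agreeing with the identity on a dense open set is the identity — simultaneously guarantees that the lift $\tilde{\sigma}$ is \emph{unique}, and uniqueness is exactly what gives multiplicativity: both $\widetilde{\sigma\tau}$ and $\tilde{\sigma}\circ\tilde{\tau}$ satisfy $\pi\circ(-)=(\sigma\tau)\circ\pi$, so they coincide. The relation $\pi\circ\tilde{\sigma}=\sigma\circ\pi$ is precisely the statement that $\pi$ is $\Aut(S)$-equivariant.

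The main obstacle, and the step requiring the most care, is the uniqueness/well-definedness of the lift, since this is what underpins both the invertibility of $\tilde{\sigma}$ and the homomorphism property. This rests on the fact that two proper birational morphisms from $\tilde{S}$ to $S$ that factor through $\pi$ via maps agreeing on a common dense open set must have equal factorizations; formally this uses that $\tilde{S}$ is reduced and separated so that morphisms agreeing on a dense open subset coincide. I would phrase this carefully rather than appeal vaguely to the factorization, since the minimality in Theorem~\ref{theorem:resolution-of-singularities} only asserts \emph{existence} of a factorization, and uniqueness of the factoring morphism is a separate (though standard) consequence of separatedness that must be invoked explicitly.
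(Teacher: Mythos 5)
Your proposal is correct and follows essentially the same route as the paper: the paper deduces this corollary directly from the factorization property of Theorem~\ref{theorem:resolution-of-singularities} (it gives no further details), and your argument is exactly that deduction, carefully spelled out. The extra care you take with uniqueness of the factoring morphism (via separatedness and reducedness) is precisely what makes the lifts invertible and multiplicative, so your write-up is a faithful, complete version of the paper's intended one-line proof.
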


The following version of Corollary~\ref{corollary:smooth-projective-model} taking into account
a birational action of a finite group is classical and widely used, at least in the case of zero
characteristic, see e.g. \cite[Lemma~3.5]{DI}
or~\mbox{\cite[Lemma-Definition~3.1]{ProkhorovShramov-Bir}},
and cf.~\cite{Weil}, \cite{Brion-regularization}.
We recall its proof for the convenience of the reader.

\begin{lemma}\label{lemma:regularization}
Let $S$ be a geometrically irreducible algebraic surface over a field~$\Bbbk$, and let $G\subset\Bir(S)$ be a finite group. Then there
exists a regular projective surface $\tilde{S}$ with a biregular action of $G$ and a $G$-equivariant
birational map~\mbox{$\tilde{S}\dasharrow S$}. In particular, if the field $\Bbbk$ is perfect, then
the surface $\tilde{S}$ is smooth.
\end{lemma}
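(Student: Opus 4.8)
The plan is to apply Weil's regularization trick to first produce a \emph{projective} model on which $G$ acts biregularly, and then to pass to its minimal resolution of singularities, which is automatically $G$-equivariant.

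First I would fix a projective model to work on. By Corollary~\ref{corollary:smooth-projective-model} there is a regular projective surface $X$ birational to $S$; transporting the birational action of $G$ through a chosen birational map $X\dasharrow S$, we may regard $G$ as a finite subgroup of $\Bir(X)=\Bir(S)$, and it suffices to construct $\tilde S$ birational to $X$ together with the required equivariant biregular structure. Thus each $g\in G$ becomes a birational self-map of $X$, and $X$ is projective and geometrically irreducible.

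Next comes the heart of the argument, the Weil construction. Consider the projective variety $P=\prod_{h\in G}X$ and the rational map $\phi\colon X\dasharrow P$ whose $h$-th coordinate is the birational map $h$, so that $\phi(x)_h=h(x)$. Let $G$ act on $P$ biregularly by permuting the factors via $(g\cdot y)_h=y_{hg}$; a direct computation gives $(g\cdot\phi(x))_h=\phi(x)_{hg}=(hg)(x)=h(g(x))=\phi(g\cdot x)_h$, so $\phi$ is $G$-equivariant. Let $Y\subset P$ be the Zariski closure of the image of $\phi$. Since $\phi$ is equivariant and the permutation action on $P$ is biregular, $Y$ is a $G$-invariant closed subvariety on which $G$ acts biregularly. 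The identity-coordinate of $\phi$ is the identity map of $X$, so the projection of $P$ to that factor restricts on $Y$ to a birational inverse of $\phi$; hence $\phi\colon X\dasharrow Y$ is birational, and its inverse is a $G$-equivariant birational map $Y\dasharrow X$. As a closed subvariety of the projective variety $P$, the surface $Y$ is projective, and being birational to $X$ it is geometrically irreducible.

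Finally I would resolve the singularities of $Y$ equivariantly. The surface $Y$ need not be regular, so let $\pi\colon\tilde S\to Y$ be its minimal resolution of singularities provided by Theorem~\ref{theorem:resolution-of-singularities}. By Remark~\ref{remark:projective} the surface $\tilde S$ is regular and projective, and by Corollary~\ref{corollary:Aut-lifts-to-resolution} the biregular $G$-action on $Y$ lifts to a biregular $G$-action on $\tilde S$ making $\pi$ equivariant. Composing the equivariant maps $\tilde S\to Y\dasharrow X\dasharrow S$ yields a $G$-equivariant birational map $\tilde S\dasharrow S$, so $\tilde S$ is the required surface; the concluding smoothness clause over perfect fields is exactly the last statement of Theorem~\ref{theorem:resolution-of-singularities}. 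The only genuinely delicate point is the equivariance bookkeeping in the Weil construction and the verification that one stays within the projective geometrically irreducible setting, since once $G$ acts biregularly the passage to a regular projective model is handed to us equivariantly by Corollary~\ref{corollary:Aut-lifts-to-resolution}.
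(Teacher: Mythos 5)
Your proof is correct, and it reaches the intermediate biregular model by a genuinely different route than the paper. The paper's own proof is field-theoretic: it takes a normal projective model $\hat V$ of the invariant field $\Bbbk(S)^G$ and defines $\hat S$ as the normalization of $\hat V$ in $\Bbbk(S)$, so that the regular $G$-action on $\hat S$ comes for free from functoriality of normalization applied to the Galois extension $\Bbbk(S)/\Bbbk(S)^G$, together with a $G$-equivariant birational map $\hat S\dasharrow S$. You instead run Weil's graph-closure construction inside the product $P=\prod_{h\in G}X$ of copies of a regular projective model $X$ (your appeal to Corollary~\ref{corollary:smooth-projective-model}), with $G$ permuting the factors; your equivariance computation $(g\cdot\phi(x))_h=(hg)(x)=\phi(g(x))_h$ and the observation that the projection to the identity factor restricts to a birational inverse of $\phi$ are both correct, and this is precisely the classical argument the paper gestures at with its ``cf.~\cite{Weil}, \cite{Brion-regularization}'' citation. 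From that point on the two proofs coincide verbatim: minimal resolution of the possibly singular model (Theorem~\ref{theorem:resolution-of-singularities}), projectivity via Remark~\ref{remark:projective}, lifting of the $G$-action via Corollary~\ref{corollary:Aut-lifts-to-resolution}, and composition of equivariant birational maps. As for what each approach buys: the paper's route is shorter, needs no auxiliary projective model to start from, and produces the quotient map $\hat S\to\hat V$ as a by-product, but it silently uses Artin's lemma (finite generation of $\Bbbk(S)^G$) and the finiteness, hence projectivity, of normalization in a finite extension of the function field; your route avoids field theory and normalization entirely, using only products, graphs, and closures, at the cost of first fixing a projective model and of checking that the closure $Y$ is an honest geometrically irreducible (and geometrically reduced) variety, which you correctly reduce to birational invariance of the function field---the same level of detail at which the paper treats the analogous point for $\hat S$.
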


\begin{proof}
%
Let $\hat{V}$ be a normal projective model of the field of invariants~\mbox{$\Bbbk(S)^G$},
and let~$\hat{S}$ be the normalization of~$\hat{V}$ in the field~$\Bbbk(S)$.
Then there is a regular action of $G$ on~$\hat{S}$ and a $G$-equivariant birational map~\mbox{$\zeta\colon \hat{S}\dashrightarrow S$}.

Let $\pi\colon \tilde{S}\to\hat{S}$ be the minimal resolution
of singularities provided by Theorem~\ref{theorem:resolution-of-singularities}.
Then $\tilde{S}$ is a regular geometrically irreducible surface; also, $\tilde{S}$ is projective
by Remark~\ref{remark:projective}.
According to Corollary~\ref{corollary:Aut-lifts-to-resolution}, the action of $G$ lifts to $\tilde{S}$ so that
the morphism $\pi$ is $G$-equivariant. Therefore, we obtain a $G$-equivariant
birational map~\mbox{$\zeta\circ\pi\colon\tilde{S}\dasharrow S$}.
\end{proof}

\medskip
\textbf{Stabilizer of a point.}
Let us recall an auxiliary result on fixed points of automorphisms of finite order
which is well known and widely used (especially in characteristic~$0$).
We provide its proof to be self-contained.
Given an algebraic variety $X$ over a field $\Bbbk$ and a $\Bbbk$-point~\mbox{$P\in X$}, denote by~$T_P(X)$ the Zariski tangent space to $X$ at~$P$.

\begin{theorem}\label{theorem:fixed-point}
Let $X$ be an irreducible algebraic variety
over a field $\Bbbk$ of characteristic~$p$.
Let $G$ be a finite group acting on
$X$ with a fixed $\Bbbk$-point~$P$. Suppose that $|G|$ is not divisible by~$p$. Then the natural representation
$$
d\colon G\to\GL\big(T_P(X)\big)
$$
is an embedding.
\end{theorem}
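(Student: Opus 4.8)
We have an irreducible variety $X$ over a field $\Bbbk$ of characteristic $p$, a finite group $G$ acting on $X$ with a fixed $\Bbbk$-point $P$, and $|G|$ not divisible by $p$. We want to show that the natural representation $d: G \to \GL(T_P(X))$ on the Zariski tangent space is injective.

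**The key idea:** The representation $d$ is the derivative (linearization) of the action at the fixed point. We need to show its kernel is trivial. So suppose $g \in G$ acts trivially on $T_P(X)$ — i.e., $dg = \mathrm{id}$ on the tangent space. We want to conclude $g = \mathrm{id}$, which (since $X$ is irreducible hence the action is determined by its effect near $P$) reduces to showing $g$ acts trivially on the local ring $\mathcal{O}_{X,P}$, or rather on its completion.

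**The mechanism:** Let $\mathfrak{m}$ be the maximal ideal of $\mathcal{O}_{X,P}$. Since $P$ is a $\Bbbk$-point, the tangent space is $T_P(X) = (\mathfrak{m}/\mathfrak{m}^2)^\vee$, so $g$ acting trivially on $T_P(X)$ means $g$ acts trivially on $\mathfrak{m}/\mathfrak{m}^2$. Now $g$ has finite order coprime to $p$. The crucial point: if $g$ acts trivially on $\mathfrak{m}/\mathfrak{m}^2$ and has order prime to $p$, then $g$ acts trivially on all of $\widehat{\mathcal{O}}_{X,P}$.

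Here's where characteristic $p$ needs care, which is presumably the "minor difference" the authors advertise. One wants an averaging/semisimplicity argument. Since $|G|$ (hence the order of $g$) is coprime to $p$, the action of the cyclic group $\langle g \rangle$ on each $\mathfrak{m}^k/\mathfrak{m}^{k+1}$ is semisimple (Maschke). One shows inductively that $g$ acts trivially on $\mathfrak{m}^k/\mathfrak{m}^{k+1}$ for all $k$: the action on $\mathfrak{m}^k/\mathfrak{m}^{k+1}$ is a quotient of $\mathrm{Sym}^k$ of the action on $\mathfrak{m}/\mathfrak{m}^2$, which is trivial by hypothesis, so all the symmetric powers are trivial, hence $g$ acts trivially on the associated graded. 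Then one lifts triviality from the associated graded to $\widehat{\mathcal{O}}_{X,P}$ using the coprime-order/semisimplicity hypothesis (a standard $\langle g \rangle$-equivariant splitting of the filtration). Finally, since $X$ is irreducible and $g$ fixes a neighborhood of $P$ scheme-theoretically (acts trivially on the completed local ring, hence on $\mathcal{O}_{X,P}$ inside its completion, hence on the function field), $g$ is the identity.

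**The main obstacle:** The delicate step is passing from triviality on the associated graded $\bigoplus_k \mathfrak{m}^k/\mathfrak{m}^{k+1}$ back to triviality on $\widehat{\mathcal{O}}_{X,P}$ itself — this is exactly where the hypothesis $p \nmid |G|$ is essential and cannot be dropped. Over characteristic zero this is routine via exponentiating a linearization or via a fixed-point/averaging argument; in characteristic $p$ one must instead use that the $\langle g\rangle$-module structure on each finite-length quotient $\mathcal{O}_{X,P}/\mathfrak{m}^{n}$ is semisimple (so the filtration splits $g$-equivariantly), and argue level by level. I expect the authors handle this by linearizing the action formally and invoking semisimplicity of representations of groups of order prime to $p$.

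=== PROOF ===

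\begin{proof}
The representation $d$ is obtained as follows.
Since $P$ is a fixed $\Bbbk$-point of the $G$-action, the group $G$
acts on the local ring $\mathcal{O}_{X,P}$ preserving its maximal ideal~$\mathfrak{m}$,
and thus also on the cotangent space $\mathfrak{m}/\mathfrak{m}^2$
and on its dual $T_P(X)$.

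Suppose that $g\in G$ acts trivially on $T_P(X)$,
so that $g$ acts trivially on $\mathfrak{m}/\mathfrak{m}^2$ as well.
We will show that $g$ is the identity.
Denote by $r$ the order of $g$; by assumption $r$ is coprime to $p$.

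Consider the completion $\hat{\mathcal{O}}=\hat{\mathcal{O}}_{X,P}$
with maximal ideal $\hat{\mathfrak{m}}$, on which $g$ acts continuously.
For every $k$ the natural surjection
$$
\mathrm{Sym}^k\big(\mathfrak{m}/\mathfrak{m}^2\big)\to
\hat{\mathfrak{m}}^k/\hat{\mathfrak{m}}^{k+1}
$$
is $g$-equivariant. Since $g$ acts trivially on $\mathfrak{m}/\mathfrak{m}^2$,
it acts trivially on each symmetric power, and hence on every quotient
$\hat{\mathfrak{m}}^k/\hat{\mathfrak{m}}^{k+1}$.
In other words, $g$ acts trivially on the associated graded ring of~$\hat{\mathcal{O}}$.

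It remains to lift this triviality to $\hat{\mathcal{O}}$ itself; this is where the
assumption $p\nmid r$ is used. For each $n$ consider the finite-dimensional
$\Bbbk$-vector space $\hat{\mathcal{O}}/\hat{\mathfrak{m}}^n$,
on which the cyclic group $\langle g\rangle$ of order $r$ coprime to $p$ acts.
By Maschke's theorem this representation is semisimple,
so the filtration by the images of the $\hat{\mathfrak{m}}^k$ splits
$\langle g\rangle$-equivariantly. Since $g$ acts trivially on each graded piece,
it acts trivially on $\hat{\mathcal{O}}/\hat{\mathfrak{m}}^n$.
As this holds for all $n$, the element $g$ acts trivially on
$\hat{\mathcal{O}}=\varprojlim_n \hat{\mathcal{O}}/\hat{\mathfrak{m}}^n$.

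Finally, the inclusion $\mathcal{O}_{X,P}\hookrightarrow\hat{\mathcal{O}}$
shows that $g$ acts trivially on $\mathcal{O}_{X,P}$, hence on the field of fractions
$\Bbbk(X)$. Since $X$ is irreducible, an automorphism acting trivially on $\Bbbk(X)$
is the identity on a dense open subset, and therefore is the identity automorphism
of~$X$. Thus $g$ is trivial, and the homomorphism $d$ is an embedding.
\end{proof}
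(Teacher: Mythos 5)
Your proof is correct and follows essentially the same route as the paper's: both arguments propagate triviality from $\mathfrak{m}/\mathfrak{m}^2$ to every graded piece $\mathfrak{m}^k/\mathfrak{m}^{k+1}$ (because $\mathfrak{m}$ is generated in degree $1$), then use semisimplicity of representations of a group of order prime to $p$ to conclude the action on the local ring is trivial, and finally use irreducibility (and separatedness) to conclude the automorphism is the identity. Your variant, which runs the semisimplicity step on the finite-dimensional quotients $\hat{\mathcal{O}}/\hat{\mathfrak{m}}^n$ and passes to the inverse limit, is if anything a cleaner execution of that step than the paper's direct appeal to a direct-sum decomposition of the infinite-dimensional module $\mathfrak{m}_P$.
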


\begin{proof}
Suppose that $d$ is not an embedding.
Replacing $G$ by the kernel of $d$, we may assume that
$G$ acts trivially on $T_P(X)$. So $G$ acts trivially on
$$
\mathfrak{m}_P/\mathfrak{m}_P^2\cong T_P(X)^\vee,
$$
where $\mathfrak{m}_P\subset \mathcal{O}_P$ is the maximal ideal in the local ring of the point $P$ on~$X$.
The quotient morphism $\mathcal{O}_P\to \mathcal{O}_P/\mathfrak{m}_P\cong\Bbbk$ admits a natural section, which gives a $G$-invariant
decomposition~\mbox{$\mathcal{O}_P\cong \Bbbk\oplus \mathfrak{m}_P$} into a direct sum of vector subspaces.

Consider the $G$-invariant filtration
$$
\mathfrak{m}_P\supset \mathfrak{m}_P^2\supset \mathfrak{m}_P^3\supset  \ldots
$$
Recall that  $\mathfrak{m}_P$ is generated by elements of degree $1$, i.e. generated by a collection of elements
whose images form a basis in~\mbox{$\mathfrak{m}_P/\mathfrak{m}_P^2$}. Hence $G$
acts trivially on~\mbox{$\mathfrak{m}_P^n/\mathfrak{m}_P^{n+1}$} for every positive integer~$n$.
Since the order of $G$ is not divisible by~$p$, every representation of $G$ is completely reducible.
Therefore, we have an isomorphism of $G$-representations
$$
\mathfrak{m}_P\cong\bigoplus\limits_{n=1}^\infty \mathfrak{m}_P^n/\mathfrak{m}_P^{n+1}.
$$
Thus, the action of $G$ on $\mathfrak{m}_P$ and $\mathcal{O}_P$ is trivial.

Let $U\subset X$ be an affine open subset containing the point~$P$. Then~\mbox{$U'=\sigma(U)$}
is also an affine open subset of $X$ containing~$P$. Let $R$ and~$R'$
be the coordinate rings of $U$ and $U'$, respectively. Then
$R$ and~$R'$ are subalgebras of~$\mathcal{O}_P$, and one has $\sigma^*R'=R$. Since the action of
$\sigma$ on $\mathcal{O}_P$ is trivial, we conclude that~\mbox{$R=R'$}, and $\sigma$ acts trivially on $R$.
This means that $U$ is $\sigma$-invariant, and~$\sigma$ acts trivially on~$U$.
Finally, since $X$ is irreducible,
$U$ is dense in~$X$; hence~$\sigma$ acts trivially on the whole~$X$.
\end{proof}

\section{Group of connected components}
\label{section:MZ}

In this section we recall the following assertion
established in~\mbox{\cite[Lemma~2.5]{MZ}}
(cf. the proof of~\mbox{\cite[Theorem~1.9]{Hu}}).

\begin{lemma}\label{lemma:MZ}
Let $X$ be a (possibly reducible) projective variety.
Then the group~\mbox{$\Aut(X)/\Aut^0(X)$} has bounded finite subgroups.
\end{lemma}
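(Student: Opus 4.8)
The plan is to let $\Aut(X)$ act on the Néron–Severi group $\NS(X)$, observe that this action kills $\Aut^0(X)$, and thereby split the problem into a lattice-automorphism part handled by Minkowski's theorem and a kernel part handled by boundedness of polarized automorphisms. First I would reduce to the case in which $\Bbbk$ is algebraically closed. Forming the neutral component commutes with field extension, so $\AAut^0_{X_{\bar{\Bbbk}}}=(\AAut^0_X)_{\bar{\Bbbk}}$, and since $\AAut^0_X$ is open and closed one checks that $\AAut^0_X(\bar{\Bbbk})\cap\AAut_X(\Bbbk)=\AAut^0_X(\Bbbk)$. Hence the inclusion $\Aut(X)\subset\Aut(X_{\bar{\Bbbk}})$ induces an injection $\Aut(X)/\Aut^0(X)\hookrightarrow\Aut(X_{\bar{\Bbbk}})/\Aut^0(X_{\bar{\Bbbk}})$, so a bound on the finite subgroups of the latter yields one for the former. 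From now on I assume $\Bbbk=\bar{\Bbbk}$.

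Recall that $\NS(X)$ is a finitely generated abelian group by the theorem of the base, and that $\Aut(X)$ acts on it through its action on $\Pic(X)$. Since $\NS(X)$ is discrete whereas $\AAut^0_X$ is connected, every element of $\Aut^0(X)$ carries each line bundle to an algebraically equivalent one and so acts trivially on $\NS(X)$ (this also rules out any permutation of irreducible components). We therefore obtain a homomorphism
$$
\phi\colon \Aut(X)/\Aut^0(X)\longrightarrow \Aut\big(\NS(X)\big).
$$
By Corollary~\ref{corollary:Minkowski} the target $\Aut\big(\NS(X)\big)$ has bounded finite subgroups; consequently so does the image $\phi\big(\Aut(X)/\Aut^0(X)\big)$.

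It remains to control the kernel $K=\ker\phi$, and this is the step I expect to be the main obstacle. By definition $K$ consists of the classes of automorphisms acting trivially on all of $\NS(X)$; in particular each such automorphism fixes the numerical class $h$ of a chosen ample line bundle $H$. The key geometric input is the boundedness of automorphisms preserving $h$: for any $g$ with $g^\ast h=h$ the restriction of $H\boxtimes H$ to the graph $\Gamma_g\subset X\times X$ is numerically equivalent to $2H$ on $\Gamma_g\cong X$, so all these graphs share one Hilbert polynomial and hence are parametrized by a single Hilbert scheme of finite type. Therefore the subgroup scheme of $\AAut_X$ fixing $h$ has only finitely many connected components, i.e. its group of $\Bbbk$-points modulo $\Aut^0(X)$ is finite; as $K$ is contained in this finite quotient, $K$ is finite. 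This boundedness is exactly the non-formal ingredient, where I would invoke the standard polarized-automorphism argument (as in \cite{MZ}).

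Finally I would assemble the two pieces. Let $G\subset\Aut(X)/\Aut^0(X)$ be any finite subgroup. Its image $\phi(G)$ is a finite subgroup of $\Aut\big(\NS(X)\big)$, so $|\phi(G)|\le B_1$ for the constant $B_1$ provided by Corollary~\ref{corollary:Minkowski}, while $G\cap K$ has order at most $|K|$. Restricting $\phi$ to $G$ gives $|G|=|G\cap K|\cdot|\phi(G)|\le |K|\cdot B_1$, a bound independent of $G$. Hence $\Aut(X)/\Aut^0(X)$ has bounded finite subgroups, which is the assertion of the lemma.
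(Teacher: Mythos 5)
Your proposal is correct and follows essentially the same route as the paper: reduce to an algebraically closed base field, use the action on $\NS(X)$ together with Minkowski's theorem (Corollary~\ref{corollary:Minkowski}) to bound the image, and bound the kernel by the Hilbert-polynomial/Hilbert-scheme argument showing that the stabilizer of an ample class has only finitely many components (Lemma~\ref{lemma:Aut-X-L} and Corollary~\ref{corollary:Aut-X-L}, which rest on Theorem~\ref{theorem:numerically-trivial}). The only cosmetic difference is that you bound the kernel of the action of the whole group $\Aut(X)/\Aut^0(X)$ on $\NS(X)$ at once, while the paper bounds the kernel subgroup separately for each finite subgroup $G$; the underlying finiteness statement is identical.
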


We provide the proof of Lemma~\ref{lemma:MZ} for the reader's convenience.
The argument below is mostly taken from~\mbox{\cite[Remark~2.6]{MZ}}.
Let us start with a simple observation.

\begin{lemma}\label{lemma:Aut-Aut0-alg-closed}
Let $X$ be a projective variety over a field $\Bbbk$.
Then there is an embedding
$$
\theta\colon \Aut(X)/\Aut^0(X)\hookrightarrow\Aut(X_{\bar{\Bbbk}})/\Aut^0(X_{\bar{\Bbbk}}).
$$
\end{lemma}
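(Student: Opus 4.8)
The plan is to realize both quotients as quotients of the groups of rational points of the automorphism group scheme, and to reduce the statement to the behaviour of the neutral component under extension of scalars. First I would invoke the facts recalled in Section~\ref{section:Aut}, namely that $\Aut(X)=\AAut_X(\Bbbk)$ and $\Aut^0(X)=\AAut^0_X(\Bbbk)$, together with the fact that the formation of the automorphism group scheme commutes with base field extension, so that there is a canonical isomorphism $(\AAut_X)_{\bar{\Bbbk}}\cong\AAut_{X_{\bar{\Bbbk}}}$. Taking $\bar{\Bbbk}$-points recovers the inclusion $\Aut(X)\subset\Aut(X_{\bar{\Bbbk}})$ already noted in the excerpt, which on a point $g$ sends it to its base change $g_{\bar{\Bbbk}}$.

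Next I would record the essential geometric input: the neutral component $\AAut^0_X$ is a connected group scheme of finite type over $\Bbbk$ containing the identity as a $\Bbbk$-point, hence it is geometrically connected. Consequently $(\AAut^0_X)_{\bar{\Bbbk}}=\AAut^0_{X_{\bar{\Bbbk}}}$, and this is a single connected component of $(\AAut_X)_{\bar{\Bbbk}}$. Since $\AAut^0_X$ is open and closed in $\AAut_X$, the subgroup $\Aut^0(X)=\AAut^0_X(\Bbbk)$ lands inside $\Aut^0(X_{\bar{\Bbbk}})$ under the inclusion above, so passing to quotients yields a well-defined homomorphism $\theta$.

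It then remains to prove that $\theta$ is injective, which amounts to the equality $\Aut(X)\cap\Aut^0(X_{\bar{\Bbbk}})=\Aut^0(X)$. Given $g\in\Aut(X)$ whose image $g_{\bar{\Bbbk}}$ lies in $\AAut^0_{X_{\bar{\Bbbk}}}(\bar{\Bbbk})$, let $C$ be the connected component of $\AAut_X$ containing the point underlying $g$. The base change $C_{\bar{\Bbbk}}$ is open and closed, hence a union of connected components of $(\AAut_X)_{\bar{\Bbbk}}$, and it contains $g_{\bar{\Bbbk}}$; on the other hand $(\AAut^0_X)_{\bar{\Bbbk}}$ is the connected component through $g_{\bar{\Bbbk}}$, so $(\AAut^0_X)_{\bar{\Bbbk}}\subseteq C_{\bar{\Bbbk}}$. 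Applying the faithfully flat surjection $(\AAut_X)_{\bar{\Bbbk}}\to\AAut_X$ gives $\AAut^0_X\subseteq C$, and since two connected components are either equal or disjoint, this forces $C=\AAut^0_X$, whence $g\in\AAut^0_X(\Bbbk)=\Aut^0(X)$.

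The step I expect to require the most care is precisely the interplay between connected components over $\Bbbk$ and over $\bar{\Bbbk}$: concretely, that a $\Bbbk$-rational point of $\AAut_X$ lies in the neutral component as soon as it does so after base change to $\bar{\Bbbk}$. This is where the geometric connectedness of $\AAut^0_X$, guaranteed by the presence of the identity as a $\Bbbk$-point, is indispensable, since without it the neutral component could a priori split upon base change and the component-matching argument above would break down.
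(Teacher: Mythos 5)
Your proof is correct and follows essentially the same route as the paper: both realize $\Aut(X)$ and $\Aut(X_{\bar{\Bbbk}})$ as rational points of the group scheme $\AAut_X$ and its base change, use that $\AAut^0_X$ is geometrically connected (hence $(\AAut^0_X)_{\bar{\Bbbk}}=\AAut^0_{X_{\bar{\Bbbk}}}$) to get a well-defined $\theta$, and reduce injectivity to the set-theoretic fact that a $\Bbbk$-point lies in $\AAut^0_X$ if and only if it does so after base change to $\bar{\Bbbk}$. The paper phrases this last step directly (a $\Bbbk$-point off the closed subscheme $\AAut^0_X$ cannot become a $\bar{\Bbbk}$-point of it), while you argue it via matching connected components under the projection $(\AAut_X)_{\bar{\Bbbk}}\to\AAut_X$; these are equivalent, and your version merely spells out details the paper leaves implicit.
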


\begin{proof}
Let $\hat{g}$ be an element of the quotient group $\Aut(X)/\Aut^0(X)$, and let $g$ be its preimage in $\Aut(X)$.
Thus, $g$ is a $\Bbbk$-point of the group scheme $\AAut_X$.
Considering it as a $\bar{\Bbbk}$-point of $\AAut_X$, we obtain the
map $\theta$.

To show that $\theta$ is injective, suppose that~$\hat{g}$ is a non-trivial element,
so that~$g$ is not contained in $\Aut^0(X)$.
In other words, the corresponding $\Bbbk$-point of~$\AAut_X$ is not contained in the
closed subgroup $\AAut_X^0$, which is also defined over~$\Bbbk$. Therefore, $g$ is not contained in the set of $\bar{\Bbbk}$-points of~\mbox{$\AAut_X^0$},
which means that it defines a non-trivial element
of the quotient group~\mbox{$\Aut(X_{\bar{\Bbbk}})/\Aut^0(X_{\bar{\Bbbk}})$} as well.
\end{proof}

To prove Lemma~\ref{lemma:MZ}, we will use the following standard
fact.

\begin{theorem}[{\cite[Theorem~II.2.1]{Kleiman}}]
\label{theorem:numerically-trivial}
Let $X$ be a projective variety over an algebraically closed field.
Let $\mathcal{F}$ be a coherent sheaf, and let
$\mathcal{G}$ be a numerically trivial line bundle on~$X$.
Then the Euler characteristic of~$\mathcal{F}$ equals
the Euler characteristic of~$\mathcal{F}\otimes\mathcal{G}$.
\end{theorem}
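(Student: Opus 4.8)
The plan is to prove the equivalent statement that the function $f(n)=\chi(\mathcal{F}\otimes\mathcal{G}^{\otimes n})$ is constant in $n$, since evaluating at $n=0$ and $n=1$ then yields the asserted equality. The natural starting point is Snapper's theorem: for a coherent sheaf and finitely many line bundles on a projective variety, the Euler characteristic of the twisted sheaf is a numerical polynomial in the twisting exponents; in particular $f(n)$ is a polynomial in $n$ of degree at most $\dim\operatorname{Supp}\mathcal{F}$. It therefore suffices to show that $f$ has degree $0$.

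Next I would reduce to the structure sheaf of an integral subvariety by d\'evissage. Every coherent sheaf $\mathcal{F}$ admits a finite filtration whose successive quotients have the form $(i_V)_*\mathcal{M}$ for closed immersions $i_V\colon V\hookrightarrow X$ of integral subschemes. Since tensoring by the locally free sheaf $\mathcal{G}^{\otimes n}$ is exact and $\chi$ is additive on short exact sequences, it is enough to treat each quotient; using the projection formula, the equality $\chi_X\big((i_V)_*\mathcal{M}\big)=\chi_V(\mathcal{M})$, and the observation that $i_V^*\mathcal{G}$ is again numerically trivial on $V$, the problem reduces to $\mathcal{F}=\mathcal{O}_X$ with $X$ integral. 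I then argue by induction on $d=\dim X$, where the inductive hypothesis is the full statement of the theorem in dimension less than $d$.

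For the base case $d=1$ the variety $X$ is an integral curve, numerical triviality says precisely that $\deg(\mathcal{G}|_X)=0$, and Riemann--Roch on a (possibly singular) projective curve gives $\chi(\mathcal{G}^{\otimes n})=n\deg\mathcal{G}+\chi(\mathcal{O}_X)=\chi(\mathcal{O}_X)$. For the inductive step I would write $\mathcal{G}\cong\mathcal{O}_X(A)\otimes\mathcal{O}_X(B)^{-1}$ with $A$ and $B$ effective Cartier divisors in general position (a very ample minus a very ample divisor), and play off the two exact sequences obtained by tensoring $0\to\mathcal{O}_X(-A)\to\mathcal{O}_X\to\mathcal{O}_A\to0$ and $0\to\mathcal{O}_X(-B)\to\mathcal{O}_X\to\mathcal{O}_B\to0$ against $\mathcal{G}^{\otimes n}\otimes\mathcal{O}_X(A)$. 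Additivity of $\chi$ then expresses the first difference $f(n+1)-f(n)$ as a difference of Euler characteristics on the $(d-1)$-dimensional schemes $A$ and $B$, twisted by powers of the restrictions $\mathcal{G}|_A$ and $\mathcal{G}|_B$, which remain numerically trivial; by the inductive hypothesis these are independent of $n$, so $f$ is at most linear.

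The step I expect to be the main obstacle is upgrading this to the conclusion that $f$ is genuinely constant, that is, killing the residual linear term. Controlling only the top self-intersection of $\mathcal{G}$ does not suffice; what is needed is that numerical triviality forces the vanishing of \emph{every} intersection number that is linear in $\mathcal{G}$, and not merely of $(\mathcal{G}\cdot C)$ for curves $C$. This is exactly where the hypothesis must be used in full strength: intersecting an arbitrary divisor $D$ with $d-2$ general very ample hyperplane sections cuts it down to a curve, so that $(\mathcal{G}\cdot D\cdot H^{d-2})=0$ for all $D$, and feeding this into asymptotic Riemann--Roch (equivalently, into the Snapper--Kleiman numerical-polynomial formalism) annihilates the linear coefficient and gives $f(n)\equiv\chi(\mathcal{F})$.
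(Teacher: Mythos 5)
The paper itself contains no proof of this statement: it is imported verbatim from Kleiman's paper as a black box, so your attempt can only be judged on its own merits rather than against an argument in the text.

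Your skeleton is the standard and correct one: Snapper polynomiality, d\'evissage to $\mathcal{F}=\mathcal{O}_X$ with $X$ integral, induction on dimension, writing $\mathcal{G}\cong\mathcal{O}_X(A-B)$ and using the two restriction sequences to show that $f(n)=\chi(\mathcal{G}^{\otimes n})$ has degree at most $1$. You also correctly identified the crux, namely killing the residual linear term. But your proposed resolution of that crux does not work, and the reason is structural. In the Snapper--Kleiman formalism, intersection numbers are by definition the coefficients of the \emph{multilinear} monomials $n_1\cdots n_t$ with $t\ge\dim\operatorname{Supp}\mathcal{F}$ factors; the sub-leading coefficients of the one-variable polynomial $f(n)$ are simply not intersection numbers, and asymptotic Riemann--Roch likewise controls only the leading coefficient. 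Concretely, everything you establish --- polynomiality, the separation $\chi(\mathcal{F}\otimes\mathcal{G}^{\otimes n}\otimes\mathcal{O}(mA))=f(n)+R(m)$, linearity of $f$, and the vanishing $(\mathcal{G}\cdot D\cdot H^{d-2})=0$ of every intersection number involving $\mathcal{G}$ --- is \emph{formally consistent} with $f(n)=cn+\chi(\mathcal{F})$ for $c\neq 0$: those vanishing numbers are coefficients of genuinely mixed monomials and never constrain the coefficient of $n$ itself. (On a \emph{smooth} variety the linear coefficient equals $\deg\big(c_1(\mathcal{G})\cdot \mathrm{Td}_{d-1}(X)\big)$ by Hirzebruch--Riemann--Roch, and numerical triviality kills it because $\mathrm{Td}_{d-1}(X)$ is a $1$-cycle class; but that class is not of your form $D\cdot H^{d-2}$, this is HRR rather than asymptotic RR, and it is unavailable here anyway, since the d\'evissage and the hyperplane sections force you onto singular varieties and there is no resolution of singularities to fall back on in positive characteristic.)

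What is missing is genuine cohomological input, not more intersection theory. One clean way to finish inside your framework: both $\mathcal{G}$ and $\mathcal{G}^{-1}$ are nef, so Fujita's vanishing theorem (valid in every characteristic) provides a single $m_0$ such that $H^i\big(X,\mathcal{F}\otimes\mathcal{O}(m_0A)\otimes\mathcal{G}^{\otimes n}\big)=0$ for all $i>0$ and \emph{all} $n\in\ZZ$ simultaneously. Combining this with the separation identity (obtained by the same restriction-plus-induction argument applied in the $\mathcal{O}(A)$-direction), one gets
$$
h^0\big(X,\mathcal{F}\otimes\mathcal{O}(m_0A)\otimes\mathcal{G}^{\otimes n}\big)=cn+\chi\big(\mathcal{F}\otimes\mathcal{O}(m_0A)\big)
\quad\text{for all } n\in\ZZ,
$$
and non-negativity of $h^0$ forces $c=0$. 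The uniformity of $m_0$ in $n$ is essential: with ordinary Serre vanishing the required twist grows with $n$ and no contradiction results. Some input of this kind (uniform vanishing, or duality-plus-growth arguments) is unavoidable at this point; a purely numerical manipulation of the Snapper polynomial, which is what your last step proposes, cannot close the gap.
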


Given a projective variety $X$ over an algebraically closed field $\Bbbk$,
let $\NS(X)$ denote Neron--Severi group of line bundles on $X$ modulo
algebraic equivalence.
Recall from \cite[Th\'eor\`eme~5.1]{Kleiman-NS} that
$\NS(X)$ is a finitely generated abelian group.
If $L$ is a line bundle on $X$, we define
the group scheme $\AAut_{X;[L]}$ as the stabilizer in~$\AAut_X$ of the class of~$L$ in
$\NS(X)$. Obviously, one has~\mbox{$\AAut^0_X\subset\AAut_{X;[L]}$}; this means that~$\AAut^0_X$
is the neutral component of~\mbox{$\AAut_{X;[L]}$}.

\begin{lemma}\label{lemma:Aut-X-L}
Let $X$ be a projective variety over an algebraically closed field, and let $L$ be an ample line bundle on~$X$.
Then $\AAut_{X;[L]}$ is a group scheme of finite type.
\end{lemma}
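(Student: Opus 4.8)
The plan is to realize $\AAut_{X;[L]}$ as a subscheme of a single finite-type piece of the Hilbert scheme of $X\times X$, by combining the graph construction of the automorphism group scheme with the invariance of Euler characteristics furnished by Theorem~\ref{theorem:numerically-trivial}.

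First I would recall the standard description of $\AAut_X$ via graphs. Let $p_1,p_2\colon X\times X\to X$ be the two projections and set $M=p_1^*L\otimes p_2^*L$; since $L$ is ample, $M$ is an ample line bundle on $X\times X$. Sending an automorphism $g$ to the graph $\Gamma_g\subset X\times X$ of the morphism $(\mathrm{id}_X,g)$ realizes $\AAut_X$ as an open subscheme of the Hilbert scheme $\Hilb_{X\times X}$; this is precisely the construction underlying the representability of $\AAut_X$, see \cite[Theorem~7.1.1]{Brion-long}. The Hilbert scheme decomposes as a disjoint union $\Hilb_{X\times X}=\coprod_P\Hilb^P_{X\times X}$ over Hilbert polynomials $P$ computed with respect to $M$, and each $\Hilb^P_{X\times X}$ is projective, hence of finite type. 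Writing $\AAut^P_X$ for the preimage of $\Hilb^P_{X\times X}$, we obtain a decomposition of $\AAut_X$ into open-and-closed pieces $\AAut^P_X$, each of finite type.

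Next I would compute the relevant Hilbert polynomial. The graph $\Gamma_g$ is isomorphic to $X$ via $(\mathrm{id}_X,g)$, and under this isomorphism $M|_{\Gamma_g}$ corresponds to $L\otimes g^*L$. Hence the Hilbert polynomial of $\Gamma_g$ with respect to $M$ is
\[
P_g(n)=\chi\big(X,(L\otimes g^*L)^{\otimes n}\big).
\]
Now suppose $g$ lies in $\AAut_{X;[L]}$, that is, $g$ fixes the class $[L]$ in $\NS(X)$. Then $g^*L\otimes L^{-1}$ is algebraically trivial, and in particular numerically trivial, so $(L\otimes g^*L)^{\otimes n}$ and $L^{\otimes 2n}$ differ by the numerically trivial line bundle $(g^*L\otimes L^{-1})^{\otimes n}$. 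By Theorem~\ref{theorem:numerically-trivial} their Euler characteristics coincide, so
\[
P_g(n)=\chi\big(X,L^{\otimes 2n}\big)=:P_0(n),
\]
independently of $g$.

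Finally I would assemble these facts. The computation shows that every geometric point of $\AAut_{X;[L]}$ lies in the piece $\AAut^{P_0}_X$. Since $\AAut^{P_0}_X$ is open and closed in $\AAut_X$ and contains all points of the subscheme $\AAut_{X;[L]}$, the closed-and-open complement of $\AAut^{P_0}_X$ meets $\AAut_{X;[L]}$ in an empty subscheme, and we conclude $\AAut_{X;[L]}\subseteq\AAut^{P_0}_X$ as schemes. As $\AAut_{X;[L]}$ is closed in $\AAut_X$, it is closed in $\AAut^{P_0}_X$, and a closed subscheme of the finite-type scheme $\AAut^{P_0}_X$ is of finite type. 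The main point requiring care is the passage from the numerical statement about geometric points to the scheme-theoretic containment $\AAut_{X;[L]}\subseteq\AAut^{P_0}_X$; this works precisely because $\AAut^{P_0}_X$ is a union of connected components (open \emph{and} closed) rather than merely a locally closed subset, and it is the invariance of Euler characteristics in Theorem~\ref{theorem:numerically-trivial} that pins all the graphs into this single component.
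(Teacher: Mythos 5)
Your proposal is correct and follows essentially the same route as the paper: embed $\AAut_X$ into $\Hilb(X\times X)$ via graphs, restrict the ample bundle $p_1^*L\otimes p_2^*L$ to a graph $\Gamma_g$ to get $L\otimes g^*L$, use Theorem~\ref{theorem:numerically-trivial} to see that the Hilbert polynomial is constant on $\AAut_{X;[L]}$, and conclude that $\AAut_{X;[L]}$ sits as a closed subscheme of the quasi-projective piece $\AAut_X^{P_0}$, hence is of finite type. Your extra remark on passing from geometric points to scheme-theoretic containment via the open-and-closed decomposition is a harmless elaboration of what the paper leaves implicit.
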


\begin{proof}
One can identify $\AAut_X$ with an open subscheme of the Hilbert scheme~\mbox{$\Hilb(X\times X)$}
by associating with each automorphism $f$ its graph~\mbox{$\Gamma_f\subset X\times X$};
see \cite[Theorem~5.23]{FGA-explained}, and the exercise after this theorem.
Set
$$
L_{X\times X}=p_1^*L\otimes p_2^*L,
$$
where $p_1, p_2\colon X\times X\to X$ are the projections
to the first and the second factor, respectively.
Then $L_{X\times X}$ is an ample line bundle on~\mbox{$X\times X$}.
Its restriction to~$\Gamma_f$ (identified with~$X$ via the projection~$p_1$) is the line bundle
$$
L_f\cong L\otimes f^*L.
$$
If~$f$ preserves the class $[L]\in \NS(X)$, then~$L_f$ is algebraically equivalent to~$L^2$;
in particular, these line bundles are numerically equivalent.
Therefore,  by Theorem~\ref{theorem:numerically-trivial}
the Hilbert polynomial of $\Gamma_f$ (with respect to the ample line bundle~$L_{X\times X}$ on~$X$)
is independent of $f$ provided that~\mbox{$f\in \AAut_{X;[L]}$}.

Let $P$ be this polynomial. Then $\AAut_{X;[L]}$ is contained in the Hilbert scheme~\mbox{$\Hilb_P(X\times X)$}. Recall that $\Hilb_P(X\times X)$ is projective,
see for instance~\mbox{\cite[Theorem~I.1.4]{Kollar-RatCurves}}.
Set
$$
\AAut_X^P=\AAut_X\cap \Hilb_P(X\times X)\subset \Hilb(X\times X).
$$
Then $\AAut_X^P$ is an open subscheme of $\Hilb_P(X\times X)$, and so it is quasi-projective.
Furthermore, $\AAut_{X;[L]}$ is a closed subscheme of~$\AAut_X^P$.
Hence, $\AAut_{X;[L]}$ is quasi-projective as well, thus it is of finite type.
\end{proof}

Denote by $\Aut_{[L]}(X)$ the group of $\Bbbk$-points of~\mbox{$\AAut_{X;[L]}$}.
The following assertion is implied by Lemma~\ref{lemma:Aut-X-L}.

\begin{corollary}\label{corollary:Aut-X-L}
Let $X$ be a projective variety over an algebraically closed field~$\Bbbk$, and let~$L$ be an ample line bundle on $X$.
Then $\Aut_{[L]}(X)/\Aut^0(X)$ is a finite group.
\end{corollary}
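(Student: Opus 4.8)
The plan is to read the result off from Lemma~\ref{lemma:Aut-X-L}. That lemma tells us that $\AAut_{X;[L]}$ is a group scheme of finite type over $\Bbbk$, so its underlying topological space is Noetherian; I would then invoke the standard fact that a Noetherian scheme has only finitely many connected components. By the observation made just before Lemma~\ref{lemma:Aut-X-L}, the neutral component of $\AAut_{X;[L]}$ is precisely $\AAut^0_X$, which is therefore an open and closed normal subgroup scheme whose cosets are exactly the connected components of $\AAut_{X;[L]}$. In particular the group of connected components $\pi_0(\AAut_{X;[L]})$ is finite.

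Next I would descend to $\Bbbk$-points. Assigning to a point $g\in\Aut_{[L]}(X)=\AAut_{X;[L]}(\Bbbk)$ the connected component containing it defines a homomorphism
$$
\phi\colon \Aut_{[L]}(X)\to\pi_0\big(\AAut_{X;[L]}\big),
$$
since the formation of connected components is compatible with the group law. Its kernel consists of those $\Bbbk$-points lying in the neutral component, that is, of $\AAut^0_X(\Bbbk)=\Aut^0(X)$. To see that $\phi$ is surjective I would use that $\Bbbk$ is algebraically closed: every connected component is a nonempty scheme of finite type over $\Bbbk$, hence has a closed point whose residue field is a finite, and therefore trivial, extension of $\Bbbk$, i.e. a $\Bbbk$-point. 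Thus $\phi$ induces an isomorphism $\Aut_{[L]}(X)/\Aut^0(X)\cong\pi_0(\AAut_{X;[L]})$, and the right-hand side is finite; this simultaneously shows that $\Aut^0(X)$ is normal, so that the quotient is genuinely a finite group.

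I do not anticipate a serious obstacle here; the one point that calls for a word of care is that $\AAut_{X;[L]}$ need not be reduced. This is harmless for the present argument, because both the finiteness of the number of connected components and the existence of a $\Bbbk$-point in each of them depend only on the underlying topological space, which is unchanged by passing to the reduction.
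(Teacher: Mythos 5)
Your proof is correct, and it rests on the same pillar as the paper's: Lemma~\ref{lemma:Aut-X-L} together with the observation that $\AAut^0_X$ is the neutral component of $\AAut_{X;[L]}$. The difference is one of packaging. The paper forms the quotient group scheme $\AAut_{X;[L]}/\AAut^0_{X}$, notes it is a finite group scheme, and then asserts the identification $\big(\AAut_{X;[L]}/\AAut^0_X\big)(\Bbbk)\cong \Aut_{[L]}(X)/\Aut^0(X)$, from which finiteness follows. You avoid quotient group schemes altogether: you count connected components of the Noetherian scheme $\AAut_{X;[L]}$ and pass to $\Bbbk$-points by hand. What your route buys is that the role of algebraic closedness becomes explicit --- it is needed exactly so that every connected component contains a $\Bbbk$-point, i.e.\ for the surjectivity of your map $\phi$; this is precisely the content hidden in the paper's unproved isomorphism of $\Bbbk$-point groups, whose injectivity is formal but whose surjectivity amounts to your closed-point argument. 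What the paper's route buys is brevity and a slightly stronger intermediate statement (finiteness of the quotient as a group scheme, not merely of its component set). Two small points of order in your write-up: the claim that the cosets of $\AAut^0_X$ are \emph{exactly} the connected components is itself justified only by the $\Bbbk$-point argument you give afterwards (over a non-closed field this identification can fail, since some components may have no rational point), and the statement that $\pi_0\big(\AAut_{X;[L]}\big)$ is a group uses that a product of two connected schemes of finite type over an algebraically closed field is connected; both facts hold in the present setting, so your argument is complete.
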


\begin{proof}
It follows from Lemma~\ref{lemma:Aut-X-L} that
$\AAut_{X;[L]}/\AAut^0_{X}$ is a finite group scheme.
Therefore, its group of $\Bbbk$-points
$$
\big(\AAut_{X;[L]}/\AAut^0_X\big)(\Bbbk)\cong \Aut_{[L]}(X)/\Aut^0(X).
$$
is finite.
\end{proof}

Now we prove Lemma~\ref{lemma:MZ}.

\begin{proof}[{Proof of Lemma~\ref{lemma:MZ}}]
By Lemma~\ref{lemma:Aut-Aut0-alg-closed}, we may assume that $X$ is defined over an algebraically closed field.
Choose an ample line bundle $L$ on $X$.
Let $G$ be a finite subgroup of the group~\mbox{$\Aut(X)/\Aut^0(X)$}. Since~\mbox{$\Aut^0(X)$} acts trivially on the Neron--Severi group~\mbox{$\NS(X)$},
there is a natural action of $G$ on~\mbox{$\NS(X)$}.
One has an exact sequence of groups
$$
1\longrightarrow K\longrightarrow G \longrightarrow \bar{G}\longrightarrow 1,
$$
where $\bar{G}$ is the image of the representation of $G$ in $\NS(X)$, and $K$ is the kernel of this representation. Since $\NS(X)$ is a finitely generated
abelian group, by Corollary~\ref{corollary:Minkowski}
there is a constant~\mbox{$M=M(X)$} such that every finite subgroup of $\Aut(\NS(X))$ has order at most $M$;
in particular, one has $|\bar{G}|\leqslant M$. On the other hand, the group~$K$ preserves the ample divisor class $[L]\in \NS(X)$, and hence
$$
K\subset \Aut_{[L]}(X)/\Aut^0(X).
$$
By Corollary~\ref{corollary:Aut-X-L}
the group $\Aut_{[L]}(X)/\Aut^0(X)$ is finite. Therefore, we have
$$
|G|=|\bar{G}|\cdot |K|\leqslant M\cdot \left|\Aut_{[L]}(X)/\Aut^0(X)\right|.
$$
\end{proof}

To conclude this section, let us make the following remark.
For every projective variety~$X$ defined over a field~$\Bbbk$,
there exists a natural embedding of groups
$$
\Aut(X)/\Aut^0(X)\hookrightarrow \big(\AAut_X/\AAut^0_X\big)(\Bbbk).
$$
However, over an algebraically non-closed field this embedding may fail to be an isomorphism.

\begin{example}
Let $\Bbbk=\QQ(\omega)$, where $\omega$ is a non-trivial cubic root of unity.
Let~$E$ be the curve given in $\PP^2$ with homogeneous coordinates~$x$, $y$, and $z$
by equation
$$
y^2z=x^3+z^3,
$$
and choose $(0:1:0)$ to be the marked point on $E$.
Then $E$ is an elliptic curve such that
$$
\AAut_{E}\cong\AAut^0_{E}\rtimes\ZZ/6\ZZ,
$$
and all six $\bar{\Bbbk}$-points of the group scheme
$\AAut_E/\AAut^0_E$ are defined over $\Bbbk$.
Note that all the $2$-torsion $\bar{\Bbbk}$-points of $\AAut^0_E$ are also defined over $\Bbbk$;
let $\upsilon$ be one of the non-trivial $2$-torsion $\Bbbk$-points of $\AAut^0_E$.
Choose an auxiliary quadratic extension~$\KK$ of~$\Bbbk$, say,~\mbox{$\KK=\Bbbk(\sqrt{7})$}.
Let $\zeta$ be the $1$-cocycle corresponding to the homomorphism
$\Gal(\KK/\Bbbk)\to\Aut(E)$ that sends
the generator of $\Gal(\KK/\Bbbk)\cong\ZZ/2\ZZ$ to~\mbox{$\upsilon\in\Aut(E)$}.
Let $E'$ be the twist of~$E$ by~$\zeta$.
Then $E'$ is a smooth geometrically irreducible projective curve of genus $1$ over~$\Bbbk$.
One can check that~\mbox{$E'(\Bbbk)=\varnothing$}.
Moreover, the Jacobian $J(E')$ is the twist of the Jacobian~\mbox{$J(E)$} by the same cocycle $\zeta$.
However, $\upsilon$ acts trivially on~\mbox{$J(E)$}, and thus~\mbox{$J(E')\cong J(E)$}. Since $E$ has a $\Bbbk$-point, we also
have an isomorphism~\mbox{$J(E)\cong E$}.

Now observe that the group scheme $\AAut_{E'}$ acts on
the curve $E\cong J(E')$, and~\mbox{$\AAut^0_{E'}$} is contained in the kernel of this action.
This gives rise to a homomorphism
$$
\AAut_{E'}/\AAut^0_{E'}\to\AAut_E/\AAut^0_E
$$
that becomes an isomorphism after the extension of scalars to $\bar{\Bbbk}$.
Thus, this homomorphism is actually an isomorphism.
Since~\mbox{$\AAut_E/\AAut^0_E$} has six $\Bbbk$-points, we conclude that~\mbox{$\AAut_{E'}/\AAut^0_{E'}$} has six $\Bbbk$-points as well.
On the other hand, the group~\mbox{$\Aut(E')/\Aut^0(E')$} does not contain elements of order~$6$. Indeed,
it follows from Lefschetz fixed point formula that
any preimage of such an element in~\mbox{$\Aut(E')$} would have a unique fixed point on~\mbox{$E'_{\bar{\Bbbk}}\cong E_{\bar{\Bbbk}}$},
and so this point would be defined over~$\Bbbk$.
\end{example}

\section{Minimal models}
\label{section:surfaces}

In this section, we recall the notion of the Kodaira dimension, and discuss some facts
concerning classification of surfaces
and the Minimal Model Program in dimension~$2$.

\medskip
\textbf{Kodaira dimension.}
One of the most important birational invariants
of projective varieties is the Kodaira dimension.
Given a smooth geometrically irreducible
projective variety, we will denote by $\omega_X$ the canonical sheaf on~$X$,
and by $K_X$ the canonical class of $X$, i.e. the class of $\omega_X$
in~\mbox{$\Pic(X)$}.

\begin{definition}[{\cite[Definition 5.6]{Badescu}}]
Let $X$ be a smooth irreducible
projective variety  over an algebraically closed
field.
The Kodaira dimension $\kappa(X)$ of $X$ is defined as follows:
$$
\kappa(X)=\left\{
\begin{array}{ll}
  \mathrm{tr.deg} \left(\bigoplus\limits_{n=0}^{\infty} H^0(X,\omega_X^{\otimes n})\right)-1,&
  \text{if }
  \mathrm{tr.deg} \left(\bigoplus\limits_{n=0}^{\infty} H^0(X,\omega_X^{\otimes n})\right)>0;\\
  -\infty,&\text{if }H^0(X,\omega_X^{\otimes n})=0 \text{ for all }n\ge 1.
\end{array}\right.
$$
\end{definition}

\begin{remark}
Alternatively, one can define $\kappa(X)$ as the maximal
dimension of an image of $X$ with respect to the rational map
given by the pluricanonical linear systems $|mK_X|$ for all
$m\ge 1$, see \cite[\S10.5]{Iitaka}.
\end{remark}

For an arbitrary field $\Bbbk$, we define the Kodaira dimension
$\kappa(X)$ of a smooth geometrically irreducible projective
variety $X$ as the Kodaira dimension of~$X_{\bar{\Bbbk}}$.
One has
$$
-\infty\le\kappa(X)\le \dim X,
$$
see e.g. \cite[Lemma~5.5]{Badescu}.
Kodaira dimension is a birational invariant
for smooth geometrically irreducible projective varieties,
see \cite[\S10.5]{Iitaka}. Moreover, the following assertion holds.

\begin{lemma}[{see \cite[Corollary~IV.1.11]{Kollar-RatCurves}}]
\label{lemma:ruled-kappa}
Let $X$ be a smooth geometrically irreducible projective variety. Suppose that~$X$ is birational to $Y\times\PP^1$, where
$Y$ is a (possibly singular) algebraic variety.
Then~\mbox{$\kappa(X)=-\infty$}.
\end{lemma}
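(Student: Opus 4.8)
The plan is to reduce to an algebraically closed base field, recognize $X$ as a \emph{separably} uniruled variety essentially by the very definition, and then feed this into the cited vanishing theorem of Kollár. Since by convention $\kappa(X)=\kappa(X_{\bar{\Bbbk}})$, and since extension of scalars commutes with products and carries the birational map to $Y\times\PP^1$ to a birational map $Y_{\bar{\Bbbk}}\times\PP^1\dashrightarrow X_{\bar{\Bbbk}}$, I would assume from the outset that $\Bbbk=\bar{\Bbbk}$. As $X$ is smooth and projective and the Kodaira dimension is a birational invariant of such varieties (see \cite[\S10.5]{Iitaka}), the whole statement is equivalent to the assertion that $H^0(X,\omega_X^{\otimes n})=0$ for every $n\ge 1$. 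Replacing $Y$ by the irreducible component of dimension $\dim X-1$ whose product with $\PP^1$ dominates $X$, I may take $Y$ to be irreducible.

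Next I would observe that $X$ is separably uniruled directly from the hypothesis. Indeed, the given birational map $\Phi\colon Y\times\PP^1\dashrightarrow X$, with $\dim Y=\dim X-1$, is dominant and separable (any birational map induces an isomorphism, hence a separable extension, of function fields), and for general $y\in Y$ the restriction $\Phi|_{\{y\}\times\PP^1}$ is non-constant because $\Phi$ does not contract the positive-dimensional fibres of the projection $Y\times\PP^1\to Y$. This is precisely the defining data of a separably uniruled variety in the sense of \cite[\S IV.1]{Kollar-RatCurves}, so no resolution of $Y\times\PP^1$ is required. I would then invoke \cite[Corollary~IV.1.11]{Kollar-RatCurves}: a smooth projective separably uniruled variety satisfies $H^0(X,\omega_X^{\otimes n})=0$ for all $n\ge 1$, whence $\kappa(X)=-\infty$. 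The geometric content of this last step is that through a general point of $X$ there is a free rational curve $f\colon\PP^1\to X$, for which freeness forces $\deg f^{*}\omega_X<0$; thus every pluricanonical form restricts to zero on such curves, and since these curves cover $X$, the form vanishes identically.

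The step that requires genuine care, and the reason I route the argument through separable uniruledness rather than through a smooth model of $Y\times\PP^1$, is that $Y$ (and hence $Y\times\PP^1$) may be singular, and in positive characteristic we cannot in general resolve its singularities in this dimension. The naive approach of comparing plurigenera on a common smooth model is therefore unavailable. What rescues the argument in arbitrary characteristic is that the product structure with $\PP^1$ furnishes a \emph{manifestly separable} ruling that transfers to the smooth variety $X$ without any resolution; it is exactly this separability hypothesis that Kollár's vanishing theorem needs, whereas mere (possibly inseparable) uniruledness would not be enough to force $\kappa=-\infty$ when $p>0$. Thus the only real point to verify is the separability of $\Phi$, which is automatic, and everything else is bookkeeping.
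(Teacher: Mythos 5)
Your proposal is correct and takes essentially the same approach as the paper: the paper offers no argument beyond the citation to Corollary~IV.1.11 of \cite{Kollar-RatCurves}, and your write-up simply makes explicit why that corollary applies --- namely that a birational map $Y\times\PP^1\dashrightarrow X$ is automatically separable and non-constant on general rulings, so $X$ is separably uniruled and the plurigenera vanish even in positive characteristic. Nothing further is needed.
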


%

In the case of surfaces (and in other cases when the resolution of singularities is available) one can extend the definition of
Kodaira dimension a little further. Namely, for an irreducible algebraic surface $S$ over an algebraically closed field,
the Kodaira dimension of $S$ is defined as the Kodaira dimension of (any) smooth projective birational model of $S$ (which exists
by Corollary~\ref{corollary:smooth-projective-model}). Note that due to birational invariance of Kodaira dimension this definition
does not depend on the choice of a birational model. If $S$ is  a geometrically irreducible algebraic surface
over an arbitrary field $\Bbbk$, we set~\mbox{$\kappa(S)=\kappa(S_{\bar{\Bbbk}})$.}

\medskip
\textbf{Minimal surfaces.}
Among all smooth projective surfaces, there is a special class
of the so-called minimal surfaces. They are the most important ones for studying
automorphism groups.

\begin{definition}[{see e.g. \cite[Definition 6.1]{Badescu}}]
Let $S$ be a smooth geometrically irreducible projective surface. We say that $S$ is \emph{minimal}
if every birational morphism $S\rightarrow Y$, where $Y$ is a
smooth projective surface, is an isomorphism.
\end{definition}

\begin{example}\label{example:minimal-0}
Let $S$ be a smooth geometrically irreducible projective surface over a field $\Bbbk$. Suppose that the canonical class $K_S$ is numerically trivial.
Then~$S$ is minimal.
Indeed, if $S$ is not minimal, then $S_{\bar{\Bbbk}}$ contains a
smooth rational curve with self-intersection~$-1$,
see e.g. \cite[Theorem~IV.3.4.5]{Shafarevich} or~\mbox{\cite[\S6]{Badescu}}.
On the other hand, computing the self-intersection by adjunction formula,
we see that such a curve cannot exist on~$S_{\bar{\Bbbk}}$.
\end{example}

Minimal surfaces are representatives of the birational equivalent classes
of all smooth geometrically irreducible projective surfaces. Indeed, since every birational morphism
between smooth projective surfaces over an algebraically closed field
is a composition of contractions of smooth rational curves with self-intersection~$-1$ (see \cite[Theorem~IV.3.4.5]{Shafarevich}),
the number of such consecutive contractions from a given surface $S$ is bounded by the Picard rank of~$S$.
Thus, we can replace every smooth geometrically irreducible projective surface $S$ over an arbitrary field by a minimal surface $S'$ such that there exists a birational
morphism from $S$ to~$S'$.

Birational automorphism groups of minimal surfaces of non-negative Kodaira dimension
are easy to study due to the following well known result.

\begin{lemma}[{see \cite[Corollary 10.22, Theorem 10.21]{Badescu}}]
\label{lemma:Bir-vs-Aut}
Let~$S$ be a minimal surface such that~\mbox{$\kappa(S)\ge 0$}.
Then $S$ is the unique minimal surface in its birational equivalence class, and~\mbox{$\Bir(S)=\Aut(S)$}.
\end{lemma}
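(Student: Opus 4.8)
The plan is to reduce everything to a single geometric statement: \emph{any birational map between minimal surfaces of non-negative Kodaira dimension is an isomorphism}. Both assertions of the lemma follow at once from this. Indeed, applying it to two different minimal models $S$ and $S'$ in one birational class (each of non-negative Kodaira dimension, since $\kappa$ is a birational invariant) shows that the birational map $S\dashrightarrow S'$ is an isomorphism, which gives uniqueness; applying it with $S'=S$ shows that every element of $\Bir(S)$ is an automorphism, so $\Bir(S)=\Aut(S)$. Since Kodaira dimension, minimality, and the relevant intersection theory are all controlled by the geometric situation, I would first pass to $\bar{\Bbbk}$, prove the statement there, and then descend: a birational self-map defined over $\Bbbk$ induces a $\Gal(\bar{\Bbbk}/\Bbbk)$-equivariant automorphism of $S_{\bar{\Bbbk}}$, which is therefore an automorphism of $S$.

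So let $\phi\colon S\dashrightarrow S'$ be a birational map of minimal surfaces with $\kappa\ge 0$ over an algebraically closed field. Resolving its indeterminacy, I obtain a smooth projective surface $W$ together with birational morphisms $f\colon W\to S$ and $g\colon W\to S'$, each of which, by \cite[Theorem~IV.3.4.5]{Shafarevich}, is a composition of contractions of $(-1)$-curves. I write the discrepancy formula $K_W=f^*K_S+E$ with $E\ge 0$ supported on the $f$-exceptional locus (all discrepancies of blow-ups being positive), and I exploit the key input from surface classification that on a minimal surface of non-negative Kodaira dimension the canonical class is nef. Now suppose $g$ is not an isomorphism; then its exceptional locus contains a $(-1)$-curve $C$ (the one contracted first from $W$). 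If $C$ were not $f$-exceptional, then $f(C)$ would be a curve, so $f^*K_S\cdot C=K_S\cdot f_*C\ge 0$ by nefness of $K_S$, while $E\cdot C\ge 0$ because $C\not\subset\operatorname{Supp}(E)$; this would give $K_W\cdot C\ge 0$, contradicting $K_W\cdot C=-1$. Hence $C$ is exceptional for both $f$ and $g$.

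With this in hand I argue by induction on the Picard rank of $W$. Contracting the common $(-1)$-curve $C$ produces $\sigma\colon W\to W_1$ with $W_1$ smooth, and since $f$ and $g$ both contract $C$ they factor as $f=f_1\circ\sigma$ and $g=g_1\circ\sigma$; note that $\phi=g_1\circ f_1^{-1}$ is unchanged, so it suffices to treat the pair $(f_1,g_1)$ of lower Picard rank. The base case is when one of the two morphisms, say $g$, is already an isomorphism: identifying $W$ with $S'$, the morphism $f\colon S'\to S$ is a birational morphism out of the minimal surface $S'$, which has no $(-1)$-curves, so $f$ is an isomorphism and hence so is $\phi$. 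This completes the induction and the proof of the geometric statement, and therefore of the lemma.

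The main obstacle is the classification input that $K_S$ is nef for a minimal surface with $\kappa(S)\ge 0$; this is where the hypothesis $\kappa\ge 0$ is genuinely used, and it rests on the rough classification of surfaces (a minimal surface carrying a curve $C$ with $K_S\cdot C<0$ is uniruled and hence has $\kappa=-\infty$, cf.\ Lemma~\ref{lemma:ruled-kappa} and Example~\ref{example:minimal-0}). Everything else is formal: the existence of a smooth surface dominating both $S$ and $S'$, the factorization of birational morphisms through $(-1)$-contractions, the nonemptiness of the $g$-exceptional locus forcing a $(-1)$-curve, and the descent from $\bar{\Bbbk}$ to $\Bbbk$.
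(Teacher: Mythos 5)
Your geometric core is correct over an algebraically closed field, and it is essentially the argument of \cite[Theorem~10.21, Corollary~10.22]{Badescu} that the paper cites in place of a proof (the paper adds only the remark that the general case "easily follows"): common resolution $f\colon W\to S$, $g\colon W\to S'$; the discrepancy formula $K_W=f^*K_S+E$; nefness of $K_S$ (correctly identified as the one genuine classification input, valid in any characteristic) forcing every $(-1)$-curve contracted by $g$ to be $f$-exceptional as well; factorization of both morphisms through the contraction of that common curve; and induction on the Picard rank, with the base case handled by minimality. All of these steps check out.

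The genuine gap is the descent to a non-closed field $\Bbbk$, which is part of the statement as the paper formulates it. You justify passing to $\bar{\Bbbk}$ by asserting that minimality is "controlled by the geometric situation", but minimality is \emph{not} a geometric property: a surface can be minimal over $\Bbbk$ while $S_{\bar{\Bbbk}}$ carries $(-1)$-curves (e.g. a form of the degree-$6$ del Pezzo surface with Picard rank $1$; all such examples have $\kappa=-\infty$, which is exactly the point). Your geometric statement can only be applied to $S_{\bar{\Bbbk}}$ and $S'_{\bar{\Bbbk}}$ after you know they are minimal, and without this the conclusion can genuinely fail: for a non-minimal surface of non-negative Kodaira dimension, such as the blow-up of a $K3$ surface at a point, one has $\Bir\neq\Aut$. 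So your proof needs the auxiliary implication: if $S$ is minimal over $\Bbbk$ and $\kappa(S)\ge 0$, then $S_{\bar{\Bbbk}}$ is minimal. This is true, but its proof uses $\kappa\ge 0$ once more: on a smooth projective surface with $\kappa\ge 0$ any two distinct $(-1)$-curves are disjoint (if $C_1\cdot C_2>0$, contracting $C_1$ produces an irreducible curve $D$ with $D^2\ge 0$ and $K\cdot D<0$, which forces $h^0(mK)=0$ for all $m>0$); they are finite in number, defined over the separable closure (each is a reduced isolated point of the Hilbert scheme, its normal bundle having no sections), and permuted by the Galois group, so their union descends and can be contracted by a birational morphism defined over $\Bbbk$ to a smooth projective surface — contradicting minimality of $S$ over $\Bbbk$ unless there are none. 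With this lemma inserted, your argument is complete. A smaller point in the same step: over an imperfect field $\bar{\Bbbk}/\Bbbk$ is not Galois, so rather than "Galois-equivariance" one should invoke flat descent — a morphism of $\Bbbk$-schemes is an isomorphism if and only if its base change to $\bar{\Bbbk}$ is.
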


\begin{remark}
In \cite{Badescu}, the proof of Lemma~\ref{lemma:Bir-vs-Aut} is given over
an algebraically closed field. However, the general case easily follows from this.
\end{remark}

Similarly to the case of characterstic zero, over fields of positive
characteristic
there exists a Kodaira--Enriques classification of minimal surfaces
due to E.\,Bombieri and D.\,Mumford \cite{Mumford-1969},
\cite{BM-II}, \cite{BM-III};
see also \cite{Badescu} and~\cite{Liedtke}.
We recall its part that will be used in this paper.
The definitions of the particular classes of surfaces can be found
for instance in~\mbox{\cite[\S6,\S7]{Liedtke}}.

\begin{theorem}
\label{theorem:KE}
Let $S$ be a smooth irreducible projective surface over an algebraically closed field.
The following assertions hold.
\begin{itemize}
\item[(i)]
If $\kappa(S)=-\infty$, then $S$ is birational
either to~$\PP^2$, or to~\mbox{$C\times\PP^1$},
where~$C$ is a (irreducible smooth projective) curve of positive genus.

\item[(ii)] If $\kappa(S)=0$ and $S$ is minimal, then $S$ is either a~$K3$ surface,
or an Enriques surface, or an abelian surface, or a hyperelliptic
surface, or a quasi-hyperelliptic surface.
\end{itemize}
\end{theorem}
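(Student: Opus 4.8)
The plan is to follow the Enriques--Kodaira classification as extended to positive characteristic by Bombieri and Mumford, organizing the argument around the Minimal Model Program in dimension~$2$ together with a case analysis on the standard numerical invariants $b_1$, $b_2$, the geometric genus $p_g=h^0(S,\omega_S)$ and the irregularity $q=h^1(S,\mathcal{O}_S)$.

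For part~(i), I would first run the MMP on $S$. Since $\kappa(S)=-\infty$, the canonical class is not pseudo-effective, so the program terminates on a Mori fibre space $S'$ birational to $S$: either $S'$ has Picard rank~$1$ with $-K_{S'}$ ample, or $S'$ admits a morphism to a smooth curve $C$ whose general fibre is $\PP^1$. In the first case $S'$, being a smooth del Pezzo surface of Picard rank~$1$ over an algebraically closed field, is $\PP^2$, so $S$ is rational and hence birational to $\PP^2$. In the second case, over the function field $\Bbbk(C)$ the generic fibre is a smooth conic; since $\mathrm{Br}\big(\Bbbk(C)\big)=0$ by Tsen's theorem, this conic is $\PP^1_{\Bbbk(C)}$, so $S'$ is birationally a $\PP^1$-bundle and thus birational to $C\times\PP^1$. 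If $g(C)=0$ this again gives a surface birational to $\PP^2$; if $g(C)>0$ we obtain the second alternative. The point demanding care in positive characteristic is that the Castelnuovo-type rationality input is packaged into the MMP output rather than deduced from Hodge theory.

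For part~(ii), the first step is to pin down the canonical class: for a minimal surface with $\kappa(S)\ge 0$ the class $K_S$ is nef, and $\kappa(S)=0$ forces $K_S^2=0$ together with $h^0(S,\omega_S^{\otimes m})\le 1$ for all $m$; standard intersection-theoretic manipulation then yields $K_S\equiv 0$, and a finer analysis gives $12K_S\sim 0$. I would then split into cases according to $b_1\in\{0,2,4\}$. When $b_1=4$ the Albanese morphism is an isomorphism onto a two-dimensional abelian variety, so $S$ is abelian. When $b_1=2$ the Albanese target is an elliptic curve and the induced fibration is either elliptic or (only in characteristics $2$ and $3$) quasi-elliptic; analysing this fibration presents $S$ as a quotient of a product of two curves by a finite group acting freely, giving the hyperelliptic surfaces in the first subcase and the quasi-hyperelliptic surfaces in the second. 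When $b_1=0$ one has $q=0$, and the torsion of $K_S$ in $\Pic(S)$ distinguishes the remaining two families: $K_S\sim 0$ gives a $K3$ surface, while $K_S\not\sim 0$ with $2K_S\sim 0$ gives an Enriques surface.

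The main obstacle lies entirely in part~(ii), and it is genuinely characteristic-dependent: the classical complex proof leans on Kodaira vanishing and Hodge symmetry ($b_1=2q$), both of which fail in positive characteristic. The substitute is the Bombieri--Mumford numerical machinery, which computes the invariants above using $\ell$-adic and, where necessary, crystalline cohomology, and which must account for the pathological phenomena of small characteristic---non-reduced Picard schemes, the failure of $b_1=2q$, and the very existence of quasi-hyperelliptic surfaces arising from quasi-elliptic fibrations in characteristics $2$ and $3$. Establishing that these quasi-elliptic fibrations occur and controlling the finite quotients in the $b_1=2$ case is where the bulk of the work lies; the remaining bookkeeping of invariants for the $K3$, Enriques, and abelian cases is comparatively formal once the vanishing-free cohomological estimates are in place.
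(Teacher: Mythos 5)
The first thing to note is that the paper does not prove Theorem~\ref{theorem:KE} at all: it quotes the Kodaira--Enriques classification in positive characteristic from Mumford and Bombieri--Mumford \cite{Mumford-1969}, \cite{BM-II}, \cite{BM-III}, pointing to \cite{Badescu} and \cite{Liedtke} as expositions. So your proposal is not competing with an argument in the paper but with the cited literature, and at the level of architecture your outline (MMP/ruledness for part~(i), the division by $b_1\in\{0,2,4\}$ for part~(ii)) is exactly the strategy of those references. The problem is that the details you supply go wrong precisely in characteristics $2$ and $3$, which is where all the positive-characteristic content of the theorem lives.

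Concretely, in your $b_1=0$ case you assert that $q=0$ and that the torsion of $K_S$ in $\Pic(S)$ separates $K3$ from Enriques surfaces; both statements are false in characteristic~$2$. There, besides the classical Enriques surfaces (with $K_S\not\sim 0$, $2K_S\sim 0$, $q=0$), one has the singular and supersingular Enriques surfaces, for which $\omega_S\cong\mathcal{O}_S$, $h^1(S,\mathcal{O}_S)=1$, and $\Pic$ has a non-reduced piece ($\mu_2$ or $\alpha_2$), while still $b_1=0$. With your dichotomy these surfaces would be misclassified as $K3$ surfaces; the correct separating invariants are $b_2$ ($22$ versus $10$) or $\chi(\mathcal{O}_S)$ ($2$ versus $1$). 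Similarly, in the $b_1=2$ case the structure theorem presents hyperelliptic and quasi-hyperelliptic surfaces as quotients $(E\times F)/\Gamma$ where $\Gamma$ is a finite \emph{group scheme}, in general non-reduced in characteristics $2$ and $3$; quotients by honest finite groups acting freely do not exhaust these families. (The paper itself relies on the group-scheme form of this structure theorem in the proof of Lemma~\ref{lemma:hyperelliptic-groups}.) It is telling that you list non-reduced Picard schemes and the failure of $b_1=2q$ among the pathologies that the Bombieri--Mumford machinery must handle, but the case analysis you then run uses exactly the statements these pathologies destroy.

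There is also a gap in part~(i): the opening step ``since $\kappa(S)=-\infty$, the canonical class is not pseudo-effective'' is not a formality. For surfaces, the equivalence of $\kappa(S)=-\infty$ with non-pseudo-effectivity of $K_S$ --- equivalently, the statement that $K_S$ nef implies $\kappa(S)\ge 0$ --- is the non-vanishing theorem, i.e. precisely the Castelnuovo--Enriques-type input that Mumford had to re-establish in characteristic $p$ without Hodge theory. Your closing remark half-acknowledges this, but as written the MMP argument presupposes the theorem that makes it terminate on a Mori fibre space rather than on a minimal model, so the main characteristic-$p$ difficulty of part~(i) is assumed rather than addressed.
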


\medskip
\textbf{$G$-minimal surfaces.}
There exists a version of the Minimal Model Program
that takes into account an action of a group.
Below we recall some of its implications in the case of geometrically rational surfaces.

\begin{definition}
A smooth geometrically irreducible
projective surface $S$ with an action of a
group $G$ is called $G$-\emph{minimal}
if every $G$-equivariant birational morphism
$S\rightarrow T$, where $T$ is a
smooth geometrically irreducible projective surface with an action of $G$,
is an isomorphism.
\end{definition}

Similarly to the case of the trivial group action, for every smooth geometrically irreducible projective
surface with an action of a group $G$ there is a $G$-equivariant birational
morphism to a $G$-minimal surface. Thus, it is interesting to
know the properties of $G$-minimal surfaces.

\begin{definition}\label{definition:conic-bundle}
Let $S$ be a smooth geometrically irreducible projective surface,
and let $\phi\colon S\to C$
be a surjective morphism to a smooth curve.
One says that~$\phi$ (or $S$) is a \emph{conic bundle},
if the fiber of $\phi$
over the scheme-theoretic generic point of $C$ is  smooth
and geometrically  irreducible, and the anticanonical line
bundle~$\omega^{-1}_S$ is very ample over $C$.
\end{definition}

\begin{remark}
Let $S$ be a smooth geometrically irreducible projective surface,
and let $\phi\colon S\to C$ be a conic bundle.
Denote by $S_\eta$ fiber of $\phi$ over the scheme-theoretic generic
point of $C$. Then $S_\eta$ is smooth and geometrically  irreducible.
Moreover, the anticanonical line
bundle~$\omega^{-1}_{S_\eta}$ is very ample.
This implies that~$S_\eta$ is a smooth conic over the field~$\Bbbk(C)$
of rational functions on~$C$.
\end{remark}

\begin{example}
Let $\Bbbk$ be an algebraically closed field of characteristic $2$.
Consider the surface $S\cong\PP^1\times\PP^1$, and let $\phi'\colon S\to\PP^1$
be the projection to the second
factor. Then $\phi'$ is a conic bundle. On the other hand,
let $\phi\colon S\to\PP^1$
be the composition
of~$\phi'$ with an inseparable double cover $\PP^1\to\PP^1$.
Then $\phi$ is \emph{not} a conic bundle. Indeed, its scheme-theoretic generic
fiber $S_{\eta}$ is not smooth
over the field~\mbox{$\Bbbk(\PP^1)$}. Moreover, for every point
$P$ of $S_{\eta}$, the scheme~$S_{\eta}$ is regular at~$P$,
but fails to be smooth
at this point, because $S_{\eta}$ is not geometrically reduced
(although it is reduced over~$\Bbbk$).
Note that $\phi$ satisfies the second requirement
of Definition~\ref{definition:conic-bundle},
i.e. the anticanonical sheaf~$\omega_S^{-1}$ is very ample
over~$\PP^1$; also, each geometric
fiber of~$\phi$ is isomorphic to a non-reduced
conic in~$\PP^2$.
\end{example}

Recall that a \emph{del Pezzo surface} is a smooth geometrically
irreducible projective
surface $S$  with ample anticanonical class. For the following result, we refer the
reader to~\mbox{\cite[Theorem~1G]{Iskovskikh}} or~\mbox{\cite[Theorem~2.7]{Mori}}
(cf. also~\mbox{\cite[Corollary~7.3]{Badescu}}).

\begin{theorem}
\label{theorem:Iskovskikh-G}
Let~$G$ be a finite group, and let~$S$ be a geometrically rational $G$-minimal surface.
Then~$S$ is either a del Pezzo surface, or a $G$-equivariant conic bundle over a smooth curve of genus zero.
\end{theorem}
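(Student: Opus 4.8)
The plan is to run the $G$-equivariant Minimal Model Program on $S$. Since $S$ is geometrically rational, we have $\kappa(S_{\bar{\Bbbk}})=-\infty$, and for a smooth projective surface this forces the canonical class $K_S$ to fail to be nef: were $K_S$ nef, the surface $S_{\bar{\Bbbk}}$ would be a minimal surface of non-negative Kodaira dimension, contradicting Theorem~\ref{theorem:KE}(i). Because $K_S$ is $G$-invariant, averaging a curve $C$ with $K_S\cdot C<0$ over its $G$-orbit yields a $G$-invariant effective class $\sum_{g\in G}g_\ast C$ with $K_S\cdot\big(\sum_{g}g_\ast C\big)<0$. Hence the $K_S$-negative part of the $G$-invariant Mori cone $\overline{\mathrm{NE}}(S)^G$ is non-empty. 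First I would invoke the $G$-equivariant Cone and Contraction Theorems to extract a $K_S$-negative extremal ray $R\subset\overline{\mathrm{NE}}(S)^G$, spanned by a $G$-invariant class, and a $G$-equivariant contraction $\phi\colon S\to T$ onto a normal projective variety $T$. The argument then splits according to $\dim T\in\{0,1,2\}$.

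If $\dim T=2$, then $\phi$ is birational and, over $\bar{\Bbbk}$, contracts a single $G$-orbit of pairwise disjoint $(-1)$-curves onto the smooth surface $T_{\bar{\Bbbk}}$; this contradicts the $G$-minimality of $S$, so this case is excluded. If $\dim T=0$, then $\NS(S)^G$ has rank one; averaging shows there is a $G$-invariant ample class generating the $G$-invariant ample ray, and since $K_S$ is not nef it must be a negative multiple of this generator, whence $-K_S$ is ample and $S$ is a del Pezzo surface. If $\dim T=1$, write $C=T$; then $\phi\colon S\to C$ is a morphism to a smooth curve whose general fibre $F$ satisfies $F^2=0$, and adjunction $2p_a(F)-2=F^2+K_S\cdot F$ together with $-K_S\cdot F>0$ gives $p_a(F)=0$ and $-K_S\cdot F=2$, so the fibres are conics and $\omega_S^{-1}$ is relatively very ample over $C$. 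Finally, since $S_{\bar{\Bbbk}}$ is rational it dominates $C_{\bar{\Bbbk}}$, so $C_{\bar{\Bbbk}}$ is unirational, hence rational by L\"uroth, and therefore $C$ is a smooth curve of genus zero; this places $\phi$ in the framework of Definition~\ref{definition:conic-bundle}.

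The hard part will be the case $\dim T=1$ in positive characteristic: I must verify that the scheme-theoretic generic fibre of $\phi$ is not merely regular but genuinely \emph{smooth} and geometrically reduced over $\Bbbk(C)$, as required by Definition~\ref{definition:conic-bundle}. The example following that definition shows that a fibration into conics may have a regular but non-reduced (hence non-smooth) generic fibre in characteristic~$2$; what rules this out here is precisely that $\phi$ is an extremal $K_S$-negative Mori contraction with relative Picard rank one, so that its generic fibre is an \emph{irreducible} anticanonically-embedded conic which one must show is smooth. Establishing this geometric reducedness of the generic conic is the delicate positive-characteristic input, and it is exactly the point handled by the cited results of Iskovskikh and Mori; it is the step I would expect to require the most care, whereas the remaining case analysis is formal once the equivariant MMP is in place.
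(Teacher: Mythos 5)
The paper does not actually prove this statement: it is imported wholesale from the literature, with a pointer to \cite[Theorem~1G]{Iskovskikh} and \cite[Theorem~2.7]{Mori} (cf. also \cite[Corollary~7.3]{Badescu}). So your proposal is not an alternative to the paper's argument but an unpacking of the black box it cites, and as an outline it is essentially the correct one: the $G$-equivariant surface MMP, the trichotomy by the dimension of the target of the extremal contraction, exclusion of the birational case by $G$-minimality, $\rho(S)^G=1$ forcing $-K_S$ ample in the point case, and adjunction plus L\"uroth in the fibration case. You also correctly isolate the two places where genuine work is needed. For the first (pairwise disjointness of the orbit of $(-1)$-curves, so that $T$ is smooth and $G$-minimality applies), the standard computation does it: writing $D=\sum_{i=1}^r C_i$ for the orbit sum, with each $C_i$ a $(-1)$-curve meeting $c$ others, one gets $D^2=r(c-1)$, so the birational case $D^2<0$ forces $c=0$. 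For the second (smoothness of the generic fibre over $\Bbbk(C)$, as required by Definition~\ref{definition:conic-bundle}), you defer to the same references the paper cites, which is defensible but can be replaced by a short self-contained argument you nearly have in hand: the generic fibre is regular, being a localization of the regular scheme $S$, and is an irreducible conic, so the only possible failure of smoothness is geometric non-reducedness in characteristic~$2$, i.e. $F_{\bar{\Bbbk}}=2L$; but then $K_S\cdot F=-2$ gives $K_S\cdot L=-1$ and $L^2=0$, contradicting the parity of adjunction $2p_a(L)-2=L^2+K_S\cdot L$. Note that this parity argument also explains why the paper's characteristic-$2$ example following Definition~\ref{definition:conic-bundle} is consistent with the theorem: there the fibre class satisfies $K_S\cdot F=-4$ and the map is not its own Stein factorization, so it is not an extremal contraction. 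With these two computations inserted, your outline is a complete proof, which is strictly more than the paper itself provides.
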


If the base field is perfect, one can use Lemma~\ref{lemma:regularization} together with
Theorem~\ref{theorem:Iskovskikh-G}
to produce nice regularizations of birational actions of finite groups
on geometrically rational surfaces (note however that we will use this only over
algebraically closed fields in our paper).

\begin{theorem}
\label{theorem:MMP}
Let $S$ be a geometrically rational algebraic
surface over a perfect field.
Let $G$ be a finite subgroup of $\Bir(S)$.
Then there exists a
smooth geometrically irreducible projective surface $S'$ with a regular action of $G$ and a
$G$-equivariant map~\mbox{$S\dasharrow S'$}, such that $S'$ is either a del Pezzo surface, or a $G$-equivariant conic bundle over a
curve of genus zero.
\end{theorem}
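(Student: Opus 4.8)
The plan is to combine the $G$-equivariant regularization provided by Lemma~\ref{lemma:regularization} with the structure theorem for geometrically rational $G$-minimal surfaces, that is, Theorem~\ref{theorem:Iskovskikh-G}.

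First, I would apply Lemma~\ref{lemma:regularization} to the finite subgroup $G\subset\Bir(S)$. Since the base field~$\Bbbk$ is perfect, this produces a smooth geometrically irreducible projective surface~$\tilde{S}$ together with a biregular action of~$G$ and a $G$-equivariant birational map~\mbox{$\phi\colon\tilde{S}\dasharrow S$}. As $\tilde{S}$ is birational to~$S$ over~$\Bbbk$, after extending scalars to~$\bar{\Bbbk}$ we find that~$\tilde{S}_{\bar{\Bbbk}}$ is birational to the rational surface~$S_{\bar{\Bbbk}}$; hence~$\tilde{S}$ is again geometrically rational.

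Next, I would run the $G$-equivariant Minimal Model Program on~$\tilde{S}$. As recalled after the definition of $G$-minimality, every smooth geometrically irreducible projective surface carrying an action of~$G$ admits a $G$-equivariant birational morphism onto a $G$-minimal surface; applied to~$\tilde{S}$, this yields a $G$-equivariant birational morphism~\mbox{$\psi\colon\tilde{S}\to S'$} onto a $G$-minimal surface~$S'$. Being birational to~$\tilde{S}$, the surface~$S'$ is still smooth, geometrically irreducible, and geometrically rational, so Theorem~\ref{theorem:Iskovskikh-G} applies and shows that~$S'$ is either a del Pezzo surface or a $G$-equivariant conic bundle over a curve of genus zero.

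Finally, composing the inverse birational map~\mbox{$\phi^{-1}\colon S\dasharrow\tilde{S}$} with the morphism~$\psi$ produces the desired $G$-equivariant rational map~\mbox{$S\dasharrow S'$}. I do not expect a serious obstacle here: the two substantive inputs (the equivariant regularization and the Iskovskikh--Mori classification) are already established, and the only points needing care are the preservation of geometric irreducibility and geometric rationality under the birational modifications, both of which become transparent after base change to~$\bar{\Bbbk}$.
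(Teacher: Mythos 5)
Your proposal is correct and follows exactly the same route as the paper's proof: regularize the action via Lemma~\ref{lemma:regularization} (using perfectness of the field to ensure smoothness), pass to a $G$-minimal model, and then invoke Theorem~\ref{theorem:Iskovskikh-G}. The only difference is that you spell out the preservation of geometric irreducibility and geometric rationality, which the paper leaves implicit.
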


\begin{proof}
First, there exists a regular projective surface $\tilde{S}$
with an action of~$G$
and a $G$-equivariant birational map $\tilde{S}\dasharrow S$, see Lemma~\ref{lemma:regularization}.
Since the base field is perfect, $\tilde{S}$ is actually smooth.
Thus there exists a
$G$-minimal surface~$S'$ that is $G$-equivariantly birational to~$\tilde{S}$ (and thus to~$S$).
Now the assertion about the geometrically rational case
follows from Theorem~\ref{theorem:Iskovskikh-G}.
\end{proof}

The next fact is well known.

\begin{theorem}
\label{theorem:del-Pezzo}
Let $S$ be a del Pezzo surface over a field~$\Bbbk$.
Then the linear system $|-3K_S|$ defines
an embedding $S\hookrightarrow\PP^N$, where $N\le 54$.
\end{theorem}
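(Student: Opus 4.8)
The plan is to reduce to an algebraically closed field, establish very ampleness of $-3K_S$ there, and then pin down $N=\dim|-3K_S|$ by a Riemann--Roch computation combined with the numerical classification of del Pezzo surfaces. The linear system $|-3K_S|$ and the associated rational map to projective space are defined over $\Bbbk$, since $K_S$ is. Both the number $h^0(S,-3K_S)$ and the property of the resulting morphism being a closed immersion are insensitive to the flat base change $\Spec\bar{\Bbbk}\to\Spec\Bbbk$: the dimension is preserved by cohomology and base change, and being a closed immersion is fppf-local on the base and therefore descends. Hence I may assume $\Bbbk=\bar{\Bbbk}$. Setting $d=K_S^2$, the classification of del Pezzo surfaces over an algebraically closed field --- $S$ is $\PP^2$, or $\PP^1\times\PP^1$, or a blow-up of $\PP^2$ at $r\le 8$ points in general position --- gives $1\le d\le 9$.

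Next I would verify that $-3K_S$ is very ample in every characteristic, which is exactly the statement that $|-3K_S|$ defines an embedding. This is classical, and the cleanest route is case by case along the classification. For $S=\PP^2$ one has $-3K_S=\mathcal{O}(9)$ and for $S=\PP^1\times\PP^1$ one has $-3K_S=\mathcal{O}(6,6)$, both obviously very ample. For a blow-up $\pi\colon S\to\PP^2$ at $r\le 8$ general points with exceptional divisors $E_i$, one has $-3K_S=\pi^*\mathcal{O}(9)-3\sum_i E_i$, and very ampleness follows because the plane curves of degree $9$ with triple points at $r\le 8$ general points separate points and tangent directions on $S$. Alternatively one may simply invoke the standard fact that $-mK_S$ is very ample on a del Pezzo surface for all $m\ge 3$ in arbitrary characteristic.

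For the numerical bound, Riemann--Roch on the surface $S$ gives
$$
\chi(S,-3K_S)=\chi(\mathcal{O}_S)+\tfrac12(-3K_S)\cdot(-3K_S-K_S)=\chi(\mathcal{O}_S)+6K_S^2 .
$$
As $S$ is geometrically rational with $-K_S$ ample, one has $h^1(\mathcal{O}_S)=h^2(\mathcal{O}_S)=0$, so $\chi(\mathcal{O}_S)=1$ and $\chi(S,-3K_S)=1+6d$. By Serre duality $h^2(S,-3K_S)=h^0(S,4K_S)=0$, since $-K_S$ is ample and hence $4K_S$ has no sections. Granting the vanishing $h^1(S,-3K_S)=0$, I obtain $h^0(S,-3K_S)=1+6d$, whence $N=h^0(S,-3K_S)-1=6d\le 6\cdot 9=54$.

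The main obstacle is precisely this vanishing $h^1(S,-3K_S)=0$ in positive characteristic, where Kodaira vanishing (which settles the characteristic-zero case by writing $-3K_S=K_S+(-4K_S)$ with $-4K_S$ ample) is not available in general. I would establish it by the explicit computation on the classification models: for $\pi\colon S\to\PP^2$ as above one has $\pi_*\mathcal{O}_S(-3K_S)\cong\mathcal{O}(9)\otimes\mathcal{I}_Z$, where $Z$ is the subscheme of triple points at the $p_i$, and $R^1\pi_*\mathcal{O}_S(-3\sum_i E_i)=0$; Leray then reduces the vanishing to the non-speciality of the system of plane curves of degree $9$ with triple base points at $r\le 8$ general points, which is a characteristic-free fact guaranteed by the general-position (del Pezzo) hypothesis, the cases $\PP^2$ and $\PP^1\times\PP^1$ being immediate. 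One could instead appeal to a vanishing theorem for nef divisors on del Pezzo surfaces valid in all characteristics. I emphasize that this vanishing is genuinely indispensable: without it Riemann--Roch yields only the inequality $h^0(S,-3K_S)\ge 1+6d$, so the equality $N=6d$, and hence the estimate $N\le 54$, would not follow.
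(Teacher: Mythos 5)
Your proposal is correct and shares its skeleton with the paper's proof: both establish that $-3K_S$ is very ample and then obtain $N=h^0\big(S,\mathcal{O}_S(-3K_S)\big)-1=6K_S^2\le 6\cdot 9=54$. The difference is that the paper is a pure citation proof --- very ampleness from \cite[Proposition~III.3.4.2]{Kollar-RatCurves}, the dimension formula $h^0(-3K_S)=1+6K_S^2$ from \cite[Corollary~III.3.2.5]{Kollar-RatCurves}, and $1\le K_S^2\le 9$ from \cite[Theorem~IV.2.5]{Manin} or \cite[Exercise~III.3.9]{Kollar-RatCurves}, all valid in arbitrary characteristic --- whereas you re-derive these facts from the classification of del Pezzo surfaces over $\bar{\Bbbk}$, Riemann--Roch, and the vanishing $h^1(S,-3K_S)=h^2(S,-3K_S)=0$. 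Your insistence that $h^1(S,-3K_S)=0$ is the genuinely characteristic-sensitive point is exactly right (Kodaira vanishing is unavailable), and both of your proposed remedies are viable: the Leray reduction to non-speciality of degree-$9$ plane curves with triple points at $r\le 8$ general points (your claim $R^1\pi_*\mathcal{O}_S(-3\sum E_i)=0$ is correct), or a characteristic-free vanishing statement on del Pezzo surfaces; this is in substance what the proof of the cited result in \cite{Kollar-RatCurves} amounts to. The one caveat is that the two assertions you label as classical or ``guaranteed by general position'' --- non-speciality of that plane-curve system, and separation of points and tangent directions for very ampleness --- are true in every characteristic but are not immediate; they require an argument, e.g. induction via restriction to an integral anticanonical curve. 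So your route buys self-containedness, at the cost of two steps that still need careful proof, which are precisely the steps the paper delegates to the literature.
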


\begin{proof}
The divisor $-3K_S$ is very ample,
see~\mbox{\cite[Proposition~III.3.4.2]{Kollar-RatCurves}}.
Thus it defines an embedding $S\hookrightarrow\PP^N$,
where
$$
N=h^0\big(S,\mathcal{O}_S(-3K_S)\big)-1.
$$
On the other hand, one has $1\le K_S^2\le 9$,
see e.g.~\mbox{\cite[Theorem~IV.2.5]{Manin}}
or~\mbox{\cite[Exercise~III.3.9]{Kollar-RatCurves}}.
Hence
$$
h^0\big(S,\mathcal{O}_S(-3K_S)\big)-1=6K_S^2\le 54
$$
by \cite[Corollary~III.3.2.5]{Kollar-RatCurves}.
\end{proof}

\medskip
\textbf{Non-rational ruled surfaces.}
Birational automorphism groups of
non-rational surfaces covered by rational curves
have simpler structure than those of geometrically rational surfaces.

\begin{definition}\label{definition:fiberwise}
Given a morphism $\phi\colon X\to Y$ between varieties $X$ and~$Y$, and an automorphism (or a birational automorphism) $g$ of~$X$, we will
say that~$g$ is \emph{fiberwise with respect to $\phi$} if it maps
every point of $X$ to a point in the same fiber of $\phi$ (provided that $g$ is well defined at this point).
The action of a subgroup~$\Gamma$ of $\Aut(X)$ or $\Bir(X)$ is fiberwise with respect to $\phi$ if every element
of~$\Gamma$ is fiberwise with respect to~$\phi$.
\end{definition}

\begin{lemma}\label{lemma:ruled-Bir}
Let $\Bbbk$ be a field.
Let~$C$ be a smooth geometrically irreducible projective curve of positive genus
over $\Bbbk$, and set $S=C\times\PP^1$.
Then there is an exact sequence of groups
\begin{equation}\label{eq:Bir-exact-sequence}
1\to \Bir(S)_\phi\to \Bir(S)\to\Gamma,
\end{equation}
where $\Bir(S)_\phi\subset\PGL_2(\Bbbk(C))$
and $\Gamma\subset\Aut(C)$.
\end{lemma}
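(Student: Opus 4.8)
The plan is to realize the required homomorphism $\Bir(S)\to\Aut(C)$ as the induced action on the base of the first projection $\phi\colon S=C\times\PP^1\to C$, and to identify its kernel with the fiberwise automorphisms. First I would dispose of the easy half of \eqref{eq:Bir-exact-sequence}: a birational automorphism that is fiberwise with respect to $\phi$ in the sense of Definition~\ref{definition:fiberwise} is nothing but a $\Bbbk(C)$-automorphism of the generic fiber $S_\eta\cong\PP^1_{\Bbbk(C)}$. Since a birational self-map of a smooth projective curve is automatically an automorphism, such a map lies in $\Aut(\PP^1_{\Bbbk(C)})=\PGL_2(\Bbbk(C))$, which gives the inclusion $\Bir(S)_\phi\subset\PGL_2(\Bbbk(C))$.

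The heart of the matter is that $\phi$ is canonical up to the base, so that every $g\in\Bir(S)$ carries fibers to fibers. I would run this part over $\bar{\Bbbk}$ and take a general fiber $\ell\cong\PP^1$ of $\phi$. As $g$ is birational, for general $\ell$ it is defined and non-degenerate on a dense open subset, hence restricts to a birational map onto its image, an irreducible curve $D$ dominated by $\PP^1$. Composing the normalization $\PP^1\to D$ with $\phi$ gives a morphism $\PP^1\to C$; if it were non-constant it would embed $\Bbbk(C)$ into $\bar{\Bbbk}(t)$, forcing $C$ to have genus $0$ by L\"uroth's theorem (a characteristic-free statement), contrary to assumption. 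Therefore $D$ lies in a single fiber of $\phi$, and being an irreducible curve inside the irreducible fiber it must equal that fiber, say $\phi^{-1}(c')$. Thus $g$ maps the general fiber $\phi^{-1}(c)$ onto the fiber $\phi^{-1}(c')$, and $c\mapsto c'$ defines a rational self-map of $C$; applying the same reasoning to $g^{-1}$ shows it is birational, hence an automorphism $\rho(g)\in\Aut(C_{\bar{\Bbbk}})$.

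It then remains to assemble the sequence. The assignment $\rho\colon g\mapsto\rho(g)$ is a group homomorphism because the construction is compatible with composition, its image is the subgroup $\Gamma\subset\Aut(C)$, and its kernel consists exactly of the maps preserving every fiber, that is, of $\Bir(S)_\phi$; this yields exactness of \eqref{eq:Bir-exact-sequence}. To guarantee that $\rho(g)$ lands in $\Aut(C)$ rather than merely in $\Aut(C_{\bar{\Bbbk}})$, I would observe that $\rho(g)$ is the unique base automorphism characterized by $\phi\circ g=\rho(g)\circ\phi$; since $\phi$ and $g$ are defined over $\Bbbk$, this characterization is $\Gal(\bar{\Bbbk}/\Bbbk)$-invariant, so $\rho(g)$ descends to $\Bbbk$.

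The main obstacle I anticipate is not the core geometry, which is characteristic-free thanks to L\"uroth, but the bookkeeping over an algebraically non-closed field: one must check that passing to the general geometric fiber introduces no subtleties, which is why I would carry out the fiber argument over $\bar{\Bbbk}$ and descend $\rho(g)$ only at the end via the $\Gal(\bar{\Bbbk}/\Bbbk)$-equivariance of the relation $\phi\circ g=\rho(g)\circ\phi$.
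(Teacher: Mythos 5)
Your proposal follows essentially the same route as the paper: both rest on the fact that a rational curve cannot dominate a curve of positive genus (you invoke L\"uroth, the paper cites Hartshorne), conclude from this that every $g\in\Bir(S)$ permutes the fibers of $\phi$ and hence induces an automorphism of $C$, and identify the kernel of the resulting homomorphism with a subgroup of $\Bir(S_\eta)=\Aut\big(\PP^1_{\Bbbk(C)}\big)\cong\PGL_2\big(\Bbbk(C)\big)$, where $S_\eta$ is the generic fiber.

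The one step where your write-up has a genuine flaw is the final descent of $\rho(g)$ from $\bar{\Bbbk}$ to $\Bbbk$. If $\Bbbk$ is imperfect --- precisely the situation this paper must allow, since it works in characteristic $p>0$ over arbitrary fields --- then $\Gal(\bar{\Bbbk}/\Bbbk)=\Gal(\Bbbk^{\mathrm{sep}}/\Bbbk)$ and its fixed field in $\bar{\Bbbk}$ is the perfect closure $\Bbbk^{1/p^\infty}$, not $\Bbbk$; so Galois invariance of the relation $\phi\circ g=\rho(g)\circ\phi$ only shows that $\rho(g)$ is defined over $\Bbbk^{1/p^\infty}$. A morphism defined over $\Bbbk^{1/p^\infty}$ need not descend to $\Bbbk$ (think of maps involving $a^{1/p}$ with $a\in\Bbbk\setminus\Bbbk^p$), so as written your argument proves the lemma only for perfect $\Bbbk$. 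The paper sidesteps this entirely by never leaving $\Bbbk$, and your argument is repaired the same way, with no Galois theory at all: the geometric fact you proved shows that $\phi\circ g$ is constant on general fibers of $\phi$, hence every function in $(\phi\circ g)^*\Bbbk(C)\subset\Bbbk(S)$ is constant on general geometric fibers and therefore lies in $\bar{\Bbbk}(C)\cap\Bbbk(S)$ (intersection inside $\bar{\Bbbk}(S)$), which equals $\Bbbk(C)$ because $\Bbbk(C)$ is algebraically closed in $\Bbbk(S)=\Bbbk(C)(t)$. Thus $\phi\circ g$ factors through $\phi$ already over $\Bbbk$, giving $\rho(g)\in\Aut(C)$ directly.
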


\begin{proof}
Consider the projection $\phi\colon S\to C$.
If $g\in\Bir(S)$, and $F\cong\PP^1$ is a general fiber of $\phi$,
then the map
$$
\phi\circ g\colon F\to C
$$
is either surjective,
or maps $F$ to a point. The former option is impossible,
because a rational curve cannot dominate a curve of positive genus,
see for instance \cite[Corollary~IV.2.4]{Hartshorne}
and \cite[Proposition~IV.2.5]{Hartshorne}.
This means that an image of $F$ under $g$ is again a fiber of $\phi$, so that
$\phi$ is equivariant with respect
to the whole group~\mbox{$\Bir(S)$}.

Hence there is an exact sequence~\eqref{eq:Bir-exact-sequence},
where the action of the group $\Bir(S)_\phi$
is fiberwise with respect to $\phi$, and $\Gamma$ is a subgroup of $\Bir(C)=\Aut(C)$. Thus,
the group~\mbox{$\Bir(S)_\phi$} is a subgroup of $\Bir(S_\eta)$, where
$S_\eta$ is the scheme-theoretic generic fiber of the map~$\phi$.
Since $S_\eta$ is isomorphic to the projective line over the field $\Bbbk(C)$, we have
$$
\Bir(S_\eta)=\Aut(S_\eta)\cong\PGL_2\big(\Bbbk(C)\big).
$$
\end{proof}

\section{Abelian varieties}
\label{section:abelian-varieties}

In this section we collect auxiliary facts about automorphism groups of abelian varieties.
The latter groups are usually infinite, but are rather easy to understand.
Note that here we regard
an abelian variety just as a variety, and do not take the
group structure into account. Also, if the base field is not algebraically closed, it is not necessary for us to assume that
an abelian variety has a point.

\begin{theorem}\label{theorem:abelian-variety}
Let $\Bbbk$ be an algebraically closed
field of characteristic $p\ge 0$, and let $\ell\neq p$
be a prime number.
Let $X$ be an $n$-dimensional abelian variety over $\Bbbk$.
Then
\begin{equation}\label{eq:abelian-variety}
\Aut(X)\cong X(\Bbbk)\rtimes\Aut(X;P),
\end{equation}
where
$X(\Bbbk)$ is the group of $\Bbbk$-points of $X$, and
$\Aut(X;P)$ is the stabilizer of a $\Bbbk$-point $P\in X$ in $\Aut(X)$.
Furthermore, $\Aut(X;P)$ is isomorphic to
a subgroup of~\mbox{$\GL_{2n}(\ZZ_\ell)$}.
\end{theorem}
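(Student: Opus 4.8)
The plan is to prove the two assertions separately. For the semidirect product decomposition~\eqref{eq:abelian-variety}, I would use the classical rigidity property of abelian varieties. Recall that $X$ carries a group structure with identity element, say $O$; but since we only care about $X$ as a variety, I want to show that \emph{every} automorphism of the underlying variety is a composition of a translation and an automorphism fixing a chosen point~$P$. First, observe that the translations $t_a\colon x\mapsto x+a$ for $a\in X(\Bbbk)$ form a subgroup of $\Aut(X)$ isomorphic to $X(\Bbbk)$, and this subgroup acts simply transitively on $X(\Bbbk)$. Given an arbitrary $g\in\Aut(X)$, the point $g(P)$ is some $\Bbbk$-point, so composing $g$ with the translation $t_{-(g(P)-P)}$ produces an automorphism fixing~$P$. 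This shows $\Aut(X)=X(\Bbbk)\cdot\Aut(X;P)$, and the intersection of the two subgroups is trivial since a nontrivial translation has no fixed points. The normality of $X(\Bbbk)$ in $\Aut(X)$ follows because conjugating a translation $t_a$ by any automorphism $g$ yields again a translation (this is where rigidity enters: an automorphism of the variety $X$ that fixes one point need not respect the group law, but conjugation of a translation by any variety-automorphism is still fixed-point-free of the appropriate form—more carefully, one shows $g\circ t_a\circ g^{-1}=t_{g(a)-g(O)}$ when $g$ is a group homomorphism, and reduces the general case to the subgroup $\Aut(X;P)$ of homomorphisms). This gives the semidirect product~\eqref{eq:abelian-variety}.

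For the second assertion, I would identify $\Aut(X;P)$ with the group of automorphisms of $X$ as an abelian variety (fixing the identity $P$), which by rigidity are exactly the group-scheme automorphisms; these act faithfully on the Tate module. Concretely, for a prime $\ell\neq p$, the $\ell$-adic Tate module
$$
T_\ell(X)=\varprojlim_n X[\ell^n](\Bbbk)
$$
is a free $\ZZ_\ell$-module of rank $2n$, since over an algebraically closed field of characteristic $p$ the $\ell^n$-torsion subgroup $X[\ell^n](\Bbbk)$ is isomorphic to $(\ZZ/\ell^n\ZZ)^{2n}$ for $\ell\neq p$. Every element of $\Aut(X;P)$, being a group automorphism, preserves each torsion subgroup $X[\ell^n]$ and hence induces a $\ZZ_\ell$-linear automorphism of $T_\ell(X)$, giving a homomorphism
$$
\Aut(X;P)\to\GL\big(T_\ell(X)\big)\cong\GL_{2n}(\ZZ_\ell).
$$

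The main obstacle, and the crux of the argument, is to show that this representation on the Tate module is \emph{injective}. The key input is that an endomorphism of an abelian variety is determined by its action on the torsion points of order prime to $p$: an automorphism fixing $P$ that acts trivially on all $\ell$-power torsion must be the identity. I would justify this by recalling that the torsion points $\bigcup_n X[\ell^n](\Bbbk)$ are Zariski dense in $X$ (as $X$ is an abelian variety over an algebraically closed field and $\ell\neq p$), so an automorphism restricting to the identity on this dense set is the identity morphism. Once injectivity is established, $\Aut(X;P)$ embeds as a subgroup of $\GL_{2n}(\ZZ_\ell)$, completing the proof. I expect the verification of faithfulness to be the delicate point, since it requires knowing that the prime-to-$p$ torsion is genuinely dense and that automorphisms fixing $P$ really are group homomorphisms (again via rigidity of abelian varieties).
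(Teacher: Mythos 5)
Your proposal follows the same route as the paper's proof: the decomposition~\eqref{eq:abelian-variety} comes from rigidity (every variety automorphism is a translation composed with a group automorphism fixing~$P$), and the embedding into $\GL_{2n}(\ZZ_\ell)$ comes from the faithful action of $\Aut(X;P)$ on the Tate module $T_\ell(X)\cong\ZZ_\ell^{2n}$. The only divergence is at the faithfulness step, where the paper cites Mumford's theorem that $\Hom(X,X)\hookrightarrow\Hom_{\ZZ_\ell}\big(T_\ell(X),T_\ell(X)\big)$ is injective, while you prove the injectivity you need directly from the Zariski density of the prime-to-$p$ torsion points --- a correct and standard alternative justification of the same step.
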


\begin{proof}
The isomorphism~\eqref{eq:abelian-variety} is obvious.
Note that $\Aut(X;P)$ can be identified with the group of automorphisms
that preserve the group structure of~$X$.

Let $T_\ell(X)\cong\ZZ_\ell^{2n}$ be the Tate module of $X$, see \cite[\S18]{Mumford-AbelianVarieties}
for the definition. Let $\Hom(X,X)$ be the $\ZZ$-algebra of endomorphisms of the abelian variety~$X$.
According to \cite[\S19, Theorem~3]{Mumford-AbelianVarieties},
there is an injective homomorphism
$$
\Hom(X,X)\hookrightarrow
\Hom_{\ZZ_\ell}\big(T_\ell(X),T_\ell(X)\big).
$$
The group $\Aut(X;P)$ is the multiplicative group of invertible elements in~\mbox{$\Hom(X,X)$}.
Thus, it embeds into the group of invertible elements in~\mbox{$\Hom_{\ZZ_\ell}(T_\ell(X),T_\ell(X))$}, which is
$$
\Aut_{\ZZ_\ell}\big(T_\ell(X),T_\ell(X)\big)\cong\GL_{2n}(\ZZ_\ell).
$$
\end{proof}

\begin{remark}
In the notation of Theorem~\ref{theorem:abelian-variety}, if the characteristic of the field~$\Bbbk$
is zero, one can check that $\Aut(X;P)$
is a subgroup of $\GL_{2n}(\ZZ)$. It is well known that this also holds
if $X$ is a complex torus of dimension $n$, see e.g.~\mbox{\cite[Theorem~8.4]{ProkhorovShramov-CCS}}.
However, one can produce an example of an elliptic curve~$X$
over a field of characteristic $2$
such that the stabilizer of a point on~$X$ is a group of order~$24$,
see \cite[Exercise~A.1(b)]{Silverman}.
Such a group cannot be embedded into $\GL_2(\ZZ)$,
see~\mbox{\cite[\S1]{Tahara}}.
\end{remark}

The next two results are consequences of Theorem~\ref{theorem:abelian-variety}.
To formulate them, we set
\begin{equation}\label{eq:JA}
J_A(n)=
|\GL_{2n}(\ZZ/4\ZZ)|=
(2^{4n}-2^{2n})\cdot (2^{4n}-2^{2n+1})\cdot\ldots\cdot (2^{4n}-2^{4n-1}).
\end{equation}
Note that
$$
J_A(n)>
(3^{2n}-1)\cdot(3^{2n}-3)\cdot\ldots\cdot (3^{2n}-3^{2n-1})
=|\GL_{2n}(\ZZ/3\ZZ)|.
$$
for every positive integer $n$.

\begin{corollary}\label{corollary:abelian-variety-Jordan}
Let $n$ be a positive integer. For
every field $\Bbbk$, every abelian variety $X$ of dimension $n$ over $\Bbbk$,
every finite subgroup~\mbox{$G\subset \Aut(X)$}
contains a normal abelian subgroup of index at most~$J_A(n)$
that can be generated by at most $2n$ elements.
In particular, for every abelian variety $X$ the group $\Aut(X)$ is Jordan.
\end{corollary}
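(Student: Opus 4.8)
The plan is to reduce to the case of an algebraically closed base field and then take the translation subgroup as the required normal abelian subgroup. First I would pass to the algebraic closure. Since every automorphism of $X$ induces an automorphism of $X_{\bar{\Bbbk}}$, there is an injection $\Aut(X)\hookrightarrow\Aut(X_{\bar{\Bbbk}})$, and $X_{\bar{\Bbbk}}$ is again an $n$-dimensional abelian variety. Thus a finite subgroup $G\subset\Aut(X)$ maps isomorphically onto a finite subgroup of $\Aut(X_{\bar{\Bbbk}})$, and any normal abelian subgroup produced there, together with its index and number of generators, pulls back to $G$ unchanged. Hence it suffices to treat the case $\Bbbk=\bar{\Bbbk}$.

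Assume now that $\Bbbk$ is algebraically closed. By Theorem~\ref{theorem:abelian-variety} and the isomorphism~\eqref{eq:abelian-variety}, the translation subgroup $X(\Bbbk)$ is an abelian normal subgroup of $\Aut(X)$ with quotient $\Aut(X;P)$, and the latter embeds into $\GL_{2n}(\ZZ_\ell)$ for every prime $\ell\neq p$. I would set $A=G\cap X(\Bbbk)$. Then $A$ is abelian (being a subgroup of the abelian group $X(\Bbbk)$) and normal in $G$ (since $X(\Bbbk)$ is normal in $\Aut(X)$), and the quotient $G/A$ embeds into $\Aut(X;P)\subset\GL_{2n}(\ZZ_\ell)$.

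To bound the index $[G:A]=|G/A|$ I would invoke Lemma~\ref{lemma:Minkowski-l-adic} applied to the finite subgroup $G/A$ of $\GL_{2n}(\ZZ_\ell)$. If $p\neq 2$, choose $\ell=2$, so that $G/A$ embeds into $\GL_{2n}(\ZZ/4\ZZ)$ and $[G:A]\le |\GL_{2n}(\ZZ/4\ZZ)|=J_A(n)$. If $p=2$, choose instead $\ell=3$, so that $G/A$ embeds into $\GL_{2n}(\ZZ/3\ZZ)$ and $[G:A]\le |\GL_{2n}(\ZZ/3\ZZ)|<J_A(n)$ by the inequality recorded after~\eqref{eq:JA}. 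In either case $[G:A]\le J_A(n)$. The one point requiring care is precisely this dichotomy: the constant $J_A(n)$ is tailored to $\ell=2$, which is forbidden when $p=2$, so in that characteristic one falls back on the odd prime $\ell=3$ and uses that $J_A(n)$ dominates $|\GL_{2n}(\ZZ/3\ZZ)|$.

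Finally I would bound the number of generators of $A$. Since $A$ is finite, every element of $A\subset X(\Bbbk)$ is a torsion point. For a finite abelian group the minimal number of generators equals the maximum over primes $\ell$ of the $\ell$-rank, and the $\ell$-rank of $A$ is at most that of the $\ell$-torsion $X[\ell](\Bbbk)$, because $A[\ell]=A\cap X[\ell](\Bbbk)$. For $\ell\neq p$ one has $X[\ell](\Bbbk)\cong(\ZZ/\ell\ZZ)^{2n}$, of rank $2n$; for $\ell=p$ (when $p>0$) the rank of $X[p](\Bbbk)$ is the $p$-rank of $X$, which is at most $n$. Hence every $\ell$-rank of $A$ is at most $2n$, so $A$ is generated by at most $2n$ elements. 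This produces the desired normal abelian subgroup, and taking $J_A(n)$ (which depends only on $n$, not on $X$ or $\Bbbk$) as the Jordan constant shows that $\Aut(X)$ is Jordan. I expect no serious obstacle here, as the substantive content is already furnished by the Tate module embedding of Theorem~\ref{theorem:abelian-variety} and by Lemma~\ref{lemma:Minkowski-l-adic}; the proof is essentially an assembly of these with the characteristic-uniform bookkeeping described above.
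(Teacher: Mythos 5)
Your proposal is correct and follows essentially the same route as the paper: reduce to $\Bbbk=\bar{\Bbbk}$, take $A=G\cap X(\Bbbk)$ as the normal abelian subgroup, embed $G/A$ into $\GL_{2n}(\ZZ_\ell)$ via Theorem~\ref{theorem:abelian-variety} with $\ell=2$ for $p\neq 2$ and $\ell=3$ for $p=2$, and bound the index by $J_A(n)$ using Lemma~\ref{lemma:Minkowski-l-adic}. The only difference is cosmetic: where the paper cites Mumford for the bound of $2n$ on the number of generators of a finite subgroup of $X(\Bbbk)$, you supply the (correct) elementary argument via $\ell$-ranks of the torsion subgroups, including the characteristic-$p$ case where the $p$-rank is at most $n$.
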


\begin{proof}
Let $X$ be an abelian variety over a field $\Bbbk$ of
characteristic $p\ge 0$,
and let~$G$ be a finite subgroup of $\Aut(X)$.
It is enough to consider the case when~$\Bbbk$ is algebraically closed.
Set $\ell=2$ if $p\neq 2$, and $\ell=3$ if $p=2$.
Note that the intersection $G_X$ of $G$ with
the subgroup~\mbox{$X(\Bbbk)\subset\Aut(X)$} of $\Bbbk$-points
of~$X$ is abelian and normal in~$G$.
Moreover, $G_X$ can be generated by at most~$2n$ elements, see e.g.~\mbox{\cite[\S15]{Mumford-AbelianVarieties}}.
On the other hand, by
Theorem~\ref{theorem:abelian-variety} the quotient~\mbox{$G/G_X$}
is isomorphic to a subgroup of $\GL_{2n}(\ZZ_\ell)$.
Thus, by Lemma~\ref{lemma:Minkowski-l-adic}
the order of~\mbox{$G/G_X$} does not exceed
$$
\max\left\{|\GL_{2n}(\ZZ/3\ZZ)|, |\GL_{2n}(\ZZ/4\ZZ)|\right\}=
J_A(n).
$$
\end{proof}

Applying Corollary~\ref{corollary:abelian-variety-Jordan}
together with Lemma~\ref{lemma:Jordan-vs-p-Jordan}, we obtain.

\begin{corollary}\label{corollary:abelian-variety-p-Jordan}
Let $p$ be a prime number, and let $n$ be a positive integer. For
every field $\Bbbk$, every abelian variety $X$ of dimension $n$ over $\Bbbk$,
every finite subgroup~\mbox{$G\subset \Aut(X)$}
contains a normal abelian subgroup $A$ such that $A$
can be generated by at most $2n$ elements, the order of $A$ is coprime to $p$, and the index
of $A$ in $G$ is at most $J_A(n)\cdot |G_p|$, where $G_p$ is a $p$-Sylow subgroup of $G$.
\end{corollary}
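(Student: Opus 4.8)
The plan is to derive this statement by chaining the two results indicated in the excerpt: the (characteristic-zero-style) Jordan bound for automorphism groups of abelian varieties, followed by the general group-theoretic passage to a $p$-Jordan bound.

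First I would apply Corollary~\ref{corollary:abelian-variety-Jordan} to the finite subgroup $G\subset\Aut(X)$. Because $X$ has dimension $n$, this furnishes a normal abelian subgroup $\tilde A\subset G$ of index at most $J_A(n)$ that can be generated by at most $2n$ elements. The order of $\tilde A$ need not be coprime to $p$, so $\tilde A$ is not yet the subgroup we seek.

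Second I would feed $\tilde A$ into Lemma~\ref{lemma:Jordan-vs-p-Jordan}, taking the constant $J$ of that lemma to be $J_A(n)$ and using its abelian branch. The lemma replaces $\tilde A$ by the product $A$ of its Sylow subgroups over all primes different from $p$; this $A$ is characteristic in $\tilde A$ and hence normal in $G$, is again abelian, has order coprime to $p$, and is generated by no more elements than $\tilde A$, hence by at most $2n$ elements. Its index satisfies
\[
[G:A]\le J\cdot|G_p|=J_A(n)\cdot|G_p|,
\]
where $G_p$ is a $p$-Sylow subgroup of $G$. These are precisely the claimed properties of $A$.

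Every step is a direct invocation of a result already established, so I do not expect any genuine obstacle. The only point worth stressing is that Lemma~\ref{lemma:Jordan-vs-p-Jordan} is engineered exactly to preserve the bound on the number of generators while exchanging the coprimality-to-$p$ requirement for the single extra factor $|G_p|$ in the index; everything else is purely formal.
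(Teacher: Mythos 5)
Your proposal is correct and follows exactly the paper's own argument: the paper's proof of Corollary~\ref{corollary:abelian-variety-p-Jordan} is precisely the one-line combination of Corollary~\ref{corollary:abelian-variety-Jordan} (giving the normal abelian subgroup of index at most $J_A(n)$ generated by at most $2n$ elements) with Lemma~\ref{lemma:Jordan-vs-p-Jordan} (trading the $p$-part for the factor $|G_p|$ while preserving normality, commutativity, and the generator bound). You have simply spelled out the details that the paper leaves implicit.
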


Arguing as in the proof of Corollary~\ref{corollary:abelian-variety-Jordan},
we can obtain restrictions on
stabilizers of points on abelian varieties.

\begin{corollary}\label{corollary:torus-stabilizer}
Let $n$ be a positive integer.
For every algebraically closed field~$\Bbbk$,
every abelian variety $X$ of dimension $n$ over $\Bbbk$,
every $\Bbbk$-point $P$ on $X$, and every finite
subgroup~$G$ of the stabilizer~\mbox{$\Aut(X;P)$} of~$P$, the order of $G$
is at most~$J_A(n)$.
\end{corollary}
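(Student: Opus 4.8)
The plan is to exploit Theorem~\ref{theorem:abelian-variety}, which gives an embedding of the point stabilizer into a $p$-adic general linear group, and then to bound the order of a finite subgroup by the Minkowski-type estimate of Lemma~\ref{lemma:Minkowski-l-adic}. This is exactly the mechanism used in the proof of Corollary~\ref{corollary:abelian-variety-Jordan}, but applied to the whole stabilizer rather than to a quotient, so the argument is even shorter.

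Concretely, let $p$ be the characteristic of $\Bbbk$. First I would choose an auxiliary prime $\ell \neq p$, taking $\ell = 2$ when $p \neq 2$ and $\ell = 3$ when $p = 2$, exactly as in Corollary~\ref{corollary:abelian-variety-Jordan}. By Theorem~\ref{theorem:abelian-variety} the stabilizer $\Aut(X;P)$ is isomorphic to a subgroup of $\GL_{2n}(\ZZ_\ell)$, so any finite subgroup $G \subset \Aut(X;P)$ is in particular a finite subgroup of $\GL_{2n}(\ZZ_\ell)$.

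Now I would invoke Lemma~\ref{lemma:Minkowski-l-adic}. If $\ell = 2$, that lemma shows $G$ embeds into $\GL_{2n}(\ZZ/4\ZZ)$, so
$$
|G| \le |\GL_{2n}(\ZZ/4\ZZ)| = J_A(n).
$$
If instead $\ell = 3$, then $G$ embeds into $\GL_{2n}(\ZZ/3\ZZ)$, whence
$$
|G| \le |\GL_{2n}(\ZZ/3\ZZ)| < J_A(n),
$$
the strict inequality being the one recorded right after the definition~\eqref{eq:JA} of $J_A(n)$. In both cases $|G| \le J_A(n)$, which is the desired bound.

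There is no serious obstacle here: the content is already packaged in Theorem~\ref{theorem:abelian-variety} and Lemma~\ref{lemma:Minkowski-l-adic}. The only point requiring a little care is the choice of $\ell$ depending on the characteristic, so that the resulting estimate always lands at or below $J_A(n)$; the inequality $|\GL_{2n}(\ZZ/3\ZZ)| < J_A(n)$ is precisely what makes the characteristic~$2$ case harmless, allowing a single uniform bound independent of $p$.
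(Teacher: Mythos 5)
Your proof is correct and takes essentially the same route as the paper's: the paper likewise sets $\ell=2$ if $p\neq 2$ and $\ell=3$ if $p=2$, embeds $\Aut(X;P)$ into $\GL_{2n}(\ZZ_\ell)$ via Theorem~\ref{theorem:abelian-variety}, and concludes by Lemma~\ref{lemma:Minkowski-l-adic}. The only difference is that you spell out the final comparison $|\GL_{2n}(\ZZ/3\ZZ)|<J_A(n)$, which the paper leaves implicit (having recorded it just after the definition of $J_A(n)$).
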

\begin{proof}
Let $\Bbbk$ be an algebraically closed field of characteristic $p\ge 0$, and
let $X$ be an $n$-dimensional abelian variety over $\Bbbk$.
Set $\ell=2$ if $p\neq 2$, and $\ell=3$ if $p=2$.
According to Theorem~\ref{theorem:abelian-variety}, the group~\mbox{$\Aut(X;P)$} is isomorphic to a subgroup
of~\mbox{$\GL_{2n}(\ZZ_\ell)$}.
Therefore, the assertion follows from Lemma~\ref{lemma:Minkowski-l-adic}.
\end{proof}

\begin{remark}\label{remark:elliptic-curves-uniform}
In the notation of
Corollaries~\ref{corollary:abelian-variety-Jordan}, \ref{corollary:abelian-variety-p-Jordan},
and~\ref{corollary:torus-stabilizer},
one can replace the constant~\mbox{$J_A(1)$} by~$24$, see
\cite[Theorem~III.10.1]{Silverman}
and \cite[Proposition~A.1.2(c)]{Silverman}; actually,
in this case the whole group $\Aut(X;P)$ is finite.
Note that this bound is stronger than the bound
given by Corollaries~\ref{corollary:abelian-variety-Jordan}, \ref{corollary:abelian-variety-p-Jordan},
and~\ref{corollary:torus-stabilizer}.
\end{remark}

The assertion of Corollary~\ref{corollary:torus-stabilizer} does not hold for stabilizers of closed points
over algebraically non-closed fields, as shown
by the following example.

\begin{example}\label{example:large-stabilizer}
Let $\Bbbk$ be an algebraically closed field of characteristic~\mbox{$p\ge 0$},
let $A$ be a positive-dimensional abelian variety over $\Bbbk$ (with a chosen group structure), and let $m$ be a positive integer not divisible by~$p$.
Choose a primitive $m$-th root of unity $\zeta\in\Bbbk$, and a point $c\in A(\Bbbk)$ whose order~$m$ in the group~\mbox{$A(\Bbbk)$} equals $m$.
Consider the action of the group $\Gamma\cong\ZZ/m\ZZ$ on~\mbox{$\mathbb{A}^1\times A$}
such that the generator of $\Gamma$ acts by the transformation
$$
(t,a)\mapsto (\zeta t, a+c),
$$
where $t$ is a coordinate on $\mathbb{A}^1$, and $a\in A$.
Set~\mbox{$\mathcal{X}=(\mathbb{A}^1\times A)/\Gamma$}. Then there is a
morphism
$$
\phi\colon \mathcal{X}\to\mathbb{A}^1\cong\Spec\Bbbk[t^m]
$$
that fits into the commutative diagram
$$
\xymatrix{
\mathbb{A}^1\times A\ar@{->}[r]\ar@{->}[d] & \mathcal{X}\ar@{->}[d]^{\phi}\\
\mathbb{A}^1\ar@{->}[r] & \mathbb{A}^1
}
$$
Let $X$ be the scheme-theoretic generic fiber of $\phi$. Then $X$ is an abelian
variety over the field $\KK=\Bbbk(t^m)$.
Let $Q$ be a $\Bbbk$-point of $A$, and let~$P$ be the closed point of $X$ corresponding to the image of the section~\mbox{$\mathbb{A}^1\times\{Q\}$}
of the projection~\mbox{$\mathbb{A}^1\times A\to\mathbb{A}^1$}; thus, $P$ is not a $\KK$-point of $X$ if~\mbox{$m>1$}.
The translation by~$c$ defines a faithful action of the group~\mbox{$\ZZ/m\ZZ$} on~$X$,
and this action preserves the point~$P$. Taking $m$ arbitrarily large, we obtain a series of varieties and their closed points preserved
by automorphisms of arbitrarily large finite orders.
\end{example}

\section{Varieties of non-negative Kodaira dimension}
\label{section:varieties-kappa-non-negative}

In this section we
make some general observations on
automorphism groups of varieties of non-negative
Kodaira dimension and
prove Proposition~\ref{proposition:Hu-improved}.
The following assertions
(and the arguments that prove them) are well known to experts.
We provide their proofs to be self-contained.

\begin{theorem}[{cf. \cite[\S2.2]{Lazarsfeld}}]
\label{theorem:general-type-abundance}
Let $X$ be a smooth geometrically irreducible projective variety such that $\kappa(X)=\dim X$.
Then for some positive integer $n$ the rational map
defined by the linear system $|nK_X|$
is birational onto its image.
\end{theorem}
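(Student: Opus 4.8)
The plan is to reduce the statement to the assertion that $K_X$ is \emph{big} and then run the standard Kodaira's lemma argument, paying attention to the one point where positive characteristic intervenes. First I would pass to the algebraic closure: since $\kappa(X)=\kappa(X_{\bar{\Bbbk}})$ by definition, $K_{X_{\bar{\Bbbk}}}$ is the base change of $K_X$, and the formation of $H^0(X,\omega_X^{\otimes n})$ commutes with extension of the base field, the pluricanonical map defined by $|nK_X|$ is birational onto its image if and only if its base change to $\bar{\Bbbk}$ is. So we may assume $\Bbbk=\bar{\Bbbk}$ and $d:=\dim X\ge 1$. Writing $R=\bigoplus_{n\ge 0}H^0(X,\omega_X^{\otimes n})$, the degree-zero part of $\mathrm{Frac}(R)$ is exactly the subfield $\Bbbk_\infty\subset\Bbbk(X)$ generated by all ratios of pluricanonical sections of equal weight, and $\mathrm{Frac}(R)=\Bbbk_\infty(s)$ for any homogeneous $s\neq 0$ of positive degree; hence $\kappa(X)=\mathrm{tr.deg}(R)-1=\mathrm{tr.deg}_\Bbbk\Bbbk_\infty=\sup_n\dim Y_n$, where $Y_n$ is the closure of the image of the map given by $|nK_X|$. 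Thus $\kappa(X)=d$ forces $\dim Y_{m_0}=d$ for some $m_0$, i.e. the map given by $|m_0K_X|$ is generically finite onto a $d$-dimensional image.

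Next I would extract a growth estimate and apply Kodaira's lemma. The image of the multiplication map $\mathrm{Sym}^k H^0(X,m_0K_X)\to H^0(X,km_0K_X)$ has dimension growing like $k^{\dim Y_{m_0}}=k^{d}$, so $h^0(X,km_0K_X)\ge c\,k^{d}$ for some $c>0$ and all large $k$. Fix an effective ample divisor $A$ on $X$. From the exact sequence $0\to\mathcal{O}_X(nK_X-A)\to\mathcal{O}_X(nK_X)\to\mathcal{O}_A(nK_X)\to 0$ together with the bound $h^0(A,\mathcal{O}_A(nK_X))=O(n^{d-1})$ (as $A$ has dimension $d-1$), the estimate above gives $h^0(X,nK_X-A)>0$ for some $n$ divisible by $m_0$. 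Hence $mK_X\sim A+E$ for some $m>0$ and some effective divisor $E$; this is Kodaira's lemma, and its proof is insensitive to the characteristic.

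Finally I would produce the birational pluricanonical map. Choose $k$ so large that $kA$ is very ample, and let $s_E\in H^0(X,mK_X-A)$ be a section with divisor $E$. Multiplication by $s_E^{\,k}$ embeds $H^0(X,kA)$ into $H^0(X,kmK_X)$, so the linear system $|kmK_X|$ contains the subsystem $\{\sigma\cdot s_E^{\,k}:\sigma\in H^0(X,kA)\}$. On the open set $U=X\setminus\mathrm{Supp}(E)$ the factor $s_E^{\,k}$ is nowhere vanishing, so on $U$ these sections separate points and tangent vectors exactly as the very ample system $|kA|$ does. Therefore the map given by $|kmK_X|$ restricts to an immersion on the dense open set $U$, which is what we need.

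The one place where I expect to have to be careful — and where the argument genuinely differs from characteristic $0$ — is the passage from the local behaviour on $U$ to birationality. Generic injectivity on closed points is \emph{not} enough in characteristic $p$, as the Frobenius morphism shows; this is why the argument must separate tangent vectors as well. Separation of tangent vectors on $U$ forces $\Omega_{X/Y}$ to have generic rank zero, so the extension $\Bbbk(X)/\Bbbk(Y)$ of function fields (with $Y$ the image) is separable, while generic injectivity on closed points makes its separable degree equal to one; together these give that the extension is trivial, i.e. the map defined by $|kmK_X|$ is birational onto its image, as claimed.
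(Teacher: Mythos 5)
Your proof is correct and takes essentially the same route as the paper's: produce an $m_0$ with $\dim\Phi_{m_0}(X)=\dim X$, deduce the growth $h^0(X,km_0K_X)\ge c\,k^{\dim X}$, apply Kodaira's lemma via the same ideal-sheaf sequence to write $mK_X\sim A+E$ with $A$ ample and $E$ effective, and conclude because $|kmK_X|$ contains the subsystem $s_E^k\cdot|kA|$, which separates points and tangent vectors off $\mathrm{Supp}(E)$. The differences are cosmetic rather than structural — you get the growth estimate from the image of $\mathrm{Sym}^k H^0(X,m_0K_X)$ (the homogeneous coordinate ring of the image) instead of the paper's elimination of indeterminacy, and you make explicit the characteristic-$p$ point (tangent separation gives separability, point separation gives separable degree one) that the paper compresses into ``embedding on $U$, therefore birational''; the one small imprecision, namely that $\mathrm{Frac}(R)$ is in general only \emph{algebraic} over $\Bbbk_\infty(s)$ rather than equal to it, does not affect the transcendence-degree count you need.
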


\begin{proof}
Since $\kappa(X)=\dim X$, there exists a positive integer $m$ such that~\mbox{$\dim \Phi_m(X)=\dim X$}, where
$\Phi_m\colon X\dashrightarrow \PP^N$ is the rational map given by the linear system~\mbox{$|mK_X|$}.
By  elimination of indeterminacy (see for instance~\mbox{\cite[Example~II.7.17.3]{Hartshorne}}),
there is a commutative diagram
$$
\xymatrix{
&  \tilde{X} \ar[dl]_{\pi} \ar[dr]^{\tilde{\Phi}_m} & \\
X \ar@{-->}[rr]^{\Phi_m}  && \PP^N
}
$$
Here $\pi$ is the blow up of the indeterminacy locus of $\Phi_m$.
Let $E\subset \tilde{X}$ be the
exceptional divisor of the morphism $\pi$.
Then $E$ is an effective Cartier divisor, and
$$
\pi^*(mK_X)\sim\tilde{\Phi}_m^*H+E,
$$
where $H$ is a hyperplane in~$\PP^N$.

Let $W=\tilde{\Phi}_m(\tilde{X})$.
Since $\tilde{\Phi}_m$ is surjective, we conclude that
$\tilde{\Phi}_m^*$ induces an injective map
$$
H^0(W,\mathcal{O}_W(H|_W))\hookrightarrow H^0(\tilde{X},\mathcal{O}_{\tilde{X}}(\tilde{\Phi}_m^*H)).
$$
Therefore, one has
\begin{multline*}
h^0(X,\mathcal{O}_X(mK_X))=h^0(\tilde{X},\mathcal{O}_{\tilde{X}}(\pi^*(mK_X)))\ge \\ \ge h^0(\tilde{X},\mathcal{O}_{\tilde{X}}(\tilde{\Phi}_m^*H))\ge  h^0(W,\mathcal{O}_W(H|_W)).
\end{multline*}

Denote $r=\dim W=\dim X=\dim \tilde{X}$.
Since~\mbox{$h^0(W,\mathcal{O}_W(m'H|_W))$} is a polynomial in $m'$ for $m'\gg 0$, there exists a positive constant $C$ such that
$$
h^0(W,\mathcal{O}_W(m'H|_W))\ge C(m')^r.
$$
Hence for $m'\gg0$ we have
\begin{equation}\label{eq:h0-growth}
h^0(X,\mathcal{O}_X(m'mK_X))\ge C(m')^r.
\end{equation}

Let $A$ be a very ample divisor on $X$. Consider the exact sequence
$$
0\to\mathcal{O}_X(-A)\to\mathcal{O}_X\to\mathcal{O}_A\to 0.
$$
It gives the exact sequence
$$
0\to\mathcal{O}_X(m'mK_X-A)\to\mathcal{O}_X(m'mK_X)\to\mathcal{O}_A(m'mK_X|_A)\to 0.
$$
Since $\dim A=r-1$, we know that $h^0(X,\mathcal{O}_A(m'mK_X|_A))$ grows as a polynomial of degree
at most $r-1$ in $m'$. Therefore, from~\eqref{eq:h0-growth}
we get
$$
h^0(X, \mathcal{O}_X(m'mK_X-A))\neq 0
$$
for $m'\gg 0$.

Let $F\in |m'mK_X-A|$
be an effective divisor, so that~\mbox{$m'mK_X\sim A+F$.} Over the open subset $U=X\setminus F\subset X$,
the linear system $|A|$ can be viewed as a linear subsystem of $|m'mK_X|$
(where the embedding is given by $L\mapsto L+F$ for~\mbox{$L\in |A|$}). By assumption, $A$ is very ample, so the rational map $\Phi_{m'm}$ induced by $|m'mK_X|$ is an embedding on $U$. Therefore, the map  $\Phi_{m'm}$ is birational.
\end{proof}

\begin{lemma}\label{lemma:Gm-or-Ga}
Let $\Gamma$ be a non-trivial connected linear algebraic group
over an algebraically closed field.
Then $\Gamma$ contains a subgroup isomorphic either
to~$\mathbb{G}_{\mathrm{m}}$ or to~$\mathbb{G}_{\mathrm{a}}$.
\end{lemma}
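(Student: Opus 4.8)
The plan is to split into two cases according to whether $\Gamma$ contains a nontrivial torus, relying on the structure theory of smooth connected affine groups over an algebraically closed field (see \cite{Borel}). Since $\Gamma$ is nontrivial and connected, it is smooth of dimension at least~$1$, and the first step is to fix a maximal torus $T\subset\Gamma$.

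If $T\neq\{e\}$, then over the algebraically closed base field $T\cong\mathbb{G}_{\mathrm{m}}^r$ for some $r\ge 1$, so the inclusion of one of the factors exhibits a subgroup of $T\subset\Gamma$ isomorphic to~$\mathbb{G}_{\mathrm{m}}$, and we are done.

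The remaining case is $T=\{e\}$, and here I would first argue that $\Gamma$ is unipotent. Over an algebraically closed field every semisimple element of $\Gamma$ lies in some maximal torus, and all maximal tori are conjugate to $T=\{e\}$; hence $\Gamma$ has no nontrivial semisimple elements, so every element of $\Gamma$ is unipotent, and by Kolchin's theorem $\Gamma$ is a unipotent group. It then remains to produce a copy of $\mathbb{G}_{\mathrm{a}}$ inside a nontrivial smooth connected unipotent group $U=\Gamma$. Because the base field is perfect, $U$ is \emph{split}: it admits a chain of closed subgroups $U=U_0\supset U_1\supset\cdots\supset U_{m-1}\supset U_m=\{e\}$ with each successive quotient $U_i/U_{i+1}\cong\mathbb{G}_{\mathrm{a}}$. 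The smallest nontrivial term $U_{m-1}\cong U_{m-1}/U_m$ is then the desired subgroup isomorphic to~$\mathbb{G}_{\mathrm{a}}$.

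The only genuinely delicate point is this last one, and it is precisely where the hypothesis that the field is algebraically closed (in particular perfect) is used: over an imperfect field there exist nontrivial connected unipotent groups, the so-called \emph{wound} unipotent groups, that contain no subgroup isomorphic to $\mathbb{G}_{\mathrm{a}}$, so some form of the splitness of unipotent groups over perfect fields is unavoidable here.
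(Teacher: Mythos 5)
Your proof is correct, but it takes a genuinely different route from the paper's. The paper splits on the radical $R$ of $\Gamma$: if $R$ is non-trivial, then $R$, being connected and solvable, either is a torus (hence contains $\mathbb{G}_{\mathrm{m}}$) or contains a copy of $\mathbb{G}_{\mathrm{a}}$ by \cite[Lemma~6.3.4]{Springer}; if $R$ is trivial, then $\Gamma$ is semisimple and is generated by a maximal torus together with a collection of root subgroups isomorphic to $\mathbb{G}_{\mathrm{a}}$ by \cite[Proposition~8.1.1]{Springer}, so non-triviality of $\Gamma$ forces the torus to be positive-dimensional or the collection to be non-empty. Your dichotomy --- maximal torus non-trivial versus $\Gamma$ unipotent --- avoids the semisimple structure theory (root subgroups) altogether: from triviality of one, hence by conjugacy of every, maximal torus, together with the fact that each semisimple element of a connected group over an algebraically closed field lies in a maximal torus, you conclude via Jordan decomposition and Kolchin's theorem that $\Gamma$ is unipotent, and you then extract $\mathbb{G}_{\mathrm{a}}$ as the bottom term of a splitting filtration, using that a smooth connected unipotent group over a perfect field is split (see e.g.~\cite[\S15]{Borel}). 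What each approach buys: the paper's argument stays entirely within the solvable and semisimple structure theory of \cite{Springer}, while yours trades the root-subgroup input for the splitness theorem and, as a bonus, isolates exactly where perfectness of the base field enters --- your closing remark about wound unipotent groups over imperfect fields is precisely the right caveat. The one step worth spelling out in your write-up is why triviality of all maximal tori kills all non-trivial semisimple elements: every element of the connected group $\Gamma$ lies in a Borel subgroup, and semisimple elements of a connected solvable group lie in its maximal tori, which are maximal tori of $\Gamma$; this is where connectedness of $\Gamma$ is actually used.
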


\begin{proof}
Let $R$ be the radical of $\Gamma$, i.e. a maximal closed, connected, normal,
solvable subgroup of~$\Gamma$. First, suppose that $R$ is non-trivial.
If $R$ is not a torus, then it contains a subgroup
isomorphic to~$\mathbb{G}_{\mathrm{a}}$
by~\mbox{\cite[Lemma~6.3.4]{Springer}}.
If $R$ is a torus, then it contains a subgroup
isomorphic to~$\mathbb{G}_{\mathrm{m}}$.
Thus, we may assume that $R$ is trivial, so that
$\Gamma$ is semi-simple. In this case $\Gamma$ is generated
by its maximal torus~$T$ and a certain collection~$\{U_\alpha\}$ of subgroups
isomrophic to~$\mathbb{G}_{\mathrm{a}}$,
see~\mbox{\cite[Proposition~8.1.1]{Springer}}.
Since $\Gamma$ is non-trivial, we conclude that
either the torus~$T$ is positive-dimensional, or the collection~$\{U_\alpha\}$
is non-empty. Thus, one finds a subgroup of $\Gamma$
isomorphic to~$\mathbb{G}_{\mathrm{m}}$ or
to~$\mathbb{G}_{\mathrm{a}}$, respectively.
\end{proof}

\begin{proposition}[{cf. \cite[Theorem~14.1]{Ueno}, \cite[Proposition~7.1.4]{Brion-long}}]
\label{proposition:Aut-nonruled}
Let~$X$ be a
smooth geometrically irreducible projective variety of non-negative Kodaira dimension
over a field~$\Bbbk$. Then the group scheme~$\AAut_X$ does not contain
non-trivial connected
linear algebraic subgroups.
\end{proposition}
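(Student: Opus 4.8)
The plan is to argue by contradiction and to produce, from a non-trivial connected linear algebraic subgroup of $\AAut_X$, a birational product structure $X\dashrightarrow Y\times\PP^1$, so that Lemma~\ref{lemma:ruled-kappa} forces $\kappa(X)=-\infty$ and contradicts the hypothesis $\kappa(X)\ge 0$. First I would reduce to the case $\Bbbk=\bar\Bbbk$: the formation of the automorphism group scheme commutes with extension of scalars, so a non-trivial connected linear algebraic subgroup of $\AAut_X$ gives one in $\AAut_{X_{\bar\Bbbk}}$, while $\kappa(X_{\bar\Bbbk})=\kappa(X)\ge 0$ by definition. Assuming $\Bbbk=\bar\Bbbk$ and given such a subgroup $\Gamma\ne 1$, Lemma~\ref{lemma:Gm-or-Ga} provides a subgroup $G\subset\Gamma$ with $G\cong\mathbb{G}_{\mathrm{m}}$ or $G\cong\mathbb{G}_{\mathrm{a}}$; since $G\subset\AAut_X$, this is a faithful regular action of $G$ on $X$.

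Next I would examine the orbits. As the action is faithful and $\dim G=1$, the locus of points with one-dimensional orbit is open and non-empty, hence dense because $X$ is irreducible. For a general point $x$ the stabilizer $G_x$ is a proper subgroup scheme of the abelian group $G$, so the effective quotient $G/G_x$ is again a split one-dimensional group of the same type, using that any quotient of the split group $\mathbb{G}_{\mathrm{m}}$ (respectively $\mathbb{G}_{\mathrm{a}}$) by a finite subgroup scheme is again split $\mathbb{G}_{\mathrm{m}}$ (respectively $\mathbb{G}_{\mathrm{a}}$). In particular every general orbit closure is a rational curve. To globalize this, I would set $K=\Bbbk(X)^G$ and invoke Rosenlicht's theorem: there is a dense open $G$-invariant $U\subset X$ admitting a geometric quotient $\pi\colon U\to Y$ whose fibres are the orbits, so that $\mathrm{tr.deg}_\Bbbk K=\dim X-1$ and $Y$ has function field $K$.

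It then remains to show that $\pi$ is birationally trivial. The generic fibre $X_\eta$ of $\pi$ is a torsor over $K$ under the split group $G_K/G_x\cong\mathbb{G}_{\mathrm{m}}$ or $\mathbb{G}_{\mathrm{a}}$. Such torsors are trivial, by Hilbert's Theorem~90 in the multiplicative case and by the additive analogue $H^1(K,\mathbb{G}_{\mathrm{a}})=0$ in the additive case; hence $X_\eta$ has a $K$-point and is $K$-rational. Therefore $\Bbbk(X)=K(t)$ is purely transcendental of degree one over $K$, so $X$ is birational to $Y\times\PP^1$, and Lemma~\ref{lemma:ruled-kappa} yields the desired contradiction $\kappa(X)=-\infty$.

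The main obstacle is exactly this last step: uniruledness alone, i.e. the fact that general orbit closures are rational curves, need not force $\kappa=-\infty$ over a field of positive characteristic, so one genuinely needs an honest birational product with $\PP^1$ in order to appeal to Lemma~\ref{lemma:ruled-kappa}. The delicate point is that $K$ is imperfect in characteristic $p$, so a priori $X_\eta$ could be a non-trivial (wound) form of $\mathbb{G}_{\mathrm{a}}$; this is ruled out because the acting group $G_K/G_x$ is a quotient of the split group $G_K$ and hence remains split, after which the vanishing of the relevant first cohomology of $K$ finishes the argument. This is what makes the reasoning uniform across all characteristics.
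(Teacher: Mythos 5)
Your proposal is correct and follows the same route as the paper's proof: reduce to $\bar{\Bbbk}$, use Lemma~\ref{lemma:Gm-or-Ga} to extract a copy of $\mathbb{G}_{\mathrm{m}}$ or $\mathbb{G}_{\mathrm{a}}$ inside $\AAut_X$, show that $X$ is then birational to $Y\times\PP^1$, and conclude via Lemma~\ref{lemma:ruled-kappa}. The only difference is that the paper obtains the birational splitting by citing \cite{Popov-quotient} or \cite[Lemma~A.5]{SV18}, whereas you prove it inline (Rosenlicht quotient, the generic fibre as a torsor under a split one-dimensional group, Hilbert~90 and its additive analogue, with the correct observation that quotients of split groups remain split over the imperfect field $\Bbbk(X)^G$) --- which is in substance exactly the argument contained in those references.
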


\begin{proof}
Since a connected algebraic group is geometrically connected, we may assume that the field $\Bbbk$
is algebraically closed.
Suppose that $\AAut_X$ contains a non-trivial connected linear algebraic group.
Then it contains a linear algebraic group $\mathbb{G}$ isomorphic either
to $\mathbb{G}_{\mathrm{m}}$ or to $\mathbb{G}_{\mathrm{a}}$,
see Lemma~\ref{lemma:Gm-or-Ga}.
Hence~$X$ is birational to~\mbox{$Y\times \PP^1$}, where $Y$ is the
geometric quotient of some open subset of $X$ by the action of~$\mathbb{G}$;
see \cite{Popov-quotient} or \cite[Lemma~A.5]{SV18}.
By Lemma~\ref{lemma:ruled-kappa} one has
$$
\kappa(X)=-\infty,
$$
which contradicts our assumptions.
\end{proof}

The following corollary of
Theorem~\ref{theorem:general-type-abundance}
and Proposition~\ref{proposition:Aut-nonruled} will be used only in
dimensions $1$ and $2$ in this paper. However, we
state it for arbitrary dimension because its proof
is essentially the same.

\begin{corollary}[{see \cite{Matsumura}, \cite[Corollaire~4]{MDLM},
cf. \cite[Corollary~14.3]{Ueno}}]
\label{corollary:general-type}
Let $X$ be a smooth geometrically irreducible
projective variety over a
field $\Bbbk$ such that $\kappa(X)=\dim X$.
Then $\AAut_X$ is a finite group scheme,
and~\mbox{$\Aut(X)$} is a finite group.
\end{corollary}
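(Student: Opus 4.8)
We must show that if $X$ is a smooth geometrically irreducible projective variety with $\kappa(X)=\dim X$ (the general type case), then $\AAut_X$ is a finite group scheme and $\Aut(X)$ is finite. The plan is to combine the two preceding results: Theorem~\ref{theorem:general-type-abundance} gives a pluricanonical map that is birational onto its image, and Proposition~\ref{proposition:Aut-nonruled} says $\AAut_X$ has no non-trivial connected linear algebraic subgroups. The strategy is to exploit the fact that the whole automorphism group scheme must act on the pluricanonical linear system, hence on projective space, which forces it to be linear; being connected-linear-free, the neutral component must then be trivial.

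\textbf{Reduction to the neutral component.} By Lemma~\ref{lemma:MZ}, the group $\Aut(X)/\Aut^0(X)$ has bounded finite subgroups; more precisely, the component group scheme $\AAut_X/\AAut^0_X$ has bounded finite subgroups. So it suffices to prove that $\AAut^0_X$ is trivial (equivalently $0$-dimensional), for then $\AAut_X$ has finitely many components each of which is a single point, hence is a finite group scheme, and $\Aut(X)=\AAut_X(\Bbbk)$ is finite. First I would pass to $X_{\bar\Bbbk}$: since a connected algebraic group is geometrically connected and finiteness descends, we may assume $\Bbbk$ is algebraically closed.

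\textbf{The main step: $\AAut^0_X$ is affine, hence linear.} By Theorem~\ref{theorem:general-type-abundance}, for some $n$ the rational map $\Phi=\Phi_{|nK_X|}$ is birational onto its image. Since $nK_X$ is intrinsic (the canonical class is preserved by every automorphism), the group $\Aut(X)$ acts linearly on the space $H^0(X,\omega_X^{\otimes n})$, giving a homomorphism $\Aut(X)\to\PGL\big(H^0(X,\omega_X^{\otimes n})\big)$ compatible with $\Phi$. Because $\Phi$ is birational onto its image, an automorphism acting trivially on $H^0(X,\omega_X^{\otimes n})$ acts trivially on the function field, hence is the identity; so this representation is faithful, and the same holds at the level of the group scheme $\AAut^0_X$. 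The heart of the argument is therefore that $\AAut^0_X$ embeds into $\PGL_N$, so it is a connected linear algebraic group scheme (after taking the reduced structure, $\AAut^0_{X,\red}$ is a connected linear algebraic group). The hard part is making the scheme-theoretic statement clean over a general base: one wants that the faithful action on the finite-dimensional space $H^0(X,\omega_X^{\otimes n})$ realizes $\AAut^0_X$ as a subgroup scheme of a $\GL$, i.e. that $\AAut^0_X$ is \emph{affine}. This can be extracted from the general structure of $\AAut^0_X$ or directly from the faithful pluricanonical representation.

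\textbf{Conclusion.} Once $\AAut^0_{X}$ is known to be a connected linear algebraic group (scheme), Proposition~\ref{proposition:Aut-nonruled} applies: $\AAut_X$ contains no non-trivial connected linear algebraic subgroup, so $\AAut^0_{X,\red}$ cannot contain a copy of $\mathbb{G}_{\mathrm{m}}$ or $\mathbb{G}_{\mathrm{a}}$, and by Lemma~\ref{lemma:Gm-or-Ga} any non-trivial connected linear group would contain one of these. Hence $\AAut^0_{X,\red}$ is trivial, so $\AAut^0_X$ is $0$-dimensional and supported at the identity; being connected it is a finite connected (infinitesimal) group scheme. Combined with Lemma~\ref{lemma:MZ}, which bounds the number of components, we conclude that $\AAut_X$ is a finite group scheme and therefore $\Aut(X)=\AAut_X(\Bbbk)$ is a finite group. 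I expect the only genuine obstacle to be the affineness/linearity of $\AAut^0_X$ in the scheme-theoretic formulation over an arbitrary field; everything else is a direct assembly of the quoted results.
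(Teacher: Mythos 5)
There is a genuine gap in your reduction to the neutral component. Lemma~\ref{lemma:MZ} does \emph{not} ``bound the number of components'' of $\AAut_X$: it only says that every \emph{finite} subgroup of $\Aut(X)/\Aut^0(X)$ has bounded order. A group with bounded finite subgroups can perfectly well be infinite (e.g.\ a free abelian group), so triviality of $\AAut^0_X$ together with Lemma~\ref{lemma:MZ} does not yield finiteness of $\AAut_X$, nor of $\Aut(X)$. The failure is not hypothetical: for a $K3$ surface $S$ the identity component $\AAut^0_S$ is trivial and Lemma~\ref{lemma:MZ} applies verbatim, yet $\Aut(S)$ is frequently infinite --- indeed the paper's own Lemma~\ref{lemma:K3} only bounds \emph{finite} subgroups of such groups, precisely because the full group need not be finite. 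So your claimed reduction (``it suffices to prove that $\AAut^0_X$ is trivial'') would prove a false statement; some input specific to general type, beyond triviality of the identity component, must control the component group.

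The missing step is exactly where the paper's proof differs from yours: apply the pluricanonical representation to the \emph{whole} group scheme $\AAut_X$, not only to $\AAut^0_X$. Since every automorphism preserves $|mK_X|$ and the map $\phi_m\colon X\dasharrow Z\subset\PP^n$ of Theorem~\ref{theorem:general-type-abundance} is birational onto its image, the group scheme $\AAut_X$ is realized as the stabilizer of $Z$ inside $\PGL_{n+1}$, i.e.\ as a closed subgroup scheme of a linear algebraic group. This makes $\AAut_X$ affine and of finite type, and finiteness of type is what actually bounds the number of components; it also dissolves the ``affineness of $\AAut^0_X$'' issue you flagged as the hard part, with no structure theory needed. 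Then $\AAut^0_{X,\red}$ is a connected linear algebraic group, hence trivial directly by Proposition~\ref{proposition:Aut-nonruled} (your detour through Lemma~\ref{lemma:Gm-or-Ga} is harmless but redundant: that lemma is an ingredient \emph{inside} the proof of Proposition~\ref{proposition:Aut-nonruled}, whose statement already excludes all non-trivial connected linear algebraic subgroups). A finite-type group scheme whose reduced identity component is trivial is a finite group scheme, and finiteness of $\Aut(X)=\AAut_X(\Bbbk)$ follows --- with Lemma~\ref{lemma:MZ} not needed anywhere.
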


\begin{proof}
We may assume that the field $\Bbbk$ is algebraically closed.
For some $m>0$, the rational map
$$
\phi_m\colon X\dasharrow Z\subset \mathbb{P}^n
$$
defined by the linear system $|mK_X|$
is birational, see Theorem~\ref{theorem:general-type-abundance}.
This realizes the group scheme~\mbox{$\AAut_X$} as the stabilizer
of $Z$ in the linear algebraic group~\mbox{$\PGL_{n+1}$}.
Therefore, $\AAut_X$ is a group scheme of finite type, and
the reduced neutral component $\AAut^0_{X,\red}$ of $\AAut_X$ is a linear
algebraic group. Moreover, $\AAut^0_{X,\red}$ is trivial
by Proposition~\ref{proposition:Aut-nonruled}. This implies
that $\AAut_X$ is a finite group scheme, and in particular
$\Aut(X)$ is a finite group.
\end{proof}

We will need the following general fact concerning algebraic groups.

\begin{theorem}[{see e.g. \cite[Lemma~4.3.1]{Brion-long}}]
\label{theorem:alg-group-abelian}
Let $\Gamma$ be a connected algebraic group.
Suppose that $\Gamma$ does not
contain non-trivial connected linear algebraic subgroups.
Then $\Gamma$ is an abelian variety.
\end{theorem}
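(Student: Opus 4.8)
The plan is to obtain this statement as a direct consequence of Chevalley's structure theorem for connected algebraic groups; once that theorem is in hand, the deduction is a single line. Recall that Chevalley's theorem asserts that over a perfect field every connected algebraic group $\Gamma$ fits into a canonical short exact sequence
$$
1\to L\to\Gamma\to A\to 1,
$$
in which $L$ is a connected \emph{normal} linear (equivalently, affine) algebraic subgroup of $\Gamma$ and the quotient $A=\Gamma/L$ is an abelian variety. I would take this structural result as the main input, since it is precisely the content that makes the theorem work.

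Granting it, the argument is immediate. The subgroup $L$ above is a connected linear algebraic subgroup of $\Gamma$, so the hypothesis that $\Gamma$ contains no non-trivial connected linear algebraic subgroup forces $L$ to be trivial. Then the quotient map identifies $\Gamma$ with $A$, and $\Gamma$ is an abelian variety.

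The only genuine point requiring care is the base field, and this is where I expect the main obstacle to lie. First I would reduce to the case of an algebraically closed field: since $\Gamma$ carries its identity as a $\Bbbk$-point it is geometrically connected, and by the standing convention it is smooth, so being an abelian variety is equivalent to being proper; as properness both ascends and descends along the faithfully flat extension $\bar{\Bbbk}/\Bbbk$, the group $\Gamma$ is an abelian variety if and only if $\Gamma_{\bar{\Bbbk}}$ is. To apply Chevalley's theorem over $\bar{\Bbbk}$ one must transport the hypothesis from $\Gamma$ to $\Gamma_{\bar{\Bbbk}}$, and here the crucial feature is that the Chevalley subgroup $L\subset\Gamma_{\bar{\Bbbk}}$ is \emph{canonical} (it is the largest connected normal affine subgroup), hence stable under all automorphisms and in particular under the Galois action; over a perfect field it therefore descends to a connected normal linear algebraic subgroup of $\Gamma$, which the hypothesis forbids unless it is trivial. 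The delicate situation is that of an imperfect base field, where Chevalley's theorem is not available in this naive form and the linear part need not be defined over $\Bbbk$ — the phenomenon underlying pseudo-abelian varieties, which shows that the perfectness assumption is genuinely needed; in that case one passes to the perfect closure and invokes the refined structure results of \cite{Brion-long}. For the applications of interest the base field is algebraically closed, so this subtlety does not intervene, and apart from it the proof is entirely formal.
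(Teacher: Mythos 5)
The paper offers no proof of Theorem~\ref{theorem:alg-group-abelian}: it is quoted as a known fact with a pointer to \cite[Lemma~4.3.1]{Brion-long}, so there is no internal argument to measure yours against. On its own terms, your deduction is the standard one and is correct over a perfect base field: granting Chevalley's theorem, the canonical largest connected normal affine subgroup of $\Gamma$ is trivial by hypothesis, and $\Gamma$ coincides with its abelian-variety quotient. Your descent step (canonicity of the Chevalley subgroup implies Galois-stability, hence definability over a perfect ground field) is also sound, though it is superfluous if one quotes Chevalley's theorem over perfect fields directly rather than only over $\bar{\Bbbk}$. One logical caveat: the lemma cited by the paper sits inside the proof of Chevalley's theorem in \cite{Brion-long}, so your derivation is non-circular only because Chevalley's theorem has proofs independent of that lemma — which it does, so this is a difference of route, not an error.

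The genuinely flawed part is the suggestion that over an imperfect field one can "pass to the perfect closure" and invoke refined structure results. No such repair is possible, because the statement is false over every imperfect field: Totaro's pseudo-abelian varieties are smooth connected commutative groups that are not proper, and commutativity makes every subgroup normal, so their defining property (no non-trivial smooth connected normal affine subgroup) means they contain no non-trivial connected linear algebraic subgroups at all, in the paper's sense of (geometrically reduced) algebraic groups. They satisfy the hypothesis and violate the conclusion. Equivalently, the hypothesis does not ascend along $\Bbbk\subset\Bbbk^{\mathrm{perf}}$: a non-trivial linear part of $\Gamma_{\Bbbk^{\mathrm{perf}}}$ appears precisely after this inseparable base change, which is why your transfer argument has nothing to latch onto. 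The correct conclusion of your own analysis is that the theorem must be read with a perfectness assumption on the base field; this costs nothing here, since its only application in the paper, Corollary~\ref{corollary:Aut-0-abelian}, is stated over a perfect field (not merely an algebraically closed one, as your last sentence suggests).
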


\begin{corollary}\label{corollary:Aut-0-abelian}
Let $X$ be a smooth geometrically irreducible
projective variety over a perfect field.
Suppose that $\kappa(X)\ge 0$. Then $\AAut^0_{X,\red}$ is an abelian variety.
\end{corollary}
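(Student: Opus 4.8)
The plan is to assemble three ingredients that are already in place: the fact that over a perfect field $\AAut^0_{X,\red}$ is a genuine algebraic group, the absence of connected linear algebraic subgroups in $\AAut_X$ forced by $\kappa(X)\ge 0$, and the structure theorem identifying such groups as abelian varieties.

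First I would recall that, because the base field is perfect, the maximal reduced subscheme $\AAut^0_{X,\red}$ of the neutral component $\AAut^0_X$ is a group scheme, and being reduced over a perfect field it is geometrically reduced, hence a smooth algebraic group in the sense of our definition (this is precisely the discussion in the General settings part of Section~\ref{section:Aut}). It is moreover connected, since it shares the underlying topological space of the connected scheme $\AAut^0_X$, and it is of finite type, since $\AAut^0_X$ is. Thus $\AAut^0_{X,\red}$ is a connected algebraic group, and in particular the hypotheses of the structure theorem are available to it.

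Next I would invoke Proposition~\ref{proposition:Aut-nonruled}: as $\kappa(X)\ge 0$, the group scheme $\AAut_X$ contains no non-trivial connected linear algebraic subgroup. Any connected linear algebraic subgroup of $\AAut^0_{X,\red}$ would, via the closed immersion $\AAut^0_{X,\red}\hookrightarrow\AAut_X$, be such a subgroup of $\AAut_X$; hence $\AAut^0_{X,\red}$ itself contains no non-trivial connected linear algebraic subgroup. Applying Theorem~\ref{theorem:alg-group-abelian} to the connected algebraic group $\AAut^0_{X,\red}$ then yields that it is an abelian variety, which is the desired conclusion.

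The only point that genuinely requires care, and the reason for the perfect-field hypothesis, is the very first step: over a non-perfect field the maximal reduced subscheme of a group scheme need not be a group scheme at all, see~\mbox{\cite[Example~2.5.3]{Brion-long}}, so a priori $\AAut^0_{X,\red}$ might fail to be an algebraic group and neither Theorem~\ref{theorem:alg-group-abelian} nor the notion of an abelian variety would apply to it. Once perfectness secures that $\AAut^0_{X,\red}$ is an honest connected algebraic group, the remaining steps are purely formal combinations of the quoted results, requiring no further computation.
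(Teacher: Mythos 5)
Your proposal is correct and follows exactly the paper's route: establish (using perfectness, per the discussion in Section~\ref{section:Aut}) that $\AAut^0_{X,\red}$ is a connected algebraic group, rule out non-trivial connected linear algebraic subgroups via Proposition~\ref{proposition:Aut-nonruled}, and conclude by Theorem~\ref{theorem:alg-group-abelian}. The extra care you take in justifying that $\AAut^0_{X,\red}$ is a genuine connected algebraic group of finite type is exactly what the paper's terser proof leaves implicit.
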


\begin{proof}
The connected algebraic group $\AAut^0_{X,\red}$
does not contain non-trivial connected linear algebraic subgroups
by Proposition~\ref{proposition:Aut-nonruled}.
Therefore, the required assertion follows from
Theorem~\ref{theorem:alg-group-abelian}.
\end{proof}

Now we are ready to prove Proposition~\ref{proposition:Hu-improved}.

\begin{proof}[Proof of Proposition~\ref{proposition:Hu-improved}]
We may assume that $X$ is defined over an algebraically closed field.
Then the group~\mbox{$\Aut^0(X)$} is the group of points of
some abelian variety by Corollary~\ref{corollary:Aut-0-abelian}.
In particular, it is abelian,
On the other hand, the quotient group
$\Aut(X)/\Aut^0(X)$ has bounded finite subgroups by Lemma~\ref{lemma:MZ}.
Hence the group $\Aut(X)$ is Jordan
by Lemma~\ref{lemma:group-theory}.
\end{proof}

\section{Jordan property for curves}
\label{section:curves}

In this section we describe the Jordan properties for
automorphism groups of smooth projective curves.

\begin{lemma}\label{lemma:curves}
Let $\Bbbk$ be a field of characteristic $p>0$,
and let $C$ be a smooth geometrically
irreducible projective curve of genus $g$ over $\Bbbk$.
The following assertions hold.
\begin{itemize}
\item[(i)]
If $g=0$, then the group $\Aut(C)$ is $p$-Jordan.

\item[(ii)] If $g=1$, then the group $\Aut(C)$ is Jordan.

\item[(iii)] If the $g\ge 2$, then the group $\Aut(C)$ is finite.
\end{itemize}
\end{lemma}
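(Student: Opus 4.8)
The statement splits into three cases by genus, and I would treat each using the results already developed in the excerpt, reducing everything to the geometric (algebraically closed) situation since Jordan-type properties and $p$-Jordan properties are inherited from extensions of scalars (as $\Aut(C)\subset\Aut(C_{\bar\Bbbk})$). For the genus $0$ case, the plan is to observe that a smooth geometrically irreducible curve of genus $0$ becomes isomorphic to $\PP^1$ after passing to $\bar\Bbbk$, so that $\Aut(C)$ embeds into $\Aut(\PP^1_{\bar\Bbbk})\cong\PGL_2(\bar\Bbbk)$. The $p$-Jordan property then follows immediately from Corollary~\ref{corollary:PGL} applied with $n=2$.

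**Genus one.** For $g=1$, the Jacobian $J(C)$ is an elliptic curve (a one-dimensional abelian variety), and I would use the fact that $\Aut(C)$ acts on $C_{\bar\Bbbk}$, which becomes an elliptic curve once we fix a $\bar\Bbbk$-point; more precisely, every automorphism of $C$ induces an automorphism of the abelian variety $J(C)$, and the kernel of $\Aut(C)\to\Aut(J(C))$ consists of translations, which form an abelian group. Thus $\Aut(C)$ fits into an extension of a subgroup of $\Aut(J(C);P)$ by an abelian translation group. Corollary~\ref{corollary:abelian-variety-Jordan} then shows $\Aut(J(C))$ is Jordan, and I would invoke Lemma~\ref{lemma:group-theory} together with the structure in Theorem~\ref{theorem:abelian-variety} to conclude $\Aut(C)$ is Jordan. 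The cleanest route is simply to note that $C_{\bar\Bbbk}$ is an abelian variety of dimension $1$ (as a variety, disregarding the group structure, which is exactly the viewpoint of Section~\ref{section:abelian-varieties}), so Corollary~\ref{corollary:abelian-variety-Jordan} applies directly to give that $\Aut(C_{\bar\Bbbk})$, and hence $\Aut(C)$, is Jordan.

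**Genus at least two.** For $g\ge 2$, the curve has Kodaira dimension $1=\dim C$, since $\deg K_C=2g-2>0$ makes $K_C$ ample; this is the general-type case in dimension one. Therefore Corollary~\ref{corollary:general-type} applies with $X=C$ and tells us directly that $\Aut(C)$ is a finite group. This is the quickest of the three cases, as the heavy lifting is entirely contained in the cited corollary.

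**Anticipated obstacle.** None of the three cases presents a genuine difficulty once the machinery of the preceding sections is in place; the main point is simply to match each genus to the correct earlier result and to handle the reduction to $\bar\Bbbk$ carefully. The subtlest case is genus $1$, where I must make sure that passing to $\bar\Bbbk$ genuinely turns $C$ into an abelian variety in the sense of Section~\ref{section:abelian-varieties}. I would stress that we do not need $C$ to have a $\Bbbk$-point: over $\bar\Bbbk$ a genus-one curve always acquires a point and hence a group structure, and Corollary~\ref{corollary:abelian-variety-Jordan} is stated so as to apply regardless, which is exactly the reason the authors emphasized in the opening of Section~\ref{section:abelian-varieties} that they do not assume the abelian variety has a point.
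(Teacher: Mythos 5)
Your proposal is correct and takes essentially the same route as the paper's proof: reduce to $\bar\Bbbk$, then settle $g=0$, $g=1$, and $g\ge 2$ by, respectively, a known $p$-Jordan result for the automorphisms of $\PP^1$, Corollary~\ref{corollary:abelian-variety-Jordan} applied to the genus-one curve viewed as a one-dimensional abelian variety, and Corollary~\ref{corollary:general-type} via ampleness of $K_C$. The only cosmetic difference is in case (i), where the paper cites Theorem~\ref{theorem:Hu} directly while you identify $C_{\bar\Bbbk}$ with $\PP^1$ and invoke Corollary~\ref{corollary:PGL}; both are available in the text and your choice is, if anything, the more explicit one.
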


\begin{proof}
We may assume that $\Bbbk$ is algebraically closed. Now
assertion~(i) follows from Theorem~\ref{theorem:Hu}.
Assertion~(ii) is given by Corollary~\ref{corollary:abelian-variety-Jordan}.
Assertion~(iii) is given by Corollary~\ref{corollary:general-type}.
\end{proof}

Similarly to the Hurwitz bound over fields of characteristic zero,
there exists a bound
on the order of the automorphism
group of a curve of genus $g\ge 2$ over a field of positive characteristic
that depends only on~$g$,
but not on the characteristic of the field,
see~\cite{St73}. In the case of elliptic curves,
the constant arising in Lemma~\ref{lemma:curves}(ii) is always bounded
by~$24$, see Remark~\ref{remark:elliptic-curves-uniform}. For a more explicit
version of Lemma~\ref{lemma:curves}(i), one can use a classification of finite groups acting on~$\PP^1$.

\begin{theorem}[{see e.g. \cite[Theorem~2.1]{DD}}]
\label{theorem:ADE}
Let $\Bbbk$ be a field of characteristic~\mbox{$p>0$}, let
$G\subset\PGL_2(\Bbbk)$
be a finite group, and let $G_p$ be a $p$-Sylow subgroup of~$G$.
Then $G$ is one of the following groups:
\begin{itemize}
\item[(1)] a dihedral group of order $2n$, where $n>1$ is coprime to~$p$;

\item[(2)] one of the groups $\mathfrak{A}_4$, $\mathfrak{S}_4$, or
$\mathfrak{A}_5$;

\item[(3)] the group $\PSL_2(\mathbf{F}_{p^k})$ for some $k\ge 1$;

\item[(4)] the group $\PGL_2(\mathbf{F}_{p^k})$ for some $k\ge 1$;

\item[(5)] a group of the form $G_p\rtimes \ZZ/n\ZZ$, where $n\ge 1$
is coprime to~$p$, and $G_p$ is a $p$-subgroup of the additive group of~$\Bbbk$.
\end{itemize}
\end{theorem}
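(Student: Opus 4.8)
The plan is to reduce to the case of an algebraically closed field and then organize the argument according to the structure of a $p$-Sylow subgroup $G_p$ of $G$, viewing $G$ as a group acting on $\PP^1$. Since any finite subgroup of $\PGL_2(\Bbbk)$ is a finite subgroup of $\PGL_2(\bar{\Bbbk})$, and $\bar{\Bbbk}$ contains every field $\mathbf{F}_{p^k}$, it suffices to pin down the isomorphism type of $G$ inside $\PGL_2(\bar{\Bbbk})$. The key geometric input is that a non-trivial element of $\PGL_2(\bar{\Bbbk})$ is either \emph{semisimple} --- diagonalizable, of order coprime to $p$, with exactly two fixed points on $\PP^1$ --- or \emph{unipotent}, of order exactly $p$, with a single fixed point; and that the stabilizer in $\PGL_2(\bar{\Bbbk})$ of a point of $\PP^1$ is a Borel subgroup $B\cong\mathbb{G}_{\mathrm{a}}\rtimes\mathbb{G}_{\mathrm{m}}$.

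First I would treat the case $G_p=\{1\}$, i.e. $p\nmid|G|$. Here every non-trivial element of $G$ is semisimple with exactly two fixed points, and every point stabilizer $G_P=G\cap B$ is cyclic, since it injects into $B/\mathbb{G}_{\mathrm{a}}\cong\mathbb{G}_{\mathrm{m}}$ (the kernel $G_P\cap\mathbb{G}_{\mathrm{a}}$ is a $p$-group, hence trivial). Counting the pairs $(g,P)$ with $g\neq 1$ and $gP=P$ in two ways --- once as $2(|G|-1)$ and once by grouping fixed points into $G$-orbits --- yields the classical relation
$$
2\left(1-\frac{1}{|G|}\right)=\sum_i\left(1-\frac{1}{n_i}\right),
$$
where the $n_i$ are the orders of the stabilizers of representatives of the orbits of points with non-trivial stabilizer. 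As in the complex case, the left-hand side being less than $2$ forces at most three orbits, and the only solutions are $(|G|,|G|)$, $(2,2,n)$, $(2,3,3)$, $(2,3,4)$, $(2,3,5)$, giving respectively a cyclic group (case (5) with trivial $G_p$), a dihedral group (case (1)), and the groups $\mathfrak{A}_4$, $\mathfrak{S}_4$, $\mathfrak{A}_5$ (case (2)).

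Next, suppose $G_p\neq\{1\}$. A central element of order $p$ in $G_p$ is unipotent and fixes a unique point $P$; being central, it forces all of $G_p$ to fix $P$, so $G_p\subset B$, and projecting to $\mathbb{G}_{\mathrm{m}}$ (which has no $p$-torsion in characteristic $p$) shows that $G_p\subset\mathbb{G}_{\mathrm{a}}$ is elementary abelian with $P$ as its unique fixed point. If $G_p$ is normal in $G$ (equivalently, is the unique $p$-Sylow subgroup), then $G$ permutes the one-point fixed locus of $G_p$ and hence also fixes $P$; thus $G\subset B$, the quotient $G/G_p\hookrightarrow\mathbb{G}_{\mathrm{m}}$ is cyclic of some order $n$ coprime to $p$, and Schur--Zassenhaus splits the extension, giving $G\cong G_p\rtimes\ZZ/n\ZZ$ (case (5)).

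The remaining case, when $G_p$ is non-trivial but not normal, is the hardest: then $G$ has several $p$-Sylow subgroups, fixing pairwise distinct points of $\PP^1$, and this is the heart of Dickson's classification. Here one analyzes the normalizer $N_G(G_p)=G_p\rtimes C$ (a Borel-type subgroup with $C$ cyclic), uses the congruence $[G:N_G(G_p)]\equiv 1\pmod{p}$ together with the action of $G$ on the set of fixed points of its Sylow subgroups to show that this action is sharply $2$-transitive of degree $|G_p|+1=p^k+1$, identifies the relevant point set with $\PP^1(\mathbf{F}_{p^k})$, and concludes that $\PSL_2(\mathbf{F}_{p^k})\subseteq G\subseteq\PGL_2(\mathbf{F}_{p^k})$, whence $G$ is $\PSL_2(\mathbf{F}_{p^k})$ or $\PGL_2(\mathbf{F}_{p^k})$ (cases (3) and (4)). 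I expect this last step to be the main obstacle, as it requires the full combinatorial--group-theoretic analysis of how unipotent subgroups with distinct fixed points interact; the other cases are essentially elementary once the dichotomy ``semisimple versus unipotent'' is in place.
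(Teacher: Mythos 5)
The paper itself offers no proof of this statement: it is quoted directly from the literature (see \cite[Theorem~2.1]{DD}, which ultimately rests on Dickson's classification), so your attempt must be judged as a self-contained proof of Dickson's theorem. Your treatment of the two soft regimes is essentially correct: when $p\nmid |G|$, the semisimple/unipotent dichotomy plus the classical two-way orbit count does yield the cyclic, dihedral, $\mathfrak{A}_4$, $\mathfrak{S}_4$, $\mathfrak{A}_5$ list; and when $G_p$ is non-trivial and normal, the unique-fixed-point argument correctly places $G$ inside a Borel subgroup and gives case (5) via Schur--Zassenhaus.

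The genuine gap is in cases (3)--(4), which you yourself flag as ``the main obstacle'' and for which you give only a roadmap --- and that roadmap is incorrect as stated. The action of $G$ on the set $\Omega$ of fixed points of its Sylow $p$-subgroups is \emph{not} sharply $2$-transitive in general: for $G=\PSL_2(\mathbf{F}_q)$ acting on $\PP^1(\mathbf{F}_q)$ the stabilizer of an ordered pair of points is cyclic of order $(q-1)/\gcd(2,q-1)>1$ once $q>3$, so $|G|=q(q^2-1)/\gcd(2,q-1)\neq q(q+1)$; alternatively, a finite sharply $2$-transitive group has a regular normal abelian subgroup (Frobenius--Zassenhaus), which is impossible for the simple groups $\PSL_2(\mathbf{F}_q)$, $q\ge 4$. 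What is true is only that $G$ acts $2$-transitively on $\Omega$ --- and even this needs an argument, since a priori the free action of $G_p$ on $\Omega\setminus\{P\}$ gives only $|\Omega|\equiv 1\pmod{|G_p|}$, not $|\Omega|=|G_p|+1$, and transitivity of $N_G(G_p)$ on $\Omega\setminus\{P\}$ is not automatic --- with cyclic two-point stabilizers and trivial three-point stabilizers. In other words, $G$ is a Zassenhaus group, and the identification of finite Zassenhaus groups with $\PSL_2(\mathbf{F}_{p^k})$, $\PGL_2(\mathbf{F}_{p^k})$ (and, in general, the twisted and Suzuki examples, which here must be excluded using the fact that $G_p$ is elementary abelian) is itself a deep classification theorem due to Zassenhaus, Feit, Ito, and Suzuki. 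So the step you compress into ``identifies the relevant point set with $\PP^1(\mathbf{F}_{p^k})$ and concludes'' is essentially the entire content of Dickson's theorem, and nothing in your sketch supplies it. To close the gap you would either have to carry out Dickson's explicit analysis of the subgroup generated by two distinct Sylow $p$-subgroups (showing it contains $\PSL_2$ over the subfield generated by the relevant parameters), or do what the paper does and cite the classification.
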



\begin{remark}\label{remark:PGL2-centralizer}
Let $\Bbbk$ be a field of characteristic $p>0$,
and let~\mbox{$g\in\PGL_2(\Bbbk)$} be a non-trivial element of finite order $n$ coprime to $p$.
Then $g$ is semi-simple, and thus is contained in some algebraic torus $T\subset\PGL_2(\Bbbk)$.
Moreover, if $n\ge 3$, then the centralizer of $g$ coincides with
$T$. In particular, every finite group that commutes with $g$ is
cyclic. Moreover, the order of such a group is coprime to~$p$, because an algebraic torus over $\Bbbk$
does not contain elements of order~$p$. This provides additional restrictions
for the possible structure of the groups of type~(5) in Theorem~\ref{theorem:ADE}.
\end{remark}

\begin{remark}\label{remark:PGL-generators}
Any group of one of types (1)--(4) in the notation of Theorem~\ref{theorem:ADE}
can be generated by $2$ elements.
Indeed, this clearly holds for a dihedral group. Also, it is well known that the symmetric group
on $n$ elements is generated by a cycle of length~$2$ and a cycle of length~$n$.
Furthermore, any non-abelian finite simple group is generated by two elements, see e.g.~\cite{King}.
In particular, this holds for an alternating group $\mathfrak{A}_5$. A similar assertion for
the group $\mathfrak{A}_4$ can be checked directly.
The groups $\PSL_2(\mathbf{F}_{p^k})$ are simple except for~\mbox{$\PSL_2(\mathbf{F}_2)\cong\mathfrak{S}_3$} and
$\PSL_2(\mathbf{F}_3)\cong\mathfrak{A}_4$, see \cite[\S3.3.1]{Wilson}.
Thus, any group of this type can be generated by two elements.
Finally, for the group~\mbox{$\PGL_2(\mathbf{F}_{p^k})$} the required assertion follows from~\cite{Waterhouse}.
\end{remark}

Now we can give an explicit bound for the index of a normal abelian subgroup in
a finite group acting on~$\PP^1$.

\begin{lemma}\label{lemma:P1}
For every prime $p$ and every field $\Bbbk$ of characteristic $p$,
every finite subgroup
$$
G\subset \Aut(\PP^1)\cong \PGL_2(\Bbbk)
$$
contains a characteristic cyclic subgroup
of order coprime
to $p$ and index at most~\mbox{$60|G_{p}|^{3}$},
where $G_{p}$ is a $p$-Sylow subgroup of $G$.
\end{lemma}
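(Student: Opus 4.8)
The plan is to run through the classification of finite subgroups of $\PGL_2(\Bbbk)$ given by Theorem~\ref{theorem:ADE} and to produce, for each of the five types, a characteristic cyclic subgroup $A$ of order coprime to $p$ whose index obeys the stated bound. In most of the types the simplest valid choice is the trivial subgroup, which is always characteristic and cyclic of order $1$; there the content is merely to check that $|G|\le 60|G_p|^3$. The constant $60=|\mathfrak{A}_5|$ is forced by the alternating group case.

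I would first dispatch the types whose order is controlled directly. In type~(2) the group $G$ is one of $\mathfrak{A}_4$, $\mathfrak{S}_4$, $\mathfrak{A}_5$, so $|G|\le 60\le 60|G_p|^3$ and the trivial subgroup works. In types~(3) and~(4), set $q=p^k$; the unipotent upper-triangular matrices form the $p$-Sylow subgroup, so $|G_p|=q$, and since $|\PSL_2(\mathbf{F}_q)|\le|\PGL_2(\mathbf{F}_q)|=q^3-q<q^3=|G_p|^3$, the trivial subgroup again has index below $60|G_p|^3$. For type~(1), a dihedral group of order $2n$ with $n>1$ coprime to $p$, the trivial subgroup is too small, so I would take the rotation subgroup $R\cong\ZZ/n\ZZ$: for $n\ge 3$ it is the unique cyclic subgroup of order $n$ in $G$, hence characteristic, it has order coprime to $p$, and its index is $2$; for $n=2$ the group is the Klein four group and the trivial subgroup has index $4$. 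In every one of these situations the index is at most $60$.

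The substantive case is type~(5), where $G\cong G_p\rtimes\ZZ/n\ZZ$ with $G_p\cong(\ZZ/p\ZZ)^m$ its normal $p$-Sylow subgroup and $\ZZ/n\ZZ$ of order coprime to $p$. If $G_p$ is trivial then $G$ itself is cyclic of order $n$ coprime to $p$, and I would take $A=G$, of index $1$. If $G_p$ is non-trivial, the crux is the inequality $n<|G_p|$. A generator $g$ of $\ZZ/n\ZZ$ is semisimple of order $n$; reading Remark~\ref{remark:PGL2-centralizer} inside the Borel subgroup normalising $G_p$, the torus containing $g$ acts on the ambient unipotent subgroup, hence on $G_p$, by scaling through a character of exact order $n$, so no non-trivial power of $g$ acts trivially on $G_p$. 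Thus the conjugation homomorphism $\ZZ/n\ZZ\to\Aut(G_p)\cong\GL_m(\mathbf{F}_p)$ is injective, and the bound on orders of elements of $\GL_m(\mathbf{F}_p)$ underlying Lemma~\ref{lemma:Darafsheh} gives $n\le p^m-1<p^m=|G_p|$. Hence $|G|=n\cdot|G_p|<|G_p|^2\le 60|G_p|^3$, and the trivial subgroup suffices once more. The small cases $n\le 2$, not covered by Remark~\ref{remark:PGL2-centralizer}, are immediate, since $|G_p|=p^m\ge 3$ whenever $n=2$ (then $p$ is odd) and $|G_p|\ge 2$ whenever $n=1$.

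I expect the main obstacle to be exactly the faithfulness statement in type~(5): one must upgrade the fixed-point-free action of a generator of $\ZZ/n\ZZ$ on $G_p$ to an injective homomorphism into $\GL_m(\mathbf{F}_p)$, as it is this injectivity that turns the Darafsheh-type order bound into the estimate $n<|G_p|$ and thereby controls $|G|$ by $|G_p|$. A lesser point requiring care is the claim in type~(1) that the rotation subgroup is characteristic for all $n\ge 3$, including even $n$, which comes down to verifying that it is the unique cyclic subgroup of order $n$ in the dihedral group.
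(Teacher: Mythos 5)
Your proof is correct, and it shares the inevitable skeleton of the paper's proof---the case analysis of Theorem~\ref{theorem:ADE}, with the trivial subgroup and the order counts $|G|\le 60$ and $|G|<|G_p|^3$ disposing of types (2)--(4)---but it treats the two substantive cases differently. In type (1) you take the rotation subgroup (the unique cyclic subgroup of order $n$ once $n\ge 3$, hence characteristic), whereas the paper takes the commutator subgroup, which is characteristic for free and has index at most $4$; this is a cosmetic difference. The genuine divergence is in type (5). The paper never tries to show that $L\cong\ZZ/n\ZZ$ acts faithfully on $G_p$: it takes $A=L'=C_L(G_p)$, bounds $[L:L']\le p^m-1$ by Lemma~\ref{lemma:Darafsheh}, and then needs the two-step argument ($G_p$ is characteristic in $G$, hence $C(G_p)\cong G_p\times L'$ is characteristic, hence $L'$ is characteristic in $C(G_p)$ by Example~\ref{example:characteristic-product}, hence normal-\emph{cum}-characteristic in $G$) to produce a characteristic cyclic subgroup of index less than $|G_p|^2$. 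You instead prove that the conjugation action of $L$ on $G_p$ is faithful, using geometry specific to $\PGL_2$: a semisimple generator $g$ lies in a torus of the Borel fixing the common fixed point of $G_p$, and that torus scales the unipotent radical by a character restricting injectively to $\langle g\rangle$ (equivalently, the centralizer of a non-trivial unipotent element of $\PGL_2(\Bbbk)$ lies in the unipotent radical, so contains no non-trivial element of order prime to $p$); hence $n\le p^m-1<|G_p|$, so $|G|<|G_p|^2$ and the trivial subgroup suffices. In effect you show that the paper's subgroup $L'$ is trivial. Your route buys a sharper numerical conclusion and avoids the characteristic-subgroup gymnastics, at the price of genuine geometric input about how type-(5) groups sit in $\PGL_2$; the paper's route is purely group-theoretic given the classification and would apply verbatim to any abstract group of the shape $(\ZZ/p\ZZ)^m\rtimes\ZZ/n\ZZ$, faithful action or not. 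One minor imprecision on your side: Remark~\ref{remark:PGL2-centralizer} as stated only concerns elements of order at least $3$, so it does not literally supply your ``character of exact order $n$'' claim; but your direct torus computation is correct and self-contained, and you rightly quarantined the cases $n\le 2$ by elementary order counting.
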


\begin{proof}
Let $\Bbbk$ be a field of characteristic $p>0$, let
$G\subset\PGL_2(\Bbbk)$
be a finite group, and let $G_p$ be a $p$-Sylow subgroup of $G$.
Then $G$ is a group of one of
types (1)--(5) in the notation of Theorem~\ref{theorem:ADE}.

If $G$ is a dihedral group of type~(1),
then the commutator subgroup of $G$ is a characteristic cyclic subgroup of
index at most $4$ in $G$, and its order is coprime to $p$.
If $G$ is one of the groups $\mathfrak{A}_4$, $\mathfrak{S}_4$, or
$\mathfrak{A}_5$, then the trivial subgroup has index at most $60$ in~$G$.
If $G\cong\PSL_2(\mathbf{F}_{p^k})$ and $p\neq 2$, then
the trivial subgroup of $G$ has index
$$
|G|=\frac{|\SL_2(\mathbf{F}_{p^k})|}{2}=\frac{p^k(p^{2k}-1)}{2}
<p^{3k}=|G_p|^3.
$$
If $G\cong\PSL_2(\mathbf{F}_{2^k})$, then
the trivial subgroup of $G$ has index
$$
|G|=|\SL_2(\mathbf{F}_{2^k})|=2^k(2^{2k}-1)<2^{3k}=|G_2|^3.
$$
If $G\cong\PGL_2(\mathbf{F}_{p^k})$, then the trivial subgroup
of $G$ has index
$$
|G|=p^k(p^{2k}-1)<p^{3k}=|G_p|^3.
$$

Now suppose that $G$ is of type~(5).
Then $G$ contains a cyclic group~\mbox{$L\cong\ZZ/n\ZZ$}
such that $G\cong G_p\rtimes L$,
and $G_p\cong (\ZZ/p\ZZ)^m$ for some positive integer~$m$.
Let~$L'$ be the centralizer of $G_p$ in $L$.
By Lemma~\ref{lemma:Darafsheh}, the index of $L'$ in $L$
is at most~\mbox{$p^m-1$}. Thus,
$L'$ is a cyclic subgroup of $G$ of index at most
$$
|G_p|(p^m-1)<|G_p|^2,
$$
and the order of $L'$ is coprime to $p$.

We claim that $L'$ is a characteristic subgroup
of $G$. Indeed, $G_p$ is characteristic in $G$ by
Example~\ref{example:characteristic-product}.
Thus its centralizer~\mbox{$C(G_p)$}
in $G$ is also a characteristic subgroup of $G$. On the other hand, one has
$$
C(G_p)\cong G_p\times L'.
$$
Therefore, $L'$ is characteristic in $C(G_p)$ (cf. Example~\ref{example:characteristic-product}), and hence also
characteristic in~$G$.
\end{proof}

\section{Cremona group}
\label{section:Cremona}

In this section we prove Theorem~\ref{theorem:Serre}.
Our proof is (nearly) identical to that presented in
\cite{Serre-2009}.

We start by recalling the assertion proved
in~\mbox{\cite[Lemma~5.2]{Serre-2009}}.
We provide its detailed proof for the convenience of the reader.

\begin{lemma}\label{lemma:Serre}
Let $n$ be a positive integer, and let $\Bbbk$ be a field of characteristic~$p$
such that $p$ does not divide $n$ and $\Bbbk$ contains a primitive $n$-th root of~$1$.
Let $S$ be a smooth geometrically irreducible projective surface over $\Bbbk$, and let~\mbox{$H\subset\Aut(S)$} be a finite group.
Suppose that there exists an $H$-equivariant conic bundle~\mbox{$\phi\colon S\to C$}.
Denote by $F$ the subgroup of~$H$ that consists of automorphisms that are fiberwise with respect to~$\phi$
(see Definition~\ref{definition:fiberwise}).
Let~\mbox{$R\subset F$} be a cyclic group of order~$n$, and let $\alpha\in H$ be an element normalizing~$R$. Then~$\alpha^2$ commutes with~$R$.
\end{lemma}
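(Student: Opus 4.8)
The plan is to reduce the statement to a congruence on the integer describing the conjugation action of $\alpha$ on $R$, and then to read off this congruence from the action of $R$ on the normal directions to its fixed locus. First I would pass to the algebraic closure: since $\Aut(S)\subset\Aut(S_{\bar\Bbbk})$, the conic bundle $\phi$ and the groups $H$, $F$, $R$ together with the element $\alpha$ all base change to $\bar\Bbbk$, the characteristic and the hypothesis $p\nmid n$ are unchanged, and $\bar\Bbbk$ still contains a primitive $n$-th root of unity; moreover the commutation of $\alpha^2$ with $R$ may be checked after this base change. Hence I assume $\Bbbk=\bar\Bbbk$. Fix a generator $r$ of $R$. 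Since $\alpha$ normalizes the cyclic group $R\cong\ZZ/n\ZZ$, it acts on $R$ as an automorphism, so $\alpha r\alpha^{-1}=r^k$ for some $k$ with $\gcd(k,n)=1$; as $\alpha^2 r\alpha^{-2}=r^{k^2}$, it suffices to prove $k^2\equiv 1\pmod n$, i.e. $k\equiv\pm1\pmod n$.

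Next I would analyze the action of $r$ on the generic fiber. The group $F$ acts faithfully on the scheme-theoretic generic fiber $S_\eta$, which is a smooth conic over $K=\Bbbk(C)$, so over $\bar K$ it becomes $\PP^1$ and $r$ defines an element of order exactly $n$ in $\PGL_2(\bar K)$. Because $p\nmid n$, this element is semisimple, hence diagonalizable with two distinct fixed points $P_1,P_2$ on $\PP^1_{\bar K}$, at which $r$ acts on the tangent line by a primitive $n$-th root of unity $\mu$ and by $\mu^{-1}$ respectively. Let $Z\subset S$ be the closure of the locus of fixed points of $r$ in the fibers of $\phi$; then $Z\to C$ is finite of degree $2$, and over a general point $c\in C$ the two points of $Z$ are the fixed points of $r$ in the fiber, where $Z$ meets the smooth fiber transversally. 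I record the resulting \emph{multiplier}: at a general point $z\in Z$ the automorphism $r$ fixes $z$ and acts on the normal space $N_{Z/S,z}\cong T_z(\text{fiber})$ by $\mu$ if $z$ lies over $P_1$ and by $\mu^{-1}$ if $z$ lies over $P_2$.

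The heart of the argument is the interplay of $\alpha$ with this multiplier. Since $r^k$ has the same fixed locus as $r$, the identity $\alpha\cdot\text{Fix}(r)=\text{Fix}(\alpha r\alpha^{-1})=\text{Fix}(r^k)=\text{Fix}(r)$ gives $\alpha(Z)=Z$, so $\alpha$ preserves $Z$ and permutes its two branches over $C$. Fix a general point $z\in Z$ lying over $P_1$ and set $z'=\alpha(z)$; from $r(z)=z$ and $\alpha r=r^k\alpha$ one checks $r^k(z')=z'$, so $z'\in Z$ as well. Differentiating the relation $\alpha r=r^k\alpha$ at $z$ yields $d\alpha_z\circ dr_z=d(r^k)_{z'}\circ d\alpha_z$, and since $\alpha$ carries $T_zZ$ to $T_{z'}Z$ this descends to the normal spaces. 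Comparing the scalars by which $r$ and $r^k$ act there gives $\mu=\big(\mu_{z'}\big)^k$, where $\mu_{z'}\in\{\mu,\mu^{-1}\}$ according to the branch containing $z'$. If $\alpha$ keeps $z$ on the $P_1$-branch this reads $\mu=\mu^k$, forcing $n\mid k-1$; if $\alpha$ moves $z$ to the $P_2$-branch it reads $\mu=\mu^{-k}$, forcing $n\mid k+1$. As $\mu$ has exact order $n$, in either case $k\equiv\pm1\pmod n$, whence $k^2\equiv1\pmod n$ and $\alpha^2$ commutes with $R$.

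I expect the main obstacle to be the bookkeeping caused by $\alpha$ not being fiberwise: because $\alpha$ moves the fibers of $\phi$, one cannot simply restrict to $S_\eta$ and diagonalize $r$ and $\alpha$ simultaneously. The device that circumvents this is to work with the $\alpha$-invariant fixed curve $Z$ and to extract the exponent $k$ from the conjugation-invariant scalar action on the one-dimensional normal spaces, which makes the computation insensitive to how $\alpha$ permutes fibers. The only characteristic-sensitive input is the hypothesis $p\nmid n$, used to guarantee that $r$ is semisimple with exactly two fixed points and that $\mu$ is a primitive $n$-th root of unity; this is precisely where the positive-characteristic argument differs from the classical one.
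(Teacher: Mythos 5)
Your core mechanism is the same one the paper uses: the two fixed points of $R$ in the fibres carry multipliers $\mu$ and $\mu^{-1}$, conjugation by $\alpha$ can only permute these two points, and comparing multipliers forces the conjugation exponent $k$ to satisfy $k\equiv\pm 1 \pmod n$. The implementations differ, though. The paper never leaves the generic fibre: it regards $\alpha$ as a semilinear automorphism of $S_{\eta,\bar{\KK}}$ over $\Bbbk$ (semilinear because $\alpha$ moves the fibres, hence acts on $\KK=\Bbbk(C)$ by a field automorphism $\bar{\alpha}$), notes that $\alpha^2$ fixes each of the two fixed points, and computes that $\zeta^{r^2}=\bar{\alpha}^2(\zeta)$; the hypothesis that $\Bbbk$ contains a primitive $n$-th root of unity is exactly what guarantees $\bar{\alpha}(\zeta)=\zeta$. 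You instead base-change to $\bar{\Bbbk}$ at the outset (a legitimate reduction, under which that hypothesis becomes automatic, so your argument, once completed, even shows the hypothesis is superfluous) and then work at closed points of the fixed curve $Z$, where all differentials are honestly $\bar{\Bbbk}$-linear and no semilinear bookkeeping is needed.

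There is, however, a gap at the step ``by $\mu$ if $z$ lies over $P_1$ and by $\mu^{-1}$ if $z$ lies over $P_2$'' and in the phrase ``according to the branch containing $z'$''. This presupposes that $Z$ has two globally distinguishable branches, one through each of $P_1$, $P_2$, along each of which the multiplier is constant. Neither is automatic: even over $\bar{\Bbbk}$ the two fixed points $P_1,P_2$ of $r$ on $\PP^1_{\bar{K}}$, where $K=\bar{\Bbbk}(C)$, may be conjugate over $K$, in which case $Z$ is irreducible, the monodromy interchanges the two points of a general fibre, and ``the branch containing $z'$'' is meaningless; and even when $Z$ does split into two components, the constancy of the multiplier along each component is asserted without proof, yet without it the relation $\mu=\mu_{z'}^{\,k}$ only tells you that $k$ is prime to $n$. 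Both points are repaired by one observation: the multiplier is a regular function on the good locus of $Z$ with values in the set of $n$-th roots of unity in $\bar{\Bbbk}$, which is finite and reduced precisely because $p\nmid n$; hence it is locally constant, so constant on every irreducible component of $Z$. This gives the needed constancy, and it also disposes of the conjugate-point case: if $Z$ were irreducible, the multiplier would take a single value, while the two points of a general fibre carry multipliers $\mu$ and $\mu^{-1}$, forcing $\mu=\mu^{-1}$, i.e.\ $n\le 2$, where the lemma is trivial since then $\alpha r\alpha^{-1}=r^k=r$. One cosmetic point: $k^2\equiv 1\pmod n$ is not equivalent to $k\equiv\pm 1\pmod n$ (take $n=8$, $k=3$); you only use the true implication $k\equiv\pm 1\Rightarrow k^2\equiv 1$, so nothing breaks, but the ``i.e.'' should be deleted.
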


\begin{proof}
Let $S_\eta$ be the fiber of $\phi$ over the scheme-theoretic generic
point $\eta$ of~$C$.
Then $S_\eta$ is a smooth conic over the field~\mbox{$\KK=\Bbbk(C)$}.
The group $F$ can be considered as a subgroup of $\Aut(S_\eta)\subset\PGL_2(\bar{\KK})$.
Thus, the cyclic group $R$ has exactly two fixed $\bar{\KK}$-points on the conic $S_{\eta, \bar{\KK}}\cong \PP^1_{\bar{\KK}}$.
Denote them by~$P_+$ and~$P_-$. Recall that the action of $R$
in the Zariski tangent spaces
$T_{P_\pm}(S_{\eta, \bar{\KK}})$ is faithful by Theorem~\ref{theorem:fixed-point}.
Thus, there is a primitive $n$-th root $\zeta$ of~$1$
such that a generator~$g$ of $R$ acts on $T_{P_\pm}(S_{\eta, \bar{\KK}})\cong\bar{\KK}$ by $\zeta^{\pm 1}$.
For any positive integer $r$, the element~$g^r$ acts on $T_{P_\pm}(S_{\eta, \bar{\KK}})$ by $\zeta^{\pm r}$;
thus, an element of $R$ is uniquely defined by its action in any of these two Zariski tangent spaces.

One can consider $\alpha$ as an automorphism of the scheme $S_{\eta, \bar{\KK}}$ (of non-finite type) over the field $\Bbbk$.
Write $\alpha g\alpha^{-1}=g^r$. Since the fixed points of $g^r$
on~$S_{\eta, \bar{\KK}}$ are $P_+$ and $P_-$, we see that $\alpha(P_+)\in\{P_+,P_-\}$.
Hence $\alpha^2(P_+)=P_+$, and
$$
g^{r^2}=\alpha^2g\alpha^{-2}
$$
acts on~$T_{P_+}(S_{\eta, \bar{\KK}})$ by $\zeta^{r^2}=\bar{\alpha}^2(\zeta)$, where $\bar{\alpha}$ is the automorphism
of the field $\KK$ over $\Bbbk$ induced by $\alpha$. However, $\zeta$ is contained in $\Bbbk$, so
$\bar{\alpha}(\zeta)=\zeta$. Therefore, we see that $r^2$ is congruent to $1$ modulo $n$, which means that $\alpha^2$ commutes with~$g$.
\end{proof}

The next lemma allows one to deal with finite groups
acting on surfaces with conic bundle structure.
It also provides an additional observation concerning the number of generators of abelian
subgroups in such groups.

\begin{lemma}\label{lemma:conic-bundle-Jordan}
Let $\Bbbk$ be a field of characteristic $p>0$, and let
$S$ be a smooth geometrically rational projective surface over~$\Bbbk$.
Let $G\subset\Aut(S)$ be a finite group, and let
$\phi\colon S\to C$ be a $G$-equivariant conic bundle.
Then $G$ contains a normal abelian subgroup generated by at most two elements that has order coprime
to $p$ and index at most~\mbox{$7200|G_{p}|^{3}$},
where~$G_{p}$ is a $p$-Sylow subgroup of~$G$.
\end{lemma}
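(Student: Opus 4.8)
The plan is to use the conic bundle $\phi$ to present $G$ as an extension whose kernel and quotient are finite subgroups of rank‑two projective linear groups, treat each by Lemma~\ref{lemma:P1}, and then glue compatible cyclic subgroups. Since $\phi$ is $G$‑equivariant, the action on the base gives a homomorphism $\pi\colon G\to\Aut(C)$ whose image $\bar G$ lies in $\PGL_2(\bar\Bbbk)$ (as $C$ is geometrically rational) and whose kernel $F$ consists of the elements that are fiberwise with respect to $\phi$; thus $1\to F\to G\to\bar G\to 1$ is exact. The generic fibre $S_\eta$ is a smooth conic over $\KK=\Bbbk(C)$, so $F$ is a finite subgroup of $\Aut(S_{\eta,\bar\KK})\cong\PGL_2(\bar\KK)$, and for the $p$‑Sylow subgroups one has $|G_p|=|F_p|\cdot|\bar G_p|$. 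As the asserted bound is independent of $\Bbbk$, I may enlarge $\Bbbk$ by a finite extension, and I do so to guarantee that $\Bbbk$ contains all roots of unity of the relevant orders (so that the Galois twist in the proof of Lemma~\ref{lemma:Serre} is trivial) and the field of definition of $F$. Applying Lemma~\ref{lemma:P1} to $F$ and to $\bar G$ yields characteristic cyclic subgroups $R\subset F$ and $\bar A\subset\bar G$, both of order coprime to $p$, with $[F:R]\le 60|F_p|^3$ and $[\bar G:\bar A]\le 60|\bar G_p|^3$. Since $R$ is characteristic in the normal subgroup $F$, it is normal in $G$, and $H:=\pi^{-1}(\bar A)$ is normal in $G$ with $[G:H]\le 60|\bar G_p|^3$.

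\emph{Main case: $|R|\ge 3$.} Then the centralizer of $R$ in $\PGL_2(\bar\KK)$ is a maximal torus $T$ (Remark~\ref{remark:PGL2-centralizer}), and I replace $R$ by $R^\flat:=F\cap T$, still normal in $G$, cyclic of order $\ge 3$ and coprime to $p$, with $C_F(R^\flat)=R^\flat$. As $G$ normalizes $R^\flat$ it normalizes $T$, and its action on $R^\flat$ factors through $N_{\PGL_2}(T)/T\cong\ZZ/2\ZZ$ together with the Galois action on roots of unity; the latter is trivial because $\mu_{|R^\flat|}\subset\Bbbk$, so $G$ acts on $R^\flat$ through $\pm1$ only (this is the mechanism behind Lemma~\ref{lemma:Serre}). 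Hence $C_G(R^\flat)$, normal in $G$ as the centralizer of a normal subgroup, has index at most $2$. Put $A:=C_G(R^\flat)\cap H$, a normal subgroup of $G$. Then $A\cap F=C_F(R^\flat)=R^\flat$ is central in $C_G(R^\flat)\supseteq A$, while $A/R^\flat$ embeds into the cyclic group $\bar A$; therefore $A$ is a central extension of a cyclic group by a central cyclic group, hence abelian, generated by at most two elements, and of order coprime to $p$, with $[G:A]\le[G:C_G(R^\flat)]\cdot[G:H]\le 2\cdot 60|\bar G_p|^3\le 7200|G_p|^3$.

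\emph{Degenerate cases: $|R|\le 2$.} Inspecting the proof of Lemma~\ref{lemma:P1} via Theorem~\ref{theorem:ADE}, this happens in two ways. If $F$ is one of the bounded groups $\mathfrak A_4,\mathfrak S_4,\mathfrak A_5,\PSL_2(\mathbf F_{p^k}),\PGL_2(\mathbf F_{p^k})$ (or a small dihedral group), then $|F|\le 60|F_p|^3$, the group $F$ is generated by at most two elements (Remark~\ref{remark:PGL-generators}), and its center has order coprime to $p$; here I take $A$ to be the prime‑to‑$p$ part of $C_G(F)\cap H$, which is abelian (a central extension of a subgroup of $\bar A$ by the center of $F$), generated by at most two elements, and normal in $G$. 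Its index is controlled by the conjugation action $H\to\Aut(F)$: because the ambient field automorphisms fix $\Bbbk\supseteq\mathbf F_{p^k}$, no field automorphism of $\mathbf F_{p^k}$ is induced, so the image lies in the group of $\PGL_2(\bar\KK)$‑conjugations preserving $F$, of order at most $120|F_p|^3$, whence $[G:A]\le 60|\bar G_p|^3\cdot 120|F_p|^3=7200|G_p|^3$. In the remaining type~(5) case the normal $p$‑Sylow $F_p\cong(\ZZ/p\ZZ)^m$ of $F$ is a unipotent subgroup of $\PGL_2(\bar\KK)$ and hence has a \emph{unique} fixed point on $S_{\eta,\bar\KK}$; since $G$ normalizes $F_p$, it fixes this point, i.e. a section of $\phi$. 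Consequently $F_p=H_p$ is a normal $p$‑Sylow of $H$ and $H=F_p\rtimes Q$ with $Q$ of order coprime to $p$; one then finds an abelian subgroup of bounded index in $Q$ by the torus argument of the main case (now unobstructed, since $p\nmid|Q|$), and the single extra factor $|F_p|=|H_p|$ is harmless within the allowed $|G_p|^3$.

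Combining the three cases gives the bound $7200|G_p|^3$, the constant arising as $2\cdot 60\cdot 60$ (the factor $2$ coming from the two fixed points of $R^\flat$, as in Lemma~\ref{lemma:Serre}) together with $|F_p|^3|\bar G_p|^3=|G_p|^3$. I expect the main obstacle to be the wild type~(5) case: unlike in characteristic zero there is no normal abelian subgroup of bounded index inside $F$, the naive centralizer of $F_p$ has index as large as $|\GL_m(\mathbf F_p)|$, and one must instead exploit the geometric fact that $F_p$ pins down a $G$‑invariant section, reducing matters to a prime‑to‑$p$ group where no power of $|G_p|$ is lost. Keeping all the intermediate subgroups normal in $G$, so as to avoid the index blow‑up of Lemma~\ref{lemma:conjugation}, is the other delicate bookkeeping point.
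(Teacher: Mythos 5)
Your skeleton is the same as the paper's: the exact sequence $1\to F\to G\to\bar{G}\to 1$ induced by $\phi$, Lemma~\ref{lemma:P1} applied to both $F$ and $\bar{G}$, and the two-fixed-point torus mechanism that underlies Lemma~\ref{lemma:Serre}. Your main case $|R|\ge 3$ is correct, and is in fact cleaner than the paper's treatment of it: upgrading Lemma~\ref{lemma:Serre} to the statement that \emph{every} element of $G$ acts on the normal cyclic subgroup $R^\flat$ as $t\mapsto t^{\pm 1}$ yields the normal subgroup $C_G(R^\flat)$ of index at most $2$, so that $A=C_G(R^\flat)\cap H$ is normal in $G$ for free; the paper instead works with $\tilde{A}=\langle R,\alpha^2\rangle$ for a single lift $\alpha$ of a generator of $\bar{H}$ (chosen of order coprime to $p$) and then intersects all $G$-conjugates of $\tilde{A}$. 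The crucial difference is that the paper's construction is uniform in $|R|$, whereas yours branches on $|R|\ge 3$, and your degenerate branches contain genuine gaps.

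The serious gap is your type~(5) case. There you produce the abelian subgroup inside a Schur--Zassenhaus complement $Q$ of $F_p$ in $H$. Complements are defined only up to conjugacy and are not normal: already for $G=F=\ZZ/p\ZZ\rtimes\ZZ/(p-1)\ZZ$, a Borel subgroup of $\PGL_2(\mathbf{F}_p)$ acting fiberwise on $\PP^1\times\PP^1$ with trivial action on the base, the complement $Q\cong\ZZ/(p-1)\ZZ$ has $p$ distinct conjugates. So the subgroup you construct need not be normal in $G$, which is exactly what the lemma demands, and you have explicitly ruled out repairing this by intersecting conjugates via Lemma~\ref{lemma:conjugation}. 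A repair does exist --- take instead the normal $p$-complement of $C_H(F_p)$ (it exists because $F_p$ is central there), which is characteristic in the normal subgroup $C_H(F_p)$ and hence normal in $G$, and bound its index using Lemma~\ref{lemma:Darafsheh} --- but none of this is in your text, and it is precisely the step where the paper's uniform construction does its work. A second, smaller gap: in your bounded case the two-generation claim fails when $F$ is the Klein four-group, since then $Z(F)=F$ is not cyclic. Concretely, for $G=F\times\ZZ/2k\ZZ\subset\Aut(\PP^1\times\PP^1)$, with $F$ acting on the fibers, $\ZZ/2k\ZZ$ acting faithfully on the base, and $p$ odd and prime to $k$, your recipe gives $A=G\cong\ZZ/2\ZZ\times\ZZ/2\ZZ\times\ZZ/2k\ZZ$, which needs three generators; the conclusion of the lemma holds for this $G$, but not via your construction. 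Finally, your claim that conjugation by $H$ cannot induce a field automorphism of $F\cong\PSL_2(\mathbf{F}_{p^k})$ is true, but not for the reason you give: $F$ need not have entries in $\bar{\Bbbk}$, so one must first conjugate $F$ to a constant subgroup and then invoke a Schur-type rigidity argument for the natural two-dimensional representation.
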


\begin{proof}
We can assume that the field $\Bbbk$ is algebraically closed;
in particular, one has $C\cong\PP^1$.

There is an exact sequence of groups
$$
1\to F\to G\to \bar{G}\to 1,
$$
where the action of $F$ is fiberwise
with respect to $\phi$, while $\bar{G}$ acts faithfully on~$\PP^1$.
By Lemma~\ref{lemma:P1} there exists a normal
cyclic subgroup $\bar{H}$ in $\bar{G}$
of order coprime to~$p$ whose index in $\bar{G}$ does not exceed
$60|\bar{G}_p|^3$, where $\bar{G}_p$ is a $p$-Sylow subgroup of $\bar{G}$.
Let $H$ be the preimage of $\bar{H}$ in $G$, so that there
is an exact sequence of groups
$$
1\to F\to H\to \bar{H}\to 1.
$$
In particular, $H$ is a normal subgroup of $G$.

By Lemma~\ref{lemma:P1} there exists a characteristic cyclic
subgroup $R$ of order coprime to $p$
and index at most $60|F_p|^3$ in $F$, where $F_p$
is a $p$-Sylow subgroup of $F$. The group $H$ acts on $F$ by conjugation,
and this action preserves the characteristic subgroup $R$.

Pick an element $\alpha$ of $H$
such that its image $\bar{\alpha}$ in $\bar{H}$ generates $\bar{H}$.
Since~$|\bar{H}|$ is coprime to $p$, the element $\bar{\alpha}^p$ generates $\bar{H}$ as well.
Hence, replacing $\alpha$ by its appropriate power, we may assume that
the order of $\alpha$ is coprime to~$p$.
Since~$\alpha$ normalizes the subgroup $R$, and the (algebraically closed) field
$\Bbbk$ contains a primitive root of $1$ or degree $|R|$, by Lemma~\ref{lemma:Serre}
the element $\alpha^2$ commutes with~$R$.
Let $\tilde{A}$ be the subgroup of $H$ generated by $R$ and $\alpha^2$. Then
$\tilde{A}$ is abelian, and its order is coprime to $p$.
Note that the subgroup $\bar{H}_{\bar{\alpha}^2}\subset\bar{H}$ generated by~$\bar{\alpha}^2$
either coincides with $\bar{H}$, or has index $2$ in $\bar{H}$.

The subgroup $R$ is characteristic in $F$, and hence
normal in $G$.
Also, the subgroup $\bar{H}_{\bar{\alpha}^2}$ is characteristic in $\bar{H}$, because
the cyclic group $\bar{H}$ contains at most one subgroup of given order
(cf. Example~\ref{example:characteristic-by-generators}).
Hence $\bar{H}_{\bar{\alpha}^2}$ is normal in~$\bar{G}$.
Still we cannot conclude from this that
$\tilde{A}$ is normal in~$G$.
However, let $A$ be the intersection of all subgroups
of $G$ conjugate to $\tilde{A}$. Then $A$ is an abelian group of order coprime to $p$, and it is normal in $G$.
Since $\tilde{A}$ is generated by two elements, the same holds for $A$.
Also, we know that $A$ contains the subgroup~$R$, because~$R$ is normal in $G$.
Similarly, the image of $A$ in $\bar{H}$ coincides with $\bar{H}_{\bar{\alpha}^2}$,
because~$\bar{H}_{\bar{\alpha}^2}$ is normal in $\bar{G}$.
Therefore, the index of
$A$ in $G$ is
$$
\frac{|G|}{|A|}=
\frac{|\bar{G}|}{|\bar{H}_{\bar{\alpha}^2}|}\cdot \frac{|F|}{|A\cap F|}\le
2\frac{|\bar{G}|}{|\bar{H}|}\cdot \frac{|F|}{|R|}\le
7200|\bar{G}_p|^3\cdot |F_p|^3=7200|G_p|^3.
$$
\end{proof}

Finally, we are ready to prove our main result.

\begin{proof}[Proof of Theorem~\ref{theorem:Serre}]
Let $\Bbbk$ be a field of characteristic $p>0$.
We may assume that $\Bbbk$ is algebraically closed.
Let $G$ be a finite subgroup of $\Bir(\PP^2)$.
By Theorem~\ref{theorem:MMP}
there exists
a surface $S$ with a faithful regular action of $G$,
such that~$S$ is either a del Pezzo surface
or a $G$-equivariant conic bundle.

Suppose that $S$ is a del Pezzo surface.
By Theorem~\ref{theorem:del-Pezzo} the linear system~\mbox{$|-3K_S|$}
defines an embedding $\iota\colon S\hookrightarrow\PP^N$,
where $N\le 54$. Since the linear system $|-3K_S|$ is $G$-invariant, we see
that the map $\iota$ is $G$-equivariant, and thus~$G$
can be realized as a subgroup of $\PGL_{55}(\Bbbk)$.
By Corollary~\ref{corollary:PGL}
there exists a constant $J_{dP}$ independent of~$p$, $\Bbbk$, and~$G$,
such that $G$ contains a normal abelian subgroup of order coprime
to $p$ and index at most $J_{dP}\cdot |G_{p}|^{3}$,
where $G_{p}$ is a $p$-Sylow subgroup of $G$.

Now we are left with the case when $S$ is a
$G$-equivariant conic bundle, which is covered by Lemma~\ref{lemma:conic-bundle-Jordan}.
\end{proof}

\section{Jordan property for surfaces}
\label{section:Popov}

In this section we prove Theorem~\ref{theorem:Popov}.
The proof of the following assertion is identical to
that in \cite{Zarhin-2014}.

\begin{lemma}\label{lemma:Zarhin}
Let $\Bbbk$ be an algebraically closed field of characteristic $p>0$.
Let~$A$ be an abelian variety of positive dimension~$n$ over~$\Bbbk$,
and let~\mbox{$S=A\times\PP^1$}. Then the group
$\Bir(S)$ is not generalized $p$-Jordan.
\end{lemma}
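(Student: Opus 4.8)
The plan is to follow Zarhin by exhibiting, for infinitely many integers $m$ coprime to $p$, a finite subgroup $H_m\subset\Bir(S)$ of order coprime to $p$ that admits no abelian normal subgroup of index bounded independently of $m$; this directly violates the definition of generalized $p$-Jordan. Concretely, I would fix an ample line bundle $\mathcal{N}$ on $A$ and, for $m$ coprime to $p$, set $\mathcal{M}=\mathcal{N}^{\otimes m}$ and consider the Mumford theta group $\GGG(\mathcal{M})$, which sits in a central extension $1\to\Bbbk^{\ast}\to\GGG(\mathcal{M})\to K(\mathcal{M})\to 1$ whose commutator pairing is non-degenerate. Since $m$ is coprime to $p$, the subgroup $A[m]\subset K(\mathcal{M})$ is étale, isomorphic to $(\ZZ/m\ZZ)^{2n}$, and the commutator pairing restricts on it to the Weil pairing $e_m$ with values in $\mu_m\subset\Bbbk$. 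After the standard rescaling of lifts of a basis of $A[m]$, this produces a finite Heisenberg subgroup $H_m\subset\GGG(\mathcal{M})$ of order $m^{2n+1}$, with center $\mu_m$ and $H_m/\mu_m\cong A[m]$; as $|H_m|$ is coprime to $p$, every element of $H_m$ has order coprime to $p$.

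The geometric input is that $\GGG(\mathcal{M})$ acts by automorphisms on the projective bundle $\PP(\mathcal{O}_A\oplus\mathcal{M})$: an element $(x,\varphi)$ of $\GGG(\mathcal{M})$, with $x\in K(\mathcal{M})$ and $\varphi\colon\mathcal{M}\xrightarrow{\sim}T_x^{\ast}\mathcal{M}$, acts through the translation $T_x$ on $A$ together with the bundle isomorphism $\mathrm{id}_{\mathcal{O}}\oplus\varphi$, while the central scalars act faithfully by scaling the fibre coordinate. Since $\PP(\mathcal{O}_A\oplus\mathcal{M})$ carries the section $\PP(\mathcal{O}_A)$, it is a $\PP^1$-bundle with a rational section, hence birational to $A\times\PP^1=S$. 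This realizes $H_m$ as a finite subgroup of $\Aut\big(\PP(\mathcal{O}_A\oplus\mathcal{M})\big)\subset\Bir(S)$ of order coprime to $p$, and the action is faithful because both the central $\mu_m$ and the translation part act faithfully.

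What remains is the purely group-theoretic estimate: in a finite Heisenberg group $H_m$ with centre $\mu_m$ and non-degenerate alternating commutator pairing on $V=H_m/\mu_m\cong A[m]$, the image in $V$ of any abelian subgroup is isotropic, hence of order at most $\sqrt{|V|}=m^n$; therefore every abelian subgroup of $H_m$, and in particular every normal abelian subgroup, has index at least $m^n$. Letting $m$ range over integers coprime to $p$, the quantity $m^n$ is unbounded, so no constant $J$ can bound the index of a normal abelian subgroup uniformly over all $H_m$, and $\Bir(S)$ is not generalized $p$-Jordan. The one point that needs care in positive characteristic, and the only place the hypothesis enters, is the non-degeneracy of the commutator pairing together with the identification $A[m]\cong(\ZZ/m\ZZ)^{2n}$; both hold precisely because $m$ is taken coprime to $p$, so that $A[m]$ is étale and the Weil pairing is valued in $\mu_m\subset\Bbbk$. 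With this caveat the argument is identical to the characteristic-zero one of Zarhin.
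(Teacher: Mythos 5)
Your proposal follows the same route as the paper's proof (both are adaptations of Zarhin's theta-group argument): one builds Heisenberg-type finite subgroups of order coprime to $p$ inside the theta group $\GGG(\mathcal{M})$ of a suitable power of an ample line bundle, realizes them as automorphisms of a $\PP^1$-fibration over $A$ birational to $A\times\PP^1$, and observes that their abelian subgroups have unbounded index. The geometric half of your argument (the action of $\GGG(\mathcal{M})$ on $\PP(\mathcal{O}_A\oplus\mathcal{M})$, its faithfulness, and the birational identification with $S$) is correct, as is the isotropy counting \emph{granted} the Heisenberg structure.

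However, the justification of that Heisenberg structure contains a genuine error: it is not true that coprimality of $m$ to $p$ alone makes the commutator pairing non-degenerate on $A[m]$. For $x,y\in A[m]$ one has
$$
e^{\mathcal{M}}(x,y)=e_m\big(x,\phi_{\mathcal{N}}(y)\big),
$$
where $\phi_{\mathcal{N}}\colon A\to\hat{A}$ is the polarization isogeny, so the radical of the restricted pairing is $A[m]\cap K(\mathcal{N})$, which is non-trivial whenever $\gcd\big(m,|K(\mathcal{N})|\big)>1$. The failure can be total: if the chosen ample bundle happens to be a square, $\mathcal{N}=\mathcal{N}_0^{\otimes 2}$, and $m=2$ (with $p$ odd), then for $x,y\in A[2]$ one gets $e^{\mathcal{M}}(x,y)=e_2\big(x,\phi_{\mathcal{N}_0}(2y)\big)=1$, so your $H_2$ is \emph{abelian} and the claims about its center, its order $m^{2n+1}$, and the bound $\sqrt{|V|}$ all collapse. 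The repair is cheap: since $K(\mathcal{N})$ is finite ($\mathcal{N}$ being ample), restrict to $m$ coprime to $p\cdot|K(\mathcal{N})|$, of which there are still infinitely many; then $A[m]\cap K(\mathcal{N})=0$ and your argument goes through verbatim. It is instructive to note how the paper avoids this point entirely: it takes $\LLL=\LLL_0^{\otimes\ell}$ for primes $\ell\neq p$, uses Mumford's non-degeneracy of the pairing on the \emph{whole} group $K(\LLL)$ only to conclude non-triviality on the $\ell$-primary part $K(\LLL)_\ell$, and then invokes the elementary fact that every abelian subgroup of a non-abelian finite $\ell$-group has index at least $\ell$ — so no exact Heisenberg structure, and hence no non-degeneracy of a restricted pairing, is ever needed.
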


\begin{proof}
Choose a prime number $\ell\neq p$. Fix a $\Bbbk$-point $O\in A$ to define a group structure on $A$.
For every $\Bbbk$-point $x\in A$, let $t_x\colon A\to A$ be the translation
by~$x$.

Let $\LLL_0$ be an ample line bundle on $A$,
and set $\LLL=\LLL_0^{\otimes \ell}$. Let $K(\LLL)$ denote the group
of $\Bbbk$-points $x\in A$ such that $t_x^*\LLL\cong\LLL$, and let
$K(\LLL)_\ell$ be its subgroup that consists of all elements whose order is a power of $\ell$.
According to~\mbox{\cite[\S23, Theorem~3]{Mumford-AbelianVarieties}},
the group $K(\LLL)$ contains all the $\ell$-torsion $\Bbbk$-points of $A$.
Since $\ell\neq p$, the group
of $\ell$-torsion $\Bbbk$-points of $A$ is isomorphic
to~\mbox{$(\ZZ/\ell\ZZ)^{2n}$},
see e.g.~\mbox{\cite[\S6]{Mumford-AbelianVarieties}}.
Thus, the group $K(\LLL)_\ell$ is non-trivial.
Furthermore, since $\LLL$ is ample, the groups
$K(\LLL)$ and $K(\LLL)_\ell$ are finite by~\mbox{\cite[\S23, Theorem~4]{Mumford-AbelianVarieties}}.

Let $\Tot(\LLL)$ be the total space of $\LLL$.
For every $x\in K(\LLL)$, there exists a (non-unique) fiberwise linear
isomorphism~\mbox{$\tilde{t}_x^*\colon \Tot(\LLL)\to \Tot(\LLL)$} that fits into a commutative diagram
$$
\xymatrix{
\Tot(\LLL)\ar@{->}[d]\ar@{->}[rr]^{\tilde{t}_x^*} && \Tot(\LLL)\ar@{->}[d]\\
A\ar@{->}[rr]^{t_x} && A
}
$$
Moreover, there exists an exact sequence of groups
$$
1\to\Bbbk^*\to \GGG(\LLL)\to K(\LLL)\to 1,
$$
where $\GGG(\LLL)$ is a central extension of $K(\LLL)$ acting by automorphisms of $\Tot(\LLL)$ of the form~$\tilde{t}_x^*$.
The preimage of $K(\LLL)_\ell$ in $\GGG(\LLL)$ contains a finite group
$\HHH_{\ell}$ that is a central extension of
$K(\LLL)_\ell$ by a cyclic group $\mathcal{Z}$ of order~$\ell^k$ for some non-negative integer~$k$.

For every two
elements $x, y\in K(\LLL)_\ell$, let $(x,y)$ be their commutator pairing; in other words, write
$$
\tilde{x}\tilde{y}\tilde{x}^{-1}\tilde{y}^{-1}=(x,y)z,
$$
where $\tilde{x}$ and $\tilde{y}$ are (arbitrary) preimages
of $x$ and $y$ in $\HHH_\ell$, and
$z$ is a generator of the cyclic group~\mbox{$\mathcal{Z}\subset\HHH_{\ell}$}.
According to~\mbox{\cite[\S23, Theorem~4]{Mumford-AbelianVarieties}},
the commutator pairing is non-trivial on $K(\LLL)_\ell$.
Hence $\HHH_{\ell}$ is a non-abelian group whose order is a power of the prime number~$\ell$.
Thus every abelian subgroup of~$\HHH_{\ell}$ has index at least $\ell$
in~$\HHH_{\ell}$.

It remains to notice that the $\mathbb{A}^1$-bundle
$\Tot(\LLL)\to A$ is birational to~\mbox{$S=A\times\PP^1$}.
Therefore, the group $\Bir(S)$ contains a group~$\HHH_{\ell}$ for every prime
$\ell\neq p$, and the required assertion follows.
\end{proof}

\begin{lemma}\label{lemma:ruled}
Let $\Bbbk$ be an algebraically closed field of characteristic $p>0$.
Let~$C$ be a smooth geometrically irreducible projective curve of genus
at least $2$ over $\Bbbk$, and set $S=C\times\PP^1$.
Then the group~\mbox{$\Bir(S)$} is $p$-Jordan.
\end{lemma}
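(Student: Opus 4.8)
The plan is to exploit the projection $\phi\colon S=C\times\PP^1\to C$ together with the structural result of Lemma~\ref{lemma:ruled-Bir}. Since $C$ has positive genus, that lemma supplies an exact sequence
$$
1\to \Bir(S)_\phi\to \Bir(S)\to\Gamma,
$$
in which the fiberwise part $\Bir(S)_\phi$ is a subgroup of $\PGL_2(\Bbbk(C))$ and $\Gamma$ is a subgroup of $\Aut(C)$. My strategy is to verify the two hypotheses of Lemma~\ref{lemma:group-theory} for this sequence, taking the kernel $\Bir(S)_\phi$ as $\Gamma'$ and $\Gamma$ as $\Gamma''$; the conclusion of that lemma then gives the $p$-Jordan property of $\Bir(S)$ directly.

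For the quotient term, I would observe that since $C$ has genus at least~$2$, Lemma~\ref{lemma:curves}(iii) shows that $\Aut(C)$ is finite. Hence $\Gamma\subset\Aut(C)$ is finite, so in particular it has bounded finite subgroups, which is exactly the requirement imposed on $\Gamma''$. For the kernel, I would note that $\Bir(S)_\phi$ is contained in $\PGL_2(\Bbbk(C))$, that is, in $\PGL_2$ of a field of characteristic~$p$. By Corollary~\ref{corollary:PGL} the group $\PGL_2(\Bbbk(C))$ is $p$-Jordan, with constants depending only on the rank~$2$ and not on the field. Since the $p$-Jordan property passes to subgroups --- any finite subgroup of $\Bir(S)_\phi$ is a finite subgroup of $\PGL_2(\Bbbk(C))$ and therefore contains the required normal abelian subgroup --- the kernel $\Bir(S)_\phi$ is itself $p$-Jordan. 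Feeding these two facts into Lemma~\ref{lemma:group-theory} completes the argument.

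This proof is short because the genus hypothesis does most of the work: it forces $\Aut(C)$ to be finite and thereby reduces the entire problem to the behaviour of the fiberwise group inside $\PGL_2$. Accordingly, I do not expect a serious obstacle; the only delicate point is that one must invoke a version of the $\PGL$ statement whose Jordan constants are uniform over all fields of characteristic~$p$, and this uniformity is precisely what Corollary~\ref{corollary:PGL} provides (via Theorem~\ref{theorem:BrauerFeit}). Were $C$ rational or of genus~$1$, the group $\Aut(C)$ would be infinite and this clean reduction would fail, which is why the statement is isolated to the genus~$\ge 2$ case.
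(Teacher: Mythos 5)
Your proposal is correct and coincides with the paper's own proof: the same exact sequence from Lemma~\ref{lemma:ruled-Bir}, finiteness of $\Aut(C)$ via Lemma~\ref{lemma:curves}(iii), the $p$-Jordan property of $\Bir(S)_\phi\subset\PGL_2(\Bbbk(C))$ via Corollary~\ref{corollary:PGL}, and the conclusion via Lemma~\ref{lemma:group-theory}. The observation that the $p$-Jordan property passes to subgroups, which the paper uses implicitly, is also handled correctly.
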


\begin{proof}
According to Lemma~\ref{lemma:ruled-Bir}, the group
$\Bir(S)$ fits into the exact sequence
\begin{equation*}
1\to \Bir(S)_\phi\to \Bir(S)\to\Gamma,
\end{equation*}
where $\Bir(S)_\phi\subset\PGL_2(\Bbbk(C))$
and $\Gamma\subset\Aut(C)$.
Since the genus of $C$ is at least $2$, the group~$\Gamma$ is finite by Lemma~\ref{lemma:curves}(iii).
On the other hand, the group~\mbox{$\PGL_2(\Bbbk(C))$}, and thus also the group~\mbox{$\Bir(S)_\phi$}, is $p$-Jordan by
Corollary~\ref{corollary:PGL}.
Therefore, the required assertion follows from Lemma~\ref{lemma:group-theory}.
\end{proof}

Now we can prove the main result of this section.

\begin{proof}[Proof of Theorem~\ref{theorem:Popov}]
Assertion (i) is given by Lemma~\ref{lemma:Zarhin}.
In the rest of the proof we may assume that $S$ is smooth and projective
by Corollary~\ref{corollary:smooth-projective-model}.
If $\kappa(S)=-\infty$, then $S$ is birational to a product $C\times\PP^1$ for some irreducible smooth projective curve $C$,
see Theorem~\ref{theorem:KE}(i). Therefore, if $C$ is not an elliptic curve, then it is either rational, or has genus at least $2$,
and so the group $\Bir(S)$ is $p$-Jordan by Theorem~\ref{theorem:Serre} and Lemma~\ref{lemma:ruled}.
On the other hand, in each of these cases $\Bir(S)$ is not Jordan because it contains a subgroup isomorphic to~\mbox{$\Aut(\PP^1)\cong\PGL_2(\Bbbk)$}
which is not Jordan. This proves assertion~(ii).
Thus, we may assume that the Kodaira dimension of $S$
is non-negative, and replace~$S$ by its minimal model.
Then~\mbox{$\Bir(S)=\Aut(S)$} by Lemma~\ref{lemma:Bir-vs-Aut},
so that assertion~(iii) follows from
Proposition~\ref{proposition:Hu-improved}.
\end{proof}

\begin{remark}
There are alternative ways that
do not use Proposition~\ref{proposition:Hu-improved}
to prove Theorem~\ref{theorem:Popov}(iii) for many birational classes
of surfaces appearing in
the Kodaira--Enriques classification. For $\kappa(S)=0$ the assertion
follows from Proposition~\ref{proposition:Jordan-kappa-0}
(which we will prove later in Section~\ref{section:kappa-0} independently
of Theorem~\ref{theorem:Popov}).
For elliptic surfaces with $\kappa(S)=1$, the assertion is given by~\mbox{\cite[Corollary~1.6]{Gu}}.
It was communicated to us by Yi Gu that a result similar to~\mbox{\cite[Corollary~1.6]{Gu}}
can be established also for quasi-elliptic surfaces with~\mbox{$\kappa(S)=1$}.
Finally, for $\kappa=2$ the assertion
follows from Corollary~\ref{corollary:general-type}.
\end{remark}

\section{Surfaces of zero Kodaira dimension}
\label{section:kappa-0}

In this section we study automorphism groups of surfaces of zero Kodaira dimension and
prove
Propositions~\ref{proposition:Jordan-kappa-0} and~\ref{proposition:stabilizer-kappa-0}.
We start with the case of $K3$ and Enriques surfaces.

\begin{lemma}
\label{lemma:K3}
There exists a constant $B_{K3}$ such that for every algebraically closed
field $\Bbbk$,
every surface $S$ over $\Bbbk$
such that~$S$ is either a $K3$ surface or an Enriques surface, and for every finite subgroup~\mbox{$G\subset \Aut(S)$}, the order of~$G$
is at most~$B_{K3}$.
\end{lemma}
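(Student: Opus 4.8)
The plan is to bound $|G|$ by combining a uniform bound on how $G$ acts on the second $\ell$-adic cohomology of $S$ with a uniform bound on the subgroup acting trivially there. Fix a prime $\ell\in\{2,3\}$ with $\ell\neq p$ (such an $\ell$ always exists), and consider the representation
$$
\rho\colon G\to \GL\big(H^2_{\mathrm{et}}(S,\QQ_\ell)\big)\cong\GL_{b_2(S)}(\QQ_\ell)
$$
arising from the action of $G$ on $S$. Here $b_2(S)=22$ if $S$ is a $K3$ surface and $b_2(S)=10$ if $S$ is an Enriques surface, so in either case $b_2(S)\le 22$. Being finite, $\rho(G)$ preserves a $\ZZ_\ell$-lattice in $H^2_{\mathrm{et}}(S,\QQ_\ell)$ and is therefore conjugate into $\GL_{b_2(S)}(\ZZ_\ell)$; by Lemma~\ref{lemma:Minkowski-l-adic} its order does not exceed $\max\{|\GL_{22}(\ZZ/3\ZZ)|,|\GL_{22}(\ZZ/4\ZZ)|\}$, a constant $N_1$ independent of $S$, of $\Bbbk$, and of $p$ (this is exactly why the choice $\ell\in\{2,3\}$ matters).

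It remains to bound the kernel $K=\ker(\rho)\cap G$. Since $K3$ and Enriques surfaces satisfy $b_1=0$, the odd cohomology $H^1_{\mathrm{et}}$ and $H^3_{\mathrm{et}}$ vanishes, while $H^0_{\mathrm{et}}$ and $H^4_{\mathrm{et}}$ carry the trivial action; hence every element of $K$ acts trivially on the whole $\ell$-adic cohomology, i.e. is a cohomologically trivial automorphism of $S$, and in particular is numerically trivial. I would then invoke the known fact that the group of numerically (a fortiori cohomologically) trivial automorphisms of a $K3$ surface, respectively of an Enriques surface, over an algebraically closed field is finite of order bounded by an absolute constant $N_0$, uniformly in the field and in its characteristic: it is trivial in characteristic zero by the Torelli theorem, and of uniformly bounded order in positive characteristic. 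This gives $|K|\le N_0$, and therefore $|G|=|K|\cdot|\rho(G)|\le N_0N_1$, so one may take $B_{K3}=N_0N_1$.

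The main obstacle is precisely the uniform bound $N_0$ on cohomologically trivial automorphisms, together with the assertion that it can be chosen independently of $p$. The delicate cases are the small values of $p$ for $K3$ surfaces, where nontrivial numerically trivial automorphisms genuinely occur, and the non-classical Enriques surfaces in characteristic $2$, where the standard reduction to the $K3$ double cover breaks down because the cover need not be smooth; in all of these the relevant groups are nevertheless finite of uniformly bounded order, which is why the cohomological argument above is preferable to any argument through the double cover. By contrast, the input from Lemma~\ref{lemma:Minkowski-l-adic} is entirely formal once the rank bound $b_2(S)\le 22$ is in place.
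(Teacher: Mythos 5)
Your proposal is correct and follows essentially the same route as the paper's proof: the paper likewise considers the action of $\Aut(S)$ on $H^2_{\acute{e}t}(S,\ZZ_\ell)$ (modulo torsion) for $\ell\in\{2,3\}$ with $\ell\neq p$, bounds the image of this representation by Lemma~\ref{lemma:Minkowski-l-adic}, and bounds the kernel by exactly the facts you invoke, namely triviality of the kernel for $K3$ surfaces \cite{Keum} and a bound of $4$ for Enriques surfaces \cite{DM}; your deviations ($\QQ_\ell$-coefficients plus a lattice argument instead of $\ZZ_\ell$ modulo torsion, and treating characteristic $0$ directly rather than reducing to $p>0$ via \cite{ProkhorovShramov-2019}) are cosmetic. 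One minor slip in a side remark: for Enriques surfaces in characteristic zero the numerically trivial automorphism group need not be trivial (by Mukai--Namikawa it can have order up to $4$), so ``trivial by Torelli'' is inaccurate there, but this does not affect the argument since only a uniform bound is required.
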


\begin{proof}
Let $S$ be either a $K3$ surface or an Enriques surface
over a field $\Bbbk$ of characteristic $p$.
By \cite[Corollary~2.8]{ProkhorovShramov-2019} we may assume
that $p>0$. Set~\mbox{$\ell=2$} if $p\neq 2$, and $\ell=3$ if $p=2$.
Set
$$
H(S)= H^2_{\acute{e}t}(S,\mathbb{Z}_{\ell})/T,
$$
where $T$ is the torison subgroup of $H^2_{\acute{e}t}(S,\mathbb{Z}_{\ell})$;
note that $T$ is trivial in the~$K3$ case.
Then $H(S)\cong \mathbb{Z}_\ell^b$, where $b=22$ if
$S$ is a $K3$ surface (see \cite[\S7.2]{Liedtke}), and $b=10$
if $S$ is an Enriques surface (see \cite[\S7.3]{Liedtke}).

Consider the representation
\[
\rho\colon \Aut(S)\to \GL\big(\mathrm{H}(S)\big).
\]
By Lemma~\ref{lemma:Minkowski-l-adic},
the order of every finite subgroup in the image of $\rho$ is
bounded by the constant $J_A(11)$, see~\eqref{eq:JA}.
On the other hand, the kernel of the homomorphism~$\rho$
is trivial in the case when $S$ is a $K3$
surface, see \cite[Theorem~1.4]{Keum} (cf.~\cite[Corollary~2.5]{Ogus});
and this kernel has order at most~$4$
if $S$ is an Enriques surface, see \cite[Theorem]{DM}.
\end{proof}

Next, we deal with hyperelliptic and quasi-hyperelliptic surfaces.
Recall that a minimal surface $S$
over an algebraically closed field $\Bbbk$ (of arbitrary characteristic)
is called \emph{hyperelliptic}, if $\kappa(S)=0$,
and the fibers of the Albanese map of $S$ are smooth elliptic curves.
Similarly, $S$ is called \emph{quasi-hyperelliptic}, if~\mbox{$\kappa(S)=0$},
and the fibers of the Albanese map of $S$ are singular rational curves.
The latter case is possible only if the characteristic of $\Bbbk$
equals $2$ or~$3$.

\begin{lemma}\label{lemma:hyperelliptic-groups}
There exists a constant $B_{hyp}$ with the following property.
Let~$\Bbbk$ be an algebraically closed
field, let $S$ be a hyperelliptic or a quasi-hyperelliptic surface
over $\Bbbk$, and
let~$G$ be a (possibly infinite)
subgroup of $\Aut(S)$. Then the following assertions hold.
\begin{itemize}
\item[(i)] The group $G$ contains a normal abelian subgroup of index
at most $B_{hyp}$.

\item[(ii)] If $G$ fixes a $\Bbbk$-point on $S$, then $|G|\le B_{hyp}$.
\end{itemize}
\end{lemma}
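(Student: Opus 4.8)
The plan is to deduce both parts from two uniform facts: that $\Aut^0(S)$ is abelian, and that the component group $\Aut(S)/\Aut^0(S)$ is finite of order bounded independently of $S$ and of~$\Bbbk$. The first is immediate: since $\kappa(S)=0$ and $\Bbbk$ is perfect, Corollary~\ref{corollary:Aut-0-abelian} shows that $\AAut^0_{S,\red}$ is an abelian variety, so $\Aut^0(S)$ is abelian. Granting the uniform bound on the component group, part~(i) follows by taking $N=G\cap\Aut^0(S)$: it is abelian, it is normal in $G$ because $\Aut^0(S)$ is characteristic in $\Aut(S)$, and its index in $G$ equals the order of the image of $G$ in $\Aut(S)/\Aut^0(S)$, hence is at most~$B_{hyp}$. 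For part~(ii) I would write $|G|=[G:G\cap\Aut^0(S)]\cdot|G\cap\Aut^0(S)|$; the first factor is bounded by the order of the component group, and the second is bounded because $G\cap\Aut^0(S)$ is a finite subgroup of $\Aut^0(S)$ fixing the point~$P$. To bound the latter I would use that the Albanese morphism $\alpha\colon S\to B$, with $B=\mathrm{Alb}(S)$ an elliptic curve, is functorial and hence $\Aut(S)$-equivariant, and that $\Aut^0(S)$ surjects onto $B$ (the descended translations of the abelian factor act transitively on the base). Any element of $\Aut^0(S)$ fixing $P$ must then fix $\alpha(P)$, so it lies in the kernel of $\Aut^0(S)\to B$, which is a proper, hence finite, closed subgroup of the abelian variety $\Aut^0(S)$, of order bounded by the structural constant below.

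The heart of the argument is the uniform finiteness of the component group, which I would establish using the two canonical fibrations of a (quasi-)hyperelliptic surface. Writing $S=(E\times F)/G_0$ as in the Bombieri--Mumford classification, with $E$ an elliptic curve, $F$ an elliptic curve or a cuspidal cubic, and $G_0$ a finite group of one of finitely many types (so that $|G_0|$ is bounded by a universal constant), the two projections yield the Albanese fibration $\alpha\colon S\to B=E/G_0$ and a second fibration $\pi\colon S\to\PP^1=F/G_0$, both preserved by $\Aut(S)$. Consider the homomorphism $(\beta,\beta')\colon\Aut(S)\to\Aut(B)\times\Aut(\PP^1)$. Its kernel acts trivially on both fibrations, hence faithfully on the finite intersection of two general fibres, and so is finite of order bounded in terms of the intersection number $F\cdot E\le|G_0|$. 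The image of $\beta$ lies in $\Aut(B)$ and contains the translation subgroup $B=\beta(\Aut^0(S))$ with quotient of order at most~$24$, see Remark~\ref{remark:elliptic-curves-uniform}. Finally $\beta'(\Aut^0(S))$ is trivial, since the abelian factor acts trivially on the second coordinate, while the image of $\beta'$ permutes the nonempty set of special fibres of $\pi$ --- which consists of at least three points, coming from the branch locus of $F\to F/G_0$ --- and is therefore a finite subgroup of $\PGL_2(\Bbbk)$ of bounded order. Combining these three bounds expresses $\Aut(S)/\Aut^0(S)$ as a uniformly bounded extension, yielding the constant~$B_{hyp}$.

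The main obstacle, I expect, is to make the structural input uniform across all characteristics: producing the second fibration $\pi$ together with its at-least-three special fibres, and the bound on $|G_0|$, in a way that covers the quasi-hyperelliptic families in characteristics~$2$ and~$3$, where $F$ is a cuspidal cubic rather than an elliptic curve and the covers $F\to\PP^1$ behave differently. An alternative route to the same uniform bound, parallel to the treatment of $K3$ and Enriques surfaces in Lemma~\ref{lemma:K3}, is to let $\Aut(S)/\Aut^0(S)$ act on the N\'eron--Severi lattice $\NS(S)$, whose rank is uniformly bounded (equal to~$2$ in the hyperelliptic case); by Corollary~\ref{corollary:Minkowski} the image in $\Aut(\NS(S))$ then has uniformly bounded finite subgroups, and the remaining difficulty is to bound the kernel of this representation independently of the field. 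Either way, this uniform control is where the work lies.
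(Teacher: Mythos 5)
Your reduction of both assertions to a single uniform statement --- that $\Aut(S)$ contains a normal abelian subgroup which is (the group of points of) an elliptic curve of uniformly bounded index, equivalently that $\Aut(S)/\Aut^0(S)$ is finite of order bounded independently of $S$ and of $\Bbbk$ --- is correct, and it is also how the paper argues: there the normal subgroup is the group $C(\Bbbk)$ of points of the Albanese base $C\cong E/\Gamma$ acting by translations, assertion (i) follows by intersecting it with $G$, and assertion (ii) follows from the $\Aut(S)$-equivariance of the Albanese map just as in your argument (the paper even gets $G\cap C(\Bbbk)=1$ outright, since a non-trivial translation of $C$ has no fixed point). The essential difference is the source of the uniform bound: the paper does not prove it, but quotes the classification of automorphism group schemes of bielliptic and quasi-bielliptic surfaces due to G.~Martin \cite{Martin}, which supplies a constant, independent of the characteristic, bounding the index of $C(\Bbbk)$ in $\Aut(S)$.

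Your attempted direct proof of that bound contains a genuine gap, located exactly where you suspect. The failing step is the claim that $\beta'(\Aut(S))\subset\Aut(\PP^1)$ preserves a set of \emph{at least three} special fibres of $\pi$ coming from the branch locus of $F\to F/G_0$: this count is a characteristic-zero (tame) phenomenon. In characteristics $2$ and $3$ the cover $F\to F/G_0\cong\PP^1$ can be wildly ramified or even purely inseparable. For instance, for an elliptic curve $F$ in characteristic $2$ the quotient by the inversion involution is branched over two points if $F$ is ordinary and over a single point if $F$ is supersingular; and for quasi-hyperelliptic surfaces (which exist only in characteristics $2$ and $3$) the acting group scheme $\Gamma$ may be infinitesimal, so that $F\to F/\Gamma$ is purely inseparable and the only canonically distinguished point of the base is the image of the cusp. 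A subgroup of $\PGL_2(\Bbbk)$ preserving one or two points of $\PP^1$ can be a maximal torus or a Borel subgroup, hence infinite; so in precisely the cases where uniformity is at issue your argument does not even yield finiteness of the component group. (The preliminary steps --- that $\Aut(S)$ preserves the second fibration, and that the kernel of $(\beta,\beta')$ acts faithfully on a general fibre --- also need justification, but those gaps are fillable.) Your fallback via $\NS(S)$ hits the same wall from the other side: Corollary~\ref{corollary:Minkowski} bounds only the \emph{finite} subgroups of the image, which does not suffice for infinite $G$ in (i), and bounding the kernel of $\Aut(S)\to\GL(\NS(S))$ uniformly in $p$ is, for these surfaces, exactly the content of Martin's theorem --- it plays the role that \cite{Keum} and \cite{DM} play for $K3$ and Enriques surfaces in Lemma~\ref{lemma:K3}. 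So either cite \cite{Martin} at this point, or be prepared to redo a substantial part of that classification for the wildly ramified and quasi-hyperelliptic families.
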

\begin{proof}
Let $S$ be a hyperelliptic or a quasi-hyperelliptic surface over
an algebraically closed field $\Bbbk$.
The Albanese morphism $\phi\colon S\to C$
maps $S$ to an elliptic curve $C$,
see \cite[Theorem~8.6]{Badescu}.
Furthermore, there exists
an elliptic curve $E$, a curve $F$ that is either an elliptic curve or a cuspidal rational
curve, and a finite group scheme $\Gamma$ acting faithfully on $E$ by translations and acting faithfully on $F$,
such that $S$ is included in the commutative diagram
$$
\xymatrix{
E\times F\ar@{->}[r]\ar@{->}[d]  & S\ar@{->}[d]^{\phi}\\
E\ar@{->}[r] & C
}
$$
We refer the reader to \cite[\S3]{BM-II} and \cite[\S2]{BM-III} for details.
In particular, this construction implies that the group~\mbox{$C(\Bbbk)$} of $\Bbbk$-points of the elliptic curve~\mbox{$C\cong E/\Gamma$}
is a normal subgroup of~\mbox{$\Aut(S)$}, and its action on $S$ agrees with the action of~$C(\Bbbk)$ on~$C$ by translations.

By the classification of automorphism groups of
hyperelliptic and quasi-hyperelliptic surfaces provided in~\cite{Martin}, there is a constant~$B_{hyp}$ independent of~$\Bbbk$ and~$S$ such that the index
of $C(\Bbbk)$ in $\Aut(S)$ does not exceed~$B_{hyp}$.
This means that every subgroup $G$ of $\Aut(S)$ has a normal abelian
subgroup of index at most~$B_{hyp}$, which gives assertion~(i).

Now suppose that a subgroup $G\subset\Aut(S)$ fixes
a $\Bbbk$-point on $S$. Since $\phi$ is equivariant with respect to $\Aut(S)$,
we conclude that $G$ acts on $C$ with a fixed $\Bbbk$-point.
Therefore, the intersection of $G$ with $C(\Bbbk)$ is trivial.
Hence~\mbox{$|G|\le B_{hyp}$}, which proves assertion~(ii).
\end{proof}

Now we prove Propositions~\ref{proposition:Jordan-kappa-0} and~\ref{proposition:stabilizer-kappa-0}.

\begin{proof}[Proof of Proposition~\ref{proposition:Jordan-kappa-0}]
Let $S$ be a geometrically irreducible algebraic
surface of Kodaira dimension $0$
over a field~$\Bbbk$ of characteristic $p$.
We may assume that~$\Bbbk$ is algebraically closed.
By Corollary~\ref{corollary:smooth-projective-model}
we can also assume that $S$ is smooth and projective.
Furthermore, by~\mbox{\cite[Proposition~1.6]{ProkhorovShramov-2019}}
it is enough to consider the case when $p>0$.

We can replace $S$ by its
minimal model, so that~\mbox{$\Bir(S)=\Aut(S)$}
by Lemma~\ref{lemma:Bir-vs-Aut}.
By Theorem~\ref{theorem:KE}(ii),
we need to provide bounds for the indices of normal abelian subgroups of $\Aut(S)$ in
the cases when $S$ is a~$K3$
surface, an Enriques surface, an abelian surface, a hyperelliptic
surface, or a quasi-hyperelliptic surface. In the first two cases this is done by Lemma~\ref{lemma:K3}.
In the third case this is done by Corollary~\ref{corollary:abelian-variety-Jordan}.
In the last two cases this is done
by Lemma~\ref{lemma:hyperelliptic-groups}(i).
\end{proof}

\begin{proof}[Proof of Proposition~\ref{proposition:stabilizer-kappa-0}]
Let $S$ be a smooth irreducible projective
surface of Kodaira dimension $0$ over a field~$\Bbbk$
of characteristic $p$, and let~$P$ be a $\Bbbk$-point on~$S$.
Replacing the surface $S$ by the surface $S_{\bar{\Bbbk}}$, and the $\Bbbk$-point $P$
by the $\bar{\Bbbk}$-point~$P_{\bar{\Bbbk}}$ of~$S_{\bar{\Bbbk}}$, we may assume that the field $\Bbbk$
is algebraically closed.
By~\mbox{\cite[Proposition~1.3]{ProkhorovShramov-2019}}
it is enough to deal with the case when~\mbox{$p>0$}.

Consider the minimal model  $S'$ of the surface $S$.
Then~\mbox{$\Aut(S)\subset\Bir(S')$}; hence by
Lemma~\ref{lemma:Bir-vs-Aut}
there is an embedding~\mbox{$\Aut(S)\subset\Aut(S')$}.
Consider the birational morphism~\mbox{$\pi\colon S\to S'$}.
Since the minimal model $S'$ is unique by
Lemma~\ref{lemma:Bir-vs-Aut}, the morphism~$\pi$
is equivariant with respect to the group~\mbox{$\Aut(S)$}.
Therefore, the image~\mbox{$\pi(P)$} of the point $P$ is invariant
under the group~\mbox{$\Aut(S;P)$}.
Thus we can assume from the very beginning that the surface $S$ is minimal.
By Theorem~\ref{theorem:KE}(ii),
we need to provide bounds for the order of~\mbox{$\Aut(S;P)$} in
the cases when $S$ is a~$K3$ surface, an Enriques surface,
an abelian surface, a hyperelliptic
surface, or a quasi-hyperelliptic surface. In the first two cases this is done by Lemma~\ref{lemma:K3}.
In the third case this is done by Corollary~\ref{corollary:torus-stabilizer}.
In the last two cases this is done
by Lemma~\ref{lemma:hyperelliptic-groups}(ii).
\end{proof}

There is a partial analog of Proposition~\ref{proposition:stabilizer-kappa-0} that
is valid over arbitrary fields. It was suggested to us by Yi Gu.

\begin{definition}
Let $X$ be an algebraic variety over a field $\Bbbk$, and let $P$ be a closed point of $X$.
The \emph{inertia group} $\Ine(X;P)$ of $P$ is the kernel of the action of the stabilizer $\Aut(X;P)$
on the residue field $\Bbbk(P)$.
\end{definition}

\begin{lemma}\label{lemma:inertia}
Let $X$ be an algebraic variety over a field $\Bbbk$, and let $P$ be a closed point of $X$.
Let $P_1$ be one of the $\bar{\Bbbk}$-points of $P_{\bar{\Bbbk}}$.
Then $\Ine(X;P)\subset\Aut(X_{\bar{\Bbbk}};P_1)$.
\end{lemma}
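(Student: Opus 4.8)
The plan is to make the $\bar{\Bbbk}$-points of $P_{\bar{\Bbbk}}$ explicit and then track how the inertia group acts on them. Since $X$ is of finite type over $\Bbbk$ and $P$ is a closed point, the residue field $\Bbbk(P)$ is a finite extension of $\Bbbk$, and the reduced subscheme supported at $P$ is $\Spec\Bbbk(P)$ together with its canonical immersion $\iota_P\colon\Spec\Bbbk(P)\hookrightarrow X$. After base change one has $P_{\bar{\Bbbk}}=\Spec\big(\Bbbk(P)\otimes_{\Bbbk}\bar{\Bbbk}\big)$, so a $\bar{\Bbbk}$-point of $P_{\bar{\Bbbk}}$ is the same datum as a $\Bbbk$-algebra homomorphism $\Bbbk(P)\to\bar{\Bbbk}$, i.e.\ a $\Bbbk$-embedding $\tau\colon\Bbbk(P)\hookrightarrow\bar{\Bbbk}$. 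First I would record this bijection, writing $[\tau]$ for the $\bar{\Bbbk}$-point attached to $\tau$, and fixing the embedding $\tau_1$ with $[\tau_1]=P_1$.

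Next I would describe the action of an element $g\in\Aut(X;P)$. Since $g$ fixes the point $P$, it restricts to an automorphism $g|_P$ of $\Spec\Bbbk(P)$ satisfying $g\circ\iota_P=\iota_P\circ g|_P$, equivalently it induces a $\Bbbk$-automorphism $\sigma_g$ of $\Bbbk(P)$. Passing to the base change $g_{\bar{\Bbbk}}\in\Aut(X_{\bar{\Bbbk}})$ and reading the identity $g_{\bar{\Bbbk}}\circ\iota_{P,\bar{\Bbbk}}=\iota_{P,\bar{\Bbbk}}\circ(g|_P)_{\bar{\Bbbk}}$ on $\bar{\Bbbk}$-points, one finds that $g_{\bar{\Bbbk}}$ permutes the points over $P$ by precomposing the embeddings with $\sigma_g$, i.e.\ $g_{\bar{\Bbbk}}([\tau])=[\tau\circ\sigma_g]$. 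The only place where I would take care is the direction of this induced map (whether one gets $\sigma_g$ or $\sigma_g^{-1}$), which results from the contravariance of the passage to coordinate rings; in any case the precise direction is immaterial for what follows.

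Finally, by the very definition of the inertia group, $g\in\Ine(X;P)$ means that the induced automorphism $\sigma_g$ of $\Bbbk(P)$ is trivial. Hence $\tau\circ\sigma_g=\tau$ for every $\tau$, so $g_{\bar{\Bbbk}}$ fixes every $\bar{\Bbbk}$-point lying over $P$; in particular $g_{\bar{\Bbbk}}(P_1)=P_1$. Under the natural inclusion $\Aut(X)\subset\Aut(X_{\bar{\Bbbk}})$ recalled in Section~\ref{section:Aut} this says exactly that $g\in\Aut(X_{\bar{\Bbbk}};P_1)$, which proves $\Ine(X;P)\subset\Aut(X_{\bar{\Bbbk}};P_1)$. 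I do not anticipate a real obstacle here: the entire content is the bookkeeping of the residue-field action, and the only subtlety is keeping the directions of the induced maps consistent.
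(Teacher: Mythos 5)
Your proof is correct and follows the same route as the paper's (much terser) argument: the paper simply observes that $\Ine(X;P)$ acts trivially on $P_{\bar{\Bbbk}}\cong\Spec\bigl(\Bbbk(P)\otimes_{\Bbbk}\bar{\Bbbk}\bigr)$, which is exactly the content you spell out via the identification of $\bar{\Bbbk}$-points with $\Bbbk$-embeddings $\Bbbk(P)\hookrightarrow\bar{\Bbbk}$. Your more detailed bookkeeping (including the remark that the direction of the induced map on $\Bbbk(P)$ is immaterial) is a faithful expansion of the same idea, not a different approach.
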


\begin{proof}
The action of $\Ine(X;P)\subset\Aut(X_{\bar{\Bbbk}})$ on
$$
P_{\bar{\Bbbk}}\cong \Spec\left(\Bbbk(P)\otimes_{\Bbbk} \bar{\Bbbk}\right)
$$
is trivial.
\end{proof}

\begin{corollary}\label{corollary:inertia}
There exists a constant $B$ such that for every
field $\Bbbk$, every smooth geometrically
irreducible projective surface
$S$ of Kodaira dimension~$0$ over $\Bbbk$, every
closed point $P\in S$, and every finite subgroup~\mbox{$G\subset \Ine(S;P)$}
the order of the group $G$ is at most~$B$.
\end{corollary}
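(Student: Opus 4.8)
The plan is to take for $B$ the very same constant furnished by Proposition~\ref{proposition:stabilizer-kappa-0} and to reduce the statement to that proposition by passing to the algebraic closure. First I would note that Proposition~\ref{proposition:stabilizer-kappa-0} is already stated over an arbitrary field, so in particular its uniform constant $B$ applies to the algebraically closed field $\bar{\Bbbk}$.

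Given $S$, a closed point $P\in S$, and a finite subgroup $G\subset\Ine(S;P)$, I would choose a $\bar{\Bbbk}$-point $P_1$ of the fibre $P_{\bar{\Bbbk}}\cong\Spec\left(\Bbbk(P)\otimes_{\Bbbk}\bar{\Bbbk}\right)$; such a point exists because this is a nonempty finite $\bar{\Bbbk}$-scheme. By Lemma~\ref{lemma:inertia}, under the natural inclusion $\Aut(S)\subset\Aut(S_{\bar{\Bbbk}})$ the group $\Ine(S;P)$, and hence $G$, lands inside the stabilizer $\Aut(S_{\bar{\Bbbk}};P_1)$ of the $\bar{\Bbbk}$-point $P_1$.

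It then remains to verify that $S_{\bar{\Bbbk}}$ meets the hypotheses of Proposition~\ref{proposition:stabilizer-kappa-0} over $\bar{\Bbbk}$. Indeed, $S_{\bar{\Bbbk}}$ is smooth and projective since these properties are stable under base change; it is irreducible, equivalently geometrically irreducible over the algebraically closed field $\bar{\Bbbk}$, because $S$ is geometrically irreducible; and its Kodaira dimension equals $\kappa(S)=0$ by the very definition of the Kodaira dimension over a non-closed field. Since $P_1$ is a $\bar{\Bbbk}$-point of $S_{\bar{\Bbbk}}$, Proposition~\ref{proposition:stabilizer-kappa-0} applied to $S_{\bar{\Bbbk}}$ and $P_1$ yields $|G|\le B$, which is exactly the asserted uniform bound.

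I do not expect any serious obstacle here, as the entire content is packaged into Lemma~\ref{lemma:inertia} and Proposition~\ref{proposition:stabilizer-kappa-0}. The only point requiring a moment of care is conceptual rather than technical: it is the \emph{inertia} group, and not the full stabilizer $\Aut(S;P)$, that is carried into the stabilizer of an honest $\bar{\Bbbk}$-point after extension of scalars. This is precisely the mechanism of Lemma~\ref{lemma:inertia}, and it is also the reason one must restrict attention to the inertia group, since the analogous statement for $\Aut(S;P)$ fails over non-closed fields by Example~\ref{example:large-stabilizer}.
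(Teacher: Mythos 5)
Your proposal is correct and is essentially identical to the paper's own proof, which simply applies Proposition~\ref{proposition:stabilizer-kappa-0} together with Lemma~\ref{lemma:inertia}; you have merely spelled out the base-change details (choice of $P_1$, verification of the hypotheses over $\bar{\Bbbk}$) that the paper leaves implicit.
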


\begin{proof}
Apply Proposition~\ref{proposition:stabilizer-kappa-0} together with Lemma~\ref{lemma:inertia}.
\end{proof}

\section{Fixed points in arbitrary dimension}
\label{section:high-dim}

In this section we prove Theorem~\ref{theorem:high-dimension-stabilizer}.
We start by recalling the rigidity theorem
for projective varieties, see \cite[\S\,III.4.3]{Shafarevich} or \cite[Lemma~3.3.3]{Brion-long}.

\begin{theorem}
\label{theorem:rigidity}
Let $U$, $V$, and $W$ be varieties over an algebraically closed field~$\Bbbk$.
Suppose that $U$ is irreducible, and $V$ is irreducible and projective.
Let
$$
f\colon U\times V\to W
$$
be a morphism. Suppose that for some $\Bbbk$-point $u_0\in U$ the subvariety
$$
\{u_0\}\times V\subset U\times V
$$
is mapped to a point
by $f$. Then for every $\Bbbk$-point $u\in U$ the subvariety
$\{u\}\times V$ is also mapped to a point.
\end{theorem}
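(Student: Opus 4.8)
The plan is to exploit properness of $V$ to localize the problem near the image point, establish the contraction on a nonempty open subset of $U$, and then spread it to all of $U$ by a density argument using the irreducibility of $U\times V$ and the separatedness of $W$.

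Write $w_0\in W$ for the $\Bbbk$-point with $f(\{u_0\}\times V)=\{w_0\}$, and choose an affine open neighborhood $W_0\subset W$ of $w_0$. First I would consider the closed subset $Z=f^{-1}(W\setminus W_0)\subset U\times V$ together with the projection $p\colon U\times V\to U$. Since $V$ is projective, the morphism $p$ is proper, hence closed, so $p(Z)$ is closed in $U$. By construction $\{u_0\}\times V$ maps entirely into $w_0\in W_0$, so it is disjoint from $Z$; thus $u_0\notin p(Z)$, and $U_0=U\setminus p(Z)$ is a nonempty open subset of $U$. For every $\Bbbk$-point $u\in U_0$ the fiber $\{u\}\times V$ lies in $f^{-1}(W_0)$, so $f$ carries it into the affine variety $W_0$.

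The key local step is that $f(\{u\}\times V)$ is a single point for every $u\in U_0$. Indeed, $V$ is projective and irreducible, hence complete and connected, so the image of the restricted morphism $V\cong\{u\}\times V\to W_0$ is a complete connected closed subvariety of the affine variety $W_0$. A complete subvariety of an affine variety is finite, and being connected it reduces to a single point. To upgrade this from $U_0$ to all of $U$, fix a $\Bbbk$-point $v_0\in V$ and set $g(u)=f(u,v_0)$, which defines a morphism $g\colon U\to W$. For $u\in U_0$ the point $f(\{u\}\times V)$ must be $g(u)$, so the two morphisms $f$ and $g\circ p$ from $U\times V$ to $W$ agree on $U_0\times V$. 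Since $U$ is irreducible, $U_0$ is dense in $U$, and since $V$ is irreducible the product $U_0\times V$ is dense in $U\times V$. As $W$ is separated, the locus where $f$ and $g\circ p$ coincide is closed, and being dense it is all of $U\times V$. Hence $f(u,v)=g(u)$ for every $(u,v)$, which says precisely that $\{u\}\times V$ is contracted to the point $g(u)$ for every $\Bbbk$-point $u\in U$.

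The main obstacle is really concentrated in the local step: one must invoke that the projection $p$ is closed (which is exactly where the projectivity of $V$ enters) and that a complete connected subvariety of an affine variety is a point. Both are standard facts, but the whole argument hinges on the interplay between \emph{completeness} of $V$, used to contract fibers over $U_0$, and \emph{separatedness} of $W$, used to propagate the contraction over the dense locus; the remaining density-and-agreement reasoning is routine.
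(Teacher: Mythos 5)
Your proof is correct, and it coincides with the standard proof of the rigidity lemma: the paper itself does not prove Theorem~\ref{theorem:rigidity} but cites it from \cite[\S\,III.4.3]{Shafarevich} and \cite[Lemma~3.3.3]{Brion-long}, where the argument is exactly yours. Namely, one uses properness of the projection $p$ to produce the open set $U_0$ over which the complete fibers $\{u\}\times V$ land in an affine chart and are therefore contracted, and then spreads the identity $f=g\circ p$ from the dense subset $U_0\times V$ to all of $U\times V$ using irreducibility of the product and separatedness of $W$.
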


\begin{corollary}\label{corollary:abelian-variety-no-fixed-points}
Let $X$ be a geometrically irreducible projective variety over a
field~$\Bbbk$, and let $\mathcal{A}\subset\AAut_X$ be a positive-dimensional abelian variety.
Then~$\mathcal{A}$ has no fixed (closed) points on~$X$.
\end{corollary}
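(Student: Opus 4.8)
The plan is to argue by contradiction using the rigidity theorem. Suppose that $\mathcal{A}$ fixes some closed point $P\in X$. The statement is geometric, so I would first reduce to an algebraically closed base field: after extending scalars to $\bar{\Bbbk}$, the group scheme $\mathcal{A}_{\bar{\Bbbk}}$ is again a positive-dimensional abelian variety inside the automorphism group scheme $\AAut_{X_{\bar{\Bbbk}}}$, and it fixes the preimage of $P$ under $X_{\bar{\Bbbk}}\to X$, which consists of finitely many closed points $P_1,\dots,P_r$ of $X_{\bar{\Bbbk}}$, all of them $\bar{\Bbbk}$-rational. Since $\mathcal{A}_{\bar{\Bbbk}}$ is connected, it cannot permute these finitely many points non-trivially, and hence fixes each of them; in particular it fixes the $\bar{\Bbbk}$-point $P_1$. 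Thus we may assume $\Bbbk=\bar{\Bbbk}$ and that $\mathcal{A}$ fixes a $\Bbbk$-point $P$.

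Next I would feed the action morphism into Theorem~\ref{theorem:rigidity}. Consider the morphism
$$
f\colon X\times\mathcal{A}\longrightarrow X,\qquad (x,g)\longmapsto g\cdot x,
$$
obtained by restricting the action of $\AAut_X$ on $X$ to the subgroup $\mathcal{A}$. Here $U=X$ is irreducible and $V=\mathcal{A}$ is irreducible and projective, so the hypotheses of the rigidity theorem are satisfied. Taking $u_0=P$, the subvariety $\{P\}\times\mathcal{A}$ is mapped by $f$ to the single point $P$, because $P$ is fixed by $\mathcal{A}$. Theorem~\ref{theorem:rigidity} then yields that for \emph{every} closed point $x\in X$ the subvariety $\{x\}\times\mathcal{A}$ is collapsed to a point as well.

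Finally, I would evaluate this at the identity $e\in\mathcal{A}$: since $e\cdot x=x$, the point to which $\{x\}\times\mathcal{A}$ collapses is $x$ itself, so $g\cdot x=x$ for all $g\in\mathcal{A}(\Bbbk)$ and all closed points $x\in X$. As $X$ is reduced and its closed points are dense, each $g\in\mathcal{A}(\Bbbk)$ acts as the identity automorphism of $X$. But $\mathcal{A}$ is embedded in $\AAut_X$, whose action on $X$ is faithful, so $\mathcal{A}(\Bbbk)$ is trivial, contradicting the positive-dimensionality of the abelian variety $\mathcal{A}$. The only genuinely delicate point is the reduction in the first paragraph: one must ensure that a fixed \emph{closed} point over an arbitrary field produces a fixed \emph{geometric} point, which is where the connectedness of $\mathcal{A}$ is used to rule out a non-trivial permutation of the points lying over $P$. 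Once such a geometric fixed point is available, the rigidity theorem does all the remaining work.
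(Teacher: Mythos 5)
Your proof is correct and takes essentially the same route as the paper: the same reduction to an algebraically closed field via the connectedness of $\mathcal{A}$, followed by the same application of the rigidity theorem (Theorem~\ref{theorem:rigidity}) to the action morphism $\mathcal{A}\times X\to X$ with the fixed point providing the collapsed fiber. The only difference is cosmetic: where the paper notes directly that non-triviality of the action forces a general orbit not to be a point, you deduce triviality of the action by evaluating at the identity and then contradict faithfulness, which amounts to the same thing.
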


\begin{proof}
Suppose that~$\mathcal{A}$ acts on $X$ with a fixed closed point $P$.
Since~$\mathcal{A}$ is (geometrically) connected, and $P_{\bar{\Bbbk}}$ is a finite union of $\bar{\Bbbk}$-points,
one can see that~$\mathcal{A}_{\bar{\Bbbk}}$
acts on $X_{\bar{\Bbbk}}$ with a fixed point as well. Hence, we may assume that the field~$\Bbbk$
is algebraically closed.

The action of $\mathcal{A}$ on $X$ is given by a morphism
$$
\Psi\colon \mathcal{A}\times X\to X.
$$
The image~\mbox{$\Psi(\mathcal{A}\times\{P\})$} is a point.
On the other hand, since the action of
$\mathcal{A}$ on $X$ is non-trivial, for a general $\Bbbk$-point $Q\in X$
the image $\Psi(\mathcal{A}\times\{Q\})$ is not a point.
This is impossible by Theorem~\ref{theorem:rigidity} because~$\mathcal{A}$ is projective.
\end{proof}

We will need the following simple but convenient fact.

\begin{lemma}\label{lemma:number-of-points}
Let $Z$ and $X$ be projective schemes over a
field $\Bbbk$, and let~\mbox{$\pi\colon Z\to X$} be a morphism such that every fiber
of $\pi$ is finite. Then there is a constant~$C=C(Z,X,\pi)$ such that
the number of $\Bbbk$-points in every fiber of~$\pi$ is at most~$C$.
\end{lemma}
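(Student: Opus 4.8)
The plan is to reduce the assertion to the uniform boundedness of the fiber dimension of a single coherent sheaf on $X$. First I would observe that $\pi$ is proper, being a morphism of projective schemes over $\Bbbk$, and that it is quasi-finite by hypothesis; a proper quasi-finite morphism of Noetherian schemes is finite (this is the standard characterization of finite morphisms). Hence $\pi$ is affine, and the pushforward $\mathcal{F}=\pi_*\mathcal{O}_Z$ is a coherent sheaf on the Noetherian scheme $X$. For any point $x\in X$ the scheme-theoretic fiber is $\pi^{-1}(x)=\Spec\big(\mathcal{F}(x)\big)$, where $\mathcal{F}(x)=\mathcal{F}_x\otimes_{\mathcal{O}_{X,x}}\kappa(x)$ denotes the fiber of $\mathcal{F}$ at $x$.

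Next I would reduce to fibers over $\Bbbk$-points of $X$. If $z$ is a $\Bbbk$-point of $Z$, then its image $x=\pi(z)$ is a closed point of $X$ with $\kappa(x)\subset\kappa(z)=\Bbbk$; combined with the structure map $\Bbbk\to\kappa(x)$ this forces $\kappa(x)=\Bbbk$, so $x\in X(\Bbbk)$. Thus any $\Bbbk$-point lying in a fiber of $\pi$ lies over a $\Bbbk$-point of $X$, and it suffices to bound, for $x\in X(\Bbbk)$, the number of $\Bbbk$-points of the finite $\Bbbk$-scheme $\pi^{-1}(x)=\Spec\big(\mathcal{F}(x)\big)$. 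Here $\mathcal{F}(x)$ is a finite-dimensional $\Bbbk$-algebra, so it decomposes as a product of local Artinian $\Bbbk$-algebras indexed by the closed points of $\pi^{-1}(x)$; each local factor has $\Bbbk$-dimension at least $1$, and the $\Bbbk$-points correspond to those factors with residue field $\Bbbk$. Consequently the number of $\Bbbk$-points of $\pi^{-1}(x)$ is at most $\dim_{\kappa(x)}\mathcal{F}(x)$.

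It then remains to bound $\dim_{\kappa(x)}\mathcal{F}(x)$ uniformly in $x$, and this is the one step requiring a genuine input. Coherence of $\mathcal{F}$ implies, by Nakayama's lemma, that the function $x\mapsto\dim_{\kappa(x)}\mathcal{F}(x)$ is upper semicontinuous on $X$. Therefore the sets $U_n=\{x\in X:\dim_{\kappa(x)}\mathcal{F}(x)<n\}$ are open and form an increasing cover of $X$; since $X$ is projective over $\Bbbk$, it is quasi-compact, so $X=U_N$ for some $N$. Taking $C=N$ gives the desired constant, depending only on $Z$, $X$, and $\pi$. The main obstacle is precisely this last boundedness statement: everything else is formal, but the uniform bound genuinely relies on upper semicontinuity of the fiber dimension of $\mathcal{F}$ together with the quasi-compactness of $X$.
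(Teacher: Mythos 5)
Your proof is correct, and its first half coincides with the paper's: both establish that $\pi$, being proper with finite fibers, is a finite morphism. Where you diverge is in how the uniform bound is then extracted. The paper first reduces to the case where $Z$ and $X$ are irreducible and reduced, applies generic flatness to find a dense open subset $X^0\subset X$ over which $\pi$ is flat, bounds the number of points in those fibers by the degree of the resulting flat finite morphism, and handles the complement by Noetherian induction --- a stratification argument. You instead work with the coherent sheaf $\mathcal{F}=\pi_*\mathcal{O}_Z$ on all of $X$ at once: the number of $\Bbbk$-points in the fiber over $x$ is at most $\dim_{\kappa(x)}\mathcal{F}(x)$, and this function is upper semicontinuous by Nakayama's lemma, hence bounded on the quasi-compact scheme $X$. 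Your route avoids both the preliminary reductions (to irreducible components and to the reduced structure) and the induction, so it is shorter and in that sense cleaner; the paper's stratification yields marginally more structure (on each stratum the count is controlled by the degree of a flat morphism), but nothing the lemma actually needs. Your explicit reduction of $\Bbbk$-points of fibers to fibers over $\Bbbk$-points of $X$, via $\kappa(\pi(z))\subset\kappa(z)=\Bbbk$, is also a correct observation that the paper leaves implicit.
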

\begin{proof}
The number of irreducible components of $Z$ is finite,
so to bound the number of $\Bbbk$-points in the fibers
we may replace~$Z$ by its irreducible component.
Furthermore, we can assume that $Z$ and $X$ are reduced (so that they are projective varieties).
Since~$Z$ is projective,
the morphism $\pi$ is projective, see e.g.~\cite[Corollary~3.3.32(e)]{Liu}.
Thus we conclude that~$\pi$
is a finite morphism, see for instance \cite[Corollary~4.4.7]{Liu}.
By generic flatness (see for instance~\mbox{\cite[\href{https://stacks.math.columbia.edu/tag/052B}{Tag 052B}]{Stack}}),
there exists a dense open subset $X^0\subset X$
such that the restriction $\pi^0$ of $\pi$ to $Z^0=\pi^{-1}(X^0)$ is flat. Thus the number of $\Bbbk$-points
in the fibers of $\pi^0$ is bounded by the degree of~$\pi^0$, see \cite[Corollary~III.9.10]{Hartshorne}.
Replacing $Z$ and $X$ by $Z\setminus Z^0$ and $X\setminus X^0$ and proceeding by Noetherian induction,
we obtain the assertion of the lemma.
\end{proof}

Now we prove Theorem~\ref{theorem:high-dimension-stabilizer}.

\begin{proof}[Proof of Theorem~\ref{theorem:high-dimension-stabilizer}]
The proof is identical to that of \cite[Theorem~1.5]{ProkhorovShramov-2019}.
The reduced neutral component~\mbox{$\AAut^0_{X,\red}$}
of the group scheme~\mbox{$\AAut_X$} is an abelian variety by Corollary~\ref{corollary:Aut-0-abelian}.
Let $\AAut^0_{X;P}$ be the stabilizer of $P$ in $\AAut^0_{X,\red}$. Then
$\AAut^0_{X;P}$ is a group scheme of finite type over $\Bbbk$ (note however that it may be non-reduced and not connected).
Denote by $\Aut^0(X;P)$ the group of $\Bbbk$-points of $\AAut^0_{X;P}$, so that $\Aut^0(X;P)$ is the stabilizer
of $P$ in $\Aut^0(X)$.

We claim that the group scheme $\AAut^0_{X;P}$ is finite. Indeed,
suppose that~\mbox{$\AAut^0_{X;P}$} is infinite.
Then it contains some
positive-dimensional abelian variety~$\mathcal{A}$.
Thus~$\mathcal{A}$ acts on $X$ with the fixed point $P$.
This is impossible by Corollary~\ref{corollary:abelian-variety-no-fixed-points}.

Now we know that the group scheme $\AAut^0_{X;P}$ is finite.
We claim that the order of the group $\Aut^0(X;P)$
is bounded by some constant $C(X)$ that does not depend on $P$.
Consider the incidence relation
$$
Z=\{(\sigma,Q)\mid \sigma(Q)=Q\}\subset \AAut^0_{X,\red}\times X,
$$
and denote by $\pi\colon Z\to X$ the projection to the second factor.
Then~$Z$ is a projective scheme,
and a fiber of~$\pi$ over a point $Q$
is exactly the group scheme~\mbox{$\AAut^0_{X;Q}$}.
Thus, the fibers of $\pi$ are finite.
Therefore, according to Lemma~\ref{lemma:number-of-points},
the number of $\Bbbk$-points in
the fibers of $\pi$, i.e. the order of the groups
$\Aut^0(X;Q)$, is bounded by a constant~\mbox{$C(X)$}.

Finally, we recall from Lemma~\ref{lemma:MZ}
that the quotient $\Aut(X)/\Aut^0(X)$ has bounded finite subgroups.
Hence the orders of the finite subgroups of the group
$$
\Aut(X;P)/\Aut^0(X;P)\subset\Aut(X)/\Aut^0(X)
$$
are bounded by some constant $B(X)$ that does not depend on $P$.
This implies the required assertion
about the group~\mbox{$\Aut(X;P)$}.
\end{proof}

\section{Nilpotent groups}
\label{section:nilpotent}

In this section we study finite nilpotent subgroups of
birational automorphism groups and prove Theorem~\ref{theorem:nilpotent}. Let us start with the case of
ruled surfaces over elliptic curves.

\begin{lemma}\label{lemma:nilpotent-ruled}
Let $\Bbbk$ be a field of characteristic $p>0$, and let $S$ be a geometrically irreducible algebraic surface over $\Bbbk$ birational to
$E\times\PP^1$, where $E$ is an elliptic curve.
Let~\mbox{$G\subset\Bir(S)$} be a finite subgroup, and let $G_p$ denote a $p$-Sylow subgroup of~$G$. Then~$G$ contains
either a normal abelian subgroup of order coprime to $p$ and index at most~\mbox{$2^{15}\cdot 3^5\cdot 5^4\cdot |G_p|^{15}$},
or a normal nilpotent subgroup of class at most $2$, order coprime to $p$, and index at most~\mbox{$2^9\cdot 3^2\cdot 5\cdot|G_p|^3$}.
\end{lemma}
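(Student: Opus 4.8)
The plan is to reduce everything to a single $G$-equivariant conic bundle over $E$ and then run a Serre-style argument (via Lemma~\ref{lemma:Serre}), splitting into two cases according to the size of the fiberwise cyclic part; the two cases will produce the two alternatives in the statement. Since $S$ is geometrically irreducible we have $\Bir(S)\subset\Bir(S_{\bar{\Bbbk}})$, so I may assume $\Bbbk=\bar{\Bbbk}$, and as $\Bir$ depends only on the birational class I work with $E\times\PP^1$. First I would regularize the $G$-action on a smooth projective model $\tilde{S}$ by Lemma~\ref{lemma:regularization}. The projection to $E$ is equivariant for the whole group $\Bir(E\times\PP^1)$ by Lemma~\ref{lemma:ruled-Bir}, and since $E$ is an abelian variety this rational fibration is automatically a morphism; after a $G$-equivariant relative minimalization over $E$ I obtain a $G$-equivariant conic bundle $\phi\colon S'\to E$. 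This yields an exact sequence $1\to F\to G\to\bar{G}\to 1$ with $F\subset\PGL_2(\Bbbk(E))$ acting fiberwise and $\bar{G}\subset\Aut(E)$.

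The two building blocks are as follows. On the base, $\Aut(E)\cong E(\Bbbk)\rtimes\Aut(E;O)$ with $|\Aut(E;O)|\le 24$ (Remark~\ref{remark:elliptic-curves-uniform}), so $\bar{H}:=\bar{G}\cap E(\Bbbk)$ is a normal abelian subgroup of $\bar{G}$ generated by at most two elements, with $[\bar{G}:\bar{H}]\le 24$; let $H\trianglelefteq G$ be its preimage, so $[G:H]\le 24$. On the fibers, Lemma~\ref{lemma:P1} gives a characteristic cyclic subgroup $R\subset F$ of order prime to $p$ with $[F:R]\le 60|F_p|^3$; being characteristic in the normal subgroup $F$, it is normal in $G$, and by Lemma~\ref{lemma:Serre} every $g\in G$ has the property that $g^2$ centralizes $R$. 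I now split on $|R|$.

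\textbf{Case $|R|\ge 3$.} Here Remark~\ref{remark:PGL2-centralizer}, applied on the generic fiber, shows that $R':=C_F(R)$ is cyclic of order prime to $p$ with $[F:R']\le 2$, that $R'$ is characteristic in $F$ (hence normal in $G$), and that $C_F(R')=R'$. Applying Lemma~\ref{lemma:Serre} to $R'$, the image of $H$ in $\Aut(R')$ has exponent at most $2$ and boundedly many generators, so $C:=C_H(R')=C_G(R')\cap H$ is normal in $G$ of bounded index. Since $R'\subset Z(C)$, since $C\cap F=C_F(R')=R'$, and since $C/R'$ embeds into the abelian group $\bar{H}$, I get $[C,C]\subset R'\subset Z(C)$, so $C$ is nilpotent of class at most $2$. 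Its prime-to-$p$ part $N$ is characteristic in the nilpotent group $C$, hence normal in $G$, nilpotent of class at most $2$, of order prime to $p$, and of index at most $2^9\cdot 3^2\cdot 5\cdot|G_p|^3$; this is the second alternative.

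\textbf{Case $|R|\le 2$.} Now $|F|\le 120|F_p|^3$ and $F_p$ is elementary abelian (being a $p$-subgroup of $\PGL_2$ over a field of characteristic $p$). Applying a Heisenberg-type estimate to $1\to F\to H\to\bar{H}\to 1$ — Lemma~\ref{lemma:Heisenberg-estimate-1} when $F$ is boundedly generated (types (1)--(4) of Theorem~\ref{theorem:ADE}, via Remark~\ref{remark:PGL-generators}), and Lemma~\ref{lemma:Heisenberg-estimate-2} when $F_p$ is normal (type (5)) — produces a characteristic abelian subgroup of $H$ of order prime to $p$ and of index polynomial in $|H_p|$; being characteristic in the normal subgroup $H$, it is normal in $G$ by Remark~\ref{remark:normal-vs-characteristic}, of index at most $2^{15}\cdot 3^5\cdot 5^4\cdot|G_p|^{15}$ (the factor $120=2^3\cdot 3\cdot 5$ from $[F:R]$ and the factor $24=2^3\cdot 3$ from $[\bar G:\bar H]$ account for the stated constant). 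This is the first alternative. I expect the main obstacle to be the case $|R|\ge 3$: one must combine the rigidity ``$g^2$ centralizes $R'$'' with the torus/centralizer description of $\PGL_2$ to manufacture a subgroup that is simultaneously normal in $G$, of order prime to $p$, and nilpotent of class \emph{exactly} at most $2$, all while keeping the index a low-degree polynomial in $|G_p|$; a secondary technical point is matching the Heisenberg bookkeeping of the case $|R|\le 2$ to the precise exponent and constant in the first alternative.
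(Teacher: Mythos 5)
Your proposal is correct, and it follows the paper's overall strategy — the exact sequence $1\to F\to G\to\bar G\to 1$ from Lemma~\ref{lemma:ruled-Bir}, the characteristic cyclic subgroup $R\subset F$ from Lemma~\ref{lemma:P1}, the case split $|R|\le 2$ versus $|R|\ge 3$, and Lemmas~\ref{lemma:Heisenberg-estimate-1} and~\ref{lemma:Heisenberg-estimate-2} in the small case, where your bookkeeping coincides with the paper's — but your treatment of the case $|R|\ge 3$ is genuinely different and, in my view, cleaner. The paper picks lifts $\alpha_1,\alpha_2\in H$ of generators of $\bar H$, applies Lemma~\ref{lemma:Serre} twice so that $\alpha_1^4,\alpha_2^4$ commute with the relevant cyclic group, forms $\tilde N=\langle \alpha_1^4,\alpha_2^4, R\rangle$, and then forces normality by intersecting all $G$-conjugates of $\tilde N$; this requires the further verifications that the intersection still contains $R$ and still surjects onto the subgroup $\bar H_{\bar\alpha_1^4,\bar\alpha_2^4}$ of $\bar H$, and it yields the index bound $2^9\cdot 3^2\cdot 5\cdot |G_p|^3$. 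You instead enlarge $R$ to $R'=C_F(R)$ and take $C=C_H(R')$: normality of $C$ in $G$ is then automatic (the centralizer of a normal subgroup, intersected with a normal subgroup), nilpotency of class at most $2$ is immediate from $[C,C]\subset C\cap F=R'\subset Z(C)$, and the order-coprime-to-$p$ condition is restored by splitting off the $p'$-part of $C$, which is characteristic because a finite nilpotent group is the direct product of its Sylow subgroups; your resulting bound $[G:N]\le 24\cdot 8\cdot|G_p|=192\,|G_p|$ is stronger than required, and you avoid the delicate intersection-of-conjugates step entirely. Two points you should spell out. First, $[F:C_F(R)]\le 2$ does not follow verbatim from Remark~\ref{remark:PGL2-centralizer}: you also need that $F$, since it normalizes $R$, permutes the two fixed points of $R$ on the geometric generic fiber, and that the kernel of this permutation action lies in the torus centralizing $R$; this one-line argument is the crux of the bound $[H:C]\le 8$. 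Second, your regularization step quietly invokes a $G$-equivariant relative minimal model over $E$ producing a conic bundle in the sense of Definition~\ref{definition:conic-bundle}, a statement the paper never formulates for non-rational bases (Theorem~\ref{theorem:MMP} concerns only geometrically rational surfaces); alternatively, observe that the proof of Lemma~\ref{lemma:Serre} uses only the generic fiber, so it applies directly to $F\subset\PGL_2(\Bbbk(E))$ and to elements of $\Bir(S)$ normalizing a cyclic subgroup of $F$, which is how the paper implicitly applies it. Finally, a cosmetic difference: you take $\bar H=\bar G\cap E(\Bbbk)$, of index at most $24$ but possibly of order divisible by $p$, whereas the paper takes the subgroup of Corollary~\ref{corollary:abelian-variety-p-Jordan}, of order prime to $p$ and index at most $24|\bar G_p|$; your choice is harmless, since in both of your cases coprimality to $p$ is recovered at the end (by the Heisenberg lemmas, respectively by extracting the $p'$-part of $C$).
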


\begin{proof}
We may assume that $\Bbbk$ is algebraically closed.
According to Lemma~\ref{lemma:ruled-Bir}, the group
$\Bir(S)$ fits into the exact sequence
\begin{equation*}
1\to \Bir(S)_\phi\to \Bir(S)\to\Gamma,
\end{equation*}
where $\Bir(S)_\phi\subset\PGL_2(\Bbbk(E))$
and $\Gamma\subset\Aut(E)$.
Thus, we obtain an exact sequence of finite groups
\begin{equation*}
1\to F\to G\to \bar{G}\to 1,
\end{equation*}
where $F$ is a subgroup of $\PGL_2(\Bbbk(E))$, and $\bar{G}$ acts faithfully on~$E$.
By Corollary~\ref{corollary:abelian-variety-p-Jordan} and Remark~\ref{remark:elliptic-curves-uniform} there exists a normal
subgroup $\bar{H}$ in~$\bar{G}$
such that~$\bar{H}$ is generated by at most $2$ elements, the order of $\bar{H}$ is coprime to~$p$, and the index of $\bar{H}$ in $\bar{G}$ does not exceed
$24|\bar{G}_p|$, where $\bar{G}_p$ is a $p$-Sylow subgroup of~$\bar{G}$.
Let~$H$ be the preimage of $\bar{H}$ in $G$, so that there
is an exact sequence of groups
$$
1\to F\to H\to \bar{H}\to 1.
$$
In particular, $H$ is a normal subgroup of $G$.
By Lemma~\ref{lemma:P1} there exists a characteristic cyclic
subgroup $R$ of order $n$ coprime to $p$
and index at most~\mbox{$60|F_p|^3$} in $F$, where $F_p$
is a $p$-Sylow subgroup of~$F$.

Suppose that $n\le 2$. Then $|F|\le 120|F_p|^3$.
If $F$ is a group of one of types~\mbox{(1)--(4)} in the notation of Theorem~\ref{theorem:ADE},
then it is generated by at most~$2$ elements by Remark~\ref{remark:PGL-generators}.
Hence $H$ contains a characteristic abelian subgroup~$A$ of order coprime to $p$ and index at most $120^4\cdot |H_p|^{13}$ by Lemma~\ref{lemma:Heisenberg-estimate-1}.
Thus $A$ is normal in $G$, and its index in~$G$ is at most
$$
24\cdot 120^4\cdot |\bar{G}_p|\cdot |H_p|^{13}\le  2^{15}\cdot 3^5\cdot 5^4\cdot |G_p|^{13}\le 2^{15}\cdot 3^5\cdot 5^4\cdot |G_p|^{15}.
$$
If $F$ is a group of type~(5) in the notation of Theorem~\ref{theorem:ADE},
then $H$ contains a characteristic abelian subgroup
$A$ of order coprime to $p$ and index at most~\mbox{$120^4\cdot |H_p|^{15}$} by Lemma~\ref{lemma:Heisenberg-estimate-2}.
Thus $A$ is normal in $G$, and its index in $G$ is at most
$$
24\cdot 120^4\cdot |\bar{G}_p|\cdot |H_p|^{15}\le  2^{15}\cdot 3^5\cdot 5^4\cdot |G_p|^{15}.
$$

Therefore, we can assume that $n\ge 3$.
The group $H$ acts on $F$ by conjugation,
and this action preserves the characteristic subgroup~$R$.
Pick two elements~$\alpha_1$ and~$\alpha_2$ of $H$
such that their images $\bar{\alpha}_1$ and $\bar{\alpha}_2$ in $\bar{H}$ generate $\bar{H}$.
Since each of the elements~$\alpha_i$ normalizes the subgroup $R$, and the (algebraically closed)
field~$\Bbbk$ contains a primitive root of $1$ or degree $n=|R|$, by Lemma~\ref{lemma:Serre}
the elements~$\alpha_i^2$ commute with~$R$.

Let $A$ be the subgroup of $H$ generated by $\alpha_1^2$ and $\alpha_2^2$. Then
$A_F=A\cap F$ is contained in the centralizer of $R$. Since $n\ge 3$, the group $A_F$
is cyclic and has order coprime to $p$ by Remark~\ref{remark:PGL2-centralizer}.
In particular, this means that the order of $A$ is coprime to $p$.
Furthermore, since $A_F$ is normalized by $\alpha_i^2$, we can use Lemma~\ref{lemma:Serre} once again
and conclude that $A_F$ commutes with $\alpha_1^4$ and $\alpha_2^4$.

Denote by $A'$  the subgroup of $H$ generated by $\alpha_1^4$ and $\alpha_2^4$,
and set~\mbox{$A'_F=A'\cap F$}. Then $A'_F\subset A_F$. Hence $A'_F$ is a cyclic central subgroup
of $A'$. Thus $A'$ is a central extension of an abelian group, which implies that
it is a nilpotent group of class at most $2$.
Let $\tilde{N}$ be the subgroup of $H$ generated by $A'$ and $R$. Since~$R$ is abelian and
commutes with $A'$, we see that $\tilde{N}$ is a nilpotent group of class at most $2$.
Moreover, its order is coprime to $p$. We want to replace $\tilde{N}$ by a subgroup with similar properties
that is normal in~$G$.

The subgroup $R$ is characteristic in $F$, and hence
normal in $G$.
Also, the subgroup $\bar{H}_{\bar{\alpha}_1^4, \bar{\alpha}_2^4}$
generated by $\bar{\alpha}_1^4$ and $\bar{\alpha}_1^4$ is characteristic in $\bar{H}$, and its index in $\bar{H}$ is at most~$16$,
see Example~\ref{example:characteristic-by-generators}.
Thus $\bar{H}_{\bar{\alpha}_1^4, \bar{\alpha}_2^4}$ is normal in $\bar{G}$.
Let $N$ be the intersection of all subgroups
of $G$ conjugate to $\tilde{N}$. Then $N$ is a nilpotent group of class at most $2$;
moreover, its order is coprime to $p$, and it is normal in~$G$.
Also, we know that $N$ contains the subgroup $R$, because~$R$ is normal in~$G$.
Similarly, the image of $N$ in $\bar{H}$ coincides with $\bar{H}_{\bar{\alpha}_1^4, \bar{\alpha}_2^4}$,
because~$\bar{H}_{\bar{\alpha}_1^4, \bar{\alpha}_2^4}$ is normal in $\bar{G}$.
Therefore, the index of
$N$ in $G$ is
\begin{multline*}
\frac{|G|}{|N|}=
\frac{|\bar{G}|}{|\bar{H}_{\bar{\alpha}_1^4, \bar{\alpha}_2^4}|}\cdot\frac{|F|}{|N\cap F|}\le
16\frac{|\bar{G}|}{|\bar{H}|}\cdot \frac{|F|}{|R|} \le
16\cdot \left(24\cdot |\bar{G}_p|^3\right)\cdot \left(60\cdot |F_p|^3\right)\\ =
2^9\cdot 3^2\cdot 5\cdot |F_p|^3\cdot |\bar{G}_p|^3=2^9\cdot 3^2\cdot 5\cdot |G_p|^3.
\end{multline*}
\end{proof}

Now we can prove Theorem~\ref{theorem:nilpotent}.

\begin{proof}[Proof of Theorem~\ref{theorem:nilpotent}]
We may assume that the field $\Bbbk$ is algebraically closed.
If~$S$ is not birational to the product $E\times\PP^1$, where $E$ is
an elliptic curve, then the group $\Bir(S)$ is $p$-Jordan by Theorem~\ref{theorem:Popov};
in particular, this means that~\mbox{$\Bir(S)$} is nilpotently
$p$-Jordan of class at most~$2$.
On the other hand, if $S$ is birational to such a product, then
$\Bir(S)$ is nilpotently
$p$-Jordan of class at most~$2$ by Lemma~\ref{lemma:nilpotent-ruled}.
\end{proof}

\section{Conclusion}
\label{section:discussion}

In this section we discuss some open questions concerning
birational automorphism groups of varieties over fields of positive
characteristic.

\medskip
\textbf{Cremona groups of higher rank.}
In \cite[6.1]{Serre-2009} J.-P.\,Serre asked whether
the groups $\Bir(\PP^n)$ over a field of characteristic $p$ are generalized
$p$-Jordan for arbitrary $n$.
Fei Hu strengthened this question in \cite[Question~1.11]{Hu} by asking whether
these groups are actually $p$-Jordan. The answer to both
questions is not known, but Theorem~\ref{theorem:Serre} gives a hope that
it should be positive.
Over fields of characteristic zero, we know
from \cite{ProkhorovShramov-Cr} that all the groups
$\Bir(\PP^n)$ (and more generally, all birational automorphism groups
of rationally connected varieties) are Jordan.
However, the proof of this fact given in \cite{ProkhorovShramov-Cr}
relies on several theorems that are not known to hold in positive
characteristic. Most importantly (except for
resoultion of singularities, which people are used to routinely assume),
it uses boundedness of terminal Fano
varieties, which is a particular case of the result of C.\,Birkar
\cite{Birkar}. Except for this, the proof uses the Minimal Model Program,
which is available only up to dimension $3$
over fields of characteristic $p>5$ (see \cite{Hacon-Xu-15},
\cite{Birkar-16}, \cite{Birkar-Waldron}),
and some properties of minimal centers of log canonical singularities.
It may be interesting to try to generalize this proof to the case
of positive characteristic \emph{assuming}
resolution of singularities, boundedness of Fanos, and the Minimal Model
Program. Even in this setup we expect it to be a complicated problem.

\medskip
\textbf{Non-uniruled varieties.}
The case of non-uniruled varieties looks much more accessible.
Based on \cite[Theorem~1.8(ii)]{ProkhorovShramov-Bir}, we ask
the following.

\begin{question}\label{question:non-uniruled}
Let $X$ be a non-uniruled geometrically irreducible algebraic variety over a field of
characteristic~\mbox{$p>0$}. Is it true that the group $\Bir(X)$
is $p$-Jordan?
\end{question}

Note that the approach to birational automorphism groups
of non-uniruled varieties over fields of characteristic zero used
in \cite{ProkhorovShramov-Bir}
does not require boundedness of Fanos, and also does not use
much of the Minimal Model Program (that is, does not require termination
of flips). Therefore, we expect that
the answer to Question~\ref{question:non-uniruled}
may be obtained using the method of~\cite{ProkhorovShramov-Bir}.

\medskip
\textbf{Nilpotent groups.}
Based on \cite{Guld} and Theorem~\ref{theorem:nilpotent},
one can ask

\begin{question}\label{question:nilpotent}
Let $X$ be a geometrically irreducible algebraic variety over a field of
characteristic $p>0$. Is it true that the group $\Bir(X)$
is nilpotently $p$-Jordan?
\end{question}

The approach to nilpotent Jordan property used in \cite{Guld}
is based on many features specific to characteristic zero.
In particular, it uses the results of \cite{ProkhorovShramov-Cr},
which in turn require boundedness
of Fanos etc.

\medskip
\textbf{Complete varieties.}
One can wonder if the analog of Theorem~\ref{theorem:Hu}
holds for automorphism groups of complete algebraic varieties.

\begin{question}\label{question:complete}
Let $X$ be a complete algebraic variety over a field
of characteristic~\mbox{$p>0$}.
Is it true that the group $\Aut(X)$ is $p$-Jordan?
\end{question}

The difficulty
here is that an analog of Lemma~\ref{lemma:MZ} is not known in this case,
even in characteristic~$0$. However, according to \cite[Corollary~1.2]{MPZ} the answer to an analog of Question~\ref{question:complete} is positive
over fields of characteristic~$0$
(cf. \cite{ProkhorovShramov-Moishezon} for an alternative proof in the three-dimensional case).

\medskip
\textbf{Quasi-projective varieties.}
Similarly to \cite[Question~2.30]{Popov},
one can ask the following.

\begin{question}\label{question:q-p}
Let $X$ be a quasi-projective variety over a field
of positive characteristic~$p$. Is it true that the group
$\Aut(X)$ is $p$-Jordan?
\end{question}

Note that the answer to
a similar question is known to be positive
for quasi-projective surfaces over fields of zero characteristic,
see~\cite{BZ}.

\medskip
\textbf{Explicit estimates.}
It would be interesting to find the precise value of the constant~$J$
appearing in Theorem~\ref{theorem:Serre}, similarly to what
was done in~\mbox{\cite[Proposition~1.2.3]{ProkhorovShramov-constant}}
and~\cite{Yasinsky}. As in the case of zero characteristic,
this will amount to accurate analysis of finite groups acting on del Pezzo
surfaces. Note that some automorphism groups of del Pezzo surfaces
over fields of small positive characteristic do not appear
in large characteristics and characteristic zero, see
for instance~\cite[Lemma~5.1]{DD}.
Thus it may happen that the resulting constants are different
for small and large values of the characteristic.

\medskip
\textbf{Exponents.}
It would be interesting to
find the exact values of the constants~$e(\Gamma)$ of Definition~\ref{D:g-Jordan-2}
for various groups~$\Gamma$ that enjoy the $p$-Jordan property.
This applies to algebraic groups and automorphism groups of projective varieties; see
\cite[Theorems~1.7 and~1.10]{Hu}, and cf.~\cite[Remark~1.8]{Hu} for a (possibly non-sharp) upper bound for~$e(\Gamma)$
in these cases.  Also, this applies to our Theorem~\ref{theorem:Popov}(ii).
Similarly, we do not know the values of the constants~$e(\Gamma)$ of Definition~\ref{D:g-nilp-Jordan-2}
for birational automorphism groups of surfaces, see Theorem~\ref{theorem:nilpotent}.
Provided that one understands an analogous bound for Theorem~\ref{theorem:Popov}, computing or bounding these values will
boil down to analyzing the proof of Lemma~\ref{lemma:nilpotent-ruled}.
Some of the progress in this direction may be made by optimizing the bounds provided
by Lemmas~\ref{lemma:Heisenberg-estimate-1} and~\ref{lemma:Heisenberg-estimate-2}.


\medskip
\textbf{Generalized $p$-Jordan property.}
We are not aware of examples of algebraic
varieties over a field of positive characteristic $p$ whose birational automorphism
group is not $p$-Jordan, but is
generalized $p$-Jordan.
Theorem~\ref{theorem:Popov} shows that there are no such varieties in dimension~$2$.
It would be interesting to find out if examples of this kind exist in higher dimensions.

\medskip
\textbf{Multiplicative bounds.}
In certain cases it is useful to consider multiplicative bounds for indices
of normal abelian subgroups in finite subgroups of a given group (i.e., the least common multiples of such indices);
see \cite{Serre-2007}, \cite{Serre-2009},
and~\cite[Theorem~1.2(ii)]{SV21}.
We point out that in the context of $p$-Jordan groups such bounds fail
to exist already in the most simple situations, even if we consider such numbers up to the powers of~$p$.
For instance,
if~$\Bbbk$ is an algebraically closed field of positive characteristic~$p$,
the $p$-Jordan group~\mbox{$\PGL_2(\Bbbk)$} contains a subgroup $\PSL_2(\mathbf{F}_{p^k})$
for every positive integer~$k$. The latter group is simple if $p^k>3$, so that
the largest normal abelian subgroup therein is the trivial group, whose index equals
$|\PSL_2(\mathbf{F}_{p^k})|=n_kp^k$. Here~\mbox{$n_k=\frac{p^{2k}-1}{2}$} if~\mbox{$p\ge 3$}, and~\mbox{$n_k=2^{2k}-1$}
if $p=2$; thus, $n_k$ is coprime to~$p$. We see that the numbers~\mbox{$n_k$, $k\ge 1$},
are unbounded, and so they do not have a finite common multiple.
That being said, it would be interesting to find examples of $p$-Jordan groups
of geometric origin where the multiplicative bounds for the arising constants do exist.


\end{document}